\newcommand{\Z}{{\mathbb{Z}}}
\newcommand{\C}{{\mathbb{C}}}
\newcommand{\R}{{\mathbb{R}}}
\newcommand{\N}{{\mathbb{N}}}
\newcommand{\eps}{{\varepsilon}}
\theoremstyle{plain}
\newtheorem{theorem}{Theorem}
\newtheorem{proposition}[theorem]{Proposition}
\newtheorem{lemma}[theorem]{Lemma}
\newtheorem{corollary}[theorem]{Corollary}
\theoremstyle{definition}
\newtheorem{remark}[theorem]{Remark}
\numberwithin{equation}{section}
\numberwithin{theorem}{section}
\numberwithin{equation}{section}
\newcommand{\LLL}{\mathcal{L}}
\newcommand{\MMM}{\mathcal{M}}
\newcommand{\UUU}{\mathcal{U}}
\newcommand{\YYY}{\mathcal{Y}}
\newcommand{\ZZZ}{\mathcal{Z}}
\begin{document}

\title[Dynamics for the focusing Hartree equation]
{Dynamics for the focusing, energy-critical nonlinear Hartree
equation}

\author[Miao]{Changxing Miao}
\address{\hskip-1.15em Changxing Miao:
\hfill\newline Institute of Applied Physics and Computational
Mathematics, \hfill\newline P. O. Box 8009,\ Beijing,\ China,\
100088,}
\email{miao\_changxing@iapcm.ac.cn}

\author[Wu]{Yifei Wu}
\address{\hskip-1.15em Yifei Wu \hfill\newline Institute of
Applied Physics and Computational Mathematics, \hfill\newline  P. O.
Box 8009,\ Beijing,\ China,\ 100088, } \email{yerfmath@yahoo.cn}

\author[Xu]{Guixiang Xu}
\address{\hskip-1.15em Guixiang Xu \hfill\newline Institute of
Applied Physics and Computational Mathematics, \hfill\newline P. O.
Box 8009,\ Beijing,\ China,\ 100088, }
\email{xu\_guixiang@iapcm.ac.cn}

\subjclass[2000]{Primary: 35L70, Secondary: 35Q55}

\keywords{Dynamics; Hartree equation; Kelvin transform; Modulational
stability; Moving plane method; Linearized operator; Spectral
theory.}

\begin{abstract}In \cite{LiMZ:e-critical Har,
MiaoXZ:09:e-critical radial Har}, the dynamics of the solutions for
the focusing energy-critical Hartree equation have been classified
when $E(u_0)<E(W)$, where $W$ is the ground state. In this paper, we
continue the study on the dynamics of the radial solutions with the
threshold energy. Our arguments closely follow those in
\cite{DuyMerle:NLS:ThresholdSolution,
DuyMerle:NLW:ThresholdSolution, DuyRouden:NLS:ThresholdSolution,
LiZh:NLS, LiZh:NLW}. The new ingredient is that we show that the
positive solution of the nonlocal elliptic equation in
$L^{\frac{2d}{d-2}}(\R^d)$ is regular and unique by the moving plane
method in its global form,  which plays an important role in the
spectral theory of the linearized operator and the dynamics behavior
of the threshold solution.
\end{abstract}

\maketitle


%
%
%
%

\section{Introduction}

We consider the focusing, energy-critical Hartree equation
\begin{equation}\label{har}
i \partial_t u + \Delta u  + \left( |x|^{-4}*|u|^2\right)u =0,\quad
(t,x)\in \R \times \R^d
\end{equation}
where $ d\geq 5$ in this context. The Hartree equation arises in the
study of Boson stars and other physical phenomena, please refer to
\cite{pi}. In chemistry, it appears as a continuous-limit model for
the mesoscopic structures, see \cite{grc}. Moreover, \eqref{har}
enjoys several symmetries: If $u(t,x)$ is a solution, then
\begin{enumerate}
\item[(a)] by scaling: so is $\lambda^{-\frac{d-2}{2}}u(\lambda^{-2} t,
\lambda^{-1} x)$, $\lambda> 0$;
\item[(b)] by time translation invariance: so is $u(t+t_0, x)$ for $t_0\in \R$;
\item[(c)] by spatial translation invariance: so is $u(t, x+x_0)$ for $x_0\in \R^d$;
\item[(d)] by phase rotation invariance: so is $e^{i\theta_0}u(t,x)$, $\theta_0\in \R$;
\item[(e)] by time reversal invariance: so is $\overline{u(-t, x)}$.
\end{enumerate}

The local well-posedness for the Cauchy problem of \eqref{har} was
developed in \cite{Caz:book, MiaoXZ:08:LWP for Har}. Namely, if $u_0
\in \dot H^1(\R^d)$, there exists a unique solution defined on a
maximal interval $I=\left(-T_-(u), T_+(u)\right)$ and the energy
\begin{equation}\label{energy}
\aligned  E(u(t)):=\frac12 \int_{\R^d} \big| \nabla u(t,x)\big|^2 dx
-\frac{1}{4} \iint_{\R^d\times\R^d} \frac{\big|u(t,x)\big|^2
\big|u(t,y)\big|^2}{|x-y|^{4}} \ dxdy=E(u_0)
\endaligned
\end{equation}
is  conserved on $I$. The name ``energy critical'' refers to the
fact that the scaling
\begin{equation}\label{scaling}
u(t,x)\rightarrow
u_{\lambda}(t,x)=\lambda^{-\frac{d-2}{2}}u\left(\lambda^{-2} t,
\lambda^{-1} x\right), \; \lambda>0
\end{equation}
makes the equation \eqref{har} and its  energy \eqref{energy}
invariant.

There are many results for the energy-critical Hartree equation. For
the defocusing case, Miao, etc, take advantage of the term
$\displaystyle - \int_{I}\int_{|x|\leq A|I|^{1/2}}|u|^{2}\Delta
\Big(\frac{1}{|x|}\Big)dxdt$ in the localized Morawetz identity,
which is related to the linear operator $i\partial_t + \Delta$ to
rule out the possibility of energy concentration, instead of the
classical Morawetz estimate dependent of the nonlinearity and thus
obtain the global well-posedness and scattering of the radial
solution in \cite{MiaoXZ:07:F e-critical radial Har}. Subsequently,
Miao, etc use the induction on energy argument in both the frequency
space and the spatial space simultaneously and the
frequency-localized interaction Morawetz estimate to remove the
radial assumption in \cite{MiaoXZ:09:DF e-critical nonradial Har}.

 For the focusing case,  the dynamics behavior becomes complicated. It turns
out that the explicit ground state
\begin{equation}\label{w function}
\aligned W(x)= c_0\left(\frac{t}{t^2+|x|^2}\right)^{ -\frac{d-2}{2}}
 \; \text{with}\; c_0>0, t>0
\endaligned
\end{equation}
plays an important role in the dynamical behavior of solutions for
\eqref{har}. Miao, etc, make use of the concentration compactness
principle and the rigidity argument, which are first introduced in
NLS and NLW by C.~Kenig and F.~Merle in \cite{Kenigmerle:H1 critical
NLS, Kenig-merle:wave},  to show that
\begin{theorem}[\cite{LiMZ:e-critical Har, MiaoXZ:09:e-critical radial Har}]\label{belowthreshold}
 Let $u$ be a solution of \eqref{har} with
\begin{equation*}
\aligned u_0 \in \dot H^1(\R^d), \;\; E(u)< E(W).
\endaligned
\end{equation*}
Then
\begin{enumerate}
\item[\rm (a)] if $\big\|\nabla u_0\big\|_{L^2 }< \big\|\nabla W\big\|_{L^2}$, then $I=\R$ and $
\big\|u\big\|_{L^6_t\big(\R; L^{\frac{6d}{3d-8}}_x\big)}<\infty$;

\item[\rm (b)] if $\big\|\nabla u_0\big\|_{L^2}> \big\|\nabla W\big\|_{L^2}$,
and either $u_0 \in L^2$ is radial  or $x\cdot u_0 \in L^2$, then
$T_{\pm} < \infty$.
\end{enumerate}
\end{theorem}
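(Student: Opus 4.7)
The plan is to run the Kenig--Merle concentration-compactness and rigidity scheme adapted to the nonlocal Hartree nonlinearity, treating parts (a) and (b) in parallel from a common variational backbone. First I would establish the sharp inequality
\[
\iint_{\R^d\times\R^d} \frac{|u(x)|^2|u(y)|^2}{|x-y|^{4}}\, dx\, dy \le C_*\|\nabla u\|_{L^2}^4,
\]
where $C_*$ is attained by the ground state $W$ in \eqref{w function}; this follows from the Hardy--Littlewood--Sobolev inequality together with the Pohozaev identities satisfied by $W$. Combined with the conservation of energy $E(u)<E(W)$ and a continuity-in-time argument, this yields the dichotomy: in case (a) one has $\|\nabla u(t)\|_{L^2}<\|\nabla W\|_{L^2}$ on the whole maximal interval $I$, while in case (b) the reverse inequality holds; in either case there is a quantitative coercivity gap $\delta=\delta(E(W)-E(u))>0$.

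For part (a) I would next combine this trapping with the small-data scattering from \cite{MiaoXZ:08:LWP for Har} and a stability theory. Assuming the global Strichartz norm were infinite, a linear profile decomposition in $\dot H^1$ adapted to the free Schr\"odinger group, together with nonlinear profiles controlled by stability, extracts a minimal non-scattering critical element $u_c$ whose orbit is precompact modulo symmetries: there exist $x(t)$ and $\lambda(t)$ such that
\[
K=\Big\{\lambda(t)^{-\frac{d-2}{2}} u_c\bigl(t,\lambda(t)^{-1}(\cdot - x(t))\bigr): t\in I\Big\}
\]
is precompact in $\dot H^1(\R^d)$. A localized virial/Morawetz identity, whose main term is bounded below by the coercivity gap and whose localization errors vanish in the limit by the precompactness of $K$, then forces $u_c\equiv 0$, contradicting the assumption and yielding the global Strichartz bound.

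For part (b) the virial identity for \eqref{har} reads
\[
\frac{d^2}{dt^2}\int_{\R^d} |x|^2 |u(t,x)|^2\, dx = 8\|\nabla u(t)\|_{L^2}^2 - 8\iint_{\R^d\times\R^d} \frac{|u(t,x)|^2|u(t,y)|^2}{|x-y|^{4}}\, dx\, dy;
\]
after using the conservation of energy and the Pohozaev identity $E(W)=\tfrac14\|\nabla W\|_{L^2}^2$, this rewrites as $32(E(u)-E(W))-8(\|\nabla u(t)\|_{L^2}^2-\|\nabla W\|_{L^2}^2)$, both of whose terms are uniformly negative under the case (b) hypotheses. Hence the second derivative of the variance is $\le -c\delta<0$ uniformly on $I$. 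When $xu_0\in L^2$, convexity forces the variance to reach zero in finite time, so $T_\pm<\infty$. In the radial case I would replace $|x|^2$ by a radial cutoff $\phi_R$ and absorb the resulting error terms using the radial Sobolev embedding $\|u(t)\|_{L^\infty(|x|\ge R)}\lesssim R^{-(d-1)/2}\|\nabla u(t)\|_{L^2}^{1/2}\|u(t)\|_{L^2}^{1/2}$, recovering the same strict concavity and hence finite-time blow-up.

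The main obstacle will be the rigidity in part (a). The Hartree nonlinearity $(|x|^{-4}*|u|^2)u$ is genuinely nonlocal: spatial cutoffs do not commute with the convolution, so the standard commutator estimates used for power-type NLS must be replaced by Hardy--Littlewood--Sobolev bounds on convolution tails. Preserving the sign of the main variational term in the truncated virial while keeping these nonlocal error terms negligible, using only the compactness of $K$ and the coercivity gap, is the delicate point, and is precisely why the scheme for \eqref{har} is not a direct transcription of the NLS argument of \cite{Kenigmerle:H1 critical NLS}.
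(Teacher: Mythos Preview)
Your outline is correct and matches the approach of the cited references \cite{LiMZ:e-critical Har, MiaoXZ:09:e-critical radial Har}: the paper does not prove Theorem~\ref{belowthreshold} itself but attributes it to those works, which implement precisely the Kenig--Merle concentration-compactness/rigidity scheme together with the (localized) virial identity you describe. Your identification of the nonlocal commutator issue in the rigidity step and the treatment of part~(b) via the sign computation $\partial_t^2 V = 32(E(u)-E(W)) - 8(\|\nabla u(t)\|_2^2 - \|\nabla W\|_2^2)$ are both on target.
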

For other dynamics results of the Hartree equation, please refer to
\cite{CaoG, GiV00, KriLR:m-critcal Har, KriMR:mass-subcritical har,
LiMZ:e-critical Har, LiZh:har:class, MiaoXZ:08:LWP for Har,
MiaoXZ:09:e-critical radial Har, MiaoXZ:09:m-critical Har,
MiaoXZ:09:p-subcritical Har, MiaoXZ:10:blowup, Na99d}.

 In this paper, we
continue the study on the dynamics of the radial solutions with the
threshold energy $E(W)$. Our goal is to give the classification of
the solutions, that is, the initial data $u_0\in \dot H^1(\R^d)$ is
radial and satisfies
\begin{equation}\label{threshold energy}
\aligned  E(u_0)=E(W).
\endaligned
\end{equation}
In this case, the classification is more abundant than  that in
Theorem \ref{belowthreshold}. Clearly, $W$ is a new solution which
doesn't satisfy neither conclusion in Theorem \ref{belowthreshold}.
Besides  $W$,  there also exist two other special radial solutions
$W^{\pm}$.

\begin{theorem}\label{threholdsolution} There exist two radial solutions $W^\pm$ of \eqref{har} with
initial data $W^{\pm}_{0}$ such that
\begin{enumerate}
\item[\rm(a)] $E(W^\pm)=E(W)$, $T_+(W^{\pm})=\infty$ and
\begin{equation*} \aligned
\lim_{t\rightarrow +\infty}W^{\pm}(t) =  W \; \text{in} \; \dot H^1.
\endaligned
\end{equation*}

\item[\rm(b)] $\big\|\nabla W^-_0\big\|_{2} < \big\|\nabla W\big\|_{2}$, $T_-(W^{-})=\infty$ and $W^-$ scatters for the negative time.

\item[\rm(c)] $\big\|\nabla W^+_0\big\|_{2} > \big\|\nabla W\big\|_{2}$, and $T_-(W^{+})<\infty$.
\end{enumerate}
\end{theorem}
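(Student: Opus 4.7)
The plan is to follow the Duyckaerts--Merle framework. Writing $u = W + h$ in \eqref{har}, the perturbation $h$ satisfies
\[
i\partial_t h = \mathcal{L} h + R(h),
\]
where $\mathcal{L}$ is the linearization of the Hartree nonlinearity at $W$ (an $\R$-linear operator acting on the pair $(h,\bar h)$), and $R(h)$ collects the genuinely quadratic and higher terms in $h$. Using the regularity and uniqueness of the positive $\dot H^1$ solution of the nonlocal Euler--Lagrange equation---the new ingredient established in this paper by the moving-plane method---carry out the spectral analysis of $\mathcal{L}$ in the radial sector. In analogy with the energy-critical NLS, the expected picture is: essential spectrum equal to $[0,\infty)$; a finite-dimensional generalized kernel spanned by the surviving symmetry directions $\partial_\lambda W$ and $iW$ (the translation modes are absent by radiality); and a simple pair of real eigenvalues $\pm e_0$ with $e_0 > 0$, with smooth exponentially decaying eigenfunctions $\mathcal{Y}_\pm = \overline{\mathcal{Y}_\mp}$.

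Given this spectral structure, construct a one-parameter family of special solutions. For each small real $a$, build the formal exponential series
\[
V^a_k(t) = W + \sum_{j=1}^{k} a^j e^{-j e_0 t}\, Z_j, \qquad Z_1 = \mathcal{Y}_+,
\]
where $Z_j$ for $j \ge 2$ is determined by inverting a shifted version of $\mathcal{L}$ on its continuous spectral subspace applied to a source built from the lower-order $Z_\ell$; invertibility is automatic since $\pm i j e_0$ lies off the spectrum of $\mathcal{L}$ for $j \ge 2$. Upgrade $V^a_k$ to a genuine solution $U^a$ of \eqref{har} on $[T_a, \infty)$ via a contraction-mapping argument in a weighted Strichartz-type norm capturing the $O(e^{-(k+1) e_0 t})$ remainder, using dispersive estimates for $e^{it\mathcal{L}}$ on the continuous spectral subspace. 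The resulting $U^a$ satisfies $\|U^a(t) - W\|_{\dot H^1} \lesssim |a|\, e^{-e_0 t}$ as $t \to +\infty$, and hence $E(U^a) = E(W)$ by conservation of energy and continuity of $E$.

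Define $W^\pm := U^{a_\pm}$ for two small nonzero parameters $a_\pm$ of opposite sign. This immediately yields part (a): $T_+(W^\pm) = +\infty$, $W^\pm \to W$ in $\dot H^1$ as $t \to +\infty$, and $E(W^\pm) = E(W)$. The map $a \mapsto \|\nabla U_0^a\|_2^2 - \|\nabla W\|_2^2$ vanishes at $a = 0$ and has leading term $2a\,\mathrm{Re}\langle \nabla W, \nabla \mathcal{Y}_+\rangle$, which is nonzero by a standard nondegeneracy computation in the spectral theory of $\mathcal{L}$ (encoding that $\mathcal{Y}_+$ is transversal to the kinetic energy level set at $W$). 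Choosing $a_\pm$ of opposite sign therefore arranges $\|\nabla W_0^-\|_2 < \|\nabla W\|_2 < \|\nabla W_0^+\|_2$, which is the first half of each of (b) and (c).

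The backward-in-time statements use the threshold extension of Theorem \ref{belowthreshold}. For $W^-$, which is radial with $E(W^-) = E(W)$ and $\|\nabla W_0^-\|_2 < \|\nabla W\|_2$, the Kenig--Merle concentration-compactness/rigidity method at the threshold excludes any nonzero minimal compact blow-up solution other than a symmetry of $W$; since $W^-$ is not such a symmetry by the strict gradient inequality, $W^-$ is global and scatters as $t \to -\infty$. For $W^+$, the strict gap $\|\nabla W^+(t)\|_2 \ge \|\nabla W\|_2 + \delta$ propagates for $t$ in the existence interval by energy conservation and continuity at $E = E(W)$, and a localized radial virial/concavity argument then forces $T_-(W^+) < \infty$. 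The main obstacle throughout is the spectral theory of the nonlocal linearization $\mathcal{L}$: unlike the NLS/NLW setting where the linearization is a Schr\"odinger operator with a local potential, here the ``potential'' is convolution with $|x|^{-4}$, and it is precisely the regularity and uniqueness of the positive $\dot H^1$ solution of the nonlocal Euler--Lagrange equation---the moving-plane input---that identifies $\ker \mathcal{L}$ and certifies the nondegeneracy needed to unlock the Duyckaerts--Merle machinery.
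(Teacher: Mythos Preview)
Your overall plan follows the Duyckaerts--Merle framework and matches the paper's construction in Section~3 closely for part~(a). Two points, however, deserve correction.

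First, a technical point on the fixed-point construction: you propose to close the contraction using ``dispersive estimates for $e^{it\mathcal{L}}$ on the continuous spectral subspace.'' Such estimates for a \emph{nonlocal} linearized operator are not available off the shelf and are not what the paper uses. The paper instead writes the Duhamel formula with the \emph{free} propagator $e^{it\Delta}$, treating the linear potential term $Vh$ as part of the forcing; on short time intervals $|I|\le 1$ one has $\|\nabla(Vh)\|_{N(I)}\lesssim |I|^{1/3}\|h\|_{l(I)}$ (Lemma~\ref{linearoperator:prelimestimate}), and a summation argument (Lemma~\ref{summation}) upgrades this to the exponentially weighted bound. No spectral projection or linearized dispersive estimate is needed.

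Second, and more substantively, your argument for $T_-(W^+)<\infty$ has a genuine gap. You assert that ``the strict gap $\|\nabla W^+(t)\|_2 \ge \|\nabla W\|_2 + \delta$ propagates for $t$ in the existence interval by energy conservation.'' This is false at the threshold energy: Lemma~\ref{energytrapp} gives only the \emph{strict} inequality $\|\nabla W^+(t)\|_2>\|\nabla W\|_2$, with no uniform gap, and indeed $\|\nabla W^+(t)\|_2\to\|\nabla W\|_2$ as $t\to+\infty$ by part~(a). Hence the standard convexity blow-up (which needs $\partial_t^2 V_R\le -c<0$) does not apply. The paper's argument (Section~\ref{subs:blowup}) is different: one shows that $\partial_t^2 V_R(t)\le -4\delta(t)<0$ and, from this concavity together with $V_R>0$, that $\partial_t V_R(t)>0$ for all $t\ge 0$. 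If $W^+$ were also global backward, the same analysis applied to $\overline{W^+}(-t)$ would give $\delta(t)\to 0$ as $t\to\pm\infty$, hence $\partial_t V_R(t)\to 0$ at both ends; but a strictly concave function with $\partial_t V_R>0$ cannot have $\partial_t V_R\to 0$ at $-\infty$, a contradiction. Your sketch for part~(b) is similarly too coarse: the backward scattering of $W^-$ is not a direct corollary of below-threshold rigidity but follows from an analogous ``if non-scattering in both directions then $\delta\equiv 0$'' contradiction (Section~\ref{S:sub:scattering}).
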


Next, we characterize all radial solutions with the threshold energy
as follows:

\begin{theorem}\label{classification} Suppose $u_0 \in \dot
H^1\big(\R^d\big)$ is radial, and such that
\begin{equation*}
\aligned E(u_0 )=E(W ).
\endaligned
\end{equation*}
Let  $u$  be a solution of \eqref{har} with initial data $u_0 $
and $I$ be the maximal interval of existence. Then the following
holds:
\begin{enumerate}
\item[\rm(a)] If $\big\|\nabla u_0\big\|_{2} < \big\|\nabla W \big\|_{2}$, then $I=\R$. Furthermore,
either $u=W^{-}$ up to the symmetries of the equation, or
$\big\|u\big\|_{L^{6}_tL^{\frac{6d}{3d-8}}_{x}}<\infty$.

\item[\rm(b)] If $\big\|\nabla u_0\big\|_{2} = \big\|\nabla W \big\|_{2}$, then  $u= W$ up to the symmetries of the equation.

\item[\rm(c)] If $\big\|\nabla u_0\big\|_{2} > \big\|\nabla W \big\|_{2}$, and $u_0 \in L^2(\R^d)$, then
either $u=W^{+}$ up to the symmetries of the equation, or $I$ is
finite.
\end{enumerate}
\end{theorem}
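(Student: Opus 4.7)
The plan is to follow the strategy developed for the nonlinear Schr\"odinger and wave equations in the cited works of Duyckaerts--Merle, Duyckaerts--Roudenko and Li--Zhang, adapted to the nonlocal Hartree setting by means of the variational and spectral information collected earlier in the paper.

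Part (b) is the most direct: by the sharp Sobolev/Hardy--Littlewood--Sobolev-type inequality underlying the energy functional, the hypotheses $E(u_0)=E(W)$ and $\|\nabla u_0\|_{2}=\|\nabla W\|_{2}$ force $u_0$ to saturate the inequality. Since $W$ is, up to scaling, phase rotation and translation, the unique extremizer (this is precisely where the moving-plane uniqueness of positive $\dot H^1$-solutions of the nonlocal elliptic equation enters), we conclude $u_0=W$ up to the symmetries of \eqref{har}, and uniqueness of the Cauchy problem propagates this in time.

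For (a) and (c) the crucial ingredient is a uniqueness statement for threshold solutions that approach $W$: any radial solution of \eqref{har} with $E(u)=E(W)$, whose trajectory accumulates at $W$ in $\dot H^1$ modulo symmetries along some sequence $t_n\to\infty$, must coincide with one of $W$, $W^+$ or $W^-$, up to the symmetries. I would establish this by modulational analysis around $W$. Writing $u(t)$ as $W$ plus a small remainder $h(t)$ after extracting time-dependent modulation parameters $(\lambda(t),\theta(t))$ and enforcing suitable orthogonality conditions, the evolution of $h$ is driven at leading order by the linearized operator $\LLL$ around $W$. The spectral picture developed in the earlier sections (one pair of simple real eigenvalues $\pm e_0$, a generalized kernel generated by the symmetries, and coercivity on the orthogonal complement) yields exponential convergence $\|h(t)\|_{\dot H^1}\lesssim e^{-e_0 t}$. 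A one-dimensional unstable-manifold argument \`a la Duyckaerts--Merle then matches $u$ with $W^+$, $W^-$, or $W$ itself, depending on the sign of the coefficient of $h$ along the unstable direction.

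With this uniqueness statement in hand, case (a) proceeds as follows. The strict subthreshold condition $\|\nabla u_0\|_2<\|\nabla W\|_2$ is preserved by the flow (by the variational characterization used in Theorem \ref{belowthreshold}), so $I=\R$. If $u$ fails to scatter, a concentration-compactness profile decomposition, of the same type underlying Theorem \ref{belowthreshold}, extracts a minimal nonscattering solution whose $\dot H^1$-orbit is precompact modulo the symmetries. A localized virial/Morawetz computation, combined with the variational characterization of $W$, rules out every such compact orbit except those that converge to $W$ up to symmetries as $t\to +\infty$; the uniqueness statement above then identifies this minimal element with $W^-$. Case (c) is symmetric: the strict superthreshold condition is preserved, the $L^2$-hypothesis provides mass conservation and a finite virial functional, and the same compactness/rigidity scheme in the over-threshold regime either yields a critical element that converges to $W$ on the good side of $I$ (and is then identified with $W^+$), or else forces $T_-(u)<\infty$.

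The main obstacle is the uniqueness step for threshold solutions converging to $W$. Because the Hartree nonlinearity is nonlocal, $\LLL$ is a nonlocal matrix operator, and obtaining the necessary spectral information — simplicity of $\pm e_0$, absence of embedded eigenvalues, and coercivity modulo the symmetry directions — rests on the nondegeneracy of $W$, which is itself a consequence of the moving-plane uniqueness of positive solutions to the associated nonlocal elliptic equation. Once this spectral structure is in place, the modulational estimates and the shadowing argument matching $u$ with $W^\pm$ are a routine, if technical, adaptation of the Duyckaerts--Merle template to the nonlocal setting.
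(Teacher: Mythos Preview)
Your proposal is correct and follows essentially the same route as the paper: part (b) via the variational characterization of $W$; parts (a) and (c) by first showing that a nonscattering (resp.\ global-in-one-direction) threshold solution converges exponentially to $W$ up to modulation, and then invoking the spectral/unstable-manifold uniqueness (the paper's Proposition~\ref{uniqueness} and Corollary~\ref{cor:uniqueness}) to identify it with $W^{\mp}$. One small correction of emphasis: in case (c) the paper does \emph{not} run a concentration-compactness extraction of a minimal element---the $L^2$ hypothesis makes the localized virial functional $V_R(t)$ available directly on $u$ itself, and monotonicity/convexity of $V_R$ alone yields the exponential decay of $\delta(t)$ (Section~\ref{S:convergence:sup}); compactness of the orbit is only used in the subcritical case (a).
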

Because of the radial assumption, the symmetries of the equation in
the above theorem refer to the symmetries under scaling, time
translation, phase rotation and time reverse.

Now we begin with a brief recapitulation of some important dynamics
results for NLS, NLW and NLKG that have been derived so far. The
orbital stability of the soliton for the $L^2$-subcritical NLS in
the energy space was settled by Weinstein
\cite{Wein:85:Modulationary stability,Wein:86:Lyapunov stability},
Berestycki and Cazenave \cite{BerCaz}, Cazenave and Lions
\cite{CazLio}. In detail, Weinstein obtained the quantitative
analysis (modulation stability analysis), while Berestycki, Cazenave
and Lions gave the qualitative analysis. After the successful
applications of the concentration compactness principle (the profile
decomposition \cite{Bahouri-gerard, IbrMN:f:NLKG, Keraani:energy-c
NLS prfile}) into the global existence and scattering theory for the
$\dot H^1$-critical NLS and NLW with the energy less than that of
the ground state in \cite{Kenigmerle:H1 critical NLS,
Kenig-merle:wave}, Duyckaerts and Merle combined the spectral theory
of the linearized
 operator, the modulational stability of the soliton with the concentration compactness argument to classify the solutions
 with the threshold energy for the $\dot H^1$-critical NLS and NLW
 in \cite{DuyMerle:NLS:ThresholdSolution, DuyMerle:NLW:ThresholdSolution}. Subsequently,
  Duyckaerts and Roudenko dealt with the 3D cubic NLS case in \cite{DuyRouden:NLS:ThresholdSolution}.
 Li and Zhang obtained the dynamics of threshold solutions for the focusing, $\dot H^1$-critical NLS and NLW
 in the higher dimensions in \cite{LiZh:NLS, LiZh:NLW}. For the more recent progresses on the global dynamics above the ground state energy of NLS , NLW and
NLKG, please refer to \cite{KriNS:e-critical NLW, KriNakSch:1D KG,
NakSch:cubic KG:Rigidity, NakSch:cubic NLS:Rigidity, NakSch:cubic
KG:nonradial, NakSch:KG:hadammeth}.

The paper is organized as follows. The main structure of the paper
is reminiscent of that for the NLS and NLW in
\cite{DuyMerle:NLS:ThresholdSolution,
DuyMerle:NLW:ThresholdSolution, DuyRouden:NLS:ThresholdSolution,
LiZh:NLS, LiZh:NLW}. The new ingredient is that we show that the
positive solution of the nonlocal elliptic equation in
$L^{\frac{2d}{d-2}}(\R^d)$ is regular and unique by the moving plane
method in its global form,  which plays an important role in the
spectral theory of the linearized operator and the dynamics behavior
of the threshold solution.

In Section \ref{S:preli}, we recall the Cauchy problem, the
properties of the ground state. We also state  the spectral
properties of the linearized operator $\mathcal{L}$ around $W$,
which is deduced from the property of the null space of the
linearized operator in $L^2_{rad}$; Under the condition
$E(u_0)=E(W),$ we can identify a quadratic form $B$ associated to
the linearized operator $\mathcal{L}$ and use the property of the
null space of the linearized operator to find two subspaces
$H^{\bot}\cap \dot H^1_{rad}$ and $G_{\bot}\cap \dot H^1_{rad}$ in
$\dot H^1$, where the linearized energy $\Phi$ is positive
(coercive), avoiding the vanishing and negative directions. These
decompositions will play an important role in establishing the
modulational stability in Section \ref{S:modulation} and analyzing
the uniqueness of the exponential decaying solutions to the
linearized equation in Section \ref{S:uniqueness}, respectively.

In Section \ref{S:existence}, we construct two special solutions
$W^{\pm}$ of \eqref{har} except for the negative time behavior by
use of the knowledge about the real eigenvalues of the linearized
operator $\mathcal{L}$ and the fixed point argument, which gives the
proof of Theorem \ref{threholdsolution} except for the negative time
behavior.

In Section \ref{S:modulation}, we make use of the variational
characterization of $W$ and the implicit function theorem to discuss
the modulational stability around $W$, then we make use of the
positivity of the linearized energy $\Phi$ in $H^{\bot}\cap \dot
H^1_{rad}$ to identify the scaling and phase parameters in the
modulational stability which are closely related with the gradient
variant $\delta(t)$ of the solution away from $W$. In particular,
there parameters are linearly dependent of $\delta(t)$ in the
interval with small gradient variant.

In Section \ref{S:convergence:sup} and Section
\ref{S:convergence:sub}, we study the solutions with initial data
satisfying Theorem \ref{classification} part (a) and (c). Main
techniques are to make use of the virial argument and the
concentration compactness argument to obtain the exponential decay
\eqref{gradient:sup:expdecay} and \eqref{expdecay:pointwise} of the
gradient variant $\delta(t)$ for the large (positive) time, which
will imply the exponential convergence in the (positive) time
direction to $W$ (up to scaling and phase rotation) by the
modulational stability, and also obtain the proof of Theorem
\ref{threholdsolution} for the negative time behavior.

In Section \ref{S:uniqueness}, we first use the positivity of the
linearized energy $\Phi$ in $G_{\bot}\cap \dot H^1_{rad}$ to analyze
the property of the exponentially decay solution of the linearized
equation, then apply it to establishing the uniqueness of the
special solutions, this will imply the proof of Theorem
\ref{classification}.

Appendix \ref{S:movplan} contains proofs of the uniqueness of the
ground state in $L^{\frac{2d}{d-2}}$ by the moving plane method in
its global form. Appendix \ref{appen CoerH} and
\ref{appen-spectralprop} contain proofs of the spectral properties
and positivity of the linearized operator in Proposition \ref{coerH}
and Proposition \ref{spectral}.

\subsection*{Acknowledgements.}The authors are partly supported by the NSF
of China (No. 10725102, No. 10801015). The authors would like to
thank Professor W.~Chen, F.~Lin and K.~Nakanishi for their valuable
suggestions.

%
%
%
%

\section{Preliminaries}\label{S:preli}

\subsection{The linear estimates and the Cauchy problem.} In this section, we recall some
results on the Cauchy problem of \eqref{har}. Let $I$ be an
interval, and denote
\begin{align*}
 Z(I):=L^6\left(I; L^{\frac{6d}{3d-8}}({\R^d})\right), \quad S(I):=
L^3 & \left(I;  L^{\frac{6d}{3d-4}}({\R^d})\right), \quad N(I):=
L^{\frac32}\left(I; L^{\frac{6d}{3d+4}}({\R^d})\right),\\
 l(I)=Z(I)\cap L^{3}\left(I;
\dot{W}^{1,\frac{6d}{3d-4}}({\R^d})\right)&,\quad
\big\|u\big\|_{l(I)}:= \big\|u\big\|_{Z(I)}+\big\|\nabla
u\big\|_{S(I)}.
 \end{align*}

A solution of \eqref{har} on an interval $I$ with $0\in I$ is a
function $u\in C^0(I, \dot H^1(\R^d))$ such that $u\in Z(J)$ for all
interval $J\Subset I$ and
\begin{equation*}
\aligned u(t)= e^{it\Delta } u_0 + i \int^t_0 e^{i(t-s)\Delta
}\left( \frac{1}{|\cdot|^{4}}*|u(s,\cdot)|^2\right)(x)\; u(s,x)\;
ds.
\endaligned
\end{equation*}

\begin{lemma}[\cite{Caz:book, tao:book}]\label{Striestimate} Consider
\begin{equation}\label{lin}
\left\{
\aligned & i\partial_t u + \Delta u = f,\quad x\in \R^d,\; t\in [0, T),\\
&u(0)= u_0 \in \dot H^1,
\endaligned\right.
\end{equation}
where $ \nabla  f \in N(0, T)$, then we have
\begin{equation*}
\aligned \sup_{t\in [0,T)}\big\| u\big\|_{\dot H^1} +
\big\|u\big\|_{l(0,T)}   \leq C \left( \big\|u_0\big\|_{\dot H^1} +
\big\|\nabla f\big\|_{N(0,T)}\right).
\endaligned
\end{equation*}
\end{lemma}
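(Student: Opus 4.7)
The plan is to reduce the estimate to the standard Strichartz inequalities for the free Schr\"odinger group applied to $\nabla u$, and then recover the $Z(0,T)$ component through a Sobolev embedding. Differentiating the equation gives
\begin{equation*}
i\partial_t(\nabla u)+\Delta(\nabla u)=\nabla f,\qquad \nabla u(0)=\nabla u_0\in L^2(\R^d),
\end{equation*}
so the problem is moved from the $\dot H^1$ level to the $L^2$ level, where the standard machinery of Keel--Tao applies.

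First I would record the admissibility numerology for $d\ge 5$. The pair $(q,r)=(3,\tfrac{6d}{3d-4})$ satisfies $\tfrac{2}{q}+\tfrac{d}{r}=\tfrac{d}{2}$, and $N(0,T)=L^{3/2}_tL^{6d/(3d+4)}_x$ is its dual in both time and space, so it is the natural inhomogeneity space. A second admissible pair I would use is $(q,r)=(6,\tfrac{6d}{3d-2})$, which also satisfies $\tfrac{2}{q}+\tfrac{d}{r}=\tfrac{d}{2}$. Applying the Keel--Tao Strichartz estimates (as presented in \cite{Caz:book, tao:book}) to $\nabla u$ then yields
\begin{equation*}
\sup_{t\in[0,T)}\|\nabla u(t)\|_{L^2_x}+\|\nabla u\|_{L^3_tL^{6d/(3d-4)}_x}+\|\nabla u\|_{L^6_tL^{6d/(3d-2)}_x}\ \leq\ C\bigl(\|\nabla u_0\|_{L^2}+\|\nabla f\|_{N(0,T)}\bigr).
\end{equation*}
The first two terms account for $\sup_t\|u\|_{\dot H^1}$ and $\|\nabla u\|_{S(0,T)}$, so only $\|u\|_{Z(0,T)}$ remains to control.

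For that, I would invoke the Sobolev embedding $\dot W^{1,6d/(3d-2)}(\R^d)\hookrightarrow L^{6d/(3d-8)}(\R^d)$, which is exactly the gain of one derivative since $\tfrac{1}{6d/(3d-8)}=\tfrac{1}{6d/(3d-2)}-\tfrac{1}{d}$ (and $d\ge 5$ guarantees that both exponents are finite and $>1$). Taking the $L^6_t$-norm in time gives
\begin{equation*}
\|u\|_{Z(0,T)}=\|u\|_{L^6_tL^{6d/(3d-8)}_x}\ \leq\ C\,\|\nabla u\|_{L^6_tL^{6d/(3d-2)}_x},
\end{equation*}
and combining with the Strichartz estimate above completes the bound.

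There is no real obstacle here: the argument is entirely routine once the admissibility of $(6,\tfrac{6d}{3d-2})$ and $(3,\tfrac{6d}{3d-4})$ is checked and the correct Sobolev exponent is identified. The only point worth mentioning is that the choice of the second admissible pair is dictated backwards from the requirement that, after losing one derivative via Sobolev, one lands in the $Z(0,T)$ space; this fixes the spatial exponent $\tfrac{6d}{3d-2}$ and, together with $\dot H^1$-scaling, the time exponent $6$. Everything else is a direct application of results recorded in \cite{Caz:book,tao:book}.
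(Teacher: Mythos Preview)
Your proof is correct, and the numerology checks out: both $(3,\tfrac{6d}{3d-4})$ and $(6,\tfrac{6d}{3d-2})$ are admissible, $N(0,T)$ is the dual of $S(0,T)$, and the Sobolev embedding $\dot W^{1,6d/(3d-2)}\hookrightarrow L^{6d/(3d-8)}$ recovers the $Z$-norm. The paper itself does not give a proof of this lemma at all; it simply records it with citations to \cite{Caz:book,tao:book} as a standard consequence of the Strichartz inequalities, so your argument is exactly the kind of routine verification those references encode.
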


\begin{lemma}[\cite{Gra04:book, Ste:70:book}]\label{L:hardy}
For $\alpha \in (0, d)$,  there exists a constant $C(d,\alpha) $
such that for any $r\in (\frac{d}{d-\alpha}, \infty)$,
\begin{equation*}
\left\|\int_{\R^n}\frac{f(y)}{|x-y|^{d-\alpha}} \; dy
\right\|_{L^r(\R^d)} \leq C(d,\alpha)
\big\|f\big\|_{L^{\frac{dr}{d+\alpha r}}(\R^d)}.
\end{equation*}
\end{lemma}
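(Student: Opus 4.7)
The plan is to derive this from the pointwise \emph{Hedberg estimate} combined with the $L^p$-boundedness of the Hardy--Littlewood maximal function $M$. First I would verify the scaling compatibility: setting $p:=\frac{dr}{d+\alpha r}$ and $q:=r$, a direct computation gives $\frac{1}{q}=\frac{1}{p}-\frac{\alpha}{d}$, while the hypothesis $r>\frac{d}{d-\alpha}$ is equivalent to $p>1$, the standard lower-exponent requirement in the $L^p\to L^q$ form of the inequality. As $r$ ranges over $(\frac{d}{d-\alpha},\infty)$, the exponent $p$ stays strictly below $d/\alpha$ automatically.

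The heart of the argument is a pointwise bound. Assuming $f\ge 0$ without loss of generality, split the integral at an auxiliary radius $R>0$ into contributions from $\{|x-y|<R\}$ and $\{|x-y|\ge R\}$. For the near piece, decompose into dyadic annuli $\{2^{-j-1}R\le|x-y|<2^{-j}R\}$, bound $|x-y|^{-(d-\alpha)}$ by its supremum on each annulus, and replace the remaining integral by $|B(x,2^{-j}R)|\cdot Mf(x)$; summing the geometric series yields a bound of the form $C\,R^{\alpha}\,Mf(x)$. For the far piece, H\"older's inequality with conjugate exponents $p'$ and $p$ together with the explicit computation $\int_{|y|\ge R}|y|^{-(d-\alpha)p'}\,dy=c\,R^{\alpha p'-d}$ (finite exactly because $p<d/\alpha$) give a bound $C\,R^{\alpha-d/p}\,\|f\|_{L^p}$. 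Optimizing $R$ so as to balance the two estimates produces Hedberg's pointwise inequality
\[
\left|\int_{\R^d}\frac{f(y)}{|x-y|^{d-\alpha}}\,dy\right| \;\le\; C\,(Mf(x))^{1-\alpha p/d}\,\|f\|_{L^p}^{\alpha p/d}.
\]

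To conclude, raise this to the $q$-th power and integrate in $x$. The algebraic identity $(1-\alpha p/d)\,q=p$ (a disguised restatement of the scaling relation $\tfrac{1}{q}=\tfrac{1}{p}-\tfrac{\alpha}{d}$) collapses the right-hand side to $C\,\|f\|_{L^p}^{\alpha p q/d}\int_{\R^d}(Mf)^p$, at which point the Hardy--Littlewood maximal theorem $\|Mf\|_{L^p}\lesssim \|f\|_{L^p}$ finishes the proof after taking $q$-th roots. The only delicate step is watching the two endpoint restrictions: the convergence of the dyadic tail in the far piece needs $p<d/\alpha$ (equivalently $r<\infty$), and the boundedness of $M$ on $L^p$ needs $p>1$ (equivalently $r>\frac{d}{d-\alpha}$). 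Both endpoints genuinely fail in the strong form and would require weak-type or $BMO$ substitutes, but these are outside the range asserted here, so the Hedberg/maximal-function argument goes through cleanly within the stated open interval.
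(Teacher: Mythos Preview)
Your argument is correct: this is exactly Hedberg's pointwise inequality followed by the $L^p$ boundedness of the maximal operator, and your tracking of the endpoint conditions ($p>1$ for $M$, $p<d/\alpha$ for the far-piece convergence) is clean and accurate. Note that the paper itself does not prove this lemma; it simply cites the standard references (Grafakos, \emph{Classical Fourier Analysis}, and Stein, \emph{Singular Integrals}), in which one finds precisely the Hedberg-type argument you have outlined, so your approach matches the intended one.
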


\begin{lemma}[\cite{Bahouri-gerard, GMO, KillipVisan:clay}]\label{L:sobolev}
For every $f\in\dot H^1(\R^d)$, there exists a constant $C$ such
that
\begin{equation*}
\big\|f\big\|_{L^{\frac{2d}{d-2}}(\R^d)} \leq C
\big\|f\big\|^{\frac{d-2}{d}}_{\dot H^1(\R^d)}
\big\|f\big\|^{\frac{2}{d}}_{\dot B^1_{2,\infty}(\R^d)}.
\end{equation*}
\end{lemma}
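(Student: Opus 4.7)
This is the well-known refined Sobolev inequality of G\'erard--Meyer--Oru and Bahouri--G\'erard, a scale-invariant sharpening of the embedding $\dot H^1(\R^d)\hookrightarrow L^{2d/(d-2)}(\R^d)$: since $\|f\|_{\dot B^1_{2,\infty}}\lesssim\|f\|_{\dot H^1}$ always holds, the refinement is strongest precisely when no single dyadic frequency block dominates. My plan is to run a Littlewood--Paley decomposition together with Bernstein's inequality and a Chebyshev-type distribution-function argument, trading a low-frequency $L^\infty$ bound against a high-frequency $L^2$ bound.

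Let $p=\tfrac{2d}{d-2}$ and write $f=P_{<N}f+P_{\geq N}f$, with $P_{<N}=\sum_{j<N}\Delta_j$ and $P_{\geq N}=\sum_{j\geq N}\Delta_j$ in terms of the standard dyadic Littlewood--Paley projectors, for a parameter $N\in\R$ to be chosen later. Bernstein gives $\|\Delta_j f\|_{L^\infty}\lesssim 2^{jd/2}\|\Delta_j f\|_{L^2}$, and the Besov hypothesis yields $\|\Delta_j f\|_{L^2}\leq 2^{-j}\|f\|_{\dot B^1_{2,\infty}}$; summing the geometric series produces
\begin{equation*}
\|P_{<N}f\|_{L^\infty}\lesssim 2^{N(d-2)/2}\|f\|_{\dot B^1_{2,\infty}}.
\end{equation*}
By Plancherel and the definition of the $\dot H^1$-norm, the complementary high frequencies satisfy
\begin{equation*}
\|P_{\geq N}f\|_{L^2}^{2}=\sum_{j\geq N}\|\Delta_j f\|_{L^2}^{2}\leq 2^{-2N}\sum_{j\geq N}2^{2j}\|\Delta_j f\|_{L^2}^{2}\leq 2^{-2N}\|f\|_{\dot H^1}^{2}.
\end{equation*}

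Given $\lambda>0$, I then select $N=N(\lambda)$ so that the low-frequency bound above equals $\lambda/2$, namely $2^{N}\sim(\lambda/\|f\|_{\dot B^1_{2,\infty}})^{2/(d-2)}$. Then $\{|f|>\lambda\}\subseteq\{|P_{\geq N}f|>\lambda/2\}$, and Chebyshev combined with the high-frequency $L^2$-estimate yields
\begin{equation*}
|\{|f|>\lambda\}|\lesssim \lambda^{-2}\|P_{\geq N}f\|_{L^2}^{2}\lesssim \lambda^{-p}\|f\|_{\dot H^1}^{2}\|f\|_{\dot B^1_{2,\infty}}^{4/(d-2)},
\end{equation*}
which already delivers the weak-type refined Sobolev bound $\|f\|_{L^{p,\infty}}\lesssim\|f\|_{\dot H^1}^{(d-2)/d}\|f\|_{\dot B^1_{2,\infty}}^{2/d}$ with the correct homogeneity in both norms on the right.

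The main obstacle is promoting this weak-type bound to the strong-$L^p$ statement asserted in the lemma: a direct layer-cake integration of the distribution-function estimate generates the logarithmically divergent integral $\int_0^\infty\lambda^{-1}\,d\lambda$. To circumvent this, I would invoke the Littlewood--Paley square-function characterization $\|f\|_{L^p}\sim\|(\sum_j|\Delta_j f|^2)^{1/2}\|_{L^p}$ (valid since $p\in(1,\infty)$) and then estimate the square function via the same high-low splitting applied pointwise at each dyadic level. The pointwise Besov bound $|\Delta_j f(x)|\lesssim 2^{j(d-2)/2}\|f\|_{\dot B^1_{2,\infty}}$ uniformly controls the low dyadic pieces, while the high pieces are handled in $L^p$ via a Gagliardo--Nirenberg interpolation between $L^2$ and $L^\infty$ on each $\Delta_j f$ together with the weighted identity $\sum_j 2^{2j}\|\Delta_j f\|_{L^2}^2\approx \|f\|_{\dot H^1}^2$; reorganising the resulting sums so that $\dot H^1$ appears to the power $2/p=(d-2)/d$ and the Besov norm to the power $1-2/p=2/d$ yields the strong-$L^p$ inequality with the claimed exponents.
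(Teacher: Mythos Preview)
The paper does not prove this lemma; it simply cites the refined Sobolev inequality from Bahouri--G\'erard, G\'erard--Meyer--Oru, and Killip--Visan, so there is no ``paper's own proof'' to compare against. Your outline up through the weak-type bound is exactly the standard argument in those references, and the exponents you obtain are correct.

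The gap is in your last paragraph. You correctly notice that if you first bound $\|P_{\geq N(\lambda)}f\|_{L^2}^2\leq 2^{-2N(\lambda)}\|f\|_{\dot H^1}^2$ and \emph{then} integrate in $\lambda$, the layer-cake integral diverges logarithmically. But the cure is not a square-function/Gagliardo--Nirenberg detour (which you sketch only vaguely): it is simply to postpone that bound and interchange. Keep
\[
|\{|f|>\lambda\}|\lesssim \lambda^{-2}\sum_{j\geq N(\lambda)}\|\Delta_j f\|_{L^2}^{2},
\]
insert this into $\|f\|_{L^p}^p=p\int_0^\infty \lambda^{p-1}|\{|f|>\lambda\}|\,d\lambda$, and swap the sum over $j$ with the $\lambda$-integral. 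For fixed $j$ the constraint $j\geq N(\lambda)$ is equivalent to $\lambda\lesssim 2^{j(d-2)/2}\|f\|_{\dot B^1_{2,\infty}}$, so the inner integral $\int_0^{c\,2^{j(d-2)/2}\|f\|_{\dot B^1_{2,\infty}}}\lambda^{p-3}\,d\lambda$ converges (since $p>2$) and equals a constant times $2^{2j}\|f\|_{\dot B^1_{2,\infty}}^{p-2}$. Summing in $j$ then gives exactly $\|f\|_{\dot H^1}^{2}\|f\|_{\dot B^1_{2,\infty}}^{p-2}$, and taking the $p$-th root yields the stated inequality. This Fubini step is the actual mechanism in the cited proofs; once you replace your final paragraph with it, the argument is complete.
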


\begin{theorem}[\cite{Caz:book, MiaoXZ:08:LWP for Har}]\label{T:local}
For any $u_0 \in \dot H^1(\R^d)$ and $t_0\in \R$, there exists a
unique maximal-lifespan solution $u :I\times \R^d \rightarrow \C$ to
\eqref{har} with   $u(t_0)=u_0$. This solution also has the
following properties:

\begin{enumerate}
\item[\rm(a)]   $I$ is an open neighborhood of $t_0$.
\item[\rm(b)]  The energy of the solution $u$ are conserved,
that is, for all $t\in I$, we have
\begin{equation*}
\aligned E(u(t))=E(u_0).
\endaligned
\end{equation*}
\item[\rm(c)]   If $u^{(n)}_0$ is a
sequence converging to $u_0$ in $\dot H^1 $ and
$u^{(n)}:I^{(n)}\times \R^d \rightarrow \C$ are the associated
maximal-lifespan solutions, then $u^{(n)}$ converges locally
uniformly to $u$.
\item[\rm(d)]   There exists $\eta_0$, such that if $\big\|u_0\big\|_{\dot H^1}<\eta_0$,
 then $u$ is a global solution.  Indeed, the solution also scatters to $0$
in $\dot H^1$.
\end{enumerate}
\end{theorem}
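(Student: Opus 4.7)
The plan is to run the standard Cazenave--Weissler fixed-point argument on the Duhamel formulation
\begin{equation*}
\Phi(u)(t) = e^{i(t-t_0)\Delta} u_0 + i\int_{t_0}^t e^{i(t-s)\Delta}\bigl(|x|^{-4}*|u(s)|^2\bigr) u(s)\, ds
\end{equation*}
in a closed ball of $l(J)$ for a suitable interval $J\ni t_0$. The workhorse is the nonlinear estimate
\begin{equation*}
\|\nabla F(u)\|_{N(J)} \lesssim \|u\|_{Z(J)}^{2}\,\|\nabla u\|_{S(J)},
\qquad F(u)=\bigl(|x|^{-4}*|u|^2\bigr)u,
\end{equation*}
proved by applying the Leibniz rule, then Hardy--Littlewood--Sobolev (Lemma \ref{L:hardy}) with $\alpha=d-4$ to bound the spatial convolution, and finally H\"older in space and time; the exponents $6,\tfrac{6d}{3d-8},3,\tfrac{6d}{3d-4},\tfrac{3}{2},\tfrac{6d}{3d+4}$ defining $Z,S,N$ are precisely tuned to close this estimate. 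A parallel difference bound yields contractivity.

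Combining with the Strichartz estimate of Lemma \ref{Striestimate} gives $\|\Phi(u)\|_{l(J)} \lesssim \|u_0\|_{\dot H^1}+\|u\|_{l(J)}^{3}$, so the contraction principle applies on $J$ as soon as $\|e^{i(t-t_0)\Delta} u_0\|_{Z(J)}$ is small, which holds for $|J|$ small by the $L^6_t$ nature of $Z$ and dominated convergence. This yields a unique local solution, extended to a maximal open interval $I\ni t_0$ by the blow-up alternative, giving part~(a). For part~(d), Strichartz gives $\|e^{it\Delta}u_0\|_{Z(\R)} \lesssim \|u_0\|_{\dot H^1}$ (the pair $(6,\tfrac{6d}{3d-8})$ being $\dot H^1$-admissible), so the same contraction runs globally whenever $\|u_0\|_{\dot H^1}<\eta_0$ is small enough; the resulting bound $\|u\|_{l(\R)}<\infty$ then forces $e^{-it\Delta}u(t)$ to converge in $\dot H^1$ as $t\to\pm\infty$, producing the scattering states.

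Part~(c) follows from the standard long-time perturbation lemma: on any $J\Subset I$ with finite $\|u\|_{l(J)}$, the Strichartz difference estimate combined with the nonlinear estimate gives $\|u^{(n)}-u\|_{l(J)}\to 0$ whenever $u_0^{(n)}\to u_0$ in $\dot H^1$, hence convergence in $C^0(J;\dot H^1)$. For part~(b), energy conservation is first verified for smooth data (take $u_0\in H^2$, where the equation holds in a classical sense) by pairing the equation with $\partial_t\bar u$, integrating, and taking real parts; the general $\dot H^1$ case follows by $\dot H^1$-approximation of $u_0$ combined with the continuity of part~(c) and the convergence of the potential energy term under $L^{2d/(d-2)}_x$ convergence, ensured by Sobolev embedding and HLS.

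The principal obstacle is the nonlinear estimate itself: one must verify that the Hardy--Littlewood--Sobolev exponent for $|x|^{-4}$ in dimension $d\ge 5$ aligns with the admissible Strichartz pair defining $N$ and $S$, and that the derivative in $\nabla F(u)$ can always be placed on a factor whose gradient is controlled by $S$-norm (the other copies of $u$ living in $Z$). Once this algebra is arranged, the remainder is a routine implementation of the energy-critical local well-posedness template; the hypothesis $d\ge 5$ enters only to keep $\alpha=d-4>0$ and the HLS exponents away from endpoints.
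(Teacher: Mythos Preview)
The paper does not prove this theorem; it is quoted from \cite{Caz:book, MiaoXZ:08:LWP for Har} as a known result. Your sketch is exactly the standard energy-critical local theory carried out in those references: the nonlinear estimate $\|\nabla F(u)\|_{N(J)}\lesssim\|u\|_{Z(J)}^{2}\|\nabla u\|_{S(J)}$ via Hardy--Littlewood--Sobolev and H\"older, combined with the Strichartz estimate of Lemma~\ref{Striestimate}, and then the contraction/perturbation/approximation package you describe.
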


\subsection{Properties of the ground state.}

Consider the nonlocal elliptic equation
\begin{equation}\label{gs}
\aligned -\Delta W (x)= \int_{\R^d} \frac{|W(y)|^2}{|x-y|^4} \; dy
\; W(x), \quad x\in \R^d.
\endaligned
\end{equation}
\eqref{w function} is an explicit solution of \eqref{gs}. Using the
moving plane method in its global form, we will show that $W$ is the
only positive solution of \eqref{gs} in $L^{\frac{2d}{d-2}}(\R^d)$
in Appendix \ref{S:movplan}, up to the symmetries of \eqref{gs}. Moreover, the uniqueness still holds for the positive solution in
$L^{\frac{2d}{d-2}}_{loc}(\R^d)$. Hence, we have

\begin{lemma}\label{GS:unique} The elliptic equation
\eqref{gs} has a unique positive, radial decreasing solution
$W(x)=W(|x|)$ in $L^{\frac{2d}{d-2}}(\R^d)$, up to the spatial
translation,  scaling and the Kelvin transform.
\end{lemma}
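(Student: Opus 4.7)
The plan is to proceed in three stages: first establish smoothness of any $L^{2d/(d-2)}$ positive solution of \eqref{gs}, then use the moving plane method in its integral (global) form to obtain radial symmetry about some point, and finally classify radial solutions of the resulting ODE-like problem.

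For the regularity step, I would set $V(x):=\int_{\R^d}W(y)^2|x-y|^{-4}\,dy$ and start from $W\in L^{2d/(d-2)}$, so that $W^2\in L^{d/(d-2)}$. Lemma \ref{L:hardy} then places $V$ in a borderline Lebesgue space, and viewing \eqref{gs} as $-\Delta W = VW$ together with the representation
\begin{equation*}
W(x)=c_d\int_{\R^d}\frac{V(z)W(z)}{|x-z|^{d-2}}\,dz
\end{equation*}
allows a standard bootstrap, alternating between Hardy--Littlewood--Sobolev for the Riesz convolutions and elliptic regularity for $-\Delta W=VW$, yielding $W\in C^\infty(\R^d)$ together with the decay $W(x)=O(|x|^{-(d-2)})$ at infinity.

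The heart of the proof is the moving plane argument, carried out directly on the integral equation (hence ``global form''), in the spirit of Chen--Li--Ou. For each direction $e$ and each $\lambda\in\R$, I let $\Sigma_\lambda=\{x\cdot e<\lambda\}$, write $x^\lambda$ for the reflection of $x$ across the hyperplane $\{x\cdot e=\lambda\}$, and set $W_\lambda(x)=W(x^\lambda)$, $V_\lambda(x)=V(x^\lambda)$. Subtracting the integral representation for $W$ from that for $W_\lambda$ and splitting the domain into $\Sigma_\lambda$ and its reflection gives
\begin{equation*}
W(x)-W_\lambda(x)=\int_{\Sigma_\lambda}\bigl[K(x,z)-K(x^\lambda,z)\bigr]\bigl[V(z)W(z)-V_\lambda(z)W_\lambda(z)\bigr]\,dz,
\end{equation*}
where $K(x,z)=|x-z|^{-(d-2)}$ and the bracket $K(x,z)-K(x^\lambda,z)$ is nonnegative on $\Sigma_\lambda$. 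On the set $\Sigma_\lambda^+:=\{z\in\Sigma_\lambda:W_\lambda(z)>W(z)\}$ one estimates the right-hand side by a Hardy--Littlewood--Sobolev inequality to obtain $\|W_\lambda-W\|_{L^{2d/(d-2)}(\Sigma_\lambda^+)}\le C\|W\|_{L^{2d/(d-2)}(\Sigma_\lambda^+)}^{\alpha}\|W_\lambda-W\|_{L^{2d/(d-2)}(\Sigma_\lambda^+)}$ for some $\alpha>0$; by the decay of $W$ the prefactor is smaller than $1$ for $\lambda$ sufficiently negative, forcing $\Sigma_\lambda^+=\emptyset$. A standard sliding argument then produces a critical $\lambda_0$ where $W_{\lambda_0}\equiv W$ on $\Sigma_{\lambda_0}$; repeating in every direction shows $W$ is radially symmetric, decreasing, about some point.

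After translating to put this center at the origin, \eqref{gs} becomes an autonomous radial problem whose explicit solution family is given, up to scaling, by \eqref{w function}. Uniqueness among positive radial decreasing solutions follows by normalizing $W(0)$ via the scaling $\lambda\mapsto \lambda^{-(d-2)/2}W(\lambda^{-1}x)$ and invoking uniqueness for the resulting integro-differential initial-value problem; the Kelvin transform appears in the final statement because it is a symmetry of \eqref{gs} that preserves positivity and the $L^{2d/(d-2)}$ norm but can move the center of symmetry to infinity. The main obstacle is clearly the second stage: the nonlocal potential $V$ depends on $W$ through a convolution, so one cannot apply a pointwise maximum principle, and the difference $V-V_\lambda$ must be controlled simultaneously with $W-W_\lambda$; this forces a purely integral implementation of the moving plane method and careful use of Hardy--Littlewood--Sobolev on $\Sigma_\lambda^+$ rather than any decay assumption beyond $W\in L^{2d/(d-2)}$.
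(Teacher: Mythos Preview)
Your proposal is essentially the same strategy as the paper's: regularity via a Hardy--Littlewood--Sobolev bootstrap, the Chen--Li--Ou integral moving-plane method applied to the Riesz-potential reformulation, and then ODE uniqueness after scaling normalization. Two points of comparison are worth noting. First, the paper makes the coupling between $W$ and $V$ explicit by writing \eqref{gs} as a \emph{system} of two integral equations, $\omega(x)=\int|x-y|^{-(d-2)}\omega(y)v(y)\,dy$ and $v(x)=\int|x-y|^{-4}\omega(y)^2\,dy$, and runs the moving plane simultaneously on $\Sigma_\lambda^\omega$ and $\Sigma_\lambda^v$; your single-equation formulation with $V(z)W(z)-V_\lambda(z)W_\lambda(z)$ is equivalent in spirit but hides where the estimate for $V-V_\lambda$ comes from. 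Second, the paper reverses your order: it runs the moving plane \emph{first}, using only $W\in L^{2d/(d-2)}$ (so the smallness of the prefactor for $\lambda\to-\infty$ comes from dominated convergence, not pointwise decay), and only afterwards deduces the sharp asymptotic $|x|^{d-2}W(x)\to\omega_\infty>0$ by applying the already-established radial symmetry to the Kelvin transform of $W$. Your claim that $W(x)=O(|x|^{-(d-2)})$ follows directly from bootstrap is not quite right---the bootstrap gives $L^\infty$ and smoothness, but the precise decay rate uses the Kelvin trick---though this does not affect the validity of the moving-plane step, since that step only needs the $L^{2d/(d-2)}$ hypothesis.
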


Combining the sharp Sobolev inequality \cite{Aubin, LiebL:book,
Talenti:best constant} with the sharp Hardy-Littlewood-Sobolev
inequality \cite{Lieb:sharp constant for HLS, LiebL:book}, we have
the following variational characterization of $W$.
\begin{lemma}[\cite{LiebL:book}]\label{SharpConstant} For any $\omega \in \dot H^1(\R^d), $ we have
\begin{align*}
 \left(\iint_{\R^d\times \R^d}
\frac{1}{|x-y|^{4}} \big|\omega(x)\big|^2 \big|\omega(y)\big|^2\
dxdy \right)^{1/4}\leq & \; C_* \big\|\nabla \omega \big\|_2,
\end{align*}
where $C_*=C_*(d)$ is the best constant. Moreover if
\begin{equation*}\aligned \left(\iint_{\R^d\times \R^d} \frac{1}{|x-y|^{4}}
\big|\omega(x)\big|^2 \big|\omega(y)\big|^2\ dxdy \right)^{1/4} = \;
C_* \big\|\nabla u \big\|_2,  \endaligned
\end{equation*}
then there exist $\lambda_0>0, x_0 \in \R^d, \theta_0 \in [0,
2\pi),$ such that
\begin{equation*}
\aligned \omega (x)= e^{i\theta_0} \lambda^{-\frac{d-2}{2}}_0W\left(
\lambda^{-1}_0(x+x_0)\right).
\endaligned
\end{equation*}
\end{lemma}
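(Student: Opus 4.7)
My plan is to deduce the stated inequality by composing two sharp inequalities, and then to read off the equality case by tracking the equality cases of both.

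\textbf{Step 1: Apply the sharp Hardy–Littlewood–Sobolev inequality.} I would apply HLS with parameter $\lambda=4$ to the pair of functions $f=g=|\omega|^2$. The HLS exponent condition $\tfrac{2}{p}+\tfrac{\lambda}{d}=2$ forces $p=\tfrac{d}{d-2}$, and the sharp form gives
\begin{equation*}
\iint_{\R^d\times\R^d}\frac{|\omega(x)|^{2}|\omega(y)|^{2}}{|x-y|^{4}}\,dxdy\;\le\;C_{\mathrm{HLS}}\,\bigl\||\omega|^{2}\bigr\|_{L^{d/(d-2)}}^{2}=C_{\mathrm{HLS}}\,\|\omega\|_{L^{2d/(d-2)}}^{4},
\end{equation*}
with the known sharp constant, and with equality precisely when $|\omega|^2$ is, up to translation and dilation, the Lieb extremizer, which is (a positive multiple of) a power of $1/(t^2+|x|^2)$.

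\textbf{Step 2: Apply the sharp Sobolev inequality.} By the Aubin–Talenti sharp Sobolev embedding $\|\omega\|_{L^{2d/(d-2)}}\le C_S\|\nabla\omega\|_{2}$, equality holds iff $|\omega|$ equals, up to translation, dilation and constant multiple, the Aubin–Talenti profile, and $\omega$ has constant phase (so that $|\nabla|\omega||=|\nabla\omega|$). Taking the fourth root of Step~1 and substituting Sobolev yields the claim with
\begin{equation*}
C_{*}=C_{\mathrm{HLS}}^{1/4}\,C_{S}.
\end{equation*}

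\textbf{Step 3: Equality case.} If equality holds in the combined bound, both Step~1 and Step~2 are saturated. Sobolev equality forces $\omega(x)=e^{i\theta_{0}}c\,\bigl(\lambda_{0}^{2}+|x+x_{0}|^{2}\bigr)^{-(d-2)/2}$ for some $c>0$, $\lambda_{0}>0$, $x_{0}\in\R^{d}$, $\theta_{0}\in[0,2\pi)$. The explicit form \eqref{w function} for $W$ shows this is exactly $\omega(x)=e^{i\theta_{0}}\lambda_{0}^{-(d-2)/2}W\bigl(\lambda_{0}^{-1}(x+x_{0})\bigr)$ after absorbing $c$ into the choice of $c_{0}$ and $t$ in \eqref{w function}. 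A direct computation (or invariance of the composite functional under the full conformal group) shows that this $\omega$ also saturates HLS, confirming consistency of the two equality cases.

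\textbf{Main obstacle.} The only substantive point is matching the equality cases of HLS and Sobolev: one must verify that the Lieb extremizer for the HLS problem with $\lambda=4$ applied to $|\omega|^{2}$ and the Aubin–Talenti extremizer for Sobolev are compatible, i.e.\ that the two chains of symmetries (translation, dilation, phase) can be chosen simultaneously. Since both extremizers are, up to a conformal change, the same $(1+|x|^{2})^{-(d-2)/2}$ profile, this is automatic; still, this is the only place where one cannot quote Steps~1–2 as black boxes and must use the precise form of the extremizers coming from \cite{LiebL:book}.
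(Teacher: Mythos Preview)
Your approach is correct and is exactly what the paper does: it states the lemma as a direct combination of the sharp Sobolev inequality (Aubin--Talenti) and the sharp Hardy--Littlewood--Sobolev inequality (Lieb), citing \cite{Aubin, Talenti:best constant, Lieb:sharp constant for HLS, LiebL:book} without further proof. Your Steps~1--3 simply spell out this composition and the matching of extremizers, which is precisely the intended argument.
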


From \eqref{gs} and Lemma \ref{SharpConstant}, we have
\begin{equation}\label{KinEnergy}
\aligned \big\|\nabla W \big\|^2_{2}=C^{-4}_*, \quad E(W)=C^{-4}_*/4
.
\endaligned
\end{equation}

Using the characterization of $W$ in Lemma \ref{SharpConstant}, the
refined Sobolev inequality in Lemma \ref{L:sobolev} and the similar
concentration compactness principle (profile decomposition in $\dot
H^1_{rad}$) to the proof of Proposition 3.1 in
\cite{MiaoXZ:10:blowup}, we can show that
\begin{proposition}\label{P:static stability} Let  $u \in \dot H^1(\R^d)$ be radial and $E(u) = E (W)$.
Then there exists a function $\varepsilon=\varepsilon (\rho)$, such that
\begin{equation*}
\aligned  \inf_{ \theta \in \R,\; \mu>0}  \big\| u_{\theta, \mu} -W
\big\|_{\dot H^1} \leq \varepsilon (\delta(u)), \quad \lim_{\rho
\rightarrow 0}\varepsilon (\rho)=0,
\endaligned
\end{equation*}
where $u_{\theta, \mu}(x) = e^{i\theta} \mu^{-\frac{d-2}{2}}u(
\mu^{-1} x)$, and $\displaystyle \delta(u)=\Big| \int_{\R^d} \left(
\big|\nabla u \big|^2 -\big|\nabla W \big|^2 \right) \; dx \Big| $.
\end{proposition}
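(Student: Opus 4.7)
\medskip

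\noindent\textbf{Proof proposal.} The plan is to argue by contradiction via a radial profile decomposition and the rigidity of extremizers in the sharp Sobolev--Hardy--Littlewood--Sobolev inequality (Lemma~\ref{SharpConstant}). Suppose the conclusion fails. Then there exist $\eta_0>0$ and a sequence $u_n\in \dot H^1_{rad}(\R^d)$ with $E(u_n)=E(W)$ and $\delta(u_n)\to 0$, but
$$\inf_{\theta\in\R,\,\mu>0}\|u_{n,\theta,\mu}-W\|_{\dot H^1}\;\geq\;\eta_0.$$
From $\delta(u_n)\to 0$ I would first read off $\|\nabla u_n\|_2^2\to \|\nabla W\|_2^2 = C_*^{-4}$, and then combined with $E(u_n)=E(W)$ and \eqref{KinEnergy} deduce
$$\iint_{\R^d\times\R^d}\frac{|u_n(x)|^2|u_n(y)|^2}{|x-y|^4}\,dxdy\;\longrightarrow\;C_*^{-4},$$
so that $u_n$ is an extremizing sequence for the sharp inequality of Lemma~\ref{SharpConstant}.

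Next, I would apply the linear profile decomposition in $\dot H^1_{rad}(\R^d)$ (as in Proposition~3.1 of \cite{MiaoXZ:10:blowup}) to obtain, after extraction,
$$u_n \;=\; \sum_{j=1}^{J}\lambda_{n}^{j\,-\frac{d-2}{2}}\phi^{j}\!\bigl(\lambda_{n}^{j\,-1}\,x\bigr) \;+\; w_n^{J},$$
with pairwise orthogonal scales ($\lambda_n^j/\lambda_n^k+\lambda_n^k/\lambda_n^j\to\infty$ for $j\neq k$), Pythagorean decomposition of the Dirichlet energy, and $\limsup_{n\to\infty}\|w_n^J\|_{\dot B^1_{2,\infty}}\to 0$ as $J\to\infty$. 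The radial setting rules out spatial translations in the profiles. The key technical fact I would need is an analogous asymptotic orthogonality for the nonlocal quartic functional
$$V(u)=\iint\frac{|u(x)|^2|u(y)|^2}{|x-y|^4}\,dxdy,\qquad V(u_n)=\sum_{j=1}^{J}V(\phi^j)+V(w_n^J)+o_n(1),$$
which follows from the Hardy--Littlewood--Sobolev bound $V(f)\lesssim \|f\|_{L^{2d/(d-2)}}^4$, the scale orthogonality of the $\phi^{j}$'s, and the refined Sobolev estimate of Lemma~\ref{L:sobolev}.

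Applying Lemma~\ref{SharpConstant} to each profile and to the remainder gives $V(\phi^j)^{1/4}\le C_*\|\nabla\phi^j\|_2$ and $V(w_n^J)^{1/4}\le C_*\|\nabla w_n^J\|_2$. Combined with $\sum_j\|\nabla\phi^j\|_2^2+\|\nabla w_n^J\|_2^2=\|\nabla u_n\|_2^2+o(1)\to C_*^{-4}$ and $\sum_j V(\phi^j)+V(w_n^J)\to C_*^{-4}$, a convexity/Jensen argument forces exactly one profile (say $\phi^1$) to be nontrivial, the remainder $w_n^J$ to vanish in $\dot H^1$, and $\phi^1$ itself to saturate the sharp inequality. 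The refined Sobolev inequality of Lemma~\ref{L:sobolev} is exactly what promotes $\|w_n^J\|_{\dot B^1_{2,\infty}}\to 0$ into $V(w_n^J)\to 0$. By the rigidity clause of Lemma~\ref{SharpConstant}, $\phi^1(x)=e^{i\theta_0}\mu_0^{-(d-2)/2}W(\mu_0^{-1}(x+x_0))$; radiality of $u_n$ and of the decomposition forces $x_0=0$.

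Choosing $\theta_n=\theta_0$ and $\mu_n=\mu_0\lambda_n^1$ then yields $\|u_{n,\theta_n,\mu_n}-W\|_{\dot H^1}\to 0$, contradicting the standing assumption. The main obstacle I expect is step three, namely verifying the asymptotic additivity of the nonlocal form $V$ along the profiles: the cross terms between profiles at orthogonal scales must be shown to vanish using HLS and scaling, and the contribution of the remainder must be bounded via Lemma~\ref{L:sobolev}. The remaining steps are bookkeeping, since the rigidity in Lemma~\ref{SharpConstant} forces a single profile equal to $W$ modulo the symmetries allowed in the radial class.
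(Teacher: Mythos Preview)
Your proposal is correct and follows essentially the same route as the paper: contradiction plus radial profile decomposition in $\dot H^1$, asymptotic additivity of the nonlocal quartic form via HLS and the refined Sobolev inequality, and then a sharp-constant/convexity argument forcing a single extremizing profile identified with $W$ by Lemma~\ref{SharpConstant}. The paper carries out the last step as the explicit chain $C_*^{-4}\le \sum_j\|\nabla U^{(j)}\|_2^4/\sum_j V(U^{(j)})\le (\sum_j\|\nabla U^{(j)}\|_2^2)^2/\sum_j V(U^{(j)})\le \|\nabla u_n\|_2^4/V(u_n)\to C_*^{-4}$, which is exactly your ``convexity/Jensen'' step and also forces $\|\nabla w_n^J\|_2\to 0$.
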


\begin{proof} Suppose the contrary holds. Let $u_n \in \dot H^1$ be any radial sequence and satisfy
\begin{equation}\label{kinetic:converg}
E(u_n)=E(W), \; \text{and}\; \Big| \int_{\R^d} \left( \big|\nabla
u_n \big|^2 -\big|\nabla W \big|^2 \right) \; dx \Big|
\longrightarrow 0.
\end{equation}
Hence, there exists a subsequence of $\{u_n\}^{\infty}_{n=1}$ and a
sequence $\{U^{(j)}\}_{j\geq 1} $ in $\dot H^1_{rad}$ and for any
$j\geq 1$, a family $\{\lambda^j_n\}$ such that
\begin{enumerate}

\item If $j\not= k$, we have the asymptotic orthogonality between  $\{\lambda^j_n\}$ and $\{\lambda^k_n\}$, i.e.
\begin{align*}
\frac{\lambda^j_n}{\lambda^k_n} + \frac{\lambda^k_n}{\lambda^j_n}
\longrightarrow +\infty, \quad \text{as}\; n\rightarrow +\infty.
\end{align*}
\item For every $l\geq 1$, we have
\begin{align*}
u_n(x) = \sum^l_{j=1} \frac{1}{\left(  \lambda^j_n
\right)^{(d-2)/2}} U^{(j)} \left( \frac{x}{\lambda^j_n}\right) +
r^l_n(x), \; \text{with}\; \lim_{l\rightarrow
+\infty}\limsup_{n\rightarrow+\infty} \big\|r^l_n\big\|_{\dot
B^1_{2, \infty}} \rightarrow 0.
\end{align*}
\item We have
\begin{align*}
\big\|\nabla u_n\big\|^2_{L^2} = \sum^{l}_{j=1} \big\|\nabla
U^{(j)}_n\big\|^2_{L^2} + \big\|\nabla r^l_n\big\|^2_{L^2} + o_n(1).
\end{align*}
\end{enumerate}
By Lemma \ref{L:hardy}, Lemma \ref{L:sobolev} and the orthogonality
between $\lambda^j_n$ and $\lambda^k_n$ for $j\not = k$, we have
\begin{align*}
\lim_{l\rightarrow+\infty}\limsup_{n\rightarrow +\infty}
\iint_{\R^d\times\R^d}&
\frac{\left|r^l_n(x)\right|^2\left|r^l_n(y)\right|^2}{|x-y|^4} \;
dxdy = 0, \\
\iint_{\R^d\times\R^d} \frac{\left| u_n (x)\right|^2\left| u_n
(y)\right|^2}{|x-y|^4} \; dxdy = &
\lim_{j\rightarrow+\infty}\sum^l_{j=1} \iint_{\R^d\times\R^d}
\frac{\left| U^{(j)}  (x)\right|^2\left| U^{(j)}
(y)\right|^2}{|x-y|^4} \; dxdy .
\end{align*}

By Lemma \ref{SharpConstant}, we have
\begin{align*}
C^{-4}_* \iint_{\R^d\times\R^d} \frac{\left| U^{(j)}
(x)\right|^2\left| U^{(j)} (y)\right|^2}{|x-y|^4} \; dxdy  \leq
\big\|\nabla U^{(j)}\big\|^4_{L^2},
\end{align*}
thus,
\begin{align*}
C^{-4}_* \sum^{l}_{j=1}\iint_{\R^d\times\R^d} \frac{\left| U^{(j)}
(x)\right|^2\left| U^{(j)} (y)\right|^2}{|x-y|^4} \; dxdy  \leq
\sum^{l}_{j=1} \big\|\nabla U^{(j)}\big\|^4_{L^2}.
\end{align*}
This yields that
\begin{align*}
C^{-4}_*  \leq &
\lim_{n\rightarrow+\infty}\limsup_{l\rightarrow+\infty}\frac{
\displaystyle \sum^{l}_{j=1} \big\|\nabla
U^{(j)}\big\|^4_{L^2}}{\displaystyle \sum^{l}_{j=1}
\iint_{\R^d\times\R^d} \frac{\left| U^{(j)} (x)\right|^2\left|
U^{(j)} (y)\right|^2}{|x-y|^4} \; dxdy} \\
\leq  & \lim_{n\rightarrow+\infty}\limsup_{l\rightarrow+\infty}
\frac{ \displaystyle \left(\sum^{l}_{j=1} \big\|\nabla
U^{(j)}\big\|^2_{L^2}\right)^2}{\displaystyle \sum^{l}_{j=1}
\iint_{\R^d\times\R^d} \frac{\left| U^{(j)} (x)\right|^2\left|
U^{(j)} (y)\right|^2}{|x-y|^4} \; dxdy}\\
\leq & \lim_{n\rightarrow+\infty} \frac{ \displaystyle \big\|\nabla
u_n \big\|^4_{L^2} }{\displaystyle \iint_{\R^d\times\R^d}
\frac{\left| u_n (x)\right|^2\left| u_n (y)\right|^2}{|x-y|^4} \;
dxdy} = C^{-4}_*,
\end{align*}
where we use \eqref{kinetic:converg}   in the last step. Therefore,
we conclude that only one profile  $U^{(j)}$ is nonzero, and
\begin{align*}
C^{-4}_*  = \frac{ \displaystyle \big\|\nabla
U^{(j_0)}\big\|^4_{L^2}}{\displaystyle \iint_{\R^d\times\R^d}
\frac{\left| U^{(j_0)} (x)\right|^2\left| U^{(j_0)}
(y)\right|^2}{|x-y|^4} \; dxdy }.
\end{align*}
By Lemma \ref{SharpConstant}, there exist $\theta_0\in [0, 2\pi)$,
$\lambda_0>0$ such that
\begin{equation*}
U^{(j_0)}(x) = e^{i\theta_0} \lambda^{-\frac{d-2}{2}}_0W (
\lambda^{-1}_0 x ).
\end{equation*}
This contradicts the assumption and we completes the proof.
\end{proof}

\subsection{The gradient separation.}
By the convex analysis in \cite{MiaoXZ:09:e-critical radial Har}, we
first have
\begin{align}
\label{convexity} \forall u \in \dot H^1 , \;\; E(u)\leq E(W), \;\;
\big\|\nabla u \big\|_2 \leq \big\| \nabla W \big\|_2
\Longrightarrow \frac{\big\|\nabla u \big\|^2_2}{\big\|\nabla W
\big\|^2_2} \leq \frac{E(u)}{E(W)},\\
\forall u \in \dot H^1 , \;\; E(u)\leq E(W), \;\; \big\|\nabla u
\big\|_2 \geq \big\| \nabla W \big\|_2 \Longrightarrow
\frac{\big\|\nabla u \big\|^2_2}{\big\|\nabla W \big\|^2_2} \geq
\frac{E(u)}{E(W)}. \notag
\end{align}
This, together with  the energy conservation, the variational
characterization of $W$ and the continuity argument, implies that
\begin{lemma}\label{energytrapp}
Let $u \in \dot H^1(\R^d)$ be a radial solution of \eqref{har} with
initial data $u_0$, and $I=(-T_-, T_+)$ its maximal interval of
existence. Assume that $ E(u_0)=E(W)$,   then
\begin{enumerate}
\item[\rm (a)] if $\big\|\nabla u_0\big\|_{2}< \big\|\nabla W\big\|_{2}$,
then $\big\|\nabla u(t)\big\|_{2}< \big\|\nabla W\big\|_{2}$ for
$t\in I$.
\item[\rm (b)]  if $\big\|\nabla u_0\big\|_{2} = \big\|\nabla W\big\|_{2}$,
then $u =W $ up to the symmetry of the equation.
\item[\rm (c)] if $\big\|\nabla u_0\big\|_{2} >  \big\|\nabla W\big\|_{2}$,
then $\big\|\nabla u(t)\big\|_{2} > \big\|\nabla W\big\|_{2}$ for $
t\in I$.
\end{enumerate}
\end{lemma}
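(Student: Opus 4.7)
The plan is to establish part (b) first by saturating the sharp Hardy--Littlewood--Sobolev/Sobolev inequality of Lemma \ref{SharpConstant}, and then derive parts (a) and (c) by a continuity argument that invokes (b) at any hypothetical crossing time.

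For (b), assume $\|\nabla u_0\|_2 = \|\nabla W\|_2$. Combining the identity $E(W) = \frac{1}{4}\|\nabla W\|_2^2$ from \eqref{KinEnergy} with $E(u_0) = E(W)$ and the definition \eqref{energy} of the energy gives
\begin{equation*}
\iint_{\R^d \times \R^d} \frac{|u_0(x)|^2|u_0(y)|^2}{|x-y|^4}\, dx\,dy \;=\; \|\nabla W\|_2^2 \;=\; C_*^{-4} \;=\; C_*^4 \|\nabla u_0\|_2^4,
\end{equation*}
which saturates the sharp bound in Lemma \ref{SharpConstant}. The equality case of that lemma forces $u_0(x) = e^{i\theta_0}\lambda_0^{-(d-2)/2} W\bigl(\lambda_0^{-1}(x+x_0)\bigr)$ for some $\theta_0, \lambda_0, x_0$, and the radial hypothesis rules out $x_0 \neq 0$. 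A direct computation using the scale invariance \eqref{scaling} and the elliptic equation \eqref{gs} shows that such a $u_0$ is in fact a time-independent solution of \eqref{har}, i.e.\ $u(t,x) \equiv u_0(x)$ solves the Cauchy problem. Uniqueness from Theorem \ref{T:local} then yields that this is \emph{the} solution, so $u = W$ up to scaling and phase rotation.

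For (a), note that the map $t \mapsto \|\nabla u(t)\|_2$ is continuous on $I$ because $u \in C^0(I, \dot H^1)$. If $\|\nabla u_0\|_2 < \|\nabla W\|_2$ but there existed $t_1 \in I$ with $\|\nabla u(t_1)\|_2 \geq \|\nabla W\|_2$, the intermediate value theorem would give some $t_* \in I$ with $\|\nabla u(t_*)\|_2 = \|\nabla W\|_2$. Since energy conservation preserves $E(u(t_*)) = E(W)$, applying part (b) to $u$ viewed from initial time $t_*$ would force $u$ to be (up to symmetry) the stationary soliton on all of $I$, and in particular $\|\nabla u_0\|_2 = \|\nabla W\|_2$, contradicting the strict inequality. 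Part (c) follows by the identical argument with the inequalities reversed.

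No single step is expected to be deep; the main subtlety worth verifying carefully is that the entire orbit $\{e^{i\theta}\lambda^{-(d-2)/2}W(\lambda^{-1}\cdot) : \theta \in \R,\, \lambda > 0\}$ selected in (b) consists of genuine stationary solutions of \eqref{har}, since this is precisely what allows one to propagate the soliton identity \emph{backwards} from $t_*$ to time $0$ in the proofs of (a) and (c). The radial assumption enters only to eliminate the spatial translation in the equality case of Lemma \ref{SharpConstant}; otherwise the conclusion in (b) would have to be read modulo translations as well.
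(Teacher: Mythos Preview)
Your proof is correct and follows essentially the same approach the paper indicates: the text preceding the lemma explicitly says the result follows from ``the energy conservation, the variational characterization of $W$ and the continuity argument,'' and the paper's one-line proof refers to the analogous argument in \cite{MiaoXZ:09:e-critical radial Har}. Your treatment of (b) via saturation of Lemma~\ref{SharpConstant} and of (a), (c) via the intermediate value theorem plus uniqueness is exactly this scheme made explicit.
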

\begin{proof}The proof is analogue to  that of Proposition 3.1 in \cite{MiaoXZ:09:e-critical radial Har}.
\end{proof}

\subsection{Monotonicity formula}
Let $\phi(x)$ be a smooth radial function such that
$\phi(x)=|x|^2$ for $|x|\leq 1$ and $\phi(x)=0$ for $|x|\geq 2$ .
For $R>0$, define
\begin{equation}\label{localV}
\aligned V_R(t)=\int_{\R^d} \phi_R(x)\big|u(t,x)\big|^2 dx,\;
\text{where}\; \phi_R(x)=R^2\phi\left(\frac{x}{R}\right).
\endaligned
\end{equation}

\begin{lemma}[\cite{LiMZ:e-critical Har, MiaoXZ:09:e-critical radial Har}]\label{L:local virial}Let $u(t,x)$ be a radial solution to \eqref{har}, $V_R(t)$ be
defined by \eqref{localV}, then
\begin{align*}
\partial_t V_R(t)=&\;  2\Im \int_{\R^d} \overline{u}\; \nabla u \cdot \nabla
\phi_R\; dx, \\
\partial^2_t V_R(t)=&\; 8 \int_{\R^d} \Big|\nabla u (t,x)\Big|^2 dx -8
\iint_{\R^d\times\R^d} \frac{|u(t,x)|^2|u(t,y)|^2}{|x-y|^4} \; dxdy
+ A_R\big(u(t)\big),
\end{align*}
where
\begin{align*} A_R\big(u(t)\big)=&\;    \int_{\R^d} \left( 4\phi''\Big(\frac{|x| }{R }\Big)-8 \right) \big|\nabla u (t,x) \big|^2 dx
  +  \int_{\R^d} \big(-\Delta \Delta \phi_R(x) \big) \big| u(t,x)\big|^2 dx \\
& + 8 \iint_{\R^d\times\R^d} \left(1-\frac12
\frac{R}{|x|}\phi'\big(\frac{|x| }{R }\big) \right) \;
\frac{x(x-y)}{|x-y|^6}\;|u(t,x)|^2|u(t,y)|^2 \;dxdy \\
& - 8 \iint_{\R^d\times\R^d} \left(1-\frac12
\frac{R}{|y|}\phi'\big(\frac{|y| }{R }\big) \right) \;
\frac{y(x-y)}{|x-y|^6}\;|u(t,x)|^2|u(t,y)|^2 \;dxdy.
\end{align*}
\end{lemma}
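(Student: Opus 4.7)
The plan is to derive both identities by direct differentiation of $V_R(t)$ under the equation, combined with integration by parts. This is a standard Morawetz/virial computation; the only novelty compared with the local NLS case is the handling of the convolution $|x|^{-4}*|u|^2$, which produces a double integral rather than a single one.

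First I would establish the formula for $\partial_t V_R$. Writing the equation as $i\partial_t u = -\Delta u - (|x|^{-4}*|u|^2)u$ and computing $\partial_t|u|^2 = 2\Re(\bar u \partial_t u)$, the nonlinear contribution vanishes because $(|x|^{-4}*|u|^2)|u|^2$ is real, leaving the continuity equation $\partial_t|u|^2 = -2\nabla\cdot\Im(\bar u\nabla u)$. Multiplying by $\phi_R$ and integrating by parts (the compact support of $\nabla\phi_R$ kills any boundary issues) yields the first identity.

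Next I would differentiate $\partial_t V_R = 2\Im\int \bar u\,\nabla u\cdot\nabla\phi_R\, dx$ once more, substituting the equation for $\partial_t u$ and $\partial_t\bar u$. After integration by parts, the kinetic part produces the standard expression $4\int(\mathrm{Hess}\,\phi_R)_{jk}\,\Re(\partial_j u\,\partial_k\bar u)\,dx - \int \Delta^2\phi_R\,|u|^2\,dx$. For radial $u$, since $\partial_j u = u_r\hat r_j$ and the Hessian of a radial function is diagonal in the $(\hat r,\hat r^\perp)$ frame, this collapses to $4\int\phi''(|x|/R)\,|\nabla u|^2\,dx$. Splitting off the ``ideal'' core value $\phi''(r)\equiv 2$ on $|x|\le R$ produces the clean $8\int|\nabla u|^2$ term, with the remainder $\int(4\phi''(|x|/R)-8)|\nabla u|^2\,dx$ absorbed into $A_R$, together with the $\int(-\Delta\Delta\phi_R)|u|^2\,dx$ contribution.

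For the nonlinear piece I would track the contribution $2\Re\int\nabla\phi_R\cdot[\bar u\,\nabla N(u) - \overline{N(u)}\nabla u]\,dx$ where $N(u)=(|x|^{-4}*|u|^2)u$. The key algebraic cancellation is $\bar u\,\nabla N(u)-\overline{N(u)}\nabla u = |u|^2\,\nabla(|x|^{-4}*|u|^2)$, which is real. Using $\nabla_x|x-y|^{-4}=-4(x-y)/|x-y|^6$ converts the expression into the double integral $-8\iint\nabla\phi_R(x)\cdot\frac{x-y}{|x-y|^6}|u(x)|^2|u(y)|^2\,dxdy$. Writing $\nabla\phi_R(x) = 2x + (\nabla\phi_R(x)-2x)$ and using the $x\leftrightarrow y$ symmetry of the kernel, the $2x$ piece combines with its mirror to produce the clean main term $-8\iint|u(x)|^2|u(y)|^2/|x-y|^4\,dxdy$, while the correction $\nabla\phi_R(x)-2x = (\tfrac{R}{|x|}\phi'(|x|/R)-2)x$ (and symmetrically in $y$) produces the two remaining pieces in $A_R$ involving the factor $1-\tfrac12\tfrac{R}{|x|}\phi'(|x|/R)$.

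The main obstacle is bookkeeping for the nonlinear term: one must carefully symmetrize in $x\leftrightarrow y$ to extract the leading potential-energy contribution and present the tail errors (supported where $|x|\ge R$ or $|y|\ge R$) in the symmetric split form displayed in $A_R$, while making sure all integration-by-parts manipulations are justified by the $\dot H^1$ regularity of $u$ together with the compact support of $\nabla\phi_R$. Everything else reduces to routine calculus.
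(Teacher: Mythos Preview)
Your proof sketch is correct and follows the standard virial/Morawetz computation; the paper itself does not prove this lemma but simply cites it from \cite{LiMZ:e-critical Har, MiaoXZ:09:e-critical radial Har}, where exactly this direct approach is used. One small clarification on the nonlinear bookkeeping: to land precisely on the two displayed error integrals in $A_R$ you should first symmetrize the full contribution $-8\iint \nabla\phi_R(x)\cdot\frac{x-y}{|x-y|^6}|u(x)|^2|u(y)|^2\,dxdy$ in $x\leftrightarrow y$ to obtain $-4\iint[\nabla\phi_R(x)-\nabla\phi_R(y)]\cdot\frac{x-y}{|x-y|^6}|u(x)|^2|u(y)|^2\,dxdy$, and only then split $\nabla\phi_R(x)-\nabla\phi_R(y) = 2(x-y) + \big[(\nabla\phi_R(x)-2x)-(\nabla\phi_R(y)-2y)\big]$; this gives the coefficient $8$ (not $16$) in front of each tail term.
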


\subsection{Preliminary properties of the linearized operator.} We consider a radial solution $u$ of \eqref{har} close to $W$ and
write $u$ as
\begin{equation*}
\aligned u(t,x)= W(x)+h(t,x),
\endaligned
\end{equation*}
then $h$ satisfies that
\begin{align}
i\partial_t h+\Delta h= V h+  i R(h ), \nonumber
\end{align}
where the linear operator $V$ and the remainder $R(h)$ are defined
by
\begin{align}
\label{linearterm} Vh: = &\; - \Big( \frac{1}{|x|^{4}}*|W|^2 \Big) h
- 2\Big(\frac{1}{|x|^4}*\big(W\Re h\big) \Big) W ,\\
\label{remainder} R(h):=&\; i \Big(|\cdot|^{-4}*|h|^2\Big)(W+h) + 2i
\Big(|\cdot|^{-4}*(W\Re h)\Big) h .
\end{align}

In the following context, we will
always denote the complex value function $h=h_1+ih_2=(h_1, h_2)$
without confusion. Let $h_1=\Re h$ and $h_2=\Im h$.
 Then $h$ is a solution of the equation
\begin{equation}\label{linearequat}
\aligned
\partial_t h + \mathcal{L} h=R(h), \quad \mathcal{L}:=\begin{pmatrix} 0 &  -L_-\\
L_+  & 0 \end{pmatrix},
\endaligned
\end{equation}
where the self-adjoint operators $L_{\pm}$ are defined by
\begin{align}
 L_+ h_1:=& -\Delta h_1 -\Big(\frac{1}{|x|^{4}}*|W|^2\Big) h_1 - 2\Big(\frac{1}{|x|^{4}}* (Wh_1)\Big) W, \label{Lpos}\\
   L_- h_2:= &-\Delta h_2-\Big(\frac{1}{|x|^{4}}*|W|^2\Big) h_2.
   \label{Lneg}
\end{align}

Now we first give some preliminary estimates about the linearized
equation \eqref{linearequat}.

\begin{lemma}\label{linearoperator:prelimestimate}
Let $V$, $R$ be defined by \eqref{linearterm} and \eqref{remainder},
respectively, $I$ be a interval with $|I|\leq 1$, and $g,h \in
l(I)$, $u, v \in L^{\frac{2d}{d-2}}(\R^d)$, then
\begin{align}
\big\|\nabla \big(V h\big) \big\|_{N(I)} \lesssim &\; |I|^{\frac13}
\big\|h\big\|_{l(I)},\label{linearestimate}
\\
 \big\|\nabla \big( R(g)-R(h)\big)
\big\|_{N(I)}\lesssim & \; \big\|g-h\big\|_{l(I)} \Big(
 |I|^{\frac16} \big(\big\|g\big\|_{l(I)} + \big\|h\big\|_{l(I)}\big)
+ \big\|g\big\|^2_{l(I)} +
\big\|h\big\|^2_{l(I)}\Big),\label{nonlinearestimates}\\
\big\|  R(u)-R(v) \big\|_{L^{\frac{2d}{d+2}}(\R^d)}\lesssim & \;
\Big(\big\|u\big\|_{L^{\frac{2d}{d-2}}} +
\big\|v\big\|_{L^{\frac{2d}{d-2}}}+
\big\|u\big\|^2_{L^{\frac{2d}{d-2}}} +
\big\|v\big\|^2_{L^{\frac{2d}{d-2}}}\Big)\big\|u-v\big\|_{L^{\frac{2d}{d-2}}}.
\label{nonlinearestimate:dual}
\end{align}
\end{lemma}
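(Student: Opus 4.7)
The plan is to derive all three estimates routinely from the Hardy--Littlewood--Sobolev inequality of Lemma \ref{L:hardy} applied to the kernel $|x|^{-4}$ (that is, with $\alpha=d-4$), combined with H\"older's inequality in space and time. The simplification making this purely mechanical is that all $W$-dependent factors that can arise---$W$, $\nabla W$, $|x|^{-4}\!*\!|W|^2$ (which equals $d(d-2)(1+|x|^2)^{-1}$ up to the symmetries of $W$), and $|x|^{-4}\!*\!(W\nabla W)$---are smooth with explicit polynomial decay, hence lie in every $L^p(\R^d)$ we shall need and enter the bound as finite constants.

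For \eqref{linearestimate}, the Leibniz rule (using that $\nabla$ commutes with the $|x|^{-4}$-convolution) expands $\nabla(Vh)$ into finitely many summands, of the representative forms $(|x|^{-4}\!*\!|W|^2)\nabla h$, $(|x|^{-4}\!*\!(W\nabla W))h$, $(|x|^{-4}\!*\!(Wh))\nabla W$, $(|x|^{-4}\!*\!(\nabla W\cdot h))W$, and $(|x|^{-4}\!*\!(W\nabla h))W$. In each summand I would apply HLS and H\"older in $x$ at fixed $t$ to land in $L^{6d/(3d+4)}_x$, placing the variable factor as $\nabla h\in L^{6d/(3d-4)}_x$ (from $S(I)$) or $h\in L^{6d/(3d-8)}_x$ (from $Z(I)$), while absorbing the $W$-factors as constants in the appropriate $L^p_x$ (a scaling count verifies, e.g., that $|x|^{-4}\!*\!|W|^2\in L^{3d/4}_x$ does the job against $\nabla h$). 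In time, one applies $\|\cdot\|_{L^{3/2}_t(I)}\le|I|^{1/3}\|\cdot\|_{L^3_t(I)}$ when the variable enters through $\nabla h\in L^3_t$, and $\le|I|^{1/2}\|\cdot\|_{L^6_t(I)}$ when it enters through $h\in L^6_t$; since $|I|\le 1$, the first dominates, yielding the overall $|I|^{1/3}$.

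For \eqref{nonlinearestimates}, set $k=g-h$ and expand $R(h+k)-R(h)$ so that every summand carries exactly one explicit factor of $k$. The summands split into \emph{type (A)}, arising from the quadratic-in-$h$ pieces of $R$, carrying one $W$-factor together with one additional factor from $\{g,h\}$, and \emph{type (B)}, arising from the cubic-in-$h$ piece, carrying two additional factors from $\{g,h\}$ and no $W$. After $\nabla$ is applied, a type-(A) term has two time-dependent factors, which I would distribute as $L^6_t\cdot L^3_t=L^2_t$ (one factor placed in $Z(I)$, the gradient of the other in $S(I)$); the embedding $L^2_t(I)\hookrightarrow L^{3/2}_t(I)$ costs $|I|^{1/6}$ and yields the $|I|^{1/6}(\|g\|_{l(I)}+\|h\|_{l(I)})\|k\|_{l(I)}$ contribution. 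A type-(B) term has three time-dependent factors, distributed as $L^6_t\cdot L^6_t\cdot L^3_t=L^{3/2}_t$ directly, with no temporal slack, yielding the $(\|g\|^2_{l(I)}+\|h\|^2_{l(I)})\|k\|_{l(I)}$ contribution. Estimate \eqref{nonlinearestimate:dual} is proved by the same telescoping without any time or gradient: place every variable factor in $L^{2d/(d-2)}_x$ and every $W$-factor in $L^{2d/(d-2)}_x$ (via Sobolev), and apply HLS on $|x|^{-4}$ to land the product in $L^{2d/(d+2)}_x$.

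The only non-trivial aspect is the bookkeeping: one must verify, by a scaling count, that every chosen H\"older split produces the target spatial exponent $6d/(3d+4)$ (or $2d/(d+2)$ for \eqref{nonlinearestimate:dual}). This is the main obstacle but is purely mechanical, and once carried out for a representative summand of each structural type every remaining term falls into line.
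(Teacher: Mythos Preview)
Your proposal is correct and follows essentially the same route as the paper: Hardy--Littlewood--Sobolev (Lemma~\ref{L:hardy}) plus H\"older in space and time, with the $|I|$-powers coming from the time-independence of the $W$-factors. The paper organizes this slightly differently---it places $W$ into the \emph{same} space-time norms $Z(I)$ and $S(I)$ as the variable factors (so the spatial H\"older split is symmetric) and then reads off $\|W\|_{Z(I)}\lesssim|I|^{1/6}$, $\|\nabla W\|_{S(I)}\lesssim|I|^{1/3}$, which avoids your bespoke $L^p$ computations for each $W$-factor; but this is bookkeeping, not a different idea. (One minor slip: $|x|^{-4}*|W|^2$ equals $d(d-2)(1+|x|^2)^{-2}$, not $(1+|x|^2)^{-1}$, though this does not affect the argument.)
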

\begin{proof} By Lemma \ref{L:hardy} and
H\"{o}lder's inequality, we have
\begin{equation*}
\aligned \big\|\nabla \big( V h \big) \big\|_{N(I)} \lesssim
\big\|W\big\|^2_{Z(I)} \big\|\nabla h\big\|_{S(I)} +
\big\|W\big\|_{Z(I)}\big\|\nabla W \big\|_{S(I)}
\big\|h\big\|_{Z(I)}.
\endaligned
\end{equation*}
Since $W \in L^{\frac{6d}{3d-8}}_x \cap \dot W^{1,
\frac{6d}{3d-4}}_x,$ we have that
\begin{equation*}
\aligned \big\|W\big\|_{Z(I)} \lesssim |I|^{\frac16}, \;\;
\big\|\nabla W \big\|_{S(I)}\lesssim |I|^{\frac13}.
\endaligned
\end{equation*}
This  implies \eqref{linearestimate}. The analogue argument gives
that
\begin{align*}
& \big\|\nabla \big( R(g)-R(h)\big) \big\|_{N(I)}\\
\lesssim  &\;  \big\|g-h\big\|_{Z(I)} \Big( \big\| \nabla
(g+h)\big\|_{S(I)} |I|^{\frac16} + \big\|  g+h \big\|_{Z(I)}
|I|^{\frac13} \Big) \\
& + \big\|\nabla (g-h) \big\|_{S(I)}  \big\|
g+h\big\|_{Z(I)} |I|^{\frac16} \\
& + \big\|g-h\big\|_{Z(I)} \Big( \big\|\nabla g\big\|_{S(I)}
\big\|g\big\|_{Z(I)} + \big\|\nabla ( g+h ) \big\|_{S(I)}
\big\|h\big\|_{Z(I)} \Big) \\
& + \big\|\nabla (g-h) \big\|_{S(I)} \Big( \big\|g\big\|^2_{Z(I)} +
\big\|g+h\big\|_{Z(I)}\big\|h\big\|_{Z(I)} \Big) \\
& + \big\|g-h\big\|_{Z(I)} \Big( |I|^{\frac13}
\big(\big\|g\big\|_{Z(I)}+   \big\|h\big\|_{Z(I)}\big)+
|I|^{\frac16} \big(\big\|\nabla g\big\|_{S(I)} +
 \big\|\nabla h\big\|_{S(I)}\big)\Big)\\
& + \big\|\nabla (g-h) \big\|_{S(I)}  \big\|
g \big\|_{Z(I)} |I|^{\frac16} \\
\lesssim & \; \big\|g-h\big\|_{l(I)} \Big(
\big(|I|^{\frac16}+|I|^{\frac13}\big)\big(\big\|g\big\|_{l(I)} +
\big\|h\big\|_{l(I)}\big)  + \big\|g\big\|^2_{l(I)} +
\big\|h\big\|^2_{l(I)}\Big).
\end{align*}
In addition, \eqref{nonlinearestimate:dual} holds by Lemma
\ref{L:hardy}. This completes the proof. \end{proof}

Due to the emergence of the linear operator $V$ in the
linearized equation \eqref{linearequat}, we will often use the
following integral summation argument.

\begin{lemma}[\cite{DuyMerle:NLS:ThresholdSolution}]
\label{summation} Let $t_0>0$, $p\in [1,+\infty[$, $a_0 \neq 0$, $E$
a normed vector space, and $f\in L^p_{\rm loc}(t_0,+\infty;E)$ such
that
\begin{equation}
\label{small.tau} \exists \tau_0>0,\; \exists C_0>0, \;\forall\;
t\geq t_0, \quad \|f\|_{L^p(t,t+\tau_0,E)}\leq C_0e^{a_0 t}.
\end{equation}
Then for $t\geq t_0$,
\begin{align}
 \|f\|_{L^p(t,+\infty,E)}\leq &\;
  \frac{C_0e^{a_0 t}}{1-e^{a_0\tau_0}},\;  \text{if
}\; a_0<0; \label{conclu.summation} \\
 \|f\|_{L^p(t_0, t,E)} \leq
&\; \frac{C_0e^{a_0 t}}{1-e^{-a_0\tau_0}},\;  \text{if }\; a_0>0.
\label{conclu.summationless}
 \end{align}
\end{lemma}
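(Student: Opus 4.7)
The plan is to use the subadditivity of the $L^p$-norm on disjoint intervals, namely $\|f\|_{L^p(A\cup B, E)} \leq \|f\|_{L^p(A, E)} + \|f\|_{L^p(B, E)}$ for disjoint measurable $A, B$ (a consequence of $(a^p+b^p)^{1/p}\leq a+b$ for $p\geq 1$), to reduce each assertion to a geometric series summation built from the hypothesis \eqref{small.tau}.

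For the case $a_0<0$, I would tile the half-line $(t,+\infty)$ by the disjoint subintervals $I_n := (t+n\tau_0, t+(n+1)\tau_0)$, $n\geq 0$. Each left endpoint $t+n\tau_0$ lies in $[t_0,+\infty)$, so applying \eqref{small.tau} at that base point gives $\|f\|_{L^p(I_n, E)} \leq C_0 e^{a_0 t}(e^{a_0\tau_0})^n$. Since $e^{a_0\tau_0}<1$, summing the geometric series immediately yields \eqref{conclu.summation}.

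For the case $a_0>0$, I would tile from the right. Set $N := \lfloor (t-t_0)/\tau_0\rfloor$ and decompose
$$(t_0, t) = (t_0, t-N\tau_0) \cup \bigcup_{n=0}^{N-1} \bigl(t-(n+1)\tau_0,\, t-n\tau_0\bigr).$$
For each full piece, applying \eqref{small.tau} at the base point $t-(n+1)\tau_0 \geq t_0$ bounds its contribution by $C_0 e^{a_0 t}(e^{-a_0\tau_0})^{n+1}$. The leftover piece $(t_0, t-N\tau_0)$ lies inside $(t_0, t_0+\tau_0)$ by maximality of $N$, so \eqref{small.tau} at $t_0$ controls it by $C_0 e^{a_0 t_0} \leq C_0 e^{a_0 t}$. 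Summing and using $\sum_{n=0}^{N} r^n \leq 1/(1-r)$ with $r=e^{-a_0\tau_0}\in (0,1)$ produces \eqref{conclu.summationless}.

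There is essentially no structural obstacle here; the proof is a routine telescoping and geometric summation. The only mild nuisance is the boundary leftover piece in the case $a_0>0$, which must be included separately and is cleanly absorbed by the inequality $1+\sum_{n\geq 1} r^n \leq 1/(1-r)$.
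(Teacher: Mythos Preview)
Your argument is correct and is precisely the standard geometric-series tiling used to prove this lemma; the paper does not supply its own proof but simply cites \cite{DuyMerle:NLS:ThresholdSolution}, where the same summation argument appears.
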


By the Strichartz estimate, Lemma
\ref{linearoperator:prelimestimate} and Lemma\ref{summation}, we
have
\begin{lemma}\label{linearstrich}
Let $v$ be a solution of \eqref{linearequat} with
\begin{equation*}
\big\| v(t) \big\|_{\dot H^1} \leq C e^{-c_1 t}
\end{equation*}
for some $C$ and $c_1>0$, then for any admissible pair $(q,r)$, i.e.
$$\frac2q=d
\Big(\frac12 -\frac1r\Big),\qquad  q\in [2, +\infty],$$ we have for large $t$
\begin{align*}
\big\|v\big\|_{l(t,+\infty)}+\big\|\nabla v\big\|_{L^q(t,+\infty;
L^r)} \leq C e^{-c_1t}.
\end{align*}
\end{lemma}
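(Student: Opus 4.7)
The plan is to run the Strichartz estimate on short time intervals $[t,t+\tau_0]$, absorb both the linear potential $V$ and the nonlinear remainder $R(v)$ on the right-hand side, and then assemble the short-interval bounds into the global estimate by means of the summation Lemma \ref{summation}.

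First I would rewrite \eqref{linearequat} in the equivalent scalar complex form $i\partial_t v + \Delta v = Vv + iR(v)$, fix a small $\tau_0>0$, and apply Lemma \ref{Striestimate} (in its standard form extending to every admissible pair $(q,r)$ at the $\dot H^1$ level) on the interval $[t,t+\tau_0]$ to obtain
\begin{equation*}
\|v\|_{l(t,t+\tau_0)} + \|\nabla v\|_{L^q(t,t+\tau_0;L^r)} \leq C\bigl(\|v(t)\|_{\dot H^1} + \|\nabla(Vv)\|_{N(t,t+\tau_0)} + \|\nabla R(v)\|_{N(t,t+\tau_0)}\bigr).
\end{equation*}
By \eqref{linearestimate} and \eqref{nonlinearestimates} with $h=0$, the two inhomogeneous terms are controlled respectively by $\tau_0^{1/3}\|v\|_{l(t,t+\tau_0)}$ and by $\tau_0^{1/6}\|v\|^2_{l(t,t+\tau_0)} + \|v\|^3_{l(t,t+\tau_0)}$. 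Choosing $\tau_0$ small so that $C\tau_0^{1/3}\leq 1/4$, and then $T_0$ large so that the hypothesis $\|v(t)\|_{\dot H^1}\leq Ce^{-c_1t}$ drives $\|v(T_0)\|_{\dot H^1}$ below the local small-data threshold of Theorem \ref{T:local}(d), a continuity argument on $s\mapsto \|v\|_{l(t,t+s)} + \|\nabla v\|_{L^q(t,t+s;L^r)}$ yields
\begin{equation*}
\|v\|_{l(t,t+\tau_0)} + \|\nabla v\|_{L^q(t,t+\tau_0;L^r)} \leq 2C\|v(t)\|_{\dot H^1} \leq C'e^{-c_1 t}, \qquad t\geq T_0.
\end{equation*}
Feeding this uniform-in-$t$ short-interval bound into Lemma \ref{summation} with $a_0=-c_1<0$, applied separately with $p=6$, $p=3$, and $p=q$ to the three Lebesgue-in-time norms composing the left-hand side (the case $q=\infty$ being handled by taking a supremum over the subintervals), upgrades the estimate to a global bound on $[t,+\infty)$ with the same exponential rate, which is exactly the claim.

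The main subtlety is that the potential $V$ is built from the time-independent ground state $W$ and therefore does not decay in $t$; so no Strichartz inequality can close on the unbounded interval $[t,+\infty)$ directly, and one must pay for $V$ by a factor of $\tau_0^{1/3}$ per interval. The decay must then be injected through the exponentially small data at the left endpoint of each short interval, and the short-interval chopping combined with Lemma \ref{summation} is precisely the mechanism that converts this pointwise-in-time smallness into the required integrated-in-time decay.
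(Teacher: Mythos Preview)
Your proposal is correct and follows essentially the same route as the paper: Strichartz on short intervals $[t,t+\tau_0]$, absorption of $V$ via the $\tau_0^{1/3}$ factor from \eqref{linearestimate} and of $R(v)$ via \eqref{nonlinearestimates}, a continuity argument to close for large $t$, and then Lemma \ref{summation} to pass to $[t,+\infty)$. Your closing paragraph correctly identifies the reason the argument must be run on short intervals rather than directly on $[t,+\infty)$.
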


\begin{proof}For small $\tau_0$, by the Strichartz estimate and
Lemma \ref{linearoperator:prelimestimate}, we have on $I=[t,
t+\tau_0]$
\begin{align*}
\big\|v\big\|_{l(I)}+\big\|\nabla v\big\|_{L^q(I; L^r)} \leq & C
e^{-c_1 t } + C\big\|\nabla (V h) \big\|_{N(I)} +  C\big\|\nabla R(
h) \big\|_{N(I)} \\
\leq & C e^{-c_1 t } + C |I|^{\frac{1}{3}} \big\|h\big\|_{l(I)} +
\big\|h\big\|^2_{l(I)} + \big\|h\big\|^3_{l(I)}.
\end{align*}
By choosing sufficiently small $\tau_0$, the continuous argument
gives that
\begin{equation*}
\big\|v\big\|_{l(I)}+\big\|\nabla v\big\|_{L^q(I; L^r)} \leq C
e^{-c_1 t }.
\end{equation*}
This implies the desired result by Lemma \ref{summation}.
\end{proof}

\subsection{Spectral properties of the linearized operator.}
Due to the symmetries of \eqref{har} under the phase rotation and
the scaling, we know that the elements
$$iW \in L^2(\R^d), \quad \widetilde{W}=\dfrac{d-2}{2}W+x\cdot \nabla W \in L^2(\R^d)$$
belongs to the null-space of $\mathcal{L}$ in $L^2_{rad}$. Indeed,
they are the only elements of the null-space of $\mathcal{L}$ in
$L^2_{rad}$.

\begin{lemma}\label{keyassumption}Let $\mathcal{L}$ be defined by
\eqref{linearequat}. Then
\begin{align*}
\big\{u\in   L^2_{rad}(\R^d),
\mathcal{L}u=0\big\}=\text{span}\big\{iW, \widetilde{W}
 \big\}.
\end{align*}Namely,
\begin{equation*}
\big\{u\in   L^2_{rad}(\R^d), L_-u=0\big\}=\text{span}\big\{W
 \big\};\quad \big\{u\in   L^2_{rad}(\R^d),  L_+u=0\big\}=\text{span}\big\{\widetilde{W}
 \big\}.
\end{equation*}
\end{lemma}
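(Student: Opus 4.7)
The plan is first to exploit the block-diagonal form of $\mathcal L$ to reduce the claim to the two scalar identities $\ker L_-\cap L^2_{rad}=\mathrm{span}\{W\}$ and $\ker L_+\cap L^2_{rad}=\mathrm{span}\{\widetilde W\}$. That these specific elements lie in the respective kernels is immediate: $L_-W=0$ is the elliptic equation \eqref{gs} itself, and $L_+\widetilde W=0$ comes from differentiating at $\lambda=1$ the scaling family $W_\lambda=\lambda^{-(d-2)/2}W(\cdot/\lambda)$, all of whose members solve \eqref{gs}.

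For the $L_-$ direction I would use the substitution $u=fW$ with $f$ radial, which is legitimate because $W>0$. Using $-\Delta W=V_0W$ with $V_0:=|x|^{-4}*W^2$, a short computation collapses the cross terms and yields
\[
L_-(fW)=-\tfrac{1}{W}\,\mathrm{div}(W^2\nabla f),
\]
so $u\in\ker L_-$ is equivalent to $(r^{d-1}W^2 f')'=0$ in radial coordinates, hence $f'(r)=c\,r^{1-d}W^{-2}(r)$. The explicit decay $W(r)\sim r^{-(d-2)}$ at infinity forces any $c\neq 0$ to produce $f(r)\sim r^{d-2}$, in which case $u=fW$ tends to a nonzero constant at infinity, incompatible with $u\in L^2(\R^d)$. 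Therefore $f$ is constant and $\ker L_-\cap L^2_{rad}=\mathrm{span}\{W\}$.

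For $L_+$ the nonlocal term $-2(|x|^{-4}*(Wu))W$ obstructs a direct ODE reduction; this is the main obstacle. My approach is to exploit Lemma \ref{GS:unique}, which says that the positive radial solutions to \eqref{gs} in $L^{2d/(d-2)}(\R^d)$ are exactly the scaling family $\{W_\lambda\}_{\lambda>0}$ (the Kelvin transform $K$ only permutes this family, since $KW_\lambda=W_{1/\lambda}$). Given $u\in\ker L_+\cap L^2_{rad}$, I would first upgrade $u$ to sufficient pointwise decay by a bootstrap on $-\Delta u=V_0 u+2(|x|^{-4}*(Wu))W$ using Lemma \ref{L:hardy}. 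Then I would identify $L_+$ with the real part of the second variation at $W$ of the sharp HLS-Sobolev functional of Lemma \ref{SharpConstant}, viewed as a constrained operator on the manifold of fixed nonlocal $L^4$ energy; minimality of $W$ together with the uniqueness of the minimizer up to symmetries forces this constrained Hessian to have trivial kernel modulo symmetries. Since the only radial infinitesimal symmetry generator is $\widetilde W$, the conclusion follows.

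The hard part is making the $L_+$ step fully rigorous: one must verify that $L_+$ is Fredholm on the appropriate subspace and pass cleanly between the kernel of the unconstrained operator and that of the constrained one, and must exclude infinitesimal kernel elements that are not tangent to any honest curve of positive solutions. A fallback plan would be to split $L^2_{rad}$ into its $K$-even and $K$-odd subspaces (on each of which $L_+$ acts invariantly, with $KW=W$ and $K\widetilde W=-\widetilde W$) and analyze each piece separately via a refined integro-differential study of the ansatz $u=fW$, establishing a Wronskian-type identity that together with asymptotic estimates forces the unwanted solution out of $L^2$, in direct parallel with the $L_-$ argument.
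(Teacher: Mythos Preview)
Your reduction to the two scalar kernels and your $L_-$ argument are correct; the substitution $u=fW$ leading to $(r^{d-1}W^2 f')'=0$ is in fact cleaner than the paper's treatment, which simply invokes that $W>0$ is the (necessarily simple) ground state of the Schr\"odinger operator $L_-=-\Delta-V_0$.

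For $L_+$ your primary approach has the gap you yourself flagged: uniqueness of the minimizer in Lemma~\ref{SharpConstant} yields only nonnegativity of the second variation on $\{W\}^{\perp_{\dot H^1}}$, not that the kernel equals the tangent space to the symmetry orbit. Passing from ``unique critical point up to symmetry'' to ``nondegenerate critical point modulo symmetry'' is exactly the missing step, and neither the constrained-Hessian argument nor the Kelvin-splitting fallback you sketch supplies it. The paper avoids this passage entirely. It keeps the variational input you have---namely $(L_+h,h)\ge 0$ for all $h\perp_{\dot H^1}W$ (this is Step~1 of Appendix~\ref{appen CoerH}, which is logically independent of the present lemma)---and combines it with two further observations: $(L_+W,W)=-2\|\nabla W\|_2^2<0$, so by the Courant--Fischer min--max the first radial eigenvalue is negative and the second is nonnegative; and $\widetilde W\in L^2_{rad}$ solves $L_+\widetilde W=0$ with exactly one positive zero. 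These place $0$ as the second radial eigenvalue with eigenfunction $\widetilde W$, and the paper then appeals to Sturm--Liouville oscillation theory to conclude simplicity of this eigenvalue.

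In short, the second-variation idea you reach for is the right input, but the paper uses it as a one-sided bound feeding into a spectral/oscillation argument, rather than trying to upgrade uniqueness directly to nondegeneracy. One caveat worth noting: the operator $L_+$ is nonlocal, so the invocation of ``Sturm--Liouville theory'' in the paper is itself somewhat informal; a fully rigorous version would require checking that the relevant oscillation/simplicity statements survive the nonlocal perturbation.
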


\begin{proof} We prove it by the oscillation properties of
Sturm-Liouville eigenvalue problems. Note that  $L_-W=0$ and
 the ground state $W$ is non-degenerate (Lemma \ref{GS:unique}),
we know that $ \big\{u\in   L^2_{rad},
L_-u=0\big\}=\text{span}\big\{W
 \big\}.
$ Hence it suffices to show that\begin{equation*}  \big\{u\in
L^2_{rad}, L_+u=0\big\}=\text{span}\big\{\widetilde{W}
 \big\}.
\end{equation*}For the radial function, by the spherical coordinates (see \cite{Gra04:book, SteinW:book:Fourier
analysis}), $L_+v=0$ can be written as
\begin{equation*}
\aligned A_0 v(r) = -\frac{d^2}{dr^2}  v - \frac{d-1}{r}\frac{d}{dr}
v - & \int^{\infty}_0    \rho^{d-5} V\left(\frac{r}{\rho}\right)
W(\rho) ^2 \ d\rho \;v(r) \\
-&  2  \int^{\infty}_0 \rho^{d-5}V\left(\frac{r}{\rho}\right)
W(\rho)v(\rho) \ d\rho \; W(r),
\endaligned
\end{equation*}where
\begin{equation*}
\aligned V(\rho)=\int_{\mathbb{S}^{d-1}} \frac{1}{|\rho - y'|^4}
d\sigma_{y'} = \omega_{d-2} \int^{1}_{-1} \frac{\left(
1-s^2\right)^{\frac{d-3}{2}}}{\left( \rho^2 -2\rho s  +1 \right)^2}
ds,
\endaligned
\end{equation*}
and $\mathbb{S}^{d-1}$ denotes the unit sphere in $\R^d$ and
$\omega_{d-2}$ denotes the area of the unit sphere
$\mathbb{S}^{d-2}$ in $\R^{d-1}$.

From  $ \left( W, A_0 W\right) < 0$,  we know that the first
eigenvalue is negative.  By the nonnegative of $L_+$ on $\{\Delta
W\}^{\bot}$ in Step 1 of Appendix \ref{appen CoerH} and the
Courant's min-max principle \cite{LiebL:book}, we conclude that the
second eigenvalue is nonnegative. Note that $ A_0 \widetilde{W}(r)
=0 $ and $\widetilde{W} \in L^2$ have only one positive zero, we
know that $0$ is the second eigenvalue, and by the Sturm-Liouville
theory, we conclude the desired result.
\end{proof}

Now we define the linearized energy $\Phi$ by
\begin{align}\label{linearized energy}
 \Phi(h):=& \frac12 \int_{\R^d} (L_+ h_1) h_1 \; dx + \frac12
 \int_{\R^d}
(L_- h_2) h_2 \; dx, \\
= & \frac12 \int_{\R^d} \big| \nabla h_1 \big| ^2 - \frac12
\iint_{\R^d\times \R^d} \frac{|W(x)|^2|h_1(y)|^2}{|x-y|^4} -
\iint_{\R^d\times \R^d}
\frac{W(x)h_1(x)W(y)h_1(y)}{|x-y|^4} \nonumber\\
& + \frac12 \int_{\R^d} \big| \nabla h_2 \big| ^2 - \frac12
\iint_{\R^d\times\R^d} \frac{|W(x)|^2|h_2(y)|^2}{|x-y|^4} .
\nonumber
 \end{align}
We denote by $B(g,h)$ the bilinear symmetric form associated to
$\Phi$,
\begin{equation}\label{bilinear form}
\aligned B(g,h):=\frac12 \int_{\R^d} (L_+ g_1) h_1 \; dx +
\frac12\int_{\R^d} (L_- g_2) h_2 \; dx, \;\; \forall \; g,h \in \dot
H^1(\R^d).
\endaligned
\end{equation}

Let us specify the important coercivity properties of $\Phi$.
Consider the three orthogonal directions $W, iW,$ and
$\widetilde{W}$ in the Hilbert space $\dot H^1(\R^d, \C)$. Let
$H:=\text{span} \{W, iW, \widetilde{W}\},$ and
\begin{equation*}\label{H orth}
\aligned H^{\bot} := \left\{v\in \dot H^1, (iW, v)_{\dot
H^1}=(\widetilde{W}, v)_{\dot H^1}  = (W, v)_{\dot H^1}=0 \right\}
\endaligned
\end{equation*}
denotes the orthogonal subspace of $H$ in $\dot{H}^1$.

\begin{proposition}\label{coerH}There exists a constant $c>0$ such
that for all radial function $ h \in H^{\bot}$, we have
\begin{equation}
\aligned  \Phi(h)\geq c\big\| h \big\|^2_{\dot H^1}.
\endaligned
\end{equation}
\end{proposition}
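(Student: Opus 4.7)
The plan is to write $h=h_1+ih_2$ with $h_j$ real, so that
\[
\Phi(h)\;=\;\tfrac12\langle L_+h_1,h_1\rangle+\tfrac12\langle L_-h_2,h_2\rangle,
\]
and to translate the three orthogonality constraints defining $H^{\bot}$. Since $W,\widetilde W$ are real while $iW$ is purely imaginary, the conditions $(W,h)_{\dot H^1}=(\widetilde W,h)_{\dot H^1}=0$ become $\int\nabla W\cdot\nabla h_1\,dx=\int\nabla\widetilde W\cdot\nabla h_1\,dx=0$, and $(iW,h)_{\dot H^1}=0$ becomes $\int\nabla W\cdot\nabla h_2\,dx=0$. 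Coercivity of $\Phi$ on $H^{\bot}\cap\dot H^1_{\mathrm{rad}}$ therefore reduces to two decoupled statements: $\langle L_-h_2,h_2\rangle\gtrsim\|h_2\|_{\dot H^1}^2$ for radial $h_2\perp_{\dot H^1}W$, and $\langle L_+h_1,h_1\rangle\gtrsim\|h_1\|_{\dot H^1}^2$ for radial $h_1$ with $h_1\perp_{\dot H^1}W$ and $h_1\perp_{\dot H^1}\widetilde W$.

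For $L_-$, I would use the classical ground-state substitution $h_2=W\eta$. Invoking the equation $-\Delta W=(|x|^{-4}\ast W^2)W$ and integrating by parts, the cross terms telescope and one obtains the identity
\[
\langle L_-h_2,h_2\rangle\;=\;\int_{\R^d}W(x)^2\,|\nabla\eta(x)|^2\,dx\;\geq\;0,
\]
with equality iff $\eta$ is constant, i.e.\ $h_2\in\mathrm{span}\{W\}$; together with Lemma~\ref{keyassumption} this yields $\mathrm{Ker}(L_-)\cap L^2_{\mathrm{rad}}=\mathrm{span}\{W\}$ and $L_-\geq 0$. To pass from non-negativity to a uniform lower bound on $\{W\}^{\bot}\cap\dot H^1_{\mathrm{rad}}$, I would write $L_-=-\Delta-K_-$ with $K_-h=(|x|^{-4}\ast W^2)h$; by Hardy--Littlewood--Sobolev (Lemma~\ref{L:hardy}) and Rellich compactness in the radial setting, $K_-$ is a compact map $\dot H^1_{\mathrm{rad}}\to\dot H^{-1}_{\mathrm{rad}}$, so $\langle L_-\cdot,\cdot\rangle$ is a compact perturbation of the $\dot H^1$-inner product, and a standard spectral-gap argument closes the case.

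For $L_+$---the main obstacle, since $(L_+W,W)=-2\|\nabla W\|_2^2<0$---I would proceed in two stages. First, I would establish $\langle L_+h_1,h_1\rangle\geq 0$ on $\{W\}^{\bot}\cap\dot H^1_{\mathrm{rad}}$ via the sharp Sobolev--HLS inequality of Lemma~\ref{SharpConstant}: expanding both $\|\nabla(W+\varepsilon h_1)\|_2^2$ and $\iint|W+\varepsilon h_1|^2|W+\varepsilon h_1|^2/|x-y|^4\,dxdy$ to second order in $\varepsilon$ and applying the second-order optimality condition for $W$ as a constrained minimizer, with the Lagrange multiplier identified from the ground-state equation, yields $\langle L_+h_1,h_1\rangle\geq 0$ precisely under the first-order constraint $\int(|x|^{-4}\ast W^2)Wh_1\,dx=0$; integrating by parts and using $-\Delta W=(|x|^{-4}\ast W^2)W$, this constraint is exactly $(h_1,W)_{\dot H^1}=0$. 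Second, combining this non-negativity with $\mathrm{Ker}(L_+)\cap L^2_{\mathrm{rad}}=\mathrm{span}\{\widetilde W\}$ from Lemma~\ref{keyassumption} and the same compact-perturbation structure used for $L_-$, an abstract Hilbert-space spectral-gap argument upgrades non-negativity to the desired coercive bound on $\{W,\widetilde W\}^{\bot}\cap\dot H^1_{\mathrm{rad}}$. The delicate point is the matching between the natural Lagrange constraint inherited from Lemma~\ref{SharpConstant} and the $\dot H^1$-orthogonality to $W$ appearing in the statement; once that identification is in place, the rest is routine spectral theory for compact perturbations of $-\Delta$.
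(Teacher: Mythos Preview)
Your proposal is correct and follows essentially the same two-stage structure as the paper: first establish nonnegativity of $\Phi$ on $\{W\}^{\perp}_{\dot H^1}$ via the second-order expansion of the sharp Sobolev--HLS functional at the optimizer $W$, then upgrade to coercivity on $H^{\perp}\cap\dot H^1_{\mathrm{rad}}$ by a compactness argument combined with the kernel identification of Lemma~\ref{keyassumption}. The only minor difference is that the paper handles the nonnegativity of both $L_+$ and $L_-$ simultaneously through the single expansion of $I(W+\alpha h)$, whereas you treat $L_-$ separately via the ground-state substitution $h_2=W\eta$; both routes are standard and equivalent, and the paper's coercivity step is exactly the explicit Lagrange-multiplier/weak-limit contradiction argument you describe abstractly as a ``spectral-gap argument.''
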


\begin{proposition}\label{spectral}Let $\sigma(\mathcal{L})$ be the spectrum of the operator
$\mathcal{L}$, defined on $L^2(\R^d)$ with domain $ H^2$ and let
$\sigma_{ess}(\mathcal{L})$ be its essential spectrum. Then we have
\begin{enumerate}
\item[\rm (a)]The operator $\mathcal{L}$ admits two radial
eigenfunctions $\mathcal{Y}_{\pm} \in \mathcal{S}(\R^d) $ with real
eigenvalues $\pm e_0$, $e_0
> 0$, that is, $ \mathcal{L} \mathcal{Y}_{\pm} = \pm e_0
\mathcal{Y}_{\pm},\; \mathcal{Y}_+ = \overline{\mathcal{Y}}_- ,\;
e_0
> 0.
$

\item[\rm (b)] There exists a constant $c>0$ such that  for all radial function $ h
\in G_{\bot}$, we have
\begin{equation*}
\aligned  \Phi(h)\geq c\big\| h \big\|^2_{\dot H^1},
\endaligned
\end{equation*}
where \begin{equation*}
 \label{G orthy} G_{\bot}  =\left\{v\in \dot H^1, (iW, v)_{\dot
H^1}=(\widetilde{W}, v)_{\dot H^1}  = B(\mathcal{Y}_\pm, v)  =
0\right\}.
\end{equation*}

\item[\rm (c)]
$\sigma_{ess}(\mathcal{L})=\{i\xi: \xi\in \R, \}, \quad
\sigma(\mathcal{L})\cap \R =\{-e_0, 0, e_0\}. $
\end{enumerate}
\end{proposition}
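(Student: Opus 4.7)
My plan to prove Proposition~\ref{spectral} mirrors the blueprint of \cite{DuyMerle:NLS:ThresholdSolution}, adapted to the nonlocal Hartree setting. For part~(a), I would reduce the eigenvalue problem $\mathcal{L}\mathcal{Y}=e_0\mathcal{Y}$ to a real self-adjoint spectral problem for the product operator $L_-L_+$: writing $\mathcal{Y}=\mathcal{Y}_1+i\mathcal{Y}_2$ with real components, the equation becomes $L_+\mathcal{Y}_1=e_0\mathcal{Y}_2$ and $L_-\mathcal{Y}_2=-e_0\mathcal{Y}_1$, whence $L_-L_+\mathcal{Y}_1=-e_0^2\mathcal{Y}_1$. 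By Lemma~\ref{keyassumption}, $\ker L_-|_{L^2_{rad}}=\mathrm{span}\{W\}$ with $L_-\ge 0$, so $L_-$ is invertible on $\{W\}^{\bot}$, and I would minimize the Rayleigh-type quotient $(L_+u,u)/(L_-^{-1}u,u)$ over radial $u\perp W$. The infimum is strictly negative because $(L_+W,W)=-2\|\nabla W\|_2^2<0$ (a direct computation using \eqref{gs}, which gives $L_+W=2\Delta W$), producing a minimizer $\mathcal{Y}_1$ with eigenvalue $-e_0^2$; then $\mathcal{Y}_{\pm}:=\mathcal{Y}_1\pm i e_0^{-1}L_+\mathcal{Y}_1$ are the claimed eigenfunctions of $\mathcal{L}$. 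Schwartz regularity follows from iterative elliptic estimates (the convolution potential $|x|^{-4}\ast W^2$ is smooth with polynomial decay) combined with Agmon-type exponential decay, valid since $\pm e_0$ lies strictly off the continuous spectrum.

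For part~(c), I would split $\mathcal{L}=\mathcal{L}_0+K$ with $\mathcal{L}_0=\bigl(\begin{smallmatrix}0&\Delta\\-\Delta&0\end{smallmatrix}\bigr)$, whose essential spectrum is $i\R$ by Fourier analysis. Since $W^2$ is integrable for $d\ge 5$, the potential $|x|^{-4}\ast W^2$ decays like $|x|^{-4}$ at infinity and the bilinear integral term in $V$ is smoothly localized by $W$, so $K$ is a relatively compact perturbation from $H^2$ to $L^2$. Weyl's theorem then yields $\sigma_{ess}(\mathcal{L})=i\R$. For the real discrete spectrum, the reduction of part~(a) shows every nonzero real $\lambda\in\sigma(\mathcal{L})$ gives a negative eigenvalue $-\lambda^2$ of $L_-L_+$ on the radial subspace; by the Sturm oscillation argument in the proof of Lemma~\ref{keyassumption} ($L_+|_{L^2_{rad}}$ has exactly one simple negative eigenvalue, with second eigenvalue $0$), this forces $\lambda=\pm e_0$.

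Part~(b) is then handled in parallel with Proposition~\ref{coerH}: the orthogonality conditions $(iW,h)_{\dot H^1}=(\widetilde{W},h)_{\dot H^1}=0$ kill $\ker\mathcal{L}$, and the additional constraints $B(\mathcal{Y}_{\pm},h)=0$ project out the $\pm e_0$ eigenspaces, using that the pairing $B(\mathcal{Y}_+,\mathcal{Y}_-)\neq 0$ (a direct computation from $\mathcal{L}\mathcal{Y}_{\pm}=\pm e_0\mathcal{Y}_{\pm}$). On the remaining $\mathcal{L}$-invariant subspace, which corresponds to the imaginary continuous spectrum, $\Phi$ is strictly positive with a spectral gap, and a concentration-compactness / contradiction argument exploiting compactness of the nonlocal quadratic form $\iint |x-y|^{-4}W(x)W(y)h_1(x)h_1(y)\,dx\,dy$ on $\dot H^1_{rad}$ upgrades this to full $\dot H^1$-coercivity. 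The main obstacle is establishing the simplicity and uniqueness of the negative eigenvalue of $L_-L_+$ on the radial subspace, which is what pins down $\sigma(\mathcal{L})\cap\R=\{-e_0,0,e_0\}$; this rests on a Sturm-Liouville oscillation count for the nonlocal radial operator obtained from $L_+$, depending crucially on the non-degeneracy of the ground state furnished by Lemma~\ref{GS:unique}.
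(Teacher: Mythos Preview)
Your overall strategy follows the Duyckaerts--Merle blueprint, but there is a genuine gap in part~(a) and the approaches to (b) and (c) diverge from the paper's.

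\textbf{Gap in (a).} You propose to show the infimum of $(L_+u,u)/(L_-^{-1}u,u)$ over radial $u\perp W$ is negative by testing with $W$, invoking $(L_+W,W)=-2\|\nabla W\|_2^2<0$. But $W\not\perp W$, so $W$ is not an admissible test function, and the negative eigenfunction $Z$ of $L_+$ need not be orthogonal to $W$ either. The paper (working with the conjugated self-adjoint operator $\mathcal{P}=(L_-)^{1/2}L_+(L_-)^{1/2}$) fills this gap by exploiting $\widetilde{W}\in\ker L_+$ together with $(\widetilde{W},W)_{L^2}\neq 0$: one chooses $\alpha_1$ so that $Z+\alpha_1\widetilde{W}\perp W$ in $L^2$, while $(L_+(Z+\alpha_1\widetilde{W}),Z+\alpha_1\widetilde{W})=(L_+Z,Z)<0$ is unchanged. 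Without this correction your variational argument does not establish negativity.

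\textbf{Different routes for (b) and (c).} The paper does \emph{not} prove (c) by counting negative eigenvalues of $L_-L_+$ via Sturm--Liouville; instead it first proves (b) and then deduces (c) from (b): if $\mathcal{L}f=e_1f$ with $e_1\in\R\setminus\{0,\pm e_0\}$, the skew relation $B(\mathcal{L}\cdot,\cdot)=-B(\cdot,\mathcal{L}\cdot)$ forces $B(f,\mathcal{Y}_\pm)=B(f,f)=0$, so the $G_\bot$-component $g$ of $f$ satisfies $\Phi(g)=0$, whence $g=0$ by coercivity and $f\in\mathrm{span}\{iW,\widetilde{W}\}\subset\ker\mathcal{L}$, a contradiction. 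Your eigenvalue-counting route could in principle work but requires an inertia argument linking the negative index of $L_+$ on $\{W\}^\perp$ to that of $\mathcal{P}$, which you do not supply.

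For (b), your description (``on the remaining $\mathcal{L}$-invariant subspace \dots\ $\Phi$ is strictly positive with a spectral gap'') is problematic: $\mathcal{L}$ is not self-adjoint, $G_\bot$ is not $\mathcal{L}$-invariant, and there is no off-the-shelf spectral gap for $\Phi$ on such a subspace. The paper instead bootstraps directly from Proposition~\ref{coerH}: writing $f=\alpha W+\widetilde{h}$ with $\widetilde{h}\in H^\bot$ and decomposing $\mathcal{Y}_\pm$ similarly with $H^\bot$-parts $h_\pm$, an algebraic identity gives $\Phi(f)=\Phi(\widetilde{h})-\dfrac{B(h_+,\widetilde{h})B(h_-,\widetilde{h})}{\sqrt{\Phi(h_+)\Phi(h_-)}}$, and a Cauchy--Schwarz argument (using the independence of $h_+$ and $h_-$, verified separately) yields $\Phi(f)\geq (1-b)\Phi(\widetilde{h})$ for some $b<1$; coercivity on $H^\bot$ then controls $\widetilde{h}$, and the relation $\alpha^2\Phi(W)+\Phi(\widetilde{h})=\Phi(f)$ controls $\alpha$.
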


The proof of Proposition \ref{coerH} and Proposition \ref{spectral}
are analogue to that of Claim 3.5,  Lemma 5.1 and Corollary 5.3 in
\cite{DuyMerle:NLS:ThresholdSolution}. For the sake of completeness,
We prove it in Appendix \ref{appen CoerH} and Appendix
\ref{appen-spectralprop}.

\begin{remark}\label{r:bilinear form prop}
As a consequence of the definition of $\Phi$ and $B$, we have for
any $ f, g \in \dot H^1, \mathcal{L}f, \mathcal{L}g \in \dot H^1 $,
and $ h\in H^{\bot}$
\begin{align}
\Phi(iW)=\Phi(\widetilde{W})=&\Phi(\mathcal{Y}_\pm)=0, \;\; \Phi(W)=-\big\|\nabla W\big\|^2_2<0,  \label{Phi}\\
B(f,g)=&B(g,f), \;\; B(\mathcal{L}f, g)=-B(f,\mathcal{L}g), \label{B-pro}\\
B(iW, f)=& B(\widetilde{W}, f)=0,\;\;  B(W, h)=0
.\label{B-orthEnerHbot}
\end{align}
\end{remark}

\begin{corollary}
As a consequence of  Proposition \ref{coerH}, we have
\begin{align}
B(\mathcal{Y}_+, \mathcal{Y}_-)\not = 0. \label{B-orthEigen}
 \end{align}
\end{corollary}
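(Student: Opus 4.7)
The plan is to argue by contradiction, assuming $B(\mathcal{Y}_+,\mathcal{Y}_-)=0$ and exploiting the coercivity of $\Phi$ on $G_{\bot}\cap \dot H^1_{rad}$ (Proposition \ref{spectral}(b)) to force $\mathcal{Y}_+$ into the kernel of $\mathcal{L}$, contradicting $e_0>0$.

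First I would look for an auxiliary element. Since $\mathcal{Y}_+$ itself is not a priori in $G_{\bot}$ (it need not be $\dot H^1$-orthogonal to $iW$ and $\widetilde W$), I would correct it and define
\begin{equation*}
h := \mathcal{Y}_+ + a\, iW + b\, \widetilde W,
\end{equation*}
where the real constants $a,b$ are uniquely chosen so that $(iW,h)_{\dot H^1}=(\widetilde W,h)_{\dot H^1}=0$. This is a $2\times 2$ linear system which is solvable because $iW$ and $\widetilde W$ have nonzero $\dot H^1$-norms (and are orthogonal in $\dot H^1$, $iW$ being purely imaginary and $\widetilde W$ real). Note $h$ is radial since $\mathcal{Y}_+, iW,\widetilde W$ are all radial.

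Next I would verify $B(\mathcal{Y}_{\pm},h)=0$, so that $h\in G_{\bot}$. Expanding the bilinear form and using $L_-W=0$ together with $L_+\widetilde W=0$ gives $B(\mathcal{Y}_{\pm},iW)=0$ and $B(\mathcal{Y}_{\pm},\widetilde W)=0$. Combined with $\Phi(\mathcal{Y}_+)=B(\mathcal{Y}_+,\mathcal{Y}_+)=0$ from \eqref{Phi} and the standing assumption $B(\mathcal{Y}_+,\mathcal{Y}_-)=0$, this yields $B(\mathcal{Y}_+,h)=B(\mathcal{Y}_-,h)=0$. Hence $h\in G_{\bot}\cap \dot H^1_{rad}$, and Proposition \ref{spectral}(b) gives $\Phi(h)\geq c\|h\|_{\dot H^1}^2$.

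Finally I would compute $\Phi(h)=B(h,h)$ by expanding all nine cross-terms in $\mathcal{Y}_+, iW, \widetilde W$. Every term involving $iW$ or $\widetilde W$ vanishes from $L_-W=0$, $L_+\widetilde W=0$ and \eqref{Phi} (which gives $\Phi(iW)=\Phi(\widetilde W)=0$), and the remaining term $B(\mathcal{Y}_+,\mathcal{Y}_+)$ is $0$ as well. Hence $\Phi(h)=0$, forcing $h=0$, i.e. $\mathcal{Y}_+=-a\,iW-b\,\widetilde W\in\ker\mathcal{L}$. Applying $\mathcal{L}$ then yields $e_0\mathcal{Y}_+=0$, so $\mathcal{Y}_+=0$, contradicting the existence of the eigenfunction. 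The main (minor) obstacle is just bookkeeping the real/imaginary split in the definition of $B$ to ensure all the mixed terms really cancel; the rest is an algebraic consequence of Proposition \ref{spectral}(b) and Remark \ref{r:bilinear form prop}.
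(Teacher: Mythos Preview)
Your argument is correct, but it takes a different route from the paper and, in one respect, a slightly less economical one. The paper proves the corollary directly from Proposition~\ref{coerH} (as the statement ``As a consequence of Proposition~\ref{coerH}'' indicates) via a pure dimension count: if $B(\mathcal{Y}_+,\mathcal{Y}_-)=0$, then together with \eqref{Phi} and \eqref{B-orthEnerHbot} one checks that $\Phi$ vanishes identically on the four-dimensional subspace $\mathrm{span}\{iW,\widetilde{W},\mathcal{Y}_+,\mathcal{Y}_-\}$ of $\dot H^1_{rad}$; since $H^{\bot}\cap\dot H^1_{rad}$ has codimension three, this subspace must meet $H^{\bot}\cap\dot H^1_{rad}$ nontrivially, contradicting the coercivity of $\Phi$ there.

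By contrast, you invoke Proposition~\ref{spectral}(b) (coercivity on $G_{\bot}$) rather than Proposition~\ref{coerH}, and construct the contradicting element explicitly as the projection of $\mathcal{Y}_+$ off $\mathrm{span}\{iW,\widetilde{W}\}$. This is logically sound---the proof of Proposition~\ref{spectral}(b) in Appendix~\ref{appen-spectralprop} does not use the present corollary, so there is no circularity---but it leans on a result whose proof is substantially more involved than Proposition~\ref{coerH}. Your approach has the virtue of being completely explicit about the offending element, while the paper's dimension-counting argument is shorter and stays closer to the announced hypothesis.
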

\begin{proof} If $B(\mathcal{Y}_+, \mathcal{Y}_-) = 0$, then for any
$h\in \text{span} \{iW, \widetilde{W}, \mathcal{Y}_{\pm}\}$, which
is of dimension 4, we have
$$
\Phi(h)=0.
$$
But by Proposition \ref{coerH}, we know that $\Phi$ is positive on
$H^{\bot} \cap \dot H^1_{rad}$, which is of codimension 3. It is a
contradiction.
\end{proof}

\begin{remark}
As a consequence of Proposition \ref{spectral}, we have Lemma
\ref{keyassumption}. In fact, it suffices to show the inclusion
relation ``$\subseteq$''. If the dimension of $\big\{u\in L^2_{rad},
\mathcal{L}u=0\big\}$ was strictly higher than two, we could find
$\ZZZ \in L^2_{rad}$, $\ZZZ\not =0$, such that $\mathcal{L}\ZZZ=0$.
Let \begin{equation*}\ZZZ'=\ZZZ- (\ZZZ, iW)_{\dot H^1} \frac{iW}
{\big\|W\big\|^2_{\dot H^1}} - (\ZZZ, \widetilde{W})_{\dot H^1}
\frac{\widetilde{W}}{\big\|\widetilde{W}\big\|^2_{\dot H^1}} \not
=0,
\end{equation*} which implies that $\mathcal{L}\ZZZ'=0$ and
\begin{align*}
(\ZZZ', iW)_{\dot H^1}& =0,\;  (\ZZZ', \widetilde{W})_{\dot H^1}=0,
\;  B(\YYY_{\pm}, \ZZZ')  =\pm\frac{ 1}{e_0}
B(\mathcal{L}\YYY_{\pm}, \ZZZ') =\mp B(\YYY_{\pm}, \mathcal{L}
\ZZZ')=0.
\end{align*}
Thus $\ZZZ'\in G_{\bot} \backslash \{0\}$. While $ \Phi(\ZZZ')=
(\mathcal{L}\ZZZ', \ZZZ')_{L^2}=0, $ which contradicts the
coercivity of $\Phi$ on $G_{\bot} \cap \dot H^1_{rad}$.
\end{remark}

\begin{corollary}As a consequence of Proposition
\ref{spectral}, we have
\begin{equation}\label{WYrelation}
\aligned \int \nabla W \cdot \nabla \mathcal{Y}_1 \; dx \not =0,
\quad \text{where}\; \mathcal{Y}_1=\Re \mathcal{Y}_+.
\endaligned
\end{equation}
\end{corollary}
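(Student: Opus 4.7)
The plan is to argue by contradiction: assume $\int \nabla W \cdot \nabla \mathcal{Y}_1\, dx = 0$ and construct a \emph{nonzero} radial vector in $H^{\bot}$ on which $\Phi$ vanishes, in direct conflict with the coercivity of Proposition \ref{coerH}.

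First I would establish that $W$, $iW$, $\widetilde W$ are pairwise orthogonal in the real Hilbert space $\dot H^1(\R^d,\C)$. The pair $(W,iW)_{\dot H^1}$ and $(iW,\widetilde W)_{\dot H^1}$ vanish by a direct complex-conjugation check. For $(W,\widetilde W)_{\dot H^1}$, note that \eqref{gs} gives $L_+W = 2\Delta W$; integrating against $\widetilde W$ and using $L_+\widetilde W = 0$ yields $\int \nabla W\cdot\nabla\widetilde W\,dx = -\tfrac12\int W\,L_+\widetilde W\,dx = 0$. Consequently, the $\dot H^1$-orthogonal projection of $\mathcal{Y}_+$ onto $H=\mathrm{span}\{W, iW, \widetilde W\}$ is $\alpha W+\beta\,iW+\gamma\,\widetilde W$, with $\alpha = (W,\mathcal{Y}_+)_{\dot H^1}/\|\nabla W\|_2^2$, and the contradiction hypothesis is exactly $\alpha = 0$. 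Set
\[
\tilde{\mathcal Y} := \mathcal{Y}_+ - \beta\, iW - \gamma\,\widetilde W \in H^{\bot}.
\]

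Next I would verify that $\tilde{\mathcal Y}$ is a legitimate test vector for Proposition \ref{coerH}: radiality is inherited from $\mathcal{Y}_+, iW, \widetilde W$, and if $\tilde{\mathcal Y}=0$ then $\mathcal{Y}_+\in\mathrm{span}\{iW,\widetilde W\}=\ker\mathcal L$ by Lemma \ref{keyassumption}, contradicting $\mathcal L\mathcal{Y}_+ = e_0\mathcal{Y}_+\neq 0$. Then I would compute $\Phi(\tilde{\mathcal Y})=B(\tilde{\mathcal Y},\tilde{\mathcal Y})$ by expanding bilinearly: every occurrence of $iW$ or $\widetilde W$ inside $B$ vanishes by \eqref{B-orthEnerHbot} (together with the symmetry in \eqref{B-pro}), so all the cross terms and the $\beta^2, \gamma^2, \beta\gamma$ terms disappear, leaving $\Phi(\tilde{\mathcal Y})=B(\mathcal{Y}_+,\mathcal{Y}_+)$. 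The anti-symmetry $B(\mathcal L f,g)=-B(f,\mathcal L g)$ from \eqref{B-pro}, applied with $f=g=\mathcal{Y}_+$ and $\mathcal L\mathcal{Y}_+=e_0\mathcal{Y}_+$, forces $2e_0\,B(\mathcal{Y}_+,\mathcal{Y}_+)=0$, so $\Phi(\tilde{\mathcal Y})=0$. This collides with Proposition \ref{coerH}, which yields $\Phi(\tilde{\mathcal Y})\geq c\|\tilde{\mathcal Y}\|_{\dot H^1}^2>0$.

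The whole argument is essentially linear algebra on the finite-dimensional subspace generated by $W, iW, \widetilde W$ and $\mathcal{Y}_\pm$; the main potential obstacle is the bookkeeping that identifies the hypothesis $\int \nabla W\cdot\nabla\mathcal{Y}_1\,dx = 0$ with the vanishing of precisely the $W$-component of the projection of $\mathcal{Y}_+$ onto $H$ (so that $\tilde{\mathcal Y}$ genuinely lands in $H^{\bot}$, not merely in the larger subspace $\{iW,\widetilde W\}^{\bot}$). Once the orthogonality of $W,iW,\widetilde W$ is in place, that identification is immediate, and the rest of the argument runs formally on the already-available spectral and coercivity inputs.
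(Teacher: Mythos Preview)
Your proof is correct, but it takes a route dual to the paper's. The paper assumes $(W,\mathcal Y_1)_{\dot H^1}=0$, computes $e_0\,B(\mathcal Y_\pm,W)=-B(\mathcal Y_\pm,\mathcal L W)$ and uses $L_+W=2\Delta W$ together with $L_-\mathcal Y_2=-e_0\mathcal Y_1$ to conclude $B(\mathcal Y_\pm,W)=0$; since also $(W,iW)_{\dot H^1}=(W,\widetilde W)_{\dot H^1}=0$, this places $W$ itself in $G_\bot$, and the coercivity of Proposition~\ref{spectral}(b) then contradicts $\Phi(W)=-\|\nabla W\|_2^2<0$. You instead project $\mathcal Y_+$ onto $H^\bot$ (the hypothesis kills exactly the $W$-component), obtain a nonzero $\tilde{\mathcal Y}\in H^\bot$ with $\Phi(\tilde{\mathcal Y})=\Phi(\mathcal Y_+)=0$, and contradict the coercivity of Proposition~\ref{coerH}. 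Both arguments are short linear algebra on the same finite-dimensional span; the paper pushes the ``bad'' direction $W$ into the coercive subspace $G_\bot$, while you push the ``null'' direction $\mathcal Y_+$ into the coercive subspace $H^\bot$. Your version has the mild advantage of relying only on Proposition~\ref{coerH}, which is the more primitive input (Proposition~\ref{spectral}(b) is itself derived from it in Appendix~\ref{appen-spectralprop}); the paper's version avoids the projection step and works directly with $W$.
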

\begin{proof}
On one hand, we have
$$(W, iW)_{\dot H^1}=(W, \widetilde{W})_{\dot H^1}=0.$$
On the other hand, by \eqref{gs}, we know $ L_+(W)= 2 \Delta W$. If
$ (W, \mathcal{Y}_1)_{\dot H^1} =0,$ then by \eqref{B-pro}, we
obtain
\begin{equation*}
\aligned e_0 B(\mathcal{Y}_{\pm}, W) = &  B(\mathcal{L}
\mathcal{Y}_{\pm}, W) = - B(\mathcal{Y}_{\pm}, \mathcal{L} W) \\
=& -\frac12 \int L_{-}\mathcal{Y}_2 \cdot L_+(W) = \frac12 \int e_0
\mathcal{Y}_1 \cdot  L_+(W)  =  \int e_0 \mathcal{Y}_1 \cdot \Delta
W =0,
\endaligned
\end{equation*}
which means that $W \in G_{\bot}$. By Proposition \ref{spectral}, we
have $\Phi(W) \gtrsim \big\|W \big\|^2_{\dot H^1}$. This contradicts
the fact \eqref{Phi}. This completes the proof.
\end{proof}

%
%
%
%

\section{Existence of special solutions}\label{S:existence}

In this section, we first construct a family of approximate
solutions to \eqref{har} by making use of the knowledge about the real
eigenvalues of the linearized operator $\mathcal{L}$, and then prove
the existence of $U^a$ by a fixed point argument around an
approximate solution.

\subsection{A family of approximate solutions converging to $W$ as $t\rightarrow +\infty$.}

\begin{proposition}\label{constru:approximatesolution}
Let $a\in \R$. There exists a sequence of functions
$(\ZZZ^a_j)_{j\geq 1}$ in $\mathcal{S}(\R^d)$ such that
$\ZZZ^a_1=a\mathcal{Y}_+$,   and if
\begin{equation}\label{h app}
\aligned
 h^a_k(t,x):=  \sum^k_{j=1}e^{-je_0t}\mathcal{Z}^a_j,\; k\in \Z^+,
\endaligned\end{equation}
then as $t\rightarrow +\infty$,
\begin{equation}\label{approxerror}
\aligned \epsilon_k:=&  \partial_t  h^a_k + \mathcal{L}  h^a_k  -
R(h^a_k) = O(e^{-(k+1)e_0t}) \quad \text{in }\;\mathcal{ S }(\R^d).
\endaligned
\end{equation}
\end{proposition}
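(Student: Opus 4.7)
The plan is to construct the $\ZZZ^a_j$ by induction on $j$, choosing each new term to kill the leading residual in the expansion of $\partial_t h^a_k + \mathcal{L}h^a_k - R(h^a_k)$ in powers of $e^{-e_0 t}$. For the base case $j=1$, the choice $\ZZZ^a_1 = a\mathcal{Y}_+$ is forced: one needs $(\partial_t + \mathcal{L})(e^{-e_0 t}\ZZZ^a_1) = 0$, i.e.\ $(\mathcal{L} - e_0)\ZZZ^a_1 = 0$, which by Proposition \ref{spectral}(a) gives $\ZZZ^a_1 \in \mathrm{span}\{\mathcal{Y}_+\} \subset \mathcal{S}(\R^d)$. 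Since $R$ is purely quadratic and cubic in $h$ (see \eqref{remainder}), $R(h^a_1) = O(e^{-2e_0 t})$ in $\mathcal{S}(\R^d)$, so $\epsilon_1 = O(e^{-2e_0 t})$.

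For the inductive step, assume $\ZZZ^a_1, \ldots, \ZZZ^a_k \in \mathcal{S}(\R^d)$ have been constructed with $\epsilon_k = O(e^{-(k+1)e_0 t})$ in $\mathcal{S}$. Expand $R(h^a_k)$ as a finite polynomial in $e^{-e_0 t}$ by collecting, from the quadratic and cubic parts of $R$ in \eqref{remainder}, all multi-indices $(j_1,j_2)$ or $(j_1,j_2,j_3)$ with $j_i \geq 1$ whose sum equals $k+1$. Since each such $j_i \leq k$, the coefficient $F_{k+1}$ at order $e^{-(k+1)e_0 t}$ is a fixed polynomial expression in $W$ and in $\ZZZ^a_1, \ldots, \ZZZ^a_k$ alone, so it is already determined by the inductive data. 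Adding a term $e^{-(k+1)e_0 t}\ZZZ^a_{k+1}$ to $h^a_k$ contributes $e^{-(k+1)e_0 t}(\mathcal{L} - (k+1)e_0)\ZZZ^a_{k+1}$ to the residual at this order, so I would define $\ZZZ^a_{k+1}$ by solving $(\mathcal{L} - (k+1)e_0)\ZZZ^a_{k+1} = F_{k+1}$. Since $(k+1)e_0 \geq 2e_0$ lies outside $\sigma(\mathcal{L}) \cap \R = \{-e_0, 0, e_0\}$ and outside $\sigma_{ess}(\mathcal{L}) = i\R$ (Proposition \ref{spectral}(c)), the resolvent $(\mathcal{L} - (k+1)e_0)^{-1}$ is a bounded operator on $L^2(\R^d)$, producing a solution in $H^2(\R^d)$. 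An elliptic bootstrap then promotes this to $C^\infty$, using the smoothness of $W$, of $|x|^{-4}*(\cdot)$ acting on smooth rapidly decaying inputs, and of the previously constructed $\ZZZ^a_j$.

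The main obstacle I anticipate is upgrading $H^2$ regularity to $\mathcal{S}(\R^d)$, i.e.\ obtaining rapid decay of $\ZZZ^a_{k+1}$. The complication is that the nonlocal convolution $|x|^{-4} * f$ of a Schwartz $f$ generically decays only like $|x|^{-4}$ at infinity (absent cancellation of moments), so the source $F_{k+1}$ is not a priori Schwartz despite being built from Schwartz-class $\ZZZ^a_j$'s. I would overcome this by an Agmon-type exponential decay estimate for the resolvent of $\mathcal{L} - (k+1)e_0$, available precisely because of the spectral gap between $(k+1)e_0$ and $\sigma_{ess}(\mathcal{L}) = i\R$; combined with the rapid decay of the eigenfunctions $\mathcal{Y}_\pm$ from Proposition \ref{spectral}(a), this forces $\ZZZ^a_{k+1}$ to decay faster than any polynomial in spite of the polynomially decaying input. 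Once $\ZZZ^a_{k+1} \in \mathcal{S}(\R^d)$ is produced, the $e^{-(k+1)e_0 t}$ contribution to $\epsilon_k$ cancels by construction, the remaining residual terms (from multi-indices summing to $\geq k+2$) already lie in $\mathcal{S}$, and a count of orders yields $\epsilon_{k+1} = O(e^{-(k+2)e_0 t})$ in $\mathcal{S}(\R^d)$, closing the induction.
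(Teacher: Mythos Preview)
Your proposal is correct and follows essentially the same inductive scheme as the paper: set $\ZZZ^a_1=a\mathcal{Y}_+$, then at each step invert $\mathcal{L}-(k+1)e_0$ (which is legitimate by Proposition~\ref{spectral}(c)) on the leading coefficient of the residual, and use that $R$ is at least quadratic to close the induction. Your discussion of the Schwartz regularity of $\ZZZ^a_{k+1}$ is in fact more explicit than the paper's, which simply cites \cite[Remark~7.2]{DuyMerle:NLS:ThresholdSolution} for the Agmon-type decay argument you outline; your observation that the convolution $|x|^{-4}*f$ only decays polynomially, so that the source term is not a priori Schwartz, is exactly the point that reference addresses.
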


\begin{remark}
Let
\begin{equation}\label{u app}
\aligned
 U^a_k(t,x):= W(x)+ h^a_k(t,x),
\endaligned\end{equation}
then as $t\rightarrow +\infty$,
\begin{equation*}
\aligned \epsilon_k:=& i \partial _t U^a_k + \Delta  U^a_k  + \Big(
\frac{1}{|x|^{4}}* |U^a_k |^2 \Big) U^a_k = O(e^{-(k+1)e_0t}) \quad
\text{in }\; \mathcal{ S }(\R^d).
\endaligned
\end{equation*}
\end{remark}

\begin{proof}[Proof of Proposition \ref{constru:approximatesolution}]
We prove it by induction. Note that $h^a_1:=e^{-e_0t}\ZZZ^a_1$, we
have
$$
\partial_t h^a_1 + \LLL h^a_1-R(h^a_1)= -R(h^a_1)
= -R \left(e^{-e_0t}\ZZZ^a_1\right).
$$
By the definition of \eqref{remainder}, we know that $R$ has at
least the quadratic term, this implies that  $R
\left(e^{-e_0t}\ZZZ^a_1\right) = O(e^{-2e_0t})$.  Therefore, we
conclude \eqref{approxerror} for $k=1$.

Let $k\geq 1$ and assume that there exist $\ZZZ^a_1,\ldots,\ZZZ^a_k$
such that $h^a_k$ satisfies \eqref{approxerror}, then there exists
$\UUU^a_{k+1}\in \mathcal{S}$ such that, as $t\rightarrow +\infty$,
$$
\partial_t h^a_k + \LLL h^a_k = R(h^a_k) + e^{-(k+1)e_0 t} \UUU^a_{k+1}
+ O \left(e^{-(k+2)e_0t} \right)\; \text{ in }\; \mathcal{ S }.
$$
By Proposition \ref{spectral}, $(k+1)e_0$ is not in the spectrum of
$\LLL$. Define $$\ZZZ^a_{k+1}:= -\left(\LLL-(k+1)e_0\right)^{-1}
\UUU^a_{k+1}.$$ By the similar argument as in the proof in \cite[Remark
7.2]{DuyMerle:NLS:ThresholdSolution}, we know that that
$\ZZZ^a_{k+1}\in \mathcal{ S }$. Then we have
\begin{equation}
 \label{eqVk1}
\partial_t\left(h^a_k +e^{-(k+1)e_0t}\ZZZ^a_{k+1}\right)
+ \LLL \left(h^a_k+e^{-(k+1)e_0t}\ZZZ^a_{k+1}\right)\\
=R(h^a_k)+ O\left(e^{-(k+2)e_0t} \right).
\end{equation}

Denote $$h^a_{k+1}:= h^a_k+e^{-(k+1)e_0 t} \ZZZ^a_{k+1}.$$ By
\eqref{eqVk1}, $h^a_{k+1}$ satisfies
\begin{equation}
 \label{eqVk+1}
\partial_t h^a_{k+1}+\LLL h^a_{k+1} - R(h^a_{k+1})=
R(h^a_{k})-R(h^a_{k+1})+O\left(e^{-(k+2)e_0 t}\right) \; \text{as}\;
t\rightarrow +\infty.
\end{equation}

Since
\begin{equation*}
h^a_j=  O\left(e^{-e_0t}\right) \;\; \text{for}\;\; j=k,k+1,
\;\;\text{and}\;\; h^a_k-h^a_{k+1}=  O(e^{-(k+1)e_0t}) \;
\text{as}\;t\rightarrow +\infty,
\end{equation*} we obtain
\begin{equation*}
R(h^a_{k})-R(h^a_{k+1}) = O\left(e^{-(k+2)e_0 t}\right) \;
\text{as}\; t\rightarrow +\infty.
\end{equation*}
This together with \eqref{eqVk+1} shows the desired estimate
\eqref{approxerror} for $k+1$, so we completes the proof.
\end{proof}

\subsection{Construction of special solutions near an approximate solution.} Now we prove the
existence of the special solution with the threshold energy by a
fixed point argument.
\begin{proposition}\label{existence:thresholdsolution}
Let $a \in \R$. There exist $k_0>0$ and $t_0 \geq 0$ such that for
any $k\geq k_0$, there exists a radial solution $U^a$ of \eqref{har}
such that for $t\geq t_0$
\begin{equation}\label{difference:Stri} \aligned
\left\|  U^a-U^a_k  \right\|_{l(t, +\infty)}  \leq
e^{-(k+\frac{1}{2})e_0t}.
\endaligned
\end{equation}
Furthermore $U^a$ is the unique solution of \eqref{har} satisfying
\eqref{difference:Stri} for large $t$. Finally, $U^a$ is independent
of $k$ and satisfies, for $t\geq t_0$,
\begin{equation}\label{difference:Energy}
\aligned   \big\|  U^a(t)- W - a e^{-e_0t}\mathcal{Y}_+ \big\|_{\dot
H^1}  \leq e^{-\frac{3}{2}e_0t}.
\endaligned
\end{equation}
\end{proposition}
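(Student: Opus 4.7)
The plan is to realize $U^a$ as a fixed point of the Duhamel integral equation solved backwards from $+\infty$. Setting $w:=U^a-U^a_k$ and subtracting from the NLS the equation $i\partial_t U^a_k+\Delta U^a_k+(|\cdot|^{-4}*|U^a_k|^2)U^a_k=\epsilon_k$ of the remark following Proposition \ref{constru:approximatesolution}, we obtain
\[
i\partial_t w+\Delta w=-\epsilon_k-F_k(w),\quad F_k(w):=\bigl(|\cdot|^{-4}*|U^a_k+w|^2\bigr)(U^a_k+w)-\bigl(|\cdot|^{-4}*|U^a_k|^2\bigr)U^a_k.
\]
Imposing $w(t)\to 0$ in $\dot H^1$ as $t\to+\infty$ converts this into the fixed-point equation $w=\Psi(w)$ with $\Psi(w)(t):=-i\int_t^{+\infty}e^{i(t-s)\Delta}[\epsilon_k(s)+F_k(w)(s)]\,ds$, and I would work in the complete metric space
\[
E_k:=\Bigl\{w\in l(t_0,+\infty):\ \|w\|_{l(t,+\infty)}+\|\nabla w\|_{S(t,+\infty)}\leq e^{-(k+\tfrac12)e_0 t}\text{ for every }t\geq t_0\Bigr\}
\]
equipped with the induced weighted distance, and show $\Psi$ is a contraction for $\tau_0$ small, $t_0$ large, and $k\geq k_0$ large.

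The core estimate is per-slab Strichartz combined with integral summation. On each short interval $I_n=[t+n\tau_0,t+(n+1)\tau_0]$, Lemma \ref{Striestimate} reduces the bound on $\Psi(w)$ to controlling $\|\nabla(\epsilon_k+F_k(w))\|_{N(I_n)}$. I would decompose $F_k(w)=Vw+(V_{U^a_k}-V)w+Q_k(w)$, with $V$ the linearization around $W$ from \eqref{linearterm} and $Q_k$ collecting the strictly nonlinear terms: Lemma \ref{linearoperator:prelimestimate}\,\eqref{linearestimate} bounds $\|\nabla(Vw)\|_{N(I_n)}\lesssim \tau_0^{1/3}\|w\|_{l(I_n)}$; the correction $(V_{U^a_k}-V)w$ involves only the Schwartz factor $h^a_k=O(e^{-e_0 t})$ and so picks up an extra $e^{-e_0(t+n\tau_0)}$; the remainder is $\lesssim \|w\|_{l(I_n)}^2+\|w\|_{l(I_n)}^3$ via \eqref{nonlinearestimates}; and $\|\nabla\epsilon_k\|_{N(I_n)}\lesssim e^{-(k+1)e_0(t+n\tau_0)}$ by the Schwartz decay in Proposition \ref{constru:approximatesolution}. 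Summing via Lemma \ref{summation} yields, for $t\geq t_0$ and $k\geq k_0$ large,
\[
\|\Psi(w)\|_{l(t,+\infty)}+\|\nabla\Psi(w)\|_{S(t,+\infty)}\leq C_\star\bigl(\tau_0^{1/3}+e^{-\tfrac12 e_0 t_0}\bigr)e^{-(k+\tfrac12)e_0 t}.
\]
Choosing $\tau_0$ small and then $t_0$ large so that $C_\star(\tau_0^{1/3}+e^{-\tfrac12 e_0 t_0})\leq \tfrac12$ makes $\Psi$ a self-map of $E_k$, and estimate \eqref{nonlinearestimates} applied to $F_k(w_1)-F_k(w_2)$ gives the matching contraction bound. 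Banach's theorem produces a unique $w\in E_k$, and $U^a:=U^a_k+w$ is a strong $\dot H^1$ solution of \eqref{har} on $[t_0,+\infty)$, extendable by Theorem \ref{T:local}.

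Uniqueness under \eqref{difference:Stri} is obtained by applying the contraction estimate to the difference of any two candidate solutions. Independence of $k$ follows because for $k'\geq k\geq k_0$ the function $h^a_{k'}-h^a_k=\sum_{j=k+1}^{k'}e^{-je_0 t}\mathcal{Z}^a_j$ is Schwartz with $l$-norm $\lesssim e^{-(k+1)e_0 t}$, so the solution $U^{a,k'}$ also lies in $E_k$ for $t$ large, and uniqueness at level $k$ forces $U^{a,k'}=U^{a,k}$. The refined bound \eqref{difference:Energy} then follows by taking $k\geq 2$ and decomposing
\[
U^a-W-ae^{-e_0 t}\mathcal{Y}_+=(U^a-U^a_k)+\sum_{j=2}^k e^{-je_0 t}\mathcal{Z}^a_j;
\]
the first piece is bounded in $L^\infty_t\dot H^1$ by $e^{-(k+\tfrac12)e_0 t}$ (upgrading the $l$-estimate on $w$ via Strichartz) and the second by $O(e^{-2e_0 t})$ in $\dot H^1$ since each $\mathcal{Z}^a_j$ is Schwartz, both being $\lesssim e^{-\tfrac32 e_0 t}$. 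The main obstacle is the joint tuning of the three parameters so that every contribution fits within the target weight $e^{-(k+\tfrac12)e_0 t}$: the source and quadratic nonlinearities decay strictly faster and are harmless for $k$ large, but the linear-in-$w$ term $Vw$ only produces $\tau_0^{1/3}$ smallness under Strichartz rather than exponential gain, which is exactly what forces $\tau_0$ to be chosen first and makes Lemma \ref{summation} indispensable for converting per-slab Strichartz estimates into the weighted global bound that closes the fixed point.
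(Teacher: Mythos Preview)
Your proof is correct and follows essentially the same route as the paper. The paper writes the equation for $e=U^a-U^a_k$ in the form $i\partial_t e+\Delta e=Ve+i\bigl(R(h^a_k+e)-R(h^a_k)\bigr)-i\epsilon_k$ (working in the $h=u-W$ variable so that the $W$-linearization $V$ appears directly), sets up the same backward Duhamel map on the weighted ball $\{\|e\|_{l(t,+\infty)}\le e^{-(k+\frac12)e_0t}\}$, and closes the contraction via the identical per-slab Strichartz estimates plus Lemma~\ref{summation}; the role of $k_0$ there is to keep the summation constant $\bigl(1-e^{-(k+\frac12)e_0\tau_0}\bigr)^{-1}$ bounded once $\tau_0$ has been fixed small, which is the one point your write-up leaves slightly implicit. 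Your further decomposition $F_k(w)=Vw+(V_{U^a_k}-V)w+Q_k(w)$ is just an explicit relabeling of the paper's term $R(h^a_k+e)-R(h^a_k)$, and the remaining steps (uniqueness, independence of $k$, and the $\dot H^1$ upgrade yielding \eqref{difference:Energy}) match the paper's proof.
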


\begin{proof}
The function $U^a$ is solution of \eqref{har} if and only if
$h^a:=U^a-W$ is solution of
\begin{equation*}
\aligned
\partial_t h^a + \mathcal{L} h^a = R(h^a).
\endaligned
\end{equation*}
By \eqref{approxerror},  $h^a_k:=U^a_k-W$ satisfies that
\begin{equation*}
\aligned \partial_t  h^a_k + \mathcal{L}  h^a_k  = R(h^a_k) +
\epsilon_k.
\endaligned
\end{equation*}
Therefore, $U^a$ satisfies \eqref{har} if and only if $e
:=U^a-U^a_k=h^a-h^a_k$ satisfies
$$\partial_t e +\LLL
e = R(h^a_k+e )- R(h^a_k)-\eps_k.$$ This may be rewritten
\begin{align}
\label{solve:h}  i\partial_t e+\Delta e =& V e+  i R(h^a_k+e)-i
R(h^a_k)-i\eps_k.
\end{align}

Let
\begin{equation*}
\aligned \MMM_k(e)(t):=-\int_t^{+\infty}e^{i(t-s)\Delta} \Big(i
Ve(s)-R\big(h^a_k(s)+e(s)\big)+R(h^a_k(s))+ \eps_k(s)\Big) ds.
\endaligned
\end{equation*}
It is easy to see that the  solution $U^a$ of \eqref{har} satisfying
\eqref{difference:Stri} for $t\geq t_0$ is equivalent to the
 fixed point of the following  integral  equation
\begin{equation}
\label{defM} \aligned
 \forall \; t\geq t_0,\quad
e(t)=\MMM_k(e)(t)\quad \text{ and }\quad  \|  e \|_{l(t,+\infty)}  \leq
e^{-\left(k+\frac{1}{2}\right)e_0 t}.
\endaligned
\end{equation}

Let us fix $k$ and $t_0$. Denote
\begin{align*}
E_{l}^k&:=\Big\{e\in Z(t_0,+\infty),\;\nabla e\in S(t_0,+\infty);\;
  \|e\|_{E_{l}^k}:=\sup_{t\geq t_0} e^{\left(k+\frac{1}{2}\right)e_0 t}  \| e \|_{l(t,+\infty)}  <\infty\Big\},\\
B_{l}^k&:=\big\{e\in Z(t_0,+\infty),\;\nabla e\in S(t_0,+\infty);\;
  \sup_{t\geq t_0} e^{\left(k+\frac{1}{2}\right)e_0 t}  \| e \|_{l(t,+\infty)} \leq 1\big\}.
\end{align*}
The space $E_{l}^k$ is a Banach space. In view of \eqref{defM}, it
is sufficient to show that if $t_0$ and $k$ are large enough, the
mapping $\MMM_k$ is a contraction on $B_l^k$.

Let $e \in B_{l}^k$, $k\geq 1$. By the Strichartz estimate, we have
\begin{align*}
\| \MMM_k(e) \|_{l(t,+\infty)}  \leq  C^*\Big(\|\nabla \big( V
 e \big) \|_{N(t,+\infty)}  +\|\nabla
\big(R(h^a_k+e)-R(h^a_k)\big)\|_{N(t,+\infty)}+\|\nabla
\eps_k\|_{N(t,+\infty)}\Big).
\end{align*}

We first estimate $\|\nabla \big(V e\big) \|_{N(t,+\infty)} $. Let
$\tau_0\in (0,1)$. By Lemma \ref{linearoperator:prelimestimate}, we
have
\begin{equation*}
\aligned \|\nabla \big(V e\big) \|_{N(t,t+\tau_0)} \leq C
\tau^{\frac13}_0
  \big\|  e\big\|_{l(t,t+\tau_0)} \leq C \tau^{\frac13}_0
  e^{-(k+\frac12)e_0t} \big\|e\big\|_{E^k_l } \leq  C \tau^{\frac13}_0
  e^{-(k+\frac12)e_0t}.
\endaligned
\end{equation*}
This together with Lemma \ref{summation} yields that
\begin{equation}\label{lin}
\aligned \|\nabla \big(V e\big) \|_{N(t,+\infty)} \leq \frac{  C
\tau^{\frac13}_0 }{1- e^{-(k+\frac12)e_0\tau_0} } \;
e^{-(k+\frac12)e_0t}.
\endaligned
\end{equation}

To estimate $\|\nabla
\big(R(h^a_k+e)-R(h^a_k)\big)\|_{N(t,+\infty)}$. By Lemma
\ref{linearoperator:prelimestimate}, we have
\begin{equation*}
\aligned  &\|\nabla \big(R(h^a_k+e)-R(h^a_k)\big)\|_{N(t,t+1)} \\
\leq &\; C \big\|e\big\|_{l(t,t+1)}\Big(
\big\|h^a_k\big\|_{l(t,t+1)} + \big\|e\big\|_{l(t,t+1)} +
\big\|h^a_k\big\|^2_{l(t,t+1)} +
\big\|e\big\|^2_{l(t,t+1)}\Big)\\
\leq &\; C e^{-(k+\frac32)e_0t}\big\|e\big\|_{E^k_l} \leq  C
e^{-(k+\frac32)e_0t}.
\endaligned
\end{equation*}
Thus, we obtain  by Lemma \ref{summation}
\begin{equation}\label{nonlin} \aligned \|\nabla
\big(R(h^a_k+e)-R(h^a_k)\big)\|_{N(t,+\infty)} \leq \frac{C }{1-
e^{-(k+\frac32)e_0}}\; e^{-(k+\frac32)e_0t}.
\endaligned
\end{equation}

Finally, we estimate $\|\nabla \eps_k\|_{N(t,+\infty)}$. By
$\eps_k=O(e^{-(k+1)e_0t})$ for large $t$, we have
\begin{equation}\label{error}
\aligned \|\nabla \eps_k\|_{N(t,+\infty)} \leq \frac{C}{(k+1)e_0}\;
e^{-(k+1)e_0t}.
\endaligned
\end{equation}

By \eqref{lin}, \eqref{nonlin} and \eqref{error}, we have
\begin{align*}
\| \MMM_k(e) \|_{l(t,+\infty)} \leq & C^* \left( \frac{  C
\tau^{\frac13}_0 }{1- e^{-(k+\frac12)e_0\tau_0} } + \frac{C
e^{-e_0t} }{1- e^{-(k+\frac32)e_0}}  +
\frac{Ce^{-\frac12e_0t}}{(k+1)e_0}\; \right) \;
e^{-(k+\frac12)e_0t}.
\end{align*}
If choosing a small $\tau_0,$ and a large $ k_0$ and $t_0$ such that
\begin{equation*}
\aligned C^* C \tau^{\frac13}_0 =\frac18,\;\;
e^{-(k_0+\frac12)e_0\tau_0} \leq \frac12,\;\;\text{and}\;\; C^*
\left( \frac{C e^{-e_0t_0} }{1- e^{-(k_0+\frac32)e_0}}  +
\frac{Ce^{-\frac12e_0t_0}}{(k_0+1)e_0}\; \right) \leq \frac14,
\endaligned
\end{equation*}
we have
\begin{align}\label{Str1}
\| \MMM_k(e) \|_{E^k_l} \leq \frac12, \qquad k\geq k_0,\quad t\geq t_0.
\end{align}

By the similar analysis,  we can obtain for any $e, f\in B_{l}^k$,
$k\geq 1$,
\begin{align}
  &\| \MMM_k(e)-\MMM_k(f) \|_{l(t,+\infty)}\nonumber
\\
\leq & C^*\Big(\|\nabla \big( V e- V f\big)\|_{N(t,+\infty)}
+\|\nabla
(R(h^a_k+e)-R(h^a_k+f))\|_{N(t,+\infty)}\Big) \nonumber\\
\leq &  C^* \left( \frac{  C \tau^{\frac13}_0 }{1-
e^{-(k+\frac12)e_0\tau_0} } + \frac{C e^{-e_0t} }{1-
e^{-(k+\frac32)e_0}}   \right) e^{-(k+\frac12)e_0t}
\;\big\|e-f\big\|_{E^k_l}  \nonumber\\
\leq &  \frac12 e^{-(k+\frac12)e_0t}
\big\|e-f\big\|_{E^k_l}.\label{Str2}
\end{align}
For $k\geq k_0$, and large $t_0$, \eqref{Str1} and \eqref{Str2} show
that $\MMM_k$ is a contraction map on $B^k_l$. Thus for $k\geq k_0$,
\eqref{har} has an unique solution $U^a$ satisfying
\eqref{difference:Stri} for $t\geq t_0$.

Next we show that $U^a$ does not depend on $k$. Since the above
proceeding still remains valid for the larger $t_0$, the uniqueness
still holds in the class of solution of \eqref{har} satisfying
\eqref{difference:Stri} for $t\geq t'_0$, where $t'_0$ is any real
number larger than $t_0$.  Let $k<\widetilde{k}$, and $U^a$ and
$\widetilde{U}^a$ be the solutions of \eqref{har} constructed for $k
$ and $\widetilde{k}$ respectively. The uniqueness of the fixed
point show that $U^a=\widetilde{U}^a$ for large $t$, then by the
uniqueness of \eqref{har}, we have $U^a=\widetilde{U}^a$.

Finally, we prove \eqref{difference:Energy}. Note that
$$U^a-U^a_k=e=\MMM_k(e).$$ By the Strichartz estimate, we have
\begin{align*}
&\big\|U^a- U^a_k \big\|_{\dot H^1} =  \big\|e\big\|_{\dot H^1}\\
\leq & C^*\Big(\|\nabla \big(V e \big)\|_{N(t,+\infty)}  +\|\nabla
\big(R(h^a_k+e)-R(h^a_k)\big)\|_{N(t,+\infty)}+\|\nabla
\eps_k\|_{N(t,+\infty)}\Big) \\
\leq  & e^{-(k+\frac12)e_0t}.
\end{align*}
This together with the fact $U^a_k = W +
ae^{-e_0t}\mathcal{Y}_{+}+ O(e^{-2e_0t})$ in $\dot H^1$ yields
\eqref{difference:Energy}. \end{proof}

\subsection{Construction of
$W^{\pm}$.}

Note that \eqref{difference:Energy} and the conservation of energy,
we have
$$E(U^a)=E(W).$$
In addition, we have  by \eqref{difference:Energy}
\begin{equation*}
\aligned \big\|\nabla U^a(t) \big\|^2_{2}= \big\|\nabla W \big\|^2_2
+ 2a e^{-e_0 t}\int_{\R^d} \nabla W \cdot \nabla \mathcal{Y}_1 \;dx+
O\left(e^{-\frac32e_0t}\right)\quad \text{as} \; t\rightarrow
+\infty.
\endaligned
\end{equation*}
By \eqref{WYrelation}, replacing $\mathcal{Y}_+$ by $-\mathcal{Y}_+$
if necessary, we may assume that
\begin{equation*}
\aligned \int \nabla W \cdot \nabla \mathcal{Y}_1  \; dx > 0.
\endaligned
\end{equation*}
This  implies that $\big\|\nabla U^a(t) \big\|^2_{2} - \big\|\nabla
W \big\|^2_2$ enjoys the same sign with $a$ for large positive time.
Thus, by Lemma \ref{energytrapp}, $\big\|\nabla U^a (t_0)
\big\|^2_{2} - \big\|\nabla W \big\|^2_2$ has the same  sign to that
of $a$.

Let
\begin{equation}\label{Definition:threshold}
\aligned   W^{+}(t,x)=  U^{+1}(t+t_0, x), \quad W^{-}(t,x)=
U^{-1}(t+t_0, x),
\endaligned
\end{equation}
which yields two radial solutions $W^{\pm}(t,x)$ of \eqref{har} for
$t\geq 0$, and satisfies
\begin{equation*}
\aligned E(W^{\pm}(t))=E(W), \; \big\|\nabla W^-(0)\big\|_2 <
\big\|\nabla W \big\|_2 , \; \big\|\nabla W^+(0)\big\|_2 >
\big\|\nabla W \big\|_2,
\endaligned
\end{equation*}
and
\begin{equation*}
\aligned  \big\|W^{\pm}(t)-W\big\|_{\dot H^1} \leq C e^{-e_0t}, \;\;
t\geq 0.
\endaligned
\end{equation*}
To complete the proof of Theorem \ref{threholdsolution}, it remains
to show the behavior of $W^{\pm}$ for the negative time, we will do
it in Section \ref{subs:blowup} and Section \ref{S:sub:scattering}.
\qed

%
%
%
%

\section{Modulation}\label{S:modulation}
For the radial function $u\in \dot H^1(\R^d)$, we define the
gradient variant of the solution away from $W$ as
\begin{equation*}
\aligned \delta(u) = \left|\int_{\R^d} \Big( |\nabla u(x)|^2 -
|\nabla W(x)|^2\Big) dx\right|.
\endaligned
\end{equation*}
By Proposition \ref{P:static stability}, we know that if
\begin{equation}\label{GSenergy}
\aligned E(u)=E(W)
\endaligned
\end{equation}
and $\delta(u)$ is small enough, then there exists
$\widetilde{\theta}$ and $\widetilde{\mu}$ such that
\begin{equation*}
\aligned u_{\widetilde{\theta}, \widetilde{\mu}} = W +
\widetilde{u}, \;\; \text{with}\;  \big\|\widetilde{u}\big\|_{\dot
H^1} \leq \varepsilon(\delta(u)),
\endaligned
\end{equation*}
where $\varepsilon(\delta) \rightarrow 0$ as $\delta\rightarrow 0$.
The goal in this section is that for the solution of \eqref{har}
with small gradient variant, we choose parameters
$\widetilde{\theta}$ and $\widetilde{\mu}$ to show the linear
dependence between these parameters, their derivatives and
$\delta(u)$.

From the implicit theorem and the variational characterization
of $W$ in Proposition \ref{P:static stability}, we have the
following orthogonal decomposition.
\begin{lemma}\label{choice:spatialtranslation}
There exist  $\delta_0>0$ and a positive function $\epsilon(\delta)$
defined for $0<\delta\leq \delta_0$, which tends to $0$ when $\delta
$ tends to $0$, such that for all radial $u$ in $\dot H^1(\R^d)$
satisfying \eqref{GSenergy},  there exists a couple $(\theta, \mu)\in
\R\times (0, +\infty)$   such that $v=u_{\theta,\mu}$ satisfies
\begin{equation}
\aligned v\bot iW, \quad v \bot \widetilde{W}.
\endaligned
\end{equation}
The parameters $(\theta, \mu)$ are unique in $\R/2\pi\Z \times \R,
$, and the mapping $u\mapsto (\theta, \mu)$ is $C^1$.
\end{lemma}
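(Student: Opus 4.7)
The plan is to apply the implicit function theorem centered at $u=W$, $(\theta,\mu)=(0,1)$, combined with the preliminary reduction provided by Proposition~\ref{P:static stability}. Concretely, I would define the $C^1$ map
\[
F:\dot H^1_{\rm rad}\times \R\times(0,\infty)\to\R^2,\qquad F(u,\theta,\mu)=\bigl((u_{\theta,\mu},iW)_{\dot H^1},\,(u_{\theta,\mu},\widetilde W)_{\dot H^1}\bigr),
\]
where $(\cdot,\cdot)_{\dot H^1}$ denotes the real part of the complex $\dot H^1$ inner product, so that solving $F(u,\theta,\mu)=0$ gives exactly the required orthogonalities.

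First I verify that $F(W,0,1)=0$. Indeed $(W,iW)_{\dot H^1}=\Re\bigl(-i\|\nabla W\|_2^2\bigr)=0$, and for the second component the scale-invariance $\|W_\mu\|_{\dot H^1}\equiv\|\nabla W\|_2$ gives, upon differentiating in $\mu$ at $\mu=1$ and using $\partial_\mu W_\mu\big|_{\mu=1}=-\widetilde W$, that $(W,\widetilde W)_{\dot H^1}=0$.

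Next I compute the Jacobian of $F$ in the $(\theta,\mu)$ variables at $(W,0,1)$. Since $\partial_\theta u_{\theta,\mu}\big|_{(0,1)}=iu$ and $\partial_\mu u_{\theta,\mu}\big|_{(0,1)}=-\widetilde u$, evaluating at $u=W$ gives
\[
\begin{pmatrix}(iW,iW)_{\dot H^1} & (-\widetilde W,iW)_{\dot H^1}\\[2pt] (iW,\widetilde W)_{\dot H^1} & (-\widetilde W,\widetilde W)_{\dot H^1}\end{pmatrix}=\begin{pmatrix}\|\nabla W\|_2^2 & 0\\[2pt] 0 & -\|\nabla \widetilde W\|_2^2\end{pmatrix},
\]
the off-diagonal entries vanishing because one factor is real while the other is purely imaginary. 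Since $\widetilde W\not\equiv 0$, this matrix is invertible, so the implicit function theorem yields $C^1$ functions $(\theta(u),\mu(u))$ solving $F(u,\theta(u),\mu(u))=0$ in some $\dot H^1$-neighborhood $\mathcal{N}$ of $W$, together with local uniqueness of the root.

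To handle an arbitrary radial $u\in\dot H^1$ satisfying \eqref{GSenergy} with $\delta(u)$ small, I invoke Proposition~\ref{P:static stability}: shrinking $\delta_0$ so that $\varepsilon(\delta_0)$ lies below the radius of $\mathcal{N}$, there exist $(\widetilde\theta,\widetilde\mu)$ with $u_{\widetilde\theta,\widetilde\mu}$ as close to $W$ in $\dot H^1$ as desired. The IFT then applies to $u_{\widetilde\theta,\widetilde\mu}$, and composing the two pairs of symmetry parameters (via the group law of the scaling$\times$phase action) produces the claimed $(\theta,\mu)$. Uniqueness in $\R/2\pi\Z\times(0,\infty)$ for small $\delta_0$ follows from the local uniqueness of the IFT together with the fact that the orbit of $W$ under the scaling/phase action is locally parametrized by $(\theta,\mu)$, a consequence of the same Jacobian computation. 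The only substantive step is verifying the invertibility of the Jacobian at $W$; the remainder is a routine packaging of the implicit function theorem with the compactness input from Proposition~\ref{P:static stability}, so I do not anticipate any significant obstacle.
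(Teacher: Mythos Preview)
Your proposal is correct and follows essentially the same approach as the paper: define the orthogonality functional, verify it vanishes at $(W,0,1)$, compute the diagonal Jacobian $\mathrm{diag}(\|\nabla W\|_2^2,\,-\|\nabla\widetilde W\|_2^2)$, apply the implicit function theorem in a neighborhood of $W$, and then use Proposition~\ref{P:static stability} as a preliminary reduction to bring a general $u$ (with small $\delta(u)$) into that neighborhood before composing the symmetry parameters. The paper's proof is the same argument in slightly different notation.
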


\begin{proof} From Proposition \ref{P:static stability}, there
exist a function $\epsilon $ and $\theta_1, \mu_1>0$ such that
$u_{\theta_1, \mu_1}=W+g$ and
\begin{equation}\label{est:variational}
\big\| u_{\theta_1, \mu_1} -W\big\|_{\dot H^1} \leq \epsilon \left(
\delta(u)\right).
\end{equation}

Now we consider the functional
\begin{align*}
J(\theta, \mu, f)=&  \left(J_0(\theta, \mu, f), J_1(\theta, \mu, f)
\right) = \left( (f_{\theta, \mu}, iW)_{\dot H^1}, (f_{\theta,\mu},
\widetilde{W})_{\dot H^1}\right)\\
= & \left( (e^{i\theta}\mu^{-\frac{d-2}{2}} f(x/\mu), iW)_{\dot
H^1}, (e^{i\theta}\mu^{-\frac{d-2}{2}} f(x/\mu),
\widetilde{W})_{\dot H^1}\right).
\end{align*}
By the simple computations, we have
\begin{align*}
J(0,1, W)=0, \quad \left| \frac{\partial J}{\partial(\theta, \mu)}
(0,1, W)\right| = \left| \begin{array}{cc} \big\|\nabla W \big\|^2_{L^2} & 0\\
0 &  - \big\|\nabla \widetilde{W} \big\|^2_{L^2} \end{array} \right|
\not = 0.
\end{align*}
Thus by the implicit theorem, there exist $\epsilon_0, \eta_0 > 0$,
such that  for any $h\in \dot H^1$ with $\big\| h- W \big\|_{\dot
H^1} < \epsilon_0$, there exists a unique
$\left(\widetilde{\theta}(h), \widetilde{\mu}(h)\right) \in C^1$
satisfying $|\widetilde{\theta}| + |\widetilde{\mu}-1 |< \eta_0 $
and
\begin{align*}
J(\widetilde{\theta}, \widetilde{\mu}, h)=0.
\end{align*}
By \eqref{est:variational}, this implies that there exists a unique
$\left(\widetilde{\theta}_1(u), \widetilde{\mu}_1(u)\right)$ such
that
\begin{align*}
J\left(\widetilde{\theta}_1 + \theta_1, \widetilde{\mu}_1 \mu_1, u
\right)= J\left(\widetilde{\theta}_1, \widetilde{\mu}_1,
u_{\theta_1, \mu_1} \right) = 0.
\end{align*}
This completes the proof by taking $\theta =\widetilde{\theta}_1 +
\theta_1$ and $\mu = \widetilde{\mu}_1 \mu_1$.
\end{proof}

Let $u(t)$ be a radial solution of \eqref{har} satisfying
\eqref{GSenergy} and write
\begin{equation}\label{def:delta}
\aligned \delta(t) := \delta(u(t))= \left|\int_{\R^d}\Big( |\nabla
u(t, x)|^2 - |\nabla W(x)|^2 \Big) dx\right|.
\endaligned
\end{equation}

Let $D_{\delta_0}$ be the open set of all times $t$ in the domain of
existence of $u$ such that  $\delta(t)< \delta_0$. On
$D_{\delta_0}$, by Lemma \ref{choice:spatialtranslation}, there
exists $C^1$ functions $\theta(t)$ and $\mu(t)$ with  the following
decomposition
\begin{equation}\label{modulcomp}
\aligned   u_{\theta(t), \mu(t)}(t, & x ) = \Big( 1+ \alpha(t) \Big)
W(x) + h(t,x),
\endaligned
\end{equation}
\begin{equation*}
\aligned  1+ \alpha(t) =& \frac{1}{\big\|W\big\|^2_{\dot
H^1}}\left(u_{\theta(t), \mu(t)}, W\right)_{\dot H^1},\quad h \in
H^{\bot}\cap \dot H^1_{rad}.
\endaligned
\end{equation*}
In Section \ref{S:convergence:sup} and Section
\ref{S:convergence:sub}, we will make use of additional conditions
to show that $u$ converges exponentially  to $W$ in $\dot H^1$,  up to
the constant modulation parameters.

As a consequence of the coercivity of $\Phi$ on $H^{\bot}\cap \dot
H^1_{rad}$ in Proposition \ref{coerH}, we can identify the scaling
and phase parameters on the interval with small gradient variant
$\delta(t)$, which can be controlled by the gradient variant.

\begin{lemma}\label{estmodulpara}Let $u$ be a radial
solution of \eqref{har}
satisfying \eqref{GSenergy}. Then taking a smaller $\delta_0$ if
necessary, the following estimates hold for $t \in D_{\delta_0}$:
\begin{align}\label{modulpara}
\big| \alpha(t)\big| \approx \big\|   \alpha(t)W(\cdot) +
h(t,\cdot)\big\|_{\dot H^1}\approx & \big\|h(t,\cdot)\big\|_{\dot
H^1} \approx
\delta(t) ,\\
\label{modulparaderiva}  \big| \alpha'(t)\big| + \big|
\theta'(t)\big| +\left|\frac{\mu'(t)}{\mu(t)}\right|\leq &\; C
\mu^2(t)\delta(t).
\end{align}
\end{lemma}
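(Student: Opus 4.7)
\emph{Step 1 (proof of \eqref{modulpara}).} I use the scale/phase invariance of $\delta$ to replace $u$ by $v:=u_{\theta(t),\mu(t)}$, so that $\delta(u(t))=\delta(v(t))$. Writing $v=W+g$ with $g:=\alpha W+h$, and noting that $E'(W)=0$ by \eqref{gs}, energy conservation together with a Taylor expansion yields
\begin{equation*}
0=E(v)-E(W)=\Phi(g)+O\big(\|g\|_{\dot H^1}^3\big).
\end{equation*}
Expanding $\Phi(g)=\alpha^2\Phi(W)+2\alpha B(W,h)+\Phi(h)$ and invoking $\Phi(W)=-\|\nabla W\|_2^2$ from \eqref{Phi} and $B(W,h)=0$ from \eqref{B-orthEnerHbot} (valid because $h\in H^{\bot}$), one obtains $\Phi(h)-\alpha^2\|\nabla W\|_2^2=O\big((|\alpha|+\|h\|_{\dot H^1})^3\big)$. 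Combining the coercivity $\Phi(h)\gtrsim\|h\|_{\dot H^1}^2$ of Proposition \ref{coerH} with the Hardy--Littlewood--Sobolev upper bound $\Phi(h)\lesssim\|h\|_{\dot H^1}^2$, and absorbing cubic remainders for $\delta_0$ small, yields $|\alpha|\approx\|h\|_{\dot H^1}$. Since $h\perp W$ in $\dot H^1$, Pythagoras gives $\|g\|_{\dot H^1}\approx|\alpha|$, and direct expansion produces $\delta(v)=\bigl|(2\alpha+\alpha^2)\|\nabla W\|_2^2+\|\nabla h\|_2^2\bigr|\approx|\alpha|$, the linear $2\alpha$-term dominating for small $|\alpha|$.

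\emph{Step 2 (proof of \eqref{modulparaderiva}).} A direct computation based on the scaling symmetry \eqref{scaling} of \eqref{har} shows that $v=u_{\theta(t),\mu(t)}$ satisfies
\begin{equation*}
\partial_t v=i\mu^2\big[\Delta v+(|x|^{-4}*|v|^2)v\big]+i\theta'(t)\,v-\frac{\mu'(t)}{\mu(t)}\widetilde v,
\end{equation*}
the prefactor $\mu^2$ arising from the time-dependence of the scaling parameter and accounting for the $\mu^2(t)$ in the statement. With $v=W+g$, $\partial_t h=\partial_t v-\alpha'W$, and differentiating the orthogonality conditions $(h,X)_{\dot H^1}=0$ for $X\in\{W,iW,\widetilde W\}$ in time forces $(\partial_t h,X)_{\dot H^1}=0$, producing a $3\times 3$ linear system for $(\alpha',\theta',\mu'/\mu)$. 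Using the linearization $i[\Delta v+(|x|^{-4}*|v|^2)v]=-\mathcal{L}g+O(\|g\|_{\dot H^1}^2)$, the pairwise $\dot H^1$-orthogonality of $W,iW,\widetilde W$, and the $O(\|h\|_{\dot H^1})$-bound on cross-pairings such as $(\widetilde h,W)_{\dot H^1}$, $(iv,\widetilde W)_{\dot H^1}$, $(\widetilde h,iW)_{\dot H^1}$, the system takes the form
\begin{equation*}
M(\alpha,h)\begin{pmatrix}\alpha'\\ \theta'\\ \mu'/\mu\end{pmatrix}=\mu^2\,\vec R(\alpha,h),\qquad |\vec R|\lesssim\delta,
\end{equation*}
where $M$ is a perturbation of $\mathrm{diag}\bigl(\|W\|_{\dot H^1}^2,\|W\|_{\dot H^1}^2,-\|\widetilde W\|_{\dot H^1}^2\bigr)$ by $O(\delta)$ off-diagonal entries. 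Inverting $M$ for $\delta_0$ small yields \eqref{modulparaderiva}.

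\emph{Main obstacle.} The delicate point is the correct derivation of the modulated equation with its $\mu^2$ prefactor, together with the careful bookkeeping of the various $\dot H^1$ inner products showing that $M$ is indeed near-diagonal. All cross-pairing bounds reduce to the elementary estimate $\|\widetilde h\|_{\dot H^1}\lesssim\|h\|_{\dot H^1}$ and the vanishing of the $\dot H^1$ inner products among $\{W,iW,\widetilde W\}$; once these are in hand, invertibility of the $3\times 3$ system follows by standard linear algebra.
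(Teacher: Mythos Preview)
Your approach is essentially the same as the paper's: energy expansion plus coercivity of $\Phi$ on $H^\perp$ for \eqref{modulpara}, and differentiation of the orthogonality constraints against the modulated equation for \eqref{modulparaderiva}. The paper phrases Step~2 as a bootstrap on $\widetilde{\widetilde\delta}:=|\alpha'|+|\theta'|+|\mu'/\mu|+\mu^2\delta$ rather than as inversion of a $3\times3$ system, but the two are equivalent.

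One genuine technical slip: in your ``Main obstacle'' paragraph you assert that all cross-pairing bounds reduce to the \emph{elementary estimate} $\|\widetilde h\|_{\dot H^1}\lesssim\|h\|_{\dot H^1}$. This estimate is false in general---the scaling generator $f\mapsto\widetilde f=\tfrac{d-2}{2}f+x\cdot\nabla f$ is unbounded on $\dot H^1$. What saves you is that the scaling generator is \emph{skew-adjoint} on $\dot H^1$ (since the $\dot H^1$-norm is scale invariant, differentiating $(f_\lambda,g_\lambda)_{\dot H^1}=(f,g)_{\dot H^1}$ gives $(\widetilde f,g)_{\dot H^1}+(f,\widetilde g)_{\dot H^1}=0$). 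Hence, for instance, $(\widetilde h,W)_{\dot H^1}=-(h,\widetilde W)_{\dot H^1}=0$ and $(\widetilde g,iW)_{\dot H^1}=-(g,i\widetilde W)_{\dot H^1}=O(\|h\|_{\dot H^1})$; in each case the $\widetilde{\ }$ lands on a fixed smooth decaying test function, and the pairing is controlled by $\|h\|_{\dot H^1}$ via Cauchy--Schwarz. With this correction your matrix $M$ is indeed a near-diagonal perturbation and the rest of the argument goes through.
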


\begin{proof}Denote
\begin{equation*}
\aligned \widetilde{\delta}(t) := \big| \alpha(t)\big| + \big\|
\alpha(t) W  + h(t) \big\|_{\dot H^1}+  \big\|h(t) \big\|_{\dot H^1}
+\delta(t).
\endaligned
\end{equation*}
 We first show that
 \begin{equation}\label{modulpara-variant}
 \aligned
\widetilde{\delta}(t) \lesssim \big| \alpha(t)\big|.
 \endaligned
 \end{equation}
Indeed, by \eqref{modulcomp}, $E(u)=E(W)$ and $t\in D_{\delta_0}$,
we have
\begin{equation}\label{linenerg-u}
\aligned \Phi(\alpha(t) W + h(t)) = O\left(\big\|\nabla
\big(\alpha(t) W + h(t)\big)\big\|^3_2 \right).
\endaligned
\end{equation}
By \eqref{B-orthEnerHbot}, we know that $W$ and $h(t)$ are
$B$-orthogonality, and $B(f,f)=\Phi(f)$, thus
\begin{equation}\label{whorth}
\aligned  \Phi(\alpha(t) W + h(t))= -\alpha(t)^2 \big|\Phi(W)\big| +
\Phi(h(t)).
\endaligned
\end{equation}
By \eqref{linenerg-u}, \eqref{whorth} and the coercivity of $\Phi$
on $H^{\bot}\cap \dot H^1_{rad}$, we have
\begin{equation}\label{h-control}
\aligned \big\|h(t)\big\|^2_{\dot H^1} \approx \Phi(h(t)) \approx
\alpha(t)^2 + O\left(\big\|\nabla \big(\alpha(t) W +
h(t)\big)\big\|^3_2 \right) \lesssim \alpha(t)^2 +
\widetilde{\delta}^3(t).
\endaligned
\end{equation}
By the orthogonality of $W$ and $h$ in $\dot H^1$, we have
\begin{equation}\label{Wh-control}
\aligned \big\|\nabla \big(\alpha(t) W + h(t)\big)\big\|^2_2 =
\alpha(t)^2\big\|\nabla W \big\|^2_2 + \big\|\nabla  h(t)\big\|^2_2
\lesssim \alpha(t)^2 + \widetilde{\delta}^3(t).
\endaligned
\end{equation}
and
\begin{equation}\label{delta}
\aligned \delta(t) =\left| \int_{\R^d} \big|\nabla u(t)\big|^2  -
\big| \nabla W\big|^2\; dx \right| = \left| \big(2 \alpha(t) +
\alpha(t)^2\big) \big\| \nabla W \big\|^2_2 + \big\|\nabla h (t)
\big\|^2_2 \right|.
\endaligned
\end{equation}
If $\delta_0$ is small, then $\big\|\nabla \big(\alpha(t) W +
h(t)\big)\big\|_2$ and $\big|\alpha(t)\big|$ are small (by the
orthogonality of $W$ and $h$ in $\dot H^1$), thus we have
\begin{equation*}
\aligned \delta(t) \lesssim \big| \alpha (t)\big| +
\widetilde{\delta}^3(t).
\endaligned
\end{equation*}
Therefore, we obtain that
\begin{equation*}
\aligned \widetilde{\delta}(t) \lesssim \big| \alpha(t)\big| +
\widetilde{\delta}^{\frac32}(t) + \widetilde{\delta}^3(t).
\endaligned
\end{equation*}
By the continuity argument, we have \eqref{modulpara-variant}. By
\eqref{h-control}, \eqref{Wh-control} and \eqref{delta} once again,
we have
\begin{equation*}
\aligned \big| \alpha(t)\big|  \approx \big\|h(t)\big\|_{\dot H^1}
\approx \big\|   \alpha W + h\big\|_{\dot H^1} \approx \delta(t).
\endaligned
\end{equation*}

Next we prove \eqref{modulparaderiva}. Denote
\begin{equation*}
\aligned \widetilde{\widetilde{\delta}}(t):= \big| \alpha'(t)\big| +
\big| \theta'(t)\big| +\left|\frac{\mu'(t)}{\mu(t)}\right|+
\mu^2(t)\delta(t).\endaligned
\end{equation*}
Let \begin{equation*} \aligned  v(t,y)=u_{\theta(t), \mu(t)}(t,y)=
e^{i\theta(t)} \mu(t)^{-\frac{d-2}{2}} u\left(t,
\frac{y}{\mu(t)}\right),
\endaligned
\end{equation*} then
\begin{equation*}
\aligned u(t,x)=e^{-i\theta(t)}\mu^{\frac{d-2}{2}}(t)\; v\big(t,
\mu(t)x\big),
\endaligned
\end{equation*}
and \eqref{har} may be written
\begin{equation*}
\aligned i\partial_t v + \mu^2\Delta v +
\mu^2\big(\frac{1}{|x|^{4}}* |v|^2 \big) v + \theta'v +
i\frac{\mu'}{\mu}\left(\frac{d-2}{2} + y\cdot \nabla \right)v=0.
\endaligned
\end{equation*}
For $t\in D_{\delta_0}$, by \eqref{modulcomp}, we have
$v=\big(1+\alpha \big) W + h$, $h\in H^{\bot}$. One easily verifies that  $h$ satisfies
that
\begin{equation*}
\aligned i\partial_t h + \mu^2(t) \Delta h + i \alpha'(t) W +
\theta'(t)W + i\frac{\mu'(t)}{\mu(t)} \widetilde{W} = O\Big(\mu^2(t)
\delta(t) + \delta(t) \widetilde{\widetilde{\delta}}(t)\Big)\;
\text{in}\; \dot H^1.
\endaligned
\end{equation*}
Note that $h\in H^{\bot}$ for $t\in D_{\delta_0}$, we have
$\partial_t h \in H^{\bot}$, i.e.
\begin{equation*}
\aligned \Re \int \partial_t h \Delta W = \Im \int \partial_t h
\Delta W = \Re \int \partial_t h \Delta  \widetilde{W}= 0.
\endaligned
\end{equation*}
This together with \eqref{modulpara} implies that
\begin{align*}
 \big| \alpha'(t)  \big| =  O\Big(\mu^2(t) \delta(t) + \delta(t)
\widetilde{\widetilde{\delta}}(t)\Big),   \;\; \big|\theta'(t) \big|
=
  O\Big(\mu^2(t) \delta(t) + \delta(t)
\widetilde{\widetilde{\delta}}(t)\Big),
\\
\left|\frac{\mu'(t)}{\mu(t)} \right| =     O\Big(\mu^2(t) \delta(t)
+ \delta(t) \widetilde{\widetilde{\delta}}(t)\Big).
\qquad\qquad\qquad
\end{align*}
Therefore, we obtain that
\begin{equation*}
\aligned \widetilde{\widetilde{\delta}}(t)= O\; \Big(\mu^2(t)
\delta(t) + \delta(t) \widetilde{\widetilde{\delta}}(t)\Big).
\endaligned
\end{equation*}
This  yields the result if $\delta_0$ is chosen small enough.
\end{proof}

\section{Convergence to $W$ for the supercritical threshold solution}\label{S:convergence:sup}
In this section, we will show the dynamics of $W^+$  in  Theorem
\ref{threholdsolution} in the negative time,  and it is also the
first step in the proof of case (c) of Theorem \ref{classification}.

\begin{proposition}\label{expdecay:supercase}
Consider a radial solution $u \in  H^1(\R^d)$ of \eqref{har} such
that
\begin{equation}\label{data:super}
\aligned E(u)=E(W), \quad \big\| \nabla u_0\big\|_{L^2}>
\big\|\nabla W\big\|_{L^2},
\endaligned
\end{equation}
which is globally defined for the positive times. Then there exist
$\theta_0\in \R/(2\pi\Z)$, $\mu_0 \in (0, +\infty)$, $c, C>0$ such
that
\begin{equation}\label{asymptotic:supercritical case}
\aligned \forall \; t\geq 0, \quad \big\|u-W_{\theta_0,
\mu_0}\big\|_{\dot H^1} \leq Ce^{-ct},
\endaligned
\end{equation}
and the negative time of existence of $u$ is finite.
\end{proposition}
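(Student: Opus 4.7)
The plan is to combine a global virial identity with the modulation framework of Section \ref{S:modulation} and the spectral input of Section \ref{S:preli}. Since $u\in H^1(\R^d)$, mass conservation keeps $u(t)\in L^2$, so the localized quantity from Lemma \ref{L:local virial} converges as $R\to+\infty$ to $V(t):=\int_{\R^d}|x|^2|u(t,x)|^2\,dx$. Using $\iint|u|^2|u|^2|x-y|^{-4}\,dxdy=2\|\nabla u\|_2^2-4E(u)$, the assumption $E(u)=E(W)$ and \eqref{KinEnergy} collapse the second derivative to the clean identity $V''(t)=-8\,\delta(t)$. Lemma \ref{energytrapp}(c) gives $\delta(t)>0$ throughout the interval of existence, so $V$ is strictly concave. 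Since $V\geq 0$ and $u$ is forward global, $V'(t)$ cannot become negative (otherwise $V$ would diverge to $-\infty$) and, being monotone decreasing, has a finite limit; in particular $\int_0^{+\infty}\delta(s)\,ds<\infty$ and there is a sequence $t_n\to+\infty$ with $\delta(t_n)\to 0$.

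On the open set $\{t:\delta(t)<\delta_0\}$, Lemma \ref{choice:spatialtranslation} provides the decomposition $u(t)_{\theta(t),\mu(t)}=(1+\alpha(t))W+h(t)$ with $h(t)\in H^{\perp}\cap\dot H^1_{rad}$, and Lemma \ref{estmodulpara} gives $\|h(t)\|_{\dot H^1}\approx|\alpha(t)|\approx\delta(t)$ together with the parameter control $|\theta'(t)|+|\mu'(t)/\mu(t)|\lesssim\mu^2(t)\delta(t)$. The scaling identity $\|u_{\theta,\mu}\|_2=\mu^2\|u_0\|_2$, $L^2$-mass conservation and $W\in L^2(\R^d)$ (valid for $d\geq 5$) keep $\mu(t_n)$ in a compact subset of $(0,\infty)$ along the sequence. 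The decisive step is to upgrade the sequential decay $\delta(t_n)\to 0$ to the pointwise exponential estimate $\delta(t)\lesssim e^{-ct}$. For this I would introduce a modulated virial-type quantity, for instance
\begin{equation*}
\mathcal F(t):=\Im\int_{\R^d}\overline{v(t,x)}\,x\cdot\nabla v(t,x)\,\chi_R(x)\,dx,\qquad v(t):=u(t)_{\theta(t),\mu(t)},
\end{equation*}
with a suitable radial cutoff $\chi_R$, show $|\mathcal F(t)|\lesssim \delta(t)$, and derive a differential inequality of the form $\mathcal F'(t)\geq c\,\Phi(h(t))+o(\delta(t))$. The positivity of $\Phi$ on $H^{\perp}\cap\dot H^1_{rad}$ from Proposition \ref{coerH}, combined with $\Phi(h)\approx\|h\|_{\dot H^1}^2\approx\delta(t)^2$ from Lemma \ref{estmodulpara}, then converts this into a Gronwall-type inequality yielding $\delta(t)\lesssim\delta(t_n)\,e^{-c(t-t_n)}$ on each small-$\delta$ interval. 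Sending $n\to\infty$ gives the exponential decay, and integrating the parameter ODEs of Lemma \ref{estmodulpara} shows $\theta(t)\to\theta_0$ and $\mu(t)\to\mu_0$ at the same rate, which is \eqref{asymptotic:supercritical case}.

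The backward-time conclusion is a short consequence of the same virial identity: if $T_-=+\infty$, then $V$ would be a nonnegative concave function on all of $\R$; any point where $V'\ne 0$ would drive $V$ to $-\infty$ in one direction, so $V'\equiv 0$ and hence $\delta\equiv 0$, contradicting $\|\nabla u_0\|_2>\|\nabla W\|_2$ via Lemma \ref{energytrapp}(c). The main obstacle is the construction and analysis of $\mathcal F$: one must arrange the modulation so that $\mathcal F'(t)$ genuinely reveals the quadratic form $\Phi(h(t))$, ensure that the time-derivatives of the parameters generate only $o(\delta)$ errors through Lemma \ref{estmodulpara}, and control the non-local Hartree convolution terms (absent in the NLS analyses of \cite{DuyMerle:NLS:ThresholdSolution,LiZh:NLS}) by means of Lemma \ref{L:hardy} and the radial assumption.
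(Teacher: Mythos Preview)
Your overall architecture (virial monotonicity plus modulation) is the same as the paper's, but there are two genuine gaps.

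\textbf{The unlocalized virial is not available.} The hypothesis $u\in H^1$ gives $u(t)\in L^2$ by mass conservation, but it does \emph{not} give $|x|u(t)\in L^2$; indeed the conclusion \eqref{asymptotic:supercritical case} says $u(t)$ converges in $\dot H^1$ to a rescaled $W$, and $|x|W\notin L^2$ for $d=5,6$. So the passage $V_R\to V=\int|x|^2|u|^2$ is unjustified and the clean identity $V''=-8\delta$ is formal. The paper keeps $R$ finite and shows instead (Lemma~\ref{L:secderiv}) that $\partial_t^2 V_R\le -4\delta(t)$ once $R$ is large, which requires a careful two-regime estimate of the error $A_R(u(t))$: a crude bound via mass conservation and the radial Sobolev inequality when $\delta(t)$ is bounded away from zero, and a refined bound obtained by writing $u=(W+\widetilde u)_{-\theta,1/\mu}$ when $\delta(t)$ is small (using the lower bound $\mu(t)\ge\mu_->0$, which is where $L^2$ is actually used).

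\textbf{The exponential-decay mechanism is mis-scaled.} Your proposed inequalities $|\mathcal F|\lesssim\delta$ and $\mathcal F'\ge c\,\Phi(h)+o(\delta)\approx c\,\delta^2+o(\delta)$ do not close: since $\delta^2=o(\delta)$, the lower bound on $\mathcal F'$ is swamped by the error term and gives no Gronwall inequality. What actually drives the exponential rate is a \emph{linear} relation between the first and second virial derivatives. The paper proves (Lemma~\ref{L:firder}) that $|\partial_t V_R(t)|\le CR^2\,\delta(t)$ for \emph{all} $t\ge0$: when $\delta(t)\gtrsim 1$ this is Hardy's inequality, and when $\delta(t)$ is small one expands $u=(W+\widetilde u)_{-\theta,1/\mu}$ and uses that $W$ is real so the zeroth-order term drops. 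Combining this with $\partial_t^2 V_R\le -4\delta$ and $\partial_t V_R>0$ gives, after integration, $\int_t^{\infty}\delta(s)\,ds\le C\delta(t)$; Gronwall on $g(t)=\int_t^{\infty}\delta$ then yields $g(t)\le Ce^{-ct}$. The pointwise decay $\delta(t)\le Ce^{-ct}$ follows a posteriori from $\delta\approx|\alpha|$, $|\alpha'|\lesssim\mu^2\delta$ and the boundedness of $\mu$, exactly as you outline at the end. No auxiliary modulated functional $\mathcal F$ or appeal to $\Phi(h)$ is needed for this step; the coercivity of $\Phi$ on $H^\perp$ enters only through Lemma~\ref{estmodulpara}.

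Your argument for finite backward time is correct in spirit, but should be run with $V_R$ rather than $V$: apply the forward analysis to $\overline u(-t)$ to get $\partial_t V_R>0$ and $\partial_t V_R\to0$ in both time directions, which together with $\partial_t^2 V_R<0$ forces $\partial_t V_R\equiv0$, a contradiction.
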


\subsection{Exponential convergence of the gradient variant}
\begin{lemma}\label{L:firder} Under the assumptions of Proposition
\ref{expdecay:supercase},  there exists $C$ such that for any
$R>0$, and all $t\geq 0$,
\begin{align}
\label{Fderiv}   \big| \partial_t V_R(t) \big| \leq   CR^2 \;
\delta(t),
\end{align}
where $V_R(t)$ is defined as in \eqref{localV}.
\end{lemma}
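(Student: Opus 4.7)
The plan is to begin with the first-derivative identity from Lemma \ref{L:local virial},
\begin{equation*}
\partial_t V_R(t) = 2\Im \int_{\R^d} \bar u\, \nabla u\cdot \nabla \phi_R \, dx,
\end{equation*}
and to exploit the modulation decomposition of Section \ref{S:modulation}. When $\delta(t) < \delta_0$, Lemmas \ref{choice:spatialtranslation}--\ref{estmodulpara} furnish parameters $\alpha(t), \theta(t), \mu(t)$ for which $u = W_\Lambda + h$, where
\begin{equation*}
W_\Lambda(x) := (1+\alpha(t))\, e^{-i\theta(t)}\mu(t)^{(d-2)/2}\, W(\mu(t)\, x),
\end{equation*}
and the residual $h := u - W_\Lambda$ satisfies $\|\nabla h\|_{L^2(\R^d)} \lesssim \delta(t)$ by the scale invariance of $\dot H^1$ combined with the linear bound of Lemma \ref{estmodulpara}. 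The key algebraic observation is that $\bar W_\Lambda \nabla W_\Lambda = (1+\alpha)^2 \mu^{d-1}\, W(\mu x)\, (\nabla W)(\mu x)$ is real-valued (the phase factors $e^{\pm i\theta}$ cancel and $W$ is real), so $\Im \int \bar W_\Lambda \nabla W_\Lambda \cdot \nabla \phi_R \, dx = 0$, and only the three off-diagonal contributions $\bar W_\Lambda \nabla h$, $\bar h \nabla W_\Lambda$, $\bar h \nabla h$ survive.

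Using $\mathrm{supp}\,\nabla \phi_R \subset \{|x| \le 2R\}$ with $|\nabla \phi_R| \lesssim R$, together with the localized Sobolev estimate $\|f\|_{L^2(B_{2R})} \lesssim R\,\|\nabla f\|_{L^2}$ (via H\"older and the embedding $\dot H^1 \hookrightarrow L^{2d/(d-2)}$), and noting the scale invariances $\|W_\Lambda\|_{L^{2d/(d-2)}} \lesssim \|W\|_{L^{2d/(d-2)}}$ and $\|\nabla W_\Lambda\|_{L^2} \lesssim \|\nabla W\|_{L^2}$, Cauchy--Schwarz on each of the three off-diagonal terms yields
\begin{equation*}
|\partial_t V_R(t)| \;\lesssim\; R^2 \,\|\nabla h\|_{L^2} \bigl(\|\nabla W\|_{L^2} + \|\nabla h\|_{L^2}\bigr) \;\lesssim\; R^2 \,\delta(t).
\end{equation*}
For the complementary regime $\delta(t) \ge \delta_0$, the crude Cauchy--Schwarz bound $|\partial_t V_R(t)| \lesssim R^2\,\|\nabla u\|_{L^2}^2$ together with an a priori $\dot H^1$-control on $u$ (standard in the threshold setting of Proposition \ref{expdecay:supercase}) gives $|\partial_t V_R(t)| \lesssim R^2 \le (R^2/\delta_0)\,\delta(t)$, and the factor $\delta_0^{-1}$ is absorbed into $C$.

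The principal obstacle is the choice of decomposition: the naive splitting $u = W + (u-W)$ fails to produce a linear bound on $u - W$ (consider $u = e^{i\theta}W$ with $\theta \neq 0$, where $\delta(t) = 0$ but $u - W \neq 0$), so one must first absorb the phase--scaling symmetry into $W_\Lambda$. The payoff is twofold: the real-valued structure of $\bar W_\Lambda \nabla W_\Lambda$ kills the leading imaginary part, and the modulated residual inherits the sharp linear control $\|\nabla h\|_{L^2} \lesssim \delta(t)$ from Lemma \ref{estmodulpara}, which is exactly what the claimed estimate requires.
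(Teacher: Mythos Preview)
Your overall strategy matches the paper's: use the virial first-derivative identity, eliminate the leading contribution via the real-valuedness of the modulated ground state, and control the residual by $\|\nabla h\|_{L^2}\lesssim\delta(t)$ from Lemma~\ref{estmodulpara}. The paper carries this out by the change of variables $x\mapsto x/\mu(t)$ and the pointwise bound $|\nabla\phi_R|\lesssim R^2/|x|$ together with Hardy's inequality; your variant via $|\nabla\phi_R|\lesssim R$ and the localized estimate $\|f\|_{L^2(B_{2R})}\lesssim R\|\nabla f\|_{L^2}$ is an equally valid route to the same $R^2\delta(t)$ bound.

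There is, however, a genuine slip in your treatment of the regime $\delta(t)\ge\delta_0$. You invoke ``an a priori $\dot H^1$-control on $u$'' to pass from $R^2\|\nabla u\|_{L^2}^2$ to $R^2$, but no such uniform bound is available in the supercritical setting of Proposition~\ref{expdecay:supercase}: with $E(u)=E(W)$ and $\|\nabla u\|_{L^2}>\|\nabla W\|_{L^2}$, the energy gives no ceiling on $\|\nabla u(t)\|_{L^2}$. The correct argument (this is what the paper means by ``by Lemma~\ref{energytrapp} and the definition of $\delta(t)$'') is that Lemma~\ref{energytrapp} forces $\|\nabla u(t)\|_{L^2}>\|\nabla W\|_{L^2}$ for all $t$, whence
\[
\|\nabla u(t)\|_{L^2}^2=\|\nabla W\|_{L^2}^2+\delta(t)\le\Bigl(\tfrac{\|\nabla W\|_{L^2}^2}{\delta_0}+1\Bigr)\delta(t)
\]
whenever $\delta(t)\ge\delta_0$. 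This feeds directly into $|\partial_tV_R(t)|\lesssim R^2\|\nabla u\|_{L^2}^2\lesssim R^2\delta(t)$ without any appeal to a global $\dot H^1$ bound. With this correction your argument is complete and essentially coincides with the paper's.
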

\begin{proof} By Lemma \ref{L:local virial}, we have
\begin{align*}
\partial_t V_R(t)=& 2\Im \int_{\R^d} \overline{u}\; \nabla u \cdot \nabla
\phi_R \; dx .
\end{align*}

Note that $\big| \nabla \phi_R \big| \lesssim R^2/|x|$,  we have by
the Hardy inequality
\begin{equation*}
\aligned \big|\partial_t V_R(t)\big| \lesssim R^2
\big\|u(t)\big\|^2_{\dot H^1}.
\endaligned
\end{equation*}
Thus by Lemma \ref{energytrapp} and the definition of $\delta(t)$,
it suffices to show \eqref{Fderiv} when $\delta(t) \leq \delta_0$,
where $\delta_0$ comes from  Section \ref{S:modulation}. In this case, by
Lemma \ref{estmodulpara}, we can write $u$ as
\begin{align*}
u_{\theta(t), \mu(t)}(t,x)= W + \widetilde{u}, \;\;
\big\|\widetilde{u}\big\|_{\dot H^1} \lesssim
\delta(t)\Longrightarrow u(t,x)= \big( W +
\widetilde{u}\big)_{-\theta(t), 1/\mu(t)}.
\end{align*}
Thus we have
\begin{align*}
\partial_t V_R(t) =& \;  2 \Im \int_{\R^d} \overline{\Big( W + \widetilde{u}\Big)_{-\theta(t), 1/\mu(t)} }\nabla \Big(W + \widetilde{u} \Big)_{-\theta(t),
1/\mu(t)} \nabla \phi_R \; dx \\
=& \; \frac{2R}{\mu(t)} \Im \int_{\R^d} \overline{\Big( W +
\widetilde{u}\Big)} \nabla \Big(W + \widetilde{u} \Big) \cdot \nabla
\phi  \left(
\frac{x}{R\mu(t)}\right) \; dx \\
= & \;  2R^2 \Im \int_{\R^d} \frac{1}{R \mu(t)} \Big( W \nabla
\widetilde{u} +
 \overline{ \widetilde{u}} \nabla W + \overline{\widetilde{u} }  \nabla
 \widetilde{u} \Big) \cdot \nabla \phi  \left(
\frac{x}{R\mu(t)}\right) \; dx.
\end{align*}
By the definition of $\phi$ and the Hardy inequality, we have
\begin{align*}
\big|\partial_t V_R(t)\big| \lesssim R^2 \big( \big\|\widetilde{u}
\big\|_{\dot H^1} +   \big\|\widetilde{u} \big\|^2_{\dot H^1} \big)
\lesssim R^2 \delta(t).
\end{align*}
This completes the proof. \end{proof}

\begin{lemma}\label{L:secderiv} Under the assumptions of Proposition
\ref{expdecay:supercase},  there exist $C>0$ and $R_1\geq 1$
such that for $R\geq R_1$, and all $t\geq 0$,
\begin{align}
\label{Secderiv} \partial^2_t V_R(t) \leq & - 4 \delta(t),\\
\label{fderiv:positive}
\partial_t V_R(t) >&  \; 0.
\end{align}
\end{lemma}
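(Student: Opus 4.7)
The plan is to apply the Morawetz identity of Lemma~\ref{L:local virial} and then exploit the threshold relation $E(u)=E(W)$ to convert the bulk of $\partial^2_t V_R$ into $-8\delta(t)$, while showing that the remainder $A_R(u(t))$ is absorbed into $\leq 4\delta(t)$ for $R$ large. Explicitly, Lemma~\ref{L:local virial} gives
\begin{equation*}
\partial^2_t V_R(t) = 8\|\nabla u(t)\|_2^2 - 8\iint \frac{|u(t,x)|^2|u(t,y)|^2}{|x-y|^4}\,dxdy + A_R(u(t)).
\end{equation*}
Using $E(u(t))=E(W)$ and the identity $\|\nabla W\|_2^2=4E(W)$ from \eqref{KinEnergy}, the first two terms simplify to $-8(\|\nabla u(t)\|_2^2-\|\nabla W\|_2^2)$, and Lemma~\ref{energytrapp}(c) fixes the sign inside to yield exactly $-8\delta(t)$. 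The problem reduces to showing $|A_R(u(t))|\leq 4\delta(t)$ uniformly in $t\geq 0$. The crucial structural observation is that every piece of $A_R$ is localized on $|x|\geq R$ (or $|y|\geq R$): the coefficient $4\phi''(|x|/R)-8$ and the factor $1-\tfrac12(R/|x|)\phi'(|x|/R)$ both vanish for $|x|\leq R$ since $\phi(r)=r^2$ near the origin, and $\Delta\Delta\phi_R$ is supported on $R\leq|x|\leq 2R$ with pointwise size $O(R^{-2})$.

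The estimate for $A_R$ is then split by the size of $\delta(t)$. When $\delta(t)<\delta_0$, the modulation decomposition of Section~\ref{S:modulation} writes $u_{\theta(t),\mu(t)}(t)=(1+\alpha(t))W+h(t)$ with $|\alpha(t)|+\|h(t)\|_{\dot H^1}\lesssim \delta(t)$. A key point is that $\mu(t)$ stays in a fixed compact subinterval $[\mu_-,\mu_+]\subset(0,\infty)$: indeed $L^2$ conservation gives $\|u_0\|_2^2=\mu(t)^{-2}\|(1+\alpha)W+h\|_2^2$, and $\|W\|_2$ is a positive finite constant for $d\geq 5$ thanks to the decay $W(x)\sim|x|^{-(d-2)}$. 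After rescaling, $A_R(u(t))$ is the analogous quantity for $(1+\alpha)W+h$ with effective cutoff radius $\mu(t)R\geq\mu_-R$. Since $W$ is a stationary solution of \eqref{har}, $A_{\mu R}(W)$ is identically zero, and the remaining contributions are controlled by $C\delta(t)$ using $|\nabla W(x)|\lesssim\langle x\rangle^{-(d-1)}$, Hardy's inequality, and $\|h\|_{\dot H^1}\lesssim\delta(t)$; choosing $\delta_0$ small yields $|A_R(u(t))|\leq 4\delta(t)$. When $\delta(t)\geq\delta_0$, it suffices to prove $|A_R(u(t))|\leq 4\delta_0$ for $R$ large, uniformly in $t$: the $\int(-\Delta\Delta\phi_R)|u|^2$ piece is $O(R^{-2}\|u_0\|_2^2)$ by mass conservation, and the gradient and nonlocal pieces are estimated by combining the radial Sobolev pointwise bound $|u(t,x)|\lesssim |x|^{-(d-2)/2}\|\nabla u(t)\|_2$ on $|x|\geq R$ with a Hardy--Littlewood--Sobolev estimate (Lemma~\ref{L:hardy}), yielding an $R^{-c}$ decay factor, provided $\sup_{t\geq 0}\|\nabla u(t)\|_2<\infty$.

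For \eqref{fderiv:positive}, since $\delta(t)>0$ throughout by Lemma~\ref{energytrapp}(c), the inequality \eqref{Secderiv} gives $\partial^2_t V_R(t)<0$ strictly, so $V_R$ is strictly concave. Choosing the cutoff $\phi\geq 0$ makes $V_R(t)\geq 0$. If $\partial_t V_R(t_0)\leq 0$ for some $t_0\geq 0$, strict concavity together with $\partial_t^2 V_R \leq -4\delta(t_0+1)<0$ would force $\partial_t V_R(t)$ to be bounded above by a strictly negative constant for $t\geq t_0+1$, so $V_R(t)\to-\infty$, contradicting $V_R\geq 0$. Hence $\partial_t V_R(t)>0$ for all $t\geq 0$.

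The main obstacle is the large-$\delta$ regime: controlling $A_R$ uniformly there requires a uniform-in-$t$ $\dot H^1$ bound on $u(t)$, which is not directly supplied by the energy identity in the supercritical case. This uniform kinetic bound has to be established either by a bootstrap argument (using that outside the small-$\delta$ window the dynamics can be quantitatively controlled via mass conservation and the threshold constraint), or by a rescaling/compactness contradiction ruling out $\|\nabla u(t_n)\|_2\to\infty$ along any time sequence---a manoeuvre standard in the Kenig--Merle/Duyckaerts--Merle framework that underlies the overall scheme cited in the introduction.
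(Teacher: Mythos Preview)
Your overall strategy matches the paper's: write $\partial_t^2 V_R = -8\delta(t) + A_R(u(t))$ via the energy identity, then split the estimate of $A_R$ according to whether $\delta(t)$ is small or large, and finish \eqref{fderiv:positive} by concavity. The small-$\delta$ analysis and the concavity argument for $\partial_t V_R>0$ are essentially the same as the paper's (the paper only needs the lower bound $\mu(t)\geq\mu_->0$, obtained by localizing $\int_{|x|\leq 1}|W|^2$ rather than your global $L^2$ identity, but this is a detail).

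The genuine gap is in the large-$\delta$ regime. You correctly sense trouble---you flag the need for a uniform-in-time bound on $\|\nabla u(t)\|_2$ and propose to get it by bootstrap or compactness---but no such a priori bound is available here, and none is needed. The paper sidesteps the issue entirely by a simple algebraic observation: in the supercritical case $\|\nabla u(t)\|_2^2 = \|\nabla W\|_2^2 + \delta(t)$ \emph{exactly}, by the definition of $\delta$ and Lemma~\ref{energytrapp}(c). Combined with the general bound (valid for all $t$, from mass conservation and the radial Sobolev inequality)
\[
A_R\big(u(t)\big)\ \leq\ \frac{C}{R^2}\ +\ \frac{C}{R^{(4d-4)/d}}\,\|u(t)\|_{\dot H^1}^{4/d}\ +\ \frac{C}{R^2}\,\|u(t)\|_{\dot H^1}^{2},
\]
this turns the right-hand side into an explicit function of $\delta$ alone,
\[
f_R(\delta)\ :=\ \frac{C}{R^2} + \frac{C}{R^{(4d-4)/d}}\big(\delta+\|\nabla W\|_2^2\big)^{2/d} + \frac{C}{R^2}\big(\delta+\|\nabla W\|_2^2\big) - 4\delta .
\]
For $R$ large, $f_R$ is concave in $\delta$ (since $2/d<1$), and one checks $f_R(\delta_2)\leq 0$ and $f_R'(\delta_2)\leq 0$ at the threshold $\delta_2$ coming from the small-$\delta$ step; concavity then gives $f_R(\delta)\leq 0$ for all $\delta\geq\delta_2$. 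This closes the estimate $A_R(u(t))\leq 4\delta(t)$ for all $t$ with no uniform kinetic bound required. Your proposed bootstrap/compactness route is unnecessary and, as stated, incomplete; replacing it with the substitution $\|\nabla u\|_2^2=\|\nabla W\|_2^2+\delta$ and the concavity argument above fills the gap.
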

\begin{proof}
By Lemma \ref{L:local virial}, we have
\begin{align*}
\partial^2_t V_R(t)=&\; 8 \int_{\R^d} \big|\nabla u (t,x)\big|^2 dx -8
\iint_{\R^d \times \R^d} \frac{|u(t,x)|^2|u(t,y)|^2}{|x-y|^4} \;
dxdy + A_R\big(u(t)\big),
\end{align*}
where $ A_R\big(u(t)\big)$ is defined in Lemma \ref{L:local virial}.

By $E(u)=E(W) = \big\|\nabla W \big\|^2_{L^2}/4$, we have
\begin{align*}
8 \int_{\R^d} \big|\nabla u\big|^2 dx -8 \iint_{\R^d\times\R^d}
\frac{|u(t,x)|^2|u(t,y)|^2}{|x-y|^4} \; dxdy=-8 \delta(t).
\end{align*}
To prove \eqref{Secderiv}, it suffices to show that for $R \geq R_1$
\begin{align*}
  A_R\big(u(t)\big)  \leq 4
\delta(t).
\end{align*}
We divide it into three steps:

\noindent{\bf Step 1: General bound on $A_R(u(t))$.} We claim that
there exists $C>0$ such that for any  $R>0$, $t\geq 0$,
 \begin{align}\label{errcontrol1:sup}
A_R\big(u(t)\big) \leq \frac{C}{R^2} + \frac{C}{R^{\frac{4d-4}{d}}}
\big\|u(t)\big\|^{\frac{4}{d}}_{\dot H^1}
 + \frac{C}{R^2}
 \big\|u(t)\big\|^2_{\dot H^1}.
\end{align}
Indeed,  choosing suitably $\phi$ such that
\begin{align*}
\phi''(r) \leq 2,\; r\geq 0,\;\text{and} \; \big| -\Delta \Delta
\phi_R \big| \lesssim  \frac{1}{R^2},
\end{align*}
we conclude by the definition $A_R(u)$ in Lemma \ref{L:local virial}
\begin{align*}
A_R\big(u(t)\big) \leq C\frac{\big\|u(t)\big\|^2_{L^2}}{R^2}
 +   8 \iint_{\R^d\times\R^d} & \left[ \Big(1-\frac12 \frac{R}{|x|}\phi'\big(\frac{|x| }{R
}\big) \Big)x -  \Big(1-\frac12 \frac{R}{|y|}\phi'\big(\frac{|y|
}{R }\big) \Big)y \right]\\
\times&\;   \frac{(x-y)}{|x-y|^6}\;|u(t,x)|^2|u(t,y)|^2 \;dxdy.
\end{align*}
Let  $\chi_{\Omega}$ be the characteristic function of the set
$\Omega$. Note that
\begin{align*}
\left|  \Big(1-\frac12 \frac{R}{|x|}\phi'\big(\frac{|x| }{R }\big)
\Big)x -  \Big(1-\frac12 \frac{R}{|y|}\phi'\big(\frac{|y| }{R }\big)
\Big)y  \right| \lesssim & \chi_{\{ |x|\geq R \} \cup \{|y|\geq R\}
} \big|x-y\big|,\end{align*} and the radial Sobolev inequality in
\cite{Strauss:Soliton wave in higher Dim, tao:book}
\begin{align*}
\big\|\chi_{\{|\cdot|\geq R\}} u(t,
\cdot)\big\|^4_{L^{\frac{2d}{d-2}}} \lesssim &
\frac{1}{R^{\frac{4d-4}{d}}} \big\|u(t)\big\|^{\frac{4}{d}}_{\dot
H^1} \big\|u(t)\big\|^{\frac{4d-4}{d}}_{L^2},
\end{align*}
we have
\begin{align*}&\iint_{\R^d\times\R^d} \chi_{\{|x|\geq R \} \cup
\{|y|\geq R\} }   \frac{1}{|x-y|^4}  |u(t,x)|^2|u(t,y)|^2 \; dxdy \\
\lesssim & \iint_{|x|\approx|y|\gtrsim  R}   \frac{1}{|x-y|^4} \;
|u(t,x)|^2|u(t,y)|^2 \; dxdy  \\& + \iint_{\max(|x|,|y|)\gg \min(|x|,|y|) \atop \max(|x|,|y|) \geq R} \frac{1}{|x-y|^4} \; |u(t,x)|^2|u(t,y)|^2 \; dxdy \\
\lesssim &\big\|\chi_{\{|\cdot|\geq R\}} u(t,
\cdot)\big\|^4_{L^{\frac{2d}{d-2}}} + \frac{1}{R^2}
\big\|u(t)\big\|^2_{L^2}\big\|u(t)\big\|^2_{\dot H^1}\\
\lesssim &
\frac{1}{R^{\frac{4d-4}{d}}}\big\|u(t)\big\|^{\frac{4d-4}{d}}_{L^2}
\big\|u(t)\big\|^{\frac{4}{d}}_{\dot H^1}
 + \frac{1}{R^2}
\big\|u(t)\big\|^2_{L^2}\big\|u(t)\big\|^2_{\dot H^1}.
\end{align*}
This yields  \eqref{errcontrol1:sup} by the mass conservation.

\noindent{\bf Step 2: Refined bound on $A_R(u(t))$ when $\delta(t)$
small.} We claim that there exist $\delta_1$,  $C>0$ such that for
any $R > 1$, and $t\geq 0$ with $\delta(t)\leq \delta_1$,
\begin{align}\label{errcontrol2:sup}
\Big| A_R(u(t)) \Big| \leq C \left( \frac{1}{ R ^{\frac{d-2}{2}}}
\delta(t) + \delta(t)^2\right).
\end{align}

To do so, we first show that for small $\delta_1$,
\begin{align}\label{sf:sup}
\mu_-:=\inf \Big\{\mu(t), t\geq 0, \delta(t)\leq \delta_1 \Big\}
> 0.
\end{align}
Indeed, by \eqref{modulcomp} and Lemma \ref{estmodulpara}, we have
\begin{align*} u_{\theta(t), \mu(t)} (t)= W + V,\; \text{with}\;\;
\big\|V(t)\big\|_{\dot H^1} \approx \delta(t).
\end{align*}
By the mass conservation, we have
\begin{align*}
\big\|u_0\big\|^2_{L^2} \geq & \int_{|x|\leq \mu(t)} \big| u(t,x)
\big|^2 dx = \frac{1}{\mu(t)^2} \int_{|x|\leq 1} \big| u_{\theta(t),
\mu(t)}\big|^2 dx \\
\gtrsim & \frac{1}{\mu(t)^2} \left(\int_{|x|\leq 1}\big|W\big|^2 dx
- \int_{|x|\leq 1}\big|V(t) \big|^2 dx \right).
\end{align*}
This together with
\begin{align*}
\big\|V(t)\big\|_{L^2(|x|\leq 1)} \lesssim
\big\|V(t)\big\|_{L^{\frac{2d}{d-2}}(|x|\leq 1)} \lesssim
\big\|V(t)\big\|_{\dot H^1} \lesssim \delta(t)
\end{align*}
implies that
\begin{align*}
\big\|u_0\big\|^2_{L^2} \geq  \frac{1}{\mu(t)^2} \left(
\int_{|x|\leq 1} \big|W\big|^2 dx -  C \delta(t)^2 \right).
\end{align*}
Choosing sufficiently small $\delta_1$, one easily gets
\eqref{sf:sup}. By \eqref{modulcomp} and the change of variable, we
have
\begin{align*}
\Big| A_R(u(t)) \Big| = \Big| A_R\big((W+V)_{-\theta(t),1/\mu(t) }
\big) \Big| = \Big| A_{R\mu(t)}\big( W+V   \big) \Big|.
\end{align*}
Note that
\begin{align*}
A_{R\mu(t)}(W)=0,\;\text{and}\;\; \big\|\nabla W\big\|_{L^2(|x|\geq
\rho)}\approx \big\| W\big\|_{L^{\frac{2d}{d-2}}(|x|\geq
\rho)}\approx \rho^{-\frac{d-2}{2}}\; \text{for}\; \rho\geq 1,
\end{align*}
we have
\begin{align*}
  \Big| A_R(u(t)) \Big|   = & \Big| A_{R\mu(t)}(W+V)-
A_{R\mu(t)}(W) \Big|
\\
\lesssim &\;  \int_{|x|\geq R\mu(t)}\big| \nabla W \cdot \nabla V(t)
\big| + \big| \nabla V(t)\big|^2
 dx \\
& + \int_{R\mu(t)\leq |x|\leq 2R\mu(t)}
\frac{1}{\big(R\mu(t)\big)^2} \left(\big|
  W \cdot V(t) \big| + \big| V(t)\big|^2  \right) dx \\
  & + \iint_{\{|x|\geq R\mu(t) \} \atop \cup \{|y|\geq R\mu(t) \}}
  \frac{1}{|x-y|^4}
  \Big( \big|W(x)V(t,x)\big| \big|W(y) \big|^2 + \big|W(x)\big|^2\big|W(y)V(t,y)
  \big|
 \\
  & \qquad \qquad  \qquad    + \big|W(x)\big|^2\big|V(t,y)\big|^2+ \big|V(t,x)\big|^2\big|W(y)\big|^2 \\
  & \qquad \qquad  \qquad   + \big|W(x)V(t,x)\big| \big|V(t,y) \big|^2 + \big|V(t,x)\big|^2 \big|V(t,y) \big|^2\Big) \;
  dxdy\\
  \lesssim & \frac{\|V(t)\|_{\dot
  H^1} }{\big(R\mu(t)\big)^{\frac{d-2}{2}}} + \frac{\|V(t)\big\|_{\dot
  H^1}}{\big(R\mu(t)\big)^{d-2}} + \big\|V(t)\big\|^2_{\dot
  H^1} + \big\|V(t)\big\|^3_{\dot
  H^1} + \big\|V(t)\big\|^4_{\dot
  H^1}.
\end{align*}
This combining  with \eqref{sf:sup} implies that
\eqref{errcontrol2:sup} if $\delta_1$ is small enough.

\vskip0.2cm

 \noindent{\bf Step 3: Conclusion.} From \eqref{errcontrol2:sup},
 there exist $\delta_2 > 0 $ and $R_2\geq 1$ such that if $R\geq R_2$, $\delta(t)\leq
 \delta_2$,
\begin{align*}
\Big| A_R(u(t)) \Big| \leq C \left( R ^{-\frac{d-2}{2}} \delta(t) +
\delta(t)^2\right) \leq 4 \delta(t).
\end{align*}

Let
\begin{align*}
f_{R_3}(\delta):=\frac{C}{R_3^2} + \frac{C}{R_{3}^{\frac{4d-4}{d}}}
\big( \delta+ \big\|W\big\|^2_{\dot H^1}\big)^{\frac{2}{d}}
 + \frac{C}{R_3^2}
\big( \delta+ \big\|W\big\|^2_{\dot H^1}\big)-4\delta,
\end{align*}
where $C$ is given by \eqref{errcontrol1:sup}. For sufficient large
$R_3$, $f_{R_3}(\delta)$ is concave on $\delta$. Choosing $R_3 $
large enough such that $f_{R_3}(\delta_3)\leq 0$, and
$f'_{R_3}(\delta_3)\leq 0$, we have for any $R\geq R_3$, $\delta\geq
\delta_3$
\begin{align*}
f_{R}(\delta)\leq f_{R_3}(\delta) \leq 0.
\end{align*}
This  implies that $ A_R(u(t)) \leq 4 \delta(t)$, so we  conclude the
proof of \eqref{Secderiv} with $R_1=\max(R_2, R_3)$.

Finally, note that $V_R(t)>0$,   $\partial^2_t
V_R(t) < 0$ for $t>0$ and $u$ is defined on $[0, +\infty)$, we easily see that $\partial_t V_R(t) > 0$
by the
convexity analysis.
\end{proof}

\begin{lemma}\label{L:inteexpondecay} Under the assumptions of Proposition
\ref{expdecay:supercase}, there exist $c>0$, $C>0$   such that
for $R\geq R_1$ ( which is given in Lemma \ref{L:secderiv}), and all
$t\geq 0$,
\begin{align}
\label{expdecay:Inte:supercase}
 \int^{+\infty}_t  \delta(s)ds   \leq & \; C  e^{-ct}.
\end{align}
\end{lemma}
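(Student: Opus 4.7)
The plan is to set $F(t):=\int_t^{+\infty}\delta(s)\,ds$ and combine the first and second derivative estimates for $V_R$ (from Lemma \ref{L:firder} and Lemma \ref{L:secderiv}) to obtain a differential inequality of the form $F(t)\leq -C'F'(t)$, from which the claimed exponential decay follows by a Gronwall argument.

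First I would verify that $F(t)$ is finite. Fix $R\geq R_1$. Integrating the inequality $\partial_t^2 V_R(s)\leq -4\delta(s)$ from $t$ to any $T>t$ gives
\begin{equation*}
\partial_t V_R(T)-\partial_t V_R(t)\leq -4\int_t^T\delta(s)\,ds.
\end{equation*}
Since $\partial_t V_R(T)>0$ for every $T\geq 0$ by \eqref{fderiv:positive}, rearranging yields $4\int_t^T\delta(s)\,ds\leq \partial_t V_R(t)$. Letting $T\to+\infty$ and using Lemma \ref{L:firder} to bound the right-hand side produces
\begin{equation*}
4 F(t)=4\int_t^{+\infty}\delta(s)\,ds\leq \partial_t V_R(t)\leq CR^2\,\delta(t).
\end{equation*}
In particular, taking $t=0$ shows $F(0)<+\infty$.

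Next, because $F'(t)=-\delta(t)$ (by the fundamental theorem of calculus, valid as $\delta$ is continuous), the inequality just derived reads
\begin{equation*}
F(t)\leq \tfrac{CR^2}{4}\,\delta(t)=-\tfrac{CR^2}{4}\,F'(t),
\end{equation*}
i.e.\ $F'(t)+\frac{4}{CR^2}F(t)\leq 0$. Multiplying by the integrating factor $e^{ct}$ with $c:=\tfrac{4}{CR^2}$ gives $\frac{d}{dt}\bigl(e^{ct}F(t)\bigr)\leq 0$, hence $F(t)\leq F(0)e^{-ct}$, which is exactly \eqref{expdecay:Inte:supercase}.

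The proof is essentially mechanical once one has the two virial estimates in hand; the only point requiring care is justifying that $\partial_t V_R(T)$ remains nonnegative as $T\to+\infty$ so that the boundary term can be dropped when integrating the second derivative inequality, but this is precisely the content of \eqref{fderiv:positive}. I do not foresee a serious obstacle here.
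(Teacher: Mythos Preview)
Your proof is correct and follows essentially the same approach as the paper: both integrate the second-derivative inequality \eqref{Secderiv}, drop the boundary term at $T$ using the positivity \eqref{fderiv:positive}, invoke \eqref{Fderiv} to obtain $\int_t^{+\infty}\delta(s)\,ds\leq C\delta(t)$, and then conclude by Gronwall. Your write-up is slightly more explicit in carrying out the Gronwall step via $F(t)$ and an integrating factor, but the argument is the same.
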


\begin{proof}
Fix $R\geq R_1$. By \eqref{Fderiv}, \eqref{Secderiv} and
\eqref{fderiv:positive}, we have \begin{align*} 4\int^T_t
\delta(s)ds \leq - \int^T_t
\partial^2_s V_R(s)ds =
\partial_tV_R(t) - \partial_tV_R(T) \leq CR^2 \delta(t).
\end{align*}
Let $T\rightarrow +\infty $, we have
\begin{align*}
 \int^{+\infty}_t  \delta(s)ds   \leq C  \delta(t).
\end{align*}
By the Gronwall inequality, we have \eqref{expdecay:Inte:supercase}.
\end{proof}
\subsection{Convergence of the modulation parameters.} Let us show that
\begin{equation}\label{diff:Gradient}
\aligned \lim_{t\rightarrow+\infty}\delta(t)=0.
\endaligned
\end{equation}
If \eqref{diff:Gradient} does not hold, by
\eqref{expdecay:Inte:supercase}, there exist two increasing
sequences $(t_n)_{n}$, $(t'_n)_{n}$ such that $t_n<t'_n$,
$\delta(t_n)\rightarrow 0$, $\delta(t'_n) = \varepsilon_1 $ for some
$0< \varepsilon_1 < \delta_0$, and
\begin{align*}
  \forall\; t\in (t_n, t'_n), \;
0< \delta(t) < \epsilon_1.
\end{align*}
On $[t_n, t'_n]$, $\theta(t)$ and $\mu(t)$ are well-defined by Lemma
\ref{choice:spatialtranslation}. By Proposition \ref{P:static
stability}, there exist $\theta_0$ and $\mu_0>0$ such that
\begin{equation}\label{approx:u:sup}
u(t_n)\longrightarrow W_{-\theta_0, 1/\mu_0} \quad \text{in}\quad \dot
H^1.
\end{equation}
By the mass conservation, there exists $u_{\infty} \in L^2$ such
that $ u(t) \rightharpoonup u_{\infty} $ in $ L^2 $, as
$t\rightarrow +\infty.$ Hence, we have
\begin{equation*}
u(t) \rightharpoonup W_{-\theta_0, 1/\mu_0} \quad  \text{in}\quad  L^2, \;
\; \text{as}\;\; t\rightarrow +\infty,
\end{equation*}
which implies that
\begin{equation}\aligned\label{fsf:bound interval}
 \displaystyle
 \mu(t)\;\text{is bounded on}\;
\bigcup_{n} [t_n, t'_n].
\endaligned
\end{equation}

By Lemma \ref{estmodulpara} and \eqref{expdecay:Inte:supercase}, we
have
\begin{align*}
\big|\alpha(t_n) \big|\approx & \;\delta(t_n) \rightarrow 0, \; \text{as}\; n\rightarrow +\infty, \\
\big|\alpha(t'_n)-\alpha(t_n)\big|= & \left|\int^{t'_n}_{t_n}
\alpha'(s) ds \right| \leq C \left|\int^{t'_n}_{t_n} \delta(s) ds
\right| \leq C e^{-ct_n}.
\end{align*}
Therefore $\alpha(t'_n) \rightarrow 0$ as $n\rightarrow +\infty$,
which contradicts the definition of $t'_n$.

Similarly by \eqref{diff:Gradient}, we have that $\mu(t)$ is bounded
for large $t$, and the parameter $\alpha(t)$ is well defined for
large $t$, and
\begin{align}
\delta(t) \approx &\big| \alpha(t) \big| =  \big| \alpha(t)
-\alpha(+\infty) \big| \notag\\
\leq & C \int^{+\infty}_t \big| \alpha'(s)\big| ds \leq C
\int^{+\infty}_t  \mu^2(s) \delta(s) ds \leq C
e^{-ct},\label{gradient:sup:expdecay}
\end{align}
which, together with Lemma \ref{estmodulpara}, implies that $
\frac{1}{\mu(t)^2} $ satisfies the Cauchy criterion as $t\rightarrow
+\infty$. Then we have
\begin{align*}
\lim_{t \rightarrow +\infty } \mu(t) = \mu_{+\infty} \in (0, +\infty
],
\end{align*}
This combining with  the boundness of  $\mu(t)$  precludes the case
$\lim_{t\rightarrow +\infty}\mu(t) = +\infty$, Hence we have
\begin{equation} \label{fsf:bound infinity}
 \lim_{t\rightarrow+\infty}\mu(t) =
\mu_{\infty} \in (0, +\infty).
\end{equation}
Thus by Lemma \ref{estmodulpara}, we have
\begin{equation*}
\aligned \big\|u-W_{\theta(t), \mu(t)}\big\|_{\dot H^1} + \big|
\alpha'(t)\big| + \big| \theta'(t)\big| \leq C \delta(t) \leq C
e^{-ct}.
\endaligned
\end{equation*}
This  yields \eqref{asymptotic:supercritical case}. \qed

\subsection{Blowup for the negative times}\label{subs:blowup}
It is a consequence of the positivity of $\partial_t V_R(t)$ in
\eqref{Fderiv} and the time reversal symmetry. Suppose that $u$ is
also global for the negative time. Applying Lemma \ref{L:firder},
Lemma \ref{L:secderiv} and Lemma \ref{L:inteexpondecay} to
$\overline{u}(-t)$, we know that they also hold for the negative
times.  Hence by \eqref{diff:Gradient}, we know that
\begin{equation*}
\lim_{t\rightarrow \pm \infty} \delta(t) =0.
\end{equation*}
By Lemma \ref{L:firder} and Lemma \ref{L:secderiv}, we know that
$\partial_t V_R(t)> 0$ and $
\partial_t V_R(t) \longrightarrow 0, \; \text{as}\; t\rightarrow \pm
\infty.
$
By Lemma \ref{L:secderiv}, we have
$
\partial^2_t V_R(t)<0.
$ This implies that $\partial_tV_R(t)\equiv 0$. It is a
contradiction, so we conclude the proof.\qed


\section{Convergence to $W$ for the subcritical threshold solution}\label{S:convergence:sub}
In this section, we consider the radial subcritical  threshold
solution $u$ of \eqref{har}. Similar to that in Section
\ref{S:convergence:sup}, the following proposition will give the
dynamics of $W^-$ of Theorem \ref{threholdsolution} in the negative
time and is also the first step in the proof of case (a) of Theorem
\ref{classification}.
\begin{proposition}\label{expdecay:subcase}
Let $u \in \dot H^1(\R^d)$ be a radial solution of \eqref{har}, and
$I=(T_-, T_+)$  denote its maximal interval of existence. Assume that
\begin{equation} \label{sub thresh case}
\aligned E(u_0)=E(W),  \quad \big\| \nabla u_0\big\|_{L^2} <
\big\|\nabla W\big\|_{L^2},
\endaligned
\end{equation}
Then $$I=\R.$$ Furthermore, if $u$ does not scatter for the positive
times, that is,
\begin{equation}\label{Noscattering}
\aligned \big\|u\big\|_{Z(0,+\infty)}=\infty,
\endaligned
\end{equation}
 then
there exist $\theta_0\in \R, \mu_0>0, c, C>0$ such that
\begin{equation}
\aligned \big\|u-W_{\theta_0,
\mu_0}\big\|_{\dot H^1} \leq Ce^{-ct}, \qquad \forall t\geq 0,
\endaligned
\end{equation}and
\begin{equation}\label{negscattering}
\aligned \big\|u\big\|_{Z(-\infty, 0)}< \infty.
\endaligned
\end{equation}
An analogues assertion holds on $(-\infty, 0]$.
\end{proposition}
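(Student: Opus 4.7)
The plan is to follow the scheme of Proposition \ref{expdecay:supercase} with two modifications forced by the subcritical regime: we have only $\dot H^1$ control (no a priori $L^2$ mass), and the conclusion requires negative-time scattering rather than blowup. Global existence is immediate from Lemma \ref{energytrapp}(a), which propagates $\|\nabla u(t)\|_2 < \|\nabla W\|_2$ on the whole maximal interval, so the local theory of Theorem \ref{T:local} gives $I=\R$.

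Assuming $\|u\|_{Z(0,+\infty)}=\infty$, the first substantive step is to extract compactness modulo symmetries. I would apply a profile decomposition in $\dot H^1_{\rm rad}$ to sequences $u(t_n)$ and combine it with the below-threshold scattering of Theorem \ref{belowthreshold}(a) and the variational characterization of Lemmas \ref{SharpConstant} and \ref{P:static stability} to show that at most one profile can be nontrivial and that it must saturate $E=E(W)$. This yields continuous modulation parameters $\theta(t),\mu(t)$ such that $\{u_{\theta(t),\mu(t)}(t):t\geq 0\}$ is precompact in $\dot H^1_{\rm rad}$. Precompactness provides uniform localization at scale $\mu(t)^{-1}$, which plays the role that the $L^2$ mass played in Section \ref{S:convergence:sup}.

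Next, I would adapt the virial monotonicity of Lemmas \ref{L:firder}--\ref{L:inteexpondecay} by working with $V_{R/\mu(t)}$. Compactness guarantees that for every $\varepsilon>0$ there exists $R_\varepsilon$ such that the kinetic and potential energy outside $B(0,R_\varepsilon/\mu(t))$ is bounded by $\varepsilon$ uniformly in $t$; this is precisely what is needed to control the error term $A_R(u(t))$ of Lemma \ref{L:local virial} and obtain $\partial_t^2 V_{R/\mu(t)}\leq -4\delta(t)+o(\delta(t))$. Lemma \ref{estmodulpara} yields $|\partial_t V_{R/\mu(t)}|\lesssim \delta(t)$ directly from the modulation decomposition. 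Integrating as in Lemma \ref{L:inteexpondecay} produces $\int_t^{+\infty}\delta(s)\,ds\leq Ce^{-ct}$; combined with the modulation ODEs this gives $\delta(t)\leq Ce^{-ct}$, convergence $\theta(t)\to\theta_0,\ \mu(t)\to\mu_0\in(0,\infty)$, and the claimed bound $\|u(t)-W_{\theta_0,\mu_0}\|_{\dot H^1}\leq Ce^{-ct}$.

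For the backward scattering I would apply the same argument to $\overline{u(-t)}$. If $\|u\|_{Z(-\infty,0)}$ were also infinite, the analysis would produce $W_{\theta_1,\mu_1}$ with $\|u(t)-W_{\theta_1,\mu_1}\|_{\dot H^1}\leq Ce^{ct}$ as $t\to-\infty$. A solution converging exponentially to $W$ (up to symmetries) at both infinities falls under the uniqueness analysis of Section \ref{S:uniqueness}, which forces $u$ to coincide with $W$ up to symmetries; this contradicts $\|\nabla u_0\|_2<\|\nabla W\|_2$, so $u$ must scatter backward. The main obstacle is the compactness extraction at threshold in the purely $\dot H^1$ setting: ruling out secondary profiles requires careful use of the refined Sobolev inequality of Lemma \ref{L:sobolev} and the asymptotic orthogonality of the scales $\lambda^j_n$ from Proposition \ref{P:static stability}. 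The virial step is then a fairly routine adaptation once the intrinsic scale $\mu(t)$ is fixed.
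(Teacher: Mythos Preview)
Your outline has the right architecture but contains two genuine gaps, one logical and one technical.

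\medskip

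\textbf{Global existence is not immediate.} In the energy-critical setting a uniform bound $\|\nabla u(t)\|_2<\|\nabla W\|_2$ does \emph{not} by itself prevent finite-time blowup: the lifespan in Theorem~\ref{T:local} depends on the profile of the data, not only on its $\dot H^1$ size. The paper obtains $I=\R$ only \emph{after} establishing compactness (Lemma~\ref{compactness}) and then ruling out $T_+<\infty$ via the mass-localization / finite-speed argument in Corollary~\ref{globalpositive}. Your first paragraph skips this.

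\medskip

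\textbf{The virial step is not a routine adaptation.} Two points:
\begin{itemize}
\item The sign reverses. Since here $\int|\nabla u|^2<\int|\nabla W|^2$, one has $8\int|\nabla u|^2-8\iint\cdots=+8\delta(t)$, so $\partial_t^2V_R(t)=+8\delta(t)+A_R$ and the useful inequality is $\partial_t^2V_R\ge(8-\varepsilon)\delta$ (Lemma~\ref{virialestimate-delta:positive}), not $\le-4\delta$. Convexity of $V_R$ goes the other way, so the supercritical argument of Lemmas~\ref{L:secderiv}--\ref{L:inteexpondecay} does not transfer directly.
\item The first-derivative bound carries an $R^2$ factor, $|\partial_tV_R|\le CR^2\delta(t)$, and you must take $R\gtrsim 1/\lambda(t)$ to kill $A_R$. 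Thus integrating the virial only gives
\[
\int_\sigma^\tau\delta(s)\,ds\le C\Big(\sup_{[\sigma,\tau]}\frac{1}{\lambda(t)^2}\Big)\big(\delta(\sigma)+\delta(\tau)\big),
\]
which is useless until you know $1/\lambda(t)^2$ is bounded. The paper closes this loop via three extra ingredients you omit: the lower bound $\sqrt t\,\lambda(t)\to\infty$ (Corollary~\ref{globalpositive}(b)), the ergodic-mean convergence $\frac1T\int_0^T\delta\to0$ (Lemma~\ref{meanconverg}) producing a sequence $\delta(t_n)\to0$, and the separate control $\bigl|\lambda(\tau)^{-2}-\lambda(\sigma)^{-2}\bigr|\le C\int_\sigma^\tau\delta$ (Lemma~\ref{control:parameter}). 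Feeding these back into the displayed inequality yields boundedness of $1/\lambda^2$ and then the Gronwall step. Your time-dependent weight $V_{R/\mu(t)}$ does not avoid this: differentiating in $t$ produces extra terms proportional to $\mu'/\mu$ and you still need a priori control of the scale.
\end{itemize}

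\medskip

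\textbf{Negative-time scattering.} Your route through Section~\ref{S:uniqueness} is logically permissible but heavier than necessary. The paper argues directly: if $u$ failed to scatter in both directions, the two-sided versions of the estimates above give $\int_{-\infty}^{+\infty}\delta\le C(\delta(\sigma)+\delta(\tau))\to0$, hence $\delta\equiv0$ and $u=W$ up to symmetry, contradicting $\|\nabla u_0\|_2<\|\nabla W\|_2$.
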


\subsection{Compactness properties.}\label{subs:compact}
It is well known that the solution with
\begin{equation*}
\aligned E(u)<E(W),  \quad \big\| \nabla u_0\big\|_{L^2} <
\big\|\nabla W\big\|_{L^2},
\endaligned
\end{equation*}
is global well-posed and scatters in both time directions \cite{LiMZ:e-critical Har, MiaoXZ:09:e-critical radial Har}.
  By Lemma
\ref{energytrapp}, the concentration compactness principle and the
stability theory as the proof of Proposition 4.2 in
\cite{MiaoXZ:09:e-critical radial Har}, we can show:

\begin{lemma} \label{compactness}
Let $u$ be a radial solution of \eqref{har} satisfying
\begin{align*}
E(u_0)=E(W),  \;\; \big\| \nabla u_0\big\|_{L^2} < \big\|\nabla
W\big\|_{L^2}, \; \; \big\|u\big\|_{Z(0,T_+)}=+\infty.
\end{align*}   Then there
exists a continuous functions $\lambda(t)$ such that the set
\begin{equation}\label{Kcompact}
\aligned K:= \big\{ (u(t))_{\lambda(t)} , t\in [0,T_+) \big\}
\endaligned
\end{equation}
is  pre-compact in $\dot H^1(\R^d)$.
\end{lemma}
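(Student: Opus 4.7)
The plan is to follow the standard Kenig--Merle concentration compactness scheme adapted to the radial, energy-critical focusing Hartree equation, essentially reproducing the argument of Proposition~4.2 in \cite{MiaoXZ:09:e-critical radial Har}. The first reduction: since $E(u_0)=E(W)$ and $\|\nabla u_0\|_2<\|\nabla W\|_2$, Lemma~\ref{energytrapp}(a) gives a uniform bound $\|\nabla u(t)\|_2<\|\nabla W\|_2$ on $[0,T_+)$, so $\{u(t)\}_{t\in[0,T_+)}$ is bounded in $\dot H^1$. It then suffices to show that for every sequence $t_n\in[0,T_+)$ there exist $\lambda_n>0$ and $U\in\dot H^1_{\rm rad}$ with $(u(t_n))_{0,\lambda_n}\to U$ in $\dot H^1$ along a subsequence; continuity of $\lambda(t)$ can be arranged afterwards by gluing with the local-in-time Strichartz control from Lemma~\ref{Striestimate}.

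Next I would apply the radial $\dot H^1$ profile decomposition (already used in the proof of Proposition~\ref{P:static stability}) to the bounded sequence $\{u(t_n)\}$, obtaining, after extraction,
\[
u(t_n,x)=\sum_{j=1}^{J}\frac{1}{(\lambda_n^{(j)})^{(d-2)/2}}V^{(j)}\!\left(\frac{x}{\lambda_n^{(j)}}\right)+r_n^{J}(x),
\]
with pairwise asymptotically orthogonal scales $\{\lambda_n^{(j)}\}$ and vanishing remainder in the $\dot B^1_{2,\infty}$ sense as $J,n\to\infty$. The Pythagorean identities for the kinetic energy and for the Hartree potential energy (proved via Lemma~\ref{L:hardy} together with the scale orthogonality, exactly as in the proof of Proposition~\ref{P:static stability}) combined with the conservation $E(u(t_n))=E(W)$ give
\[
\sum_j\|\nabla V^{(j)}\|_2^2+\|\nabla r_n^{J}\|_2^2=\|\nabla u(t_n)\|_2^2+o_n(1),\qquad \sum_j E(V^{(j)})+E(r_n^{J})=E(W)+o_n(1).
\]
Together with \eqref{convexity} and \eqref{KinEnergy}, every profile and the remainder sit in the sub-threshold regime, so $E(V^{(j)})\geq 0$ and $E(r_n^{J})\geq 0$.

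The crux is to show that exactly one profile is non-trivial. By Theorem~\ref{belowthreshold}(a), any sub-threshold profile with $E(V^{(j)})<E(W)$ generates a global nonlinear profile $U^{(j)}$ scattering in both time directions with finite $Z$-norm. If two or more profiles carried strictly positive energy, then the nonlinear profile superposition $\sum_{j}(U^{(j)})_{0,\lambda_n^{(j)}}(\,\cdot\,-t_n)$ together with the long-time perturbation lemma for \eqref{har} (built on Lemma~\ref{Striestimate} and the nonlinear bounds of Lemma~\ref{linearoperator:prelimestimate}) would produce a uniform $Z$-bound for $u$ on $[t_n,T_+)$ for all large $n$, contradicting $\|u\|_{Z(0,T_+)}=+\infty$. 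Hence only one profile $V^{(1)}$ survives, with $E(V^{(1)})=E(W)$ and $\|\nabla r_n^{J}\|_2\to 0$. Setting $\lambda(t_n):=\lambda_n^{(1)}$ yields $(u(t_n))_{0,\lambda(t_n)}\to V^{(1)}$ in $\dot H^1$, establishing the pre-compactness of $K$.

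The main obstacle is this uniqueness step: ruling out multiple non-trivial profiles requires a robust long-time perturbation theory for the energy-critical Hartree nonlinearity capable of absorbing the asymptotically orthogonal sum of sub-threshold scattering profiles, and upgrading the weak profile decomposition to strong $\dot H^1$ convergence of the remainder. This relies crucially on the nonlinear bounds of Lemma~\ref{linearoperator:prelimestimate}, the sub-threshold scattering result of Theorem~\ref{belowthreshold}(a), and the radial Sobolev/Besov embedding underlying Lemma~\ref{L:sobolev}; the remaining ingredients---gradient trapping, Pythagoras for the Hartree energy, and the radial profile decomposition---are direct adaptations of tools already recorded in the excerpt.
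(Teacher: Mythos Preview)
Your proposal is correct and follows essentially the same approach the paper indicates: the paper does not give a self-contained proof but simply invokes Lemma~\ref{energytrapp}, the concentration compactness principle, and the stability theory exactly as in the proof of Proposition~4.2 in \cite{MiaoXZ:09:e-critical radial Har}, which is precisely the Kenig--Merle profile-decomposition argument you outline. One small remark: the perturbation estimates you need are the full trilinear Hartree bounds from the local theory (Lemma~\ref{L:hardy} plus H\"older in the $Z$, $S$, $N$ norms) rather than Lemma~\ref{linearoperator:prelimestimate}, which is specifically tailored to the linearization around $W$; but the underlying estimates are of the same type.
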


Let $u$ be a solution of \eqref{har}, and $\lambda(t)$ be as in
Lemma \ref{compactness}. Consider $\delta_0$ as in Section 4. The
parameters $\theta(t)$, $\mu(t)$ and $\alpha(t)$ are defined for
$t\in D_{\delta_0}= \{t: \delta(t)< \delta_0 \}$. By
\eqref{modulcomp} and Lemma \ref{estmodulpara}, there exists a
constant $C_0>0$ such that
\begin{equation*}
\aligned \int_{\mu(t)\leq |x|\leq
2\mu(t)}\big|\nabla u(t,x) \big|^2 dx = & \int_{\mu(t)\leq |x|\leq
2\mu(t)} \frac{1}{\mu(t)^d} \left| e^{i\theta(t)}\nabla u\left(t,\frac{x}{\mu(t)}\right) \right|^2 dx  \\
 \geq & \int_{1\leq |x|\leq 2}\big|
\nabla W\big|^2 -C_0 \delta(t), \quad  \forall \; t \in D_{\delta_0}.
\endaligned
\end{equation*}
Taking a smaller $\delta_0$ if necessary, we can assume that the
right hand side of the above inequality is bounded from below by a
positive constant $\varepsilon_0$ on $D_{\delta_0}$. Thus, we have
\begin{equation*}
\aligned \int_{\frac{\mu(t)}{\lambda(t)}\leq |x|\leq \frac{2
\mu(t)}{\lambda(t)}}\frac{1}{\lambda(t)^d}\left|\nabla
u\left(t,\frac{x}{\lambda(t)}\right) \right|^2 dx \geq \int_{1\leq
|x|\leq 2}\big| \nabla W\big|^2 -C_0 \delta(t), \quad  \forall \; t \in D_{\delta_0}.
\endaligned
\end{equation*}
By the compactness of $\overline{K}$, it follows that for any $t\in
D_{\delta_0}$, we have $\big| \mu(t) \big| \sim \big| \lambda(t)
\big| $. As a consequence, we may modify $\lambda(t)$ such that $K$
defined by \eqref{Kcompact} remains pre-compact in $\dot H^1$ and
\begin{equation}\label{para-uniform-SGD}
\aligned \forall \; t\in D_{\delta_0}, \; \lambda(t)=\mu(t).
\endaligned
\end{equation}

As a consequence of Lemma \ref{compactness}, we have
\begin{corollary}\label{globalpositive}Let $u$ be a radial solution of \eqref{har} satisfying \eqref{sub thresh
case} and  not scatter for the positive times. Then
\begin{enumerate}
\item[\rm (a)] $T_+=+\infty.$

\item[\rm (b)] $\displaystyle \lim_{t \rightarrow \infty} \sqrt{t} \lambda(t) =\infty$;
\end{enumerate}
\end{corollary}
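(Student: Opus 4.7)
Plan for Corollary~\ref{globalpositive}:

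Both parts are contradiction arguments built on three ingredients: the pre-compactness of $K = \{u(t)_{\lambda(t)} : t \in [0, T_+)\}$ in $\dot H^1_{\mathrm{rad}}$ from Lemma~\ref{compactness}, the uniform energy-trapping $\|\nabla u(t)\|_2 < \|\nabla W\|_2$ from Lemma~\ref{energytrapp}(a), and the localized mass $V_R(t)$ of Lemma~\ref{L:local virial}. A useful first consequence of pre-compactness is that the gradient of $u(t)$ concentrates at spatial scale $1/\lambda(t)$: for every $\varepsilon > 0$ there exists $R(\varepsilon) > 0$ with
\[
\int_{|x| \leq 1/(R(\varepsilon)\lambda(t))} |\nabla u(t)|^2 \, dx + \int_{|x| \geq R(\varepsilon)/\lambda(t)} |\nabla u(t)|^2 \, dx \leq \varepsilon, \qquad t \in [0, T_+).
\]

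For part (a), I would assume $T_+ < \infty$ and first show $\lambda(t) \to \infty$ as $t \to T_+^-$. If along some sequence $t_n \to T_+^-$ the scale $\lambda(t_n)$ stayed in a compact subset of $(0, \infty)$, pre-compactness of $K$ combined with the scaling and phase invariances would give convergence of $u(t_n)$ in $\dot H^1$, and Theorem~\ref{T:local} would extend $u$ past $T_+$; the case $\lambda(t_n) \to 0$ is excluded similarly using Theorem~\ref{T:local}(d), since rescaling by a vanishing parameter drives the rescaled data arbitrarily small. Having established $\lambda(t) \to \infty$, I would adapt the virial analysis of Section~\ref{S:convergence:sup} to the subcritical case: since $E(u) = E(W)$ and $\|\nabla u\|_2 < \|\nabla W\|_2$, Lemma~\ref{L:local virial} gives $\partial_t^2 V_R(t) = 8\delta(t) + A_R(u(t))$. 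In place of the mass bound used in Lemma~\ref{L:secendeavor}'s Step~1 (unavailable since $u_0 \in \dot H^1$ only), I would use the radial Sobolev bound $\int_{|x| \leq 2R} |u|^2 \, dx \leq CR^2 \|\nabla u\|_2^2$, which for $R$ large yields $\partial_t^2 V_R(t) \geq 4\delta(t)$ together with $|\partial_t V_R(t)| \leq CR^2$. Pre-compactness forces $\delta(u(t_n))$ to converge, along subsequences, to $\delta(V^*)$ for a non-trivial radial profile $V^*$; the case $V^* = W$ up to symmetry is excluded by Lemma~\ref{energytrapp}(a)'s strict inequality, while $\delta^* := \delta(V^*) > 0$ makes $\int_0^{T_+}\delta(t)\,dt = +\infty$, contradicting the integrated convexity estimate and the bounded $|\partial_t V_R|$.

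For part (b), I would suppose $T_+ = +\infty$ and that there exists $t_n \to \infty$ with $\sqrt{t_n}\lambda(t_n) \leq M < \infty$. Choosing $R = R_n \approx \sqrt{t_n}/M$ (comparable to the concentration scale $1/\lambda(t_n)$), the same convexity $\partial_t^2 V_{R_n} \geq 4\delta(t)$ and the bound $|\partial_t V_{R_n}(t)| \leq CR_n^2 \leq C t_n/M^2$ yield $\int_0^{t_n} \delta(s) \, ds \leq C t_n/M^2$. On the other hand, pre-compactness at $t_n$ produces a non-trivial profile $V^*$ with $\delta(V^*) > 0$, and the variational characterization of $W$ (Lemma~\ref{SharpConstant}) combined with continuity of $\delta$ forces $\delta(s) \geq \delta^*/2 > 0$ on a set of positive density in $[0,t_n]$ for large $n$, giving $\int_0^{t_n}\delta(s)\,ds \gg t_n/M^2$ and the contradiction.

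The main obstacle is running the localized virial argument without the $L^2$-mass conservation that is implicitly used in the analogous Section~\ref{S:convergence:sup} estimates; the radial Sobolev inequality is the necessary substitute, but introduces factors of $R$ that must be balanced against $1/\lambda(t)$. A secondary difficulty is ruling out the degenerate case $V^* = W$ (up to symmetry) in the limit of the rescaled orbit, for which the strict energy trapping of Lemma~\ref{energytrapp}(a) together with Proposition~\ref{P:static stability} are the crucial inputs.
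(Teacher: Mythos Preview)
Your plan for both parts has genuine gaps, and the paper's route is quite different.

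\textbf{Part (a).} Your contradiction fails: you assume $T_+<\infty$, and $\delta(t)\le\|\nabla W\|_2^2$ is bounded, so $\int_0^{T_+}\delta(t)\,dt<\infty$ regardless of whether $\delta^*>0$. No amount of virial convexity will produce a divergent integral over a finite interval with bounded integrand. The paper instead pushes the concentration $\lambda(t)\to\infty$ further: using a localized-mass/almost-finite-speed-of-propagation argument (borrowed from Duyckaerts--Merle, Lemma~2.8), it shows that finite-time blowup with the compactness of $\overline{K}$ forces $u_0\in L^2$, and then that in fact $u\equiv 0$, contradicting $E(u_0)=E(W)>0$. There is no virial here; the whole argument lives at the level of mass and propagation.

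\textbf{Part (b).} Your virial scheme has two holes. First, the inequality $\partial_t^2 V_{R_n}\ge 4\delta(t)$ demands $R_n\ge R_\varepsilon/\lambda(t)$ for \emph{every} $t\in[0,t_n]$, not only $t=t_n$; the hypothesis $\sqrt{t_n}\lambda(t_n)\le M$ gives no lower bound on $\lambda(t)$ elsewhere on $[0,t_n]$, so you cannot control $A_{R_n}(u(t))$. Second, even if $\delta(t_n)\to\delta^*>0$ along your subsequence, nothing you cite forces $\delta(s)\ge\delta^*/2$ on a set of positive density in $[0,t_n]$; Proposition~\ref{P:static stability} only relates $\delta(u)$ to the distance from the orbit of $W$ and says nothing about temporal density. (Your exclusion of $V^*=W$ is also unjustified: strict inequality need not survive the strong limit.) In fact the virial machinery you are reaching for is exactly Lemma~\ref{meanconverg}, which in the paper \emph{uses} part~(b) as an input; your argument would be circular.

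The paper's proof of (b) is a short rigidity argument with no virial: assume $\sqrt{t_n}\,\lambda(t_n)\to\tau_0\in[0,\infty)$, rescale $w_n(s,y)=\lambda(t_n)^{-(d-2)/2}u\big(t_n+s/\lambda(t_n)^2,\,y/\lambda(t_n)\big)$, pass to a strong $\dot H^1$-limit $w_0$ by compactness of $\overline K$, and let $w$ solve \eqref{har} with data $w_0$. Since $E(w_0)=E(W)$, $\|\nabla w_0\|_2\le\|\nabla W\|_2$, Corollary~\ref{global} makes $w$ global. Continuous dependence (Theorem~\ref{T:local}(c)) at time $-t_n\lambda(t_n)^2\to-\tau_0^2$ then gives
\[
\lambda(t_n)^{-(d-2)/2}u_0\big(\cdot/\lambda(t_n)\big)\longrightarrow w(-\tau_0^2)\quad\text{in }\dot H^1,
\]
while $\lambda(t_n)\to 0$ forces the left side to converge weakly to $0$. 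Hence $w(-\tau_0^2)=0$, contradicting $E(w)=E(W)>0$. This is the missing idea: propagate the compactness backward to the fixed data $u_0$ via continuous dependence, rather than try to quantify $\delta(t)$ globally.
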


As a direct consequence of (a) in Corollary \ref{globalpositive}, we
have

\begin{corollary}\label{global}
Let $u$ be a radial solution of \eqref{har} with the maximal existence
interval $I$ such that
\begin{equation*} \aligned E(u_0)\leq E(W), \quad \big\|\nabla
u_0\big\|_2 \leq \big\|\nabla W \big\|_{2},
\endaligned
\end{equation*}
then $I=\R.$
\end{corollary}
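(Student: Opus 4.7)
The plan is to deduce Corollary \ref{global} by a three-way case split, reducing in each case to a result already established in the excerpt. The hypotheses $E(u_0)\le E(W)$ and $\|\nabla u_0\|_2\le \|\nabla W\|_2$ partition into the following exhaustive cases: (i) $E(u_0)<E(W)$; (ii) $E(u_0)=E(W)$ with $\|\nabla u_0\|_2<\|\nabla W\|_2$; (iii) $E(u_0)=E(W)$ with $\|\nabla u_0\|_2=\|\nabla W\|_2$.

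In case (i), I would apply the convexity dichotomy \eqref{convexity}. Under $E(u_0)<E(W)$ and $\|\nabla u_0\|_2\le\|\nabla W\|_2$, this yields
\begin{equation*}
\frac{\|\nabla u_0\|_2^{\,2}}{\|\nabla W\|_2^{\,2}}\ \le\ \frac{E(u_0)}{E(W)}\ <\ 1,
\end{equation*}
so that the gradient hypothesis is automatically strict, $\|\nabla u_0\|_2<\|\nabla W\|_2$. Theorem \ref{belowthreshold}(a) then gives $I=\R$ (and, in fact, scattering).

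In case (iii), Lemma \ref{energytrapp}(b) forces $u$ to equal $W$ up to the symmetries of the equation; since $W$ is a stationary solution of \eqref{har} (by \eqref{gs}), and the admissible symmetries (scaling, time and phase translation, time reversal) preserve global existence, one has $I=\R$ at once. In case (ii), the conclusion $I=\R$ is precisely the opening assertion of Proposition \ref{expdecay:subcase}, which has already been proved via the compactness/virial machinery of Section \ref{S:convergence:sub} (culminating in Corollary \ref{globalpositive}(a), together with its time-reversed counterpart obtained by applying the same argument to $\overline{u(-t,x)}$).

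There is no substantive obstacle in this argument; it is essentially a repackaging. The one small point of care is the first case: the hypothesis only gives $\|\nabla u_0\|_2\le\|\nabla W\|_2$, which is not literally in the form required by Theorem \ref{belowthreshold}(a), and one must invoke the convexity relation \eqref{convexity} to upgrade this inequality to a strict one before applying that theorem.
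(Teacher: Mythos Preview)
Your three-way split matches the paper's, and cases (i) and (iii) are handled identically (the paper happens to split on the gradient first rather than the energy, using \eqref{convexity} in the reverse direction to force $E(u_0)=E(W)$ when $\|\nabla u_0\|_2=\|\nabla W\|_2$, but the content is the same). In case (ii), however, your citation is slightly circular: the assertion $I=\R$ in Proposition \ref{expdecay:subcase} is not established prior to Corollary \ref{global}; rather, Corollary \ref{global} \emph{is} its proof in the paper's logical order. Pointing to Corollary \ref{globalpositive}(a) is the right move, but that result carries the extra hypothesis $\|u\|_{Z(0,T_+)}=+\infty$, so you must first bifurcate: if $\|u\|_{Z(0,T_+)}<\infty$, the finite blowup criterion gives $T_+=+\infty$ directly; otherwise Corollary \ref{globalpositive}(a) applies. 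Then repeat for $\overline{u(-t)}$ to handle the negative time direction. This is exactly the paper's argument, and once you make this dichotomy explicit your proof is complete and coincides with it.
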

\begin{proof}
If $\big\|\nabla u_0\big\|_2 = \big\|\nabla W\big\|_2$, then by
\eqref{convexity}, we have $E(u_0)=E(W)$, then by the varialtional
characterization of $W$, we have $u_0=\pm W_{\theta_0,\lambda_0}$
for some $\theta_0\in [0, 2\pi), \lambda_0>0$. By uniqueness of
\eqref{har}, $u$ is only the stationary solution $\pm W_{\theta_0,
\lambda_0}$, which is globally defined.

If $\big\|\nabla u_0\big\|_2 < \big\|\nabla W\big\|_2, E(u_0)<E(W)$,
then from \cite{MiaoXZ:09:e-critical radial Har}, we have the
solution $u$ is global wellposed and scatters.

Now we consider the case $\big\|\nabla u_0\big\|_2 < \big\|\nabla
W\big\|_2, E(u_0)=E(W)$. If $\big\|u\big\|_{Z(I)}<+\infty$, then by
the finite blowup criterion, we know that $u$ is a global solution.
If $\big\|u\big\|_{Z([0, T_+))}=+\infty$, then by Corollary
\ref{globalpositive} (a), we have $T_+ =+\infty$. The same result
holds for the negative time.\end{proof}

\begin{proof}[Proof of Corollary \ref{globalpositive}]
 We show (a) by contradiction. Assume that
 $$T_+(u_0)< +\infty.$$ For this case we can show that
$$\lambda(t) \rightarrow +\infty ,\quad
\text{as}\; t \rightarrow T_+(u_0). $$
 Then using the
localized mass argument, the almost finite propagation speed and the
compactness property of $\overline{K}$ in $\dot H^1$ as Step 2 of
Lemma 2.8 in \cite{DuyMerle:NLS:ThresholdSolution}, we can show that
$$u_0 \in L^2.$$
Moreover, we have $u \equiv 0,$ which contradicts the assumption
that $E(u_0)=E(W)$ or that $u$ blows up at finite time $T_+>0$.

We also show (b) by the compactness argument. Assume that (b) does
not hold. Then there exists a sequence $t_n\rightarrow +\infty$ such
that
\begin{equation*}
\aligned \lim_{t_n\rightarrow +\infty} \sqrt{t_n} \lambda(t_n)
=\tau_0 \in[0, +\infty).
\endaligned
\end{equation*}

Consider
\begin{equation*}
\aligned w_n (s,y) = \lambda(t_n)^{-\frac{d-2}{2}} u
\left(t_n+\frac{s}{\lambda(t_n)^2}, \frac{y}{\lambda(t_n)} \right).
\endaligned
\end{equation*}
By the compactness of $\overline{K}$, up to a subsequence, there
exists a function $w_0\in \dot H^1$ such that
\begin{equation*}
\aligned w_n(0) \rightarrow w_0 \; \; \text{in}\; \dot H^1
\endaligned
\end{equation*}

Let $w$ be the solution of \eqref{har} with initial data $w_0$. Note
that
\begin{equation*}
\aligned E(u_0)=E(W),\quad \big\|\nabla u(t_n)\big\|_{2} <
\big\|\nabla W\big\|_{2},
\endaligned
\end{equation*}
we have
\begin{equation*}
\aligned E(w_0)=E(W),\quad \big\|\nabla w_0\big\|_{2} \leq
\big\|\nabla W\big\|_{2}.
\endaligned
\end{equation*}
Thus by Corollary \ref{global}, we have that
$T_{-}(w_0)=T_{+}(w_0)=\infty$. By Theorem \ref{T:local} and
$-\sqrt{t_n} \lambda(t_n)\rightarrow -\tau_0$, we have
\begin{equation*}
\aligned
 \lambda(t_n)^{-\frac{d-2}{2}}u_0\left( \frac{y}{\lambda(t_n)}
\right) =w_n\big( -t_n \lambda(t_n)^2, y \big) \rightarrow
w(-\tau^2_0, y), \;\;\text{in}\; \dot H^1.
\endaligned
\end{equation*}
Since $\lambda(t_n) \rightarrow 0$, we have
\begin{equation*}
\aligned
 \lambda(t_n)^{-\frac{d-2}{2}}u_0\left(\frac{y}{\lambda(t_n)}
\right) \rightharpoonup 0, \;\;\text{in}\; \dot H^1.
\endaligned
\end{equation*}
Thus $w(-\tau^2_0)=0$, which contradicts $E(w)=E(W)>0$. \end{proof}

\subsection{Convergence in the ergodic mean.}
\begin{lemma}\label{meanconverg}
Let $u$ be a radial solution of \eqref{har} satisfying  \eqref{sub
thresh case} and  \eqref{Noscattering}. Then
\begin{equation}
\aligned \lim_{T\rightarrow +\infty}
\frac{1}{T}\int^{T}_{0}\delta(t) dt =0,
\endaligned
\end{equation}
where $\delta(t)$ is defined by \eqref{def:delta}.
\end{lemma}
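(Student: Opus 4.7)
\noindent\textbf{Proof plan for Lemma \ref{meanconverg}.} My plan is to integrate the localized virial identity from Lemma \ref{L:local virial} over $[0,T]$ and control both the error term $A_R$ and the boundary terms by combining the $\dot H^1$--pre-compactness of $K=\{u(t)_{\lambda(t)}\}$ from Lemma \ref{compactness} with the dispersive growth $\sqrt{t}\,\lambda(t)\to+\infty$ from Corollary \ref{globalpositive}(b). Since Lemma \ref{energytrapp} gives $\|\nabla u(t)\|_2<\|\nabla W\|_2$ for all $t$, the energy conservation $E(u)=E(W)=\|\nabla W\|_2^2/4$ rewrites as $8\|\nabla u\|_2^2-8\iint|u|^2|u|^2/|x-y|^4 = 8\delta(t)$, so Lemma \ref{L:local virial} gives $\partial_t^2 V_R(t)=8\delta(t)+A_R(u(t))$, and integrating,
\begin{equation*}
8\int_0^T\delta(t)\,dt=\partial_t V_R(T)-\partial_t V_R(0)-\int_0^T A_R(u(t))\,dt.\qquad(\star)
\end{equation*}

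By Lemma \ref{compactness} and the identification $\lambda(t)=\mu(t)$ from \eqref{para-uniform-SGD}, for each $\eta>0$ there exists $\rho_\eta>0$ such that for all $t\geq 0$,
\begin{equation*}
\int_{|x|>\rho_\eta/\lambda(t)}|\nabla u(t)|^2\,dx+\|u(t)\|_{L^{2d/(d-2)}(|x|>\rho_\eta/\lambda(t))}^2<\eta.
\end{equation*}
Inspecting $A_R$ in Lemma \ref{L:local virial}, its $(4\phi''-8)|\nabla u|^2$ piece and nonlocal pieces are supported in $\{|x|\geq R\}$ while $|\Delta\Delta\phi_R|\lesssim R^{-2}\chi_{\{R\leq|x|\leq 2R\}}$; combined with Lemma \ref{L:hardy} this produces $|A_R(u(t))|\leq C\eta$ whenever $R\lambda(t)\geq \rho_\eta$, and $|A_R(u(t))|\leq C$ uniformly (using $\|\nabla u\|_2\leq\|\nabla W\|_2$). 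I then choose $R=\sqrt{T}$ and $t_\eta$ so large that $\sqrt{t}\,\lambda(t)\geq\rho_\eta$ on $[t_\eta,\infty)$; for $t\in[t_\eta,T]$ this gives $\sqrt{T}\,\lambda(t)\geq\sqrt{t}\,\lambda(t)\geq\rho_\eta$, whence $\int_0^T|A_R(u(t))|\,dt\leq Ct_\eta+C\eta T$.

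For the boundary terms in $(\star)$ I will split the integral defining $\|u(t)\nabla\phi_R\|_2^2$ at $|x|=\rho_\eta/\lambda(t)$: on the inside $|\nabla\phi_R(x)|\lesssim|x|\lesssim\rho_\eta/\lambda(t)$ together with H\"older and Sobolev gives a contribution $\lesssim\rho_\eta^4/\lambda(t)^4$, while on the outside $|\nabla\phi_R|\lesssim R$ combined with the $L^{2d/(d-2)}$--tail bound gives $\lesssim R^4\eta$. Hence $|\partial_tV_R(t)|\leq 2\|\nabla u\|_2\|u\nabla\phi_R\|_2 \lesssim \rho_\eta^2/\lambda(t)^2+R^2\sqrt{\eta}$. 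At $t=T$ with $R=\sqrt{T}$, since $T\lambda(T)^2=(\sqrt{T}\,\lambda(T))^2\to+\infty$, this yields $|\partial_tV_R(T)|/T\leq C\rho_\eta^2/(T\lambda(T)^2)+C\sqrt{\eta}=o_T(1)+C\sqrt{\eta}$; the analogous bound at $t=0$ uses only that $\lambda(0)>0$ is fixed. Substituting into $(\star)$ and dividing by $T$ gives $\tfrac1T\int_0^T\delta(t)\,dt\leq o_T(1)+C\sqrt{\eta}$, so letting $T\to\infty$ and then $\eta\to 0$ completes the proof.

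The main obstacle is the boundary estimate $|\partial_tV_R(T)|/T\to 0$: the crude bound $|\partial_tV_R|\lesssim R^2=T$ only gives $O(1)$ contribution and is insufficient to deduce Ces\`aro convergence. The refined control relies crucially on both the $\dot H^1$--pre-compactness of $K$ (which traps the $\dot H^1$--mass of $u(t)$ inside $|x|\lesssim\rho_\eta/\lambda(t)$, where $|\nabla\phi_R|$ is much smaller than $R$) and the dispersive growth $\sqrt{t}\,\lambda(t)\to+\infty$, which ensures $1/\lambda(T)^2=o(T)$ and thus beats the natural scaling.
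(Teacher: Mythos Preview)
Your argument is correct and uses the same basic ingredients as the paper---the localized virial identity, the $\dot H^1$-pre-compactness of $K$, and the growth $\sqrt{t}\,\lambda(t)\to+\infty$---but the two proofs diverge in how they neutralize the boundary term $\partial_t V_R$. The paper never refines the bound on $\partial_t V_R$: it simply uses the Hardy inequality to get $|\partial_t V_R(t)|\leq CR^2$ and then exploits the extra freedom in the choice of $R$ by taking $R=\epsilon_0\sqrt{T}$ with $\epsilon_0$ small, so that $CR^2/T=C\epsilon_0^2$ is already as small as desired. The smallness of $A_R$ still follows because $R\lambda(t)\geq \epsilon_0\sqrt{T}\cdot M_0/\sqrt{t}\geq \epsilon_0 M_0\geq \rho_\epsilon$ on $[t_0,T]$. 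Your route fixes $R=\sqrt{T}$ and instead invests the compactness of $K$ a second time to sharpen the boundary estimate to $|\partial_t V_R(t)|\lesssim \rho_\eta^2/\lambda(t)^2+\sqrt{\eta}\,R^2$, after which $\sqrt{T}\,\lambda(T)\to\infty$ kills the first piece. Both are valid; the paper's version is a bit leaner (compactness is used only for $A_R$), while yours makes explicit the mechanism by which pre-compactness beats the naive $R^2$ scaling of $\partial_t V_R$, which is a nice structural observation. The reference to \eqref{para-uniform-SGD} is unnecessary here---only Lemma~\ref{compactness} and Corollary~\ref{globalpositive}(b) are actually used.
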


\begin{proof} By Lemma \ref{L:local virial} and the Hardy inequality, we have
\begin{align*}
\big| \partial_t V_R(t)\big| \leq & C R^2.
\end{align*}
From  $E(u)=E(W)=\big\|\nabla W\big\|^2_{L^2}/4$, we have
\begin{align*}
\partial^2_t V_R(t)=&  -8 \delta(t) + A_R(t),
\end{align*}
where $ A_R\big(u(t)\big)$ is defined in Lemma \ref{L:local virial}.
By the compactness of $\overline{K}$ in $\dot H^1$, for any
$\epsilon >0$, there exists $\rho_{\epsilon} >0$ such that
\begin{align*}
\int_{|x|\geq \frac{\rho_{\epsilon}}{\lambda(t)}} \big| \nabla
u(t,x) \big|^2 dx \leq \epsilon.
\end{align*}

Note that
\begin{align*}
\left|  \Big(1-\frac12 \frac{R}{|x|}\phi'\big(\frac{|x| }{R }\big)
\Big)x -  \Big(1-\frac12 \frac{R}{|y|}\phi'\big(\frac{|y| }{R }\big)
\Big)y  \right| \lesssim & \chi_{\{ |x|\geq R \} \cup \{|y|\geq R\}
} \big|x-y\big|,\end{align*} and
\begin{align*}
&\iint \chi_{\{|x|\geq R\}\cup\{|y|\geq R\}}
\frac{1}{\big|x-y\big|^4} \big|u(t,x) \big|^2 \big|u(t,y) \big|^2\;
dxdy\\
\lesssim &  \big\|u(t)\big\|^2_{L^{\frac{2d}{d-2}} (|x|\geq R )}
\big\|u(t)\big\|^2_{\dot H^1}  \lesssim
\big\|u(t)\big\|^2_{L^{\frac{2d}{d-2}} (|x|\geq R )},
\end{align*}
we have for $\displaystyle R \geq  \rho_{\epsilon}/\lambda(t)$
\begin{align}\label{smalremainder}
 \forall \; t\geq 0,\;  \big| A_R(t) \big| \leq \epsilon.
\end{align}

Fix $\epsilon$, choose $\epsilon_0$ and $M_0$ such that
\begin{align*}
2C \epsilon^2_0 = \epsilon,\;\; M_0\epsilon_0 \geq \rho_{\epsilon}.
\end{align*}
By Corollary \ref{globalpositive} (b), there exists $t_0\geq 0$ such
that
\begin{align*}
\forall \; t\geq t_0, \;\; \lambda(t) \geq \frac{M_0}{\sqrt{t}}.
\end{align*}
For $T\geq t_0$, let
$R:=\epsilon_0 \sqrt{T}$.
For $t\in [t_0, T]$, we have
\begin{align*}
R \geq \epsilon_0 \sqrt{T} \; \frac{ M_0 }{\sqrt{t} \lambda(t)} =
\frac{\sqrt{T}}{\sqrt{t}}\; \frac{M_0 \epsilon_0}{ \lambda(t)} \geq
\frac{\rho_{\epsilon}}{\lambda(t)},
\end{align*}
this  yields that
\begin{align*}
8\int^T_t \delta(s) ds \leq & \int^T_t \partial^2_s V_R(s) ds +
\big| A_R(s) \big| \big( T -t_0\big) \leq   2CR^2 + \epsilon T = 2
\epsilon T.
\end{align*}
Let $T\rightarrow +\infty$, we have
\begin{align*}
\lim_{T\rightarrow+\infty} \frac{1}{T} \int^T_0 \delta(s) ds \leq
\frac{\epsilon}{4}.
\end{align*}
This completes the proof.
\end{proof}

\begin{corollary}\label{seqconv}
Let $u$ be a radial solution of \eqref{har} satisfying  \eqref{sub
thresh case} and  \eqref{Noscattering}.  Then there exists a
sequence $t_n$ such that $t_n \rightarrow +\infty$ and
\begin{equation}
\aligned \lim_{n\rightarrow +\infty} \delta(t_n)=0.
\endaligned
\end{equation}
\end{corollary}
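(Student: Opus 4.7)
The plan is to deduce Corollary \ref{seqconv} directly from Lemma \ref{meanconverg} by a standard contradiction argument. Since $\delta(t) \geq 0$, the non-existence of a sequence along which $\delta$ tends to $0$ is equivalent to $\liminf_{t\to+\infty}\delta(t) > 0$, and this is incompatible with the vanishing of the Ces\`aro mean $\frac{1}{T}\int_0^T \delta(t)\,dt$.

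More precisely, I would argue as follows. Suppose, for contradiction, that no sequence $t_n \to +\infty$ with $\delta(t_n)\to 0$ exists. Then $\liminf_{t\to+\infty}\delta(t)=:\eta>0$, so there exists $T_0\geq 0$ such that $\delta(t)\geq \eta/2$ for all $t\geq T_0$. For every $T\geq T_0$ this gives
\begin{equation*}
\frac{1}{T}\int_0^T \delta(t)\,dt \;\geq\; \frac{1}{T}\int_{T_0}^T \frac{\eta}{2}\,dt \;=\; \frac{\eta}{2}\cdot\frac{T-T_0}{T}.
\end{equation*}
Letting $T\to+\infty$ yields $\displaystyle\liminf_{T\to+\infty}\frac{1}{T}\int_0^T \delta(t)\,dt \geq \eta/2 > 0$, which contradicts Lemma \ref{meanconverg}.

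There is no real obstacle here; the entire content is packaged in Lemma \ref{meanconverg}, whose proof (via the localized virial identity, the compactness of $\overline{K}$ in $\dot H^1$ to control the error term $A_R(u(t))$, and Corollary \ref{globalpositive}(b) giving $\sqrt{t}\lambda(t)\to\infty$) is the substantive ingredient. Thus the proof of Corollary \ref{seqconv} is a one-line measure-theoretic observation: a nonnegative function whose Ces\`aro average tends to $0$ must have a subsequence tending to $0$.
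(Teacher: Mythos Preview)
Your argument is correct and is precisely the intended one: the paper states this result as an immediate corollary of Lemma~\ref{meanconverg} without giving a separate proof, and your contradiction via $\liminf_{t\to\infty}\delta(t)>0$ is exactly the standard deduction from the vanishing Ces\`aro mean of a nonnegative function.
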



\subsection{Exponential convergence.}\label{subs:expconv}

\begin{lemma}\label{virialestimate-delta:positive} Let $u$ be a
radial solution of \eqref{har} satisfying  \eqref{sub thresh case},
\eqref{Noscattering} and \eqref{para-uniform-SGD}, and $\lambda(t)$
be as in Lemma \ref{compactness}. Then there exists a constant $C$
such that if $\; 0\leq \sigma < \tau$,
\begin{equation*}
\aligned \int^{\tau}_{\sigma} \delta (t) dt \leq C \left(
\sup_{\sigma\leq t\leq \tau}\frac{1}{\lambda(t)^2} \right) \Big(
\delta(\sigma) + \delta(\tau) \Big).
\endaligned
\end{equation*}
\end{lemma}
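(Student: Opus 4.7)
The plan is to adapt the virial argument of Lemma \ref{L:secderiv} to the subcritical regime, using the pre-compactness of $\overline{K}$ (Lemma \ref{compactness}) in place of the mass conservation used there. I first apply the virial identity of Lemma \ref{L:local virial}. Using $E(u)=E(W)$ to eliminate the nonlocal term, one gets $\iint |u|^2|u|^2/|x-y|^4 = 2\|\nabla u\|_2^2-\|\nabla W\|_2^2$, and with the subcritical sign $\delta(t)=\|\nabla W\|_2^2-\|\nabla u(t)\|_2^2\ge 0$ (Lemma \ref{energytrapp}), the identity becomes
\[
\partial_t^2 V_R(t)=8\delta(t)+A_R(u(t)),
\]
the sign-reversed analog of the identity used in the supercritical case.

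The main obstacle is to show that, for $R$ chosen proportional to $\sup_{[\sigma,\tau]}\lambda(t)^{-1}$, one has $|A_R(u(t))|\le 4\delta(t)$ uniformly on $[\sigma,\tau]$, forcing $\partial_t^2 V_R\ge 4\delta$. I would split by the size of $\delta(t)$. If $\delta(t)\le\delta_0$, then \eqref{para-uniform-SGD} gives $\lambda(t)=\mu(t)$, and Step~2 of the proof of Lemma \ref{L:secderiv} applies verbatim (it used only the modulation decomposition $u_{\theta(t),\mu(t)}=W+V$ with $\|V\|_{\dot H^1}\approx\delta(t)$), yielding
\[
|A_R(u(t))|\lesssim (R\lambda(t))^{-(d-2)/2}\delta(t)+\delta(t)^2.
\]
If instead $\delta(t)\ge\delta_0$, then the pre-compactness of $\{u(t)_{\lambda(t)}\}$ in $\dot H^1_{\mathrm{rad}}$ yields, exactly as in the proof of Lemma \ref{meanconverg}, that for every $\epsilon>0$ there is $\rho_\epsilon$ with $R\ge\rho_\epsilon/\lambda(t)\Rightarrow|A_R(u(t))|\le\epsilon\le(\epsilon/\delta_0)\delta(t)$. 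Taking $R\ge R_0\sup_{[\sigma,\tau]}\lambda(t)^{-1}$ with $R_0$ large (and $\epsilon,\delta_0$ small) makes both sub-bounds at most $4\delta(t)$.

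Integrating $\partial_t^2 V_R\ge 4\delta(t)$ twice over $[\sigma,\tau]$ then gives
\[
4\int_\sigma^\tau\delta(t)\,dt\le\partial_t V_R(\tau)-\partial_t V_R(\sigma)\le|\partial_t V_R(\sigma)|+|\partial_t V_R(\tau)|.
\]
The endpoint bound $|\partial_t V_R(t)|\lesssim R^2\delta(t)$ follows by the two-regime argument of Lemma \ref{L:firder}: when $\delta(t)\le\delta_0$, the modulation decomposition, combined with the pointwise estimate $|\nabla\phi_R|\lesssim R^2/|x|$ and Hardy's inequality, controls the linear-in-$\widetilde u$ contribution (the $W\nabla W$ term drops out under $\Im$) by $R^2\|\widetilde u\|_{\dot H^1}\lesssim R^2\delta(t)$; when $\delta(t)\ge\delta_0$, the trivial Hardy bound $|\partial_t V_R(t)|\lesssim R^2\|\nabla u\|_2^2\le R^2\|\nabla W\|_2^2\le (R^2\|\nabla W\|_2^2/\delta_0)\,\delta(t)$ is enough. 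Inserting $R^2\lesssim \sup_{[\sigma,\tau]}\lambda(t)^{-2}$ then yields the claimed inequality.
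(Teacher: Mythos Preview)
Your proposal is correct and follows essentially the same approach as the paper's proof: the paper also rewrites $\partial_t^2 V_R = 8\delta(t) + A_R(u(t))$, splits the control of $A_R$ into the regimes $\delta(t)\ge\delta_0$ (handled by compactness of $\overline K$, as in Lemma~\ref{meanconverg}) and $\delta(t)<\delta_0$ (handled by the modulation computation of Step~2 in Lemma~\ref{L:secderiv}, using $\mu(t)=\lambda(t)$), chooses $R = R_2\sup_{[\sigma,\tau]}\lambda(t)^{-1}$, and combines the resulting $\partial_t^2 V_R\ge 4\delta$ with the endpoint bound $|\partial_t V_R|\lesssim R^2\delta$ from Lemma~\ref{L:firder}. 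One cosmetic slip: you integrate $\partial_t^2 V_R$ once, not twice, to reach $\partial_t V_R(\tau)-\partial_t V_R(\sigma)$.
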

\begin{proof} For $R>0$, Let us consider the function $V_R(t)$ defined as in
\eqref{localV}.

By the same estimate as that in Lemma \ref{L:firder}, there is a
constant $C_0$ independent of $t\geq 0$ such that
\begin{align}\label{Fderiv:subcase}
\Big| \partial_t V_R(t) \Big| \leq C_0 R^2 \delta(t).
\end{align}

Now we show that if $\epsilon>0$, there exists $R_{\epsilon}$ such
that for any $R \geq R_{\epsilon}/\lambda(t)$, then
\begin{align}\label{Sderiv:subcase}
\partial^2_t V_R(t) \geq  (8-\epsilon) \delta(t).
\end{align}
Indeed, by Lemma \ref{L:local virial} and $E(u)=E(W) =  \big\|\nabla
W \big\|^2_{L^2}/4$, we have
\begin{align*}
\partial^2_t V_R(t)=&\; 8 \int_{\R^d} \big|\nabla u (t,x)\big|^2 dx -8 \iint_{\R^d\times\R^d}
\frac{|u(t,x)|^2|u(t,y)|^2}{|x-y|^4} \; dxdy + A_R\big(u(t)\big)\\
= & \; 8 \delta(t) + A_R\big(u(t)\big),
\end{align*}
where $A_R\big(u(t)\big)$ is defined in Lemma \ref{L:local virial}.

To prove \eqref{Sderiv:subcase}, it suffices to show that if
$\epsilon>0$, there exists $R_{\epsilon}$ such that for any $R \geq
R_{\epsilon}/\lambda(t)$
\begin{align*}
  \Big| A_R(t) \Big|\lesssim \epsilon
\delta(t).
\end{align*}

For the case $\delta(t)\geq \delta_0$,  as the estimate
\eqref{smalremainder}, we can use the compactness of $\overline{K}$
in $\dot H^1$ to show that for any $t\geq 0$, $\epsilon>0$, there
exists $\rho_{\epsilon}>0$, such that for any $R\geq
\rho_{\epsilon}/\lambda(t)$,
\begin{align*}
\big|A_R(t) \big|\leq \epsilon\lesssim \epsilon \delta(t).
\end{align*}

For the case  $\delta(t)< \delta_0$, similar to the proof of Step 2 in
Lemma \ref{L:secderiv}, we can show that for any $t\geq 0$,
$\rho>1$, there exists $C>0$ such that for any $R\geq
\rho/\lambda(t)$,
\begin{align*}
\big|A_R(t) \big|\leq C\left( \rho^{-\frac{d-2}{2}} \delta(t)+
\delta(t)^2\right).
\end{align*}
This  implies  \eqref{Sderiv:subcase} if we choose $\rho$ large
enough.

By \eqref{Sderiv:subcase}, there exists $R_2$ such that for any
$R\geq R_2/\lambda(t)$
\begin{align*}
\partial^2_t V_R(t) \geq 4 \delta(t).
\end{align*}

Finally, if taking $R = R_2 \displaystyle \sup_{\sigma\leq t \leq
\tau} \left(\frac{1}{\lambda(t)}\right),$ and integrating between
$\sigma$ and $\tau$, we have
\begin{align*}
4\int^{\tau}_{\sigma} \delta(t) dt \leq \int^{\tau}_{\sigma}
\partial^2_t V_R(t) dt = \partial_t V_R(\tau)-\partial_t V_R(\sigma)
\leq C R^2 \Big(\delta(\tau) + \delta(\sigma) \Big).
\end{align*}
This finishes the proof of Lemma \ref{virialestimate-delta:positive}. \end{proof}

\begin{lemma}\label{control:parameter}
Let $u$ be a radial solution of \eqref{har} satisfying \eqref{sub
thresh case},  \eqref{Noscattering} and \eqref{para-uniform-SGD},
and $\lambda(t)$ be as in Lemma \ref{compactness}. Then there is a
constant $C_0>0$ such that for any $ \sigma, \tau>0$ with
$\sigma+\frac{1}{\lambda(\sigma)^2} \leq \tau$, we have
\begin{equation}\label{parcon}
\aligned \left|\frac{1}{\lambda(\tau)^2}
-\frac{1}{\lambda(\sigma)^2} \right|\leq C_0 \int^{\tau}_{\sigma}
\delta(t) dt.
\endaligned
\end{equation}
\end{lemma}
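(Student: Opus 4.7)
The idea is to split $[\sigma,\tau]$ according to the size of $\delta(t)$, applying the modulation machinery of Section \ref{S:modulation} where $\delta$ is small and a compactness argument where $\delta$ is bounded away from zero. Let $\delta_0$ be as in Section \ref{S:modulation}, and set $G:=\{t\in [\sigma,\tau]:\delta(t)<\delta_0\}$, $B:=[\sigma,\tau]\setminus G$. On $G$, \eqref{para-uniform-SGD} gives $\lambda(t)=\mu(t)$, and Lemma \ref{estmodulpara} yields
\[
\left|\frac{d}{dt}\frac{1}{\mu(t)^2}\right|=\frac{2|\mu'(t)|}{\mu(t)^3}\leq 2C\delta(t),
\]
so integrating over each maximal component of $G$ contributes at most $2C\int_\sigma^\tau \delta(t)\,dt$ to the total variation of $\lambda^{-2}$. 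Since $\delta(t)\geq \delta_0$ on $B$, we also have $|B|\leq \delta_0^{-1}\int_\sigma^\tau \delta(t)\,dt$.

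The key step is the following \emph{comparability claim}: there exists $M>0$, depending only on $\overline{K}$, such that for every $t_1\leq t_2$ with $t_2-t_1\leq 1/\lambda(t_1)^2$, $M^{-1}\lambda(t_1)\leq \lambda(t_2)\leq M\lambda(t_1)$. Granting this, I partition each connected component $[c,d]\subset B$ by $t_0:=c$ and $t_{k+1}:=t_k+1/\lambda(t_k)^2$; the claim gives $|\lambda(t_{k+1})^{-2}-\lambda(t_k)^{-2}|\leq (M^2-M^{-2})\lambda(t_k)^{-2}=(M^2-M^{-2})(t_{k+1}-t_k)$, and telescoping produces $|\lambda(c)^{-2}-\lambda(d)^{-2}|\leq (M^2-M^{-2})(d-c)\leq (M^2-M^{-2})\delta_0^{-1}\int_c^d \delta(t)\,dt$. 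Summing over components of $B$ and combining with the $G$-contribution yields \eqref{parcon} with $C_0:=\max(2C,(M^2-M^{-2})\delta_0^{-1})$. The hypothesis $\sigma+1/\lambda(\sigma)^2\leq \tau$ ensures that the interval contains at least one natural time scale $1/\lambda(\sigma)^2$, so the dichotomy above is non-vacuous.

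The main obstacle is the comparability claim. To prove it, I consider the rescaled solution $w(s,y):=\lambda(t_1)^{-(d-2)/2}u\big(t_1+s/\lambda(t_1)^2,\, y/\lambda(t_1)\big)$, which by scale-invariance solves \eqref{har} with $w(0)=(u(t_1))_{\lambda(t_1)}\in K$. By Theorem \ref{T:local} combined with pre-compactness of $\overline{K}$ in $\dot H^1$, the family $\{w(s)\,:\,w(0)\in K,\ s\in[0,1]\}$ lies in a compact subset $K'\subset \dot H^1$. At $s^*:=\lambda(t_1)^2(t_2-t_1)\in [0,1]$, one has $w(s^*)=(u(t_2))_{\lambda(t_1)}$, while $(u(t_2))_{\lambda(t_2)}\in K$; the two rescalings of $u(t_2)$ differ by the factor $\mu:=\lambda(t_2)/\lambda(t_1)$. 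If $\mu_n$ were unbounded along a subsequence, pure rescalings of elements $g^{(n)}=(u(t_2^{(n)}))_{\lambda(t_2^{(n)})}\in K$, which satisfy $\|g^{(n)}\|_{\dot H^1}\geq c>0$ by the energy trapping of Lemma \ref{energytrapp} together with $E(u)=E(W)>0$, would converge only weakly to $0$ in $\dot H^1$ (while retaining $\dot H^1$-norm bounded below), contradicting pre-compactness of the corresponding $w(s^*_n)\in K'$. Hence $\mu$ lies in a compact subset of $(0,\infty)$, proving the claim.
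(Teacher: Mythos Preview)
Your comparability claim is correct and is essentially Step~1 of the paper's proof (``local constancy of $\lambda(t)$''). The treatment of the set $G$ via Lemma~\ref{estmodulpara} is also fine. However, the handling of $B$ has a genuine gap.

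The problem is with \emph{short} components of $B$. A connected component $[c,d]\subset B$ may have length $d-c\ll 1/\lambda(c)^2$; indeed, since $B=\{t:\delta(t)\geq\delta_0\}$ is merely a closed set, nothing prevents arbitrarily many such short components. On such a component your partition stops immediately at $t_1>d$, and the comparability claim only yields
\[
\bigl|\lambda(d)^{-2}-\lambda(c)^{-2}\bigr|\leq (M^2-M^{-2})\,\lambda(c)^{-2},
\]
which is \emph{not} bounded by $(M^2-M^{-2})(d-c)$ when $d-c\ll\lambda(c)^{-2}$. Summing over many short components, the total variation of $\lambda^{-2}$ could be of order $(\text{number of components})\cdot\lambda^{-2}$, while $\int_B\delta\,dt\geq\delta_0|B|$ only controls the total \emph{length} of $B$, not the number of components. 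So the telescoping bound $|\lambda(c)^{-2}-\lambda(d)^{-2}|\leq(M^2-M^{-2})\delta_0^{-1}\int_c^d\delta$ fails precisely here, and the argument cannot close.

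The paper's proof inserts an additional step (its Step~2) that rules out exactly this oscillation: using compactness of $\overline{K}$ and continuity of the flow, one shows that if $\delta$ exceeds $\delta_0$ somewhere on an interval of the natural length $1/\lambda(\tau)^2$, then $\delta$ stays above some fixed $\delta_1>0$ on the \emph{entire} interval. This lets you partition $[\sigma,\tau]$ into intervals of length $\approx 1/\lambda^2$ each lying wholly in one regime, so that on a ``bad'' interval the bound $\lambda^{-2}\approx(\text{interval length})\leq\delta_1^{-1}\int\delta$ is legitimate. Your argument needs this ingredient (or an equivalent one preventing rapid alternation of $\delta$ across $\delta_0$ on the natural time scale) to be complete.
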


\begin{proof} We divide it into three steps:

\noindent{\bf Step 1: Local constancy of $\lambda(t)$.}  By the
compactness of $\overline{K}$, one easily  show that there exists
$C_1>0$  such that \begin{align}\label{sf:local constancy} \forall\;
\sigma, \tau\geq 0, \big| \tau - \sigma  \big| \leq
\frac{1}{\lambda(\sigma)^2} \Longrightarrow
\frac{\lambda(\tau)}{\lambda(\sigma)} +
\frac{\lambda(\sigma)}{\lambda(\tau)} \leq C_1.
\end{align}
Indeed, for any two sequences
$\tau_n, \sigma_n \geq 0$ such that
\begin{align*}
\big| \tau_n-\sigma_n\big|\leq \frac{1}{\lambda(\sigma_n)^2}.
\end{align*}
Up to subsequence, we may assume that
\begin{align*}
\lim_{n\rightarrow +\infty} \lambda^2(\sigma_n) \big(
\tau_n-\sigma_n\big) = s_0 \in [-1, 1].
\end{align*}

Consider
\begin{align*}
v_n(s,y)=
 \Big(\lambda(\sigma_n)\Big)^{-\frac{d-2}{2}}u\left(\frac{s}{\lambda(\sigma_n)^2}+\sigma_n,
\frac{y}{\lambda(\sigma_n)}\right).
\end{align*}
By the compactness of $\overline{K}$, up to subsequence, there
exists $v_0\in \dot H^1$ such that
\begin{align*}
v_n(0) \longrightarrow v_0 \;\; \text{in}\;\; \dot H^1,\;\;
\text{as}\;\; n\rightarrow +\infty.
\end{align*}
Thus $E(v_0)=E(W)$ and $\big\|\nabla v_0 \big\|_{L^2} < \big\|\nabla
W\big\|_{L^2}$. Let $v$ be the solution of \eqref{har} with $v_0$.
By Corollary \ref{global} and Theorem \ref{T:local}, $v$ is globally
defined and
\begin{align*}
 \Big(\lambda(\sigma_n)\Big)^{-\frac{d-2}{2}} u\left(\tau_n,
\frac{y}{\lambda(\sigma_n)}\right)=v_n\left(  \lambda(\sigma_n)^2
\big( \tau_n-\sigma_n\big) , y \right) \longrightarrow v(s_0,y)
\;\text{in}\; \dot H^1 \;\;\text{as}\;\; n\rightarrow +\infty.
\end{align*}
In addition, by the compactness of $\overline{K}$, we know that $
\Big(\lambda(\tau_n)\Big)^{-\frac{d-2}{2}} u\left(\tau_n,
\frac{y}{\lambda(\tau_n)}\right) $ converges in $\dot H^1$. Thus
$\lambda(\tau_n)/\lambda(\sigma_n) +
\lambda(\sigma_n)/\lambda(\tau_n)$ is bounded.

\vskip0.2cm

\noindent{\bf Step 2: Control of the variations of $\delta(t)$. }Let
$\delta_0$ be as in Lemma \ref{modulpara}. Using the local constancy
of $\lambda(t)$ and the compactness of $\overline{K}$, we will show
that for any $\tau>0$, if \begin{align*} \sup_{t\in\left[\tau,
\tau+\frac{1}{\lambda(\tau)^2}\right]} \delta(t)>\delta_0,
\end{align*}then there exists $\delta_1>0$ such that
\begin{align}\label{local control:delta:sub}
\inf_{t\in\left[\tau, \tau+\frac{1}{\lambda(\tau)^2}\right]}
\delta(t)>\delta_1.
\end{align}
Indeed, if not, we may find sequences $\tau_n$, $t_n$ and $t'_n$
such that
\begin{align*}
t_n, t'_n \in \left[\tau_n, \tau_n
+\frac{1}{\lambda(\tau_n)^2}\right],\quad  \delta(t_n)\rightarrow 0,\quad
\delta(t'_n)>\delta_0.
\end{align*}

Consider
\begin{align*}
v_n(s,y)=  \lambda(t_n)
^{-\frac{d-2}{2}}u\left(\frac{s}{\lambda(t_n)^2}+t_n,
\frac{y}{\lambda(t_n)}\right).
\end{align*}
From the compactness of $\overline{K}$ and $\delta(t_n)\rightarrow 0$,
we may assume that
\begin{align*}
v_n(0)\longrightarrow W_{\lambda_0}\; \text{in}\; \dot
H^1\;\;\text{as}\; n\rightarrow +\infty.
\end{align*}
By Step 1, we know that $\lambda(t_n)/\lambda(\tau_n)\leq C$, thus
$\big|\lambda(t_n)^2\big(t_n -t'_n \big)\big| < C$ for some constant
$C>0$. Up to a subsequence, we may assume that
\begin{align*}
\lim_{n\rightarrow +\infty} \lambda(t_n)^2 \big( t_n-t'_n\big) = s_0
\in [-C, C].
\end{align*}
By Theorem \ref{T:local}, we know that
\begin{align*}
v_n\Big(  \lambda(t_n)^2 \big( t_n-t'_n\big), y \Big)=
 \lambda(t_n) ^{-\frac{d-2}{2}} u\left(t'_n,
\frac{y}{\lambda(t_n)}\right) \longrightarrow W_{\lambda_0}
\;\text{in}\; \dot H^1 \;\;\text{as}\;\; n\rightarrow +\infty.
\end{align*}
This  contradicts  $\delta(t'_n)> \delta_0$.

\vskip0.2cm

\noindent{\bf Step 3: End of the proof. } We first show that there
exists $C>0$ such that
\begin{align}\label{parcon:subinterval}
0\leq \sigma \leq \widetilde{\sigma} \leq \widetilde{\tau} \leq \tau
= \sigma + \frac{1}{C^2_1\lambda(\sigma)^2} \Longrightarrow
\left|\frac{1}{\lambda(\widetilde{\tau})^2}
-\frac{1}{\lambda(\widetilde{\sigma})^2} \right|\leq C
\int^{\tau}_{\sigma} \delta(t) dt.
\end{align}
where $C_1\geq 1$ is the constant defined in Step 1. Indeed, if
$\delta(t) \leq \delta_0$ on $[\sigma, \tau]$, then by Lemma
\ref{estmodulpara} and \eqref{para-uniform-SGD}, we have
\begin{align*}
\left| \frac{1}{\lambda(\widetilde{\tau})^2}
-\frac{1}{\lambda(\widetilde{\sigma})^2} \right| =\left|
\int^{\widetilde{\tau}}_{\widetilde{\sigma}}
\frac{\lambda'(t)}{\lambda(t)^3} dt \right| \leq
\int^{\widetilde{\tau}}_{\widetilde{\sigma}}\left|
\frac{\mu'(t)}{\mu(t)^3} \right|  dt \leq C \int^{\tau}_{\sigma}
\delta(t) dt.
\end{align*}
Otherwise if there exists $t\in [\sigma, \tau]$ such that
$\delta(t)> \delta_0$, then by Step 2, we know that $\delta(t) \geq
\delta_1$ for all $t\in [\sigma, \tau]$. Note that
\begin{align*}
\big| \widetilde{\sigma}- \widetilde{\tau}\big| \leq
\frac{1}{C^2_1\lambda(\sigma)^2} \leq
\frac{1}{\lambda(\widetilde{\sigma})^2}.
\end{align*}
By Step 1, we obtain
\begin{align*}
\left| \frac{1}{\lambda(\widetilde{\tau})^2}
-\frac{1}{\lambda(\widetilde{\sigma})^2} \right| \leq
\frac{2C^2_1}{\lambda(\widetilde{\sigma})^2} \leq
\frac{2C^4_1}{\lambda(\sigma)^2} = 2C^5_1 \big| \tau-\sigma \big|
\leq \frac{2C^5_1}{\delta_1}\int^{\tau}_{\sigma} \delta(t) dt.
\end{align*}

Dividing   $[\sigma, \tau]$ into small subintervals, we can complete
the proof. \end{proof}

\begin{lemma} Let $u$ be a
radial solution of \eqref{har} satisfying  \eqref{sub thresh case},
\eqref{Noscattering} and \eqref{para-uniform-SGD}, and $\lambda(t)$
be as in Lemma \ref{compactness}. Then there exists $C$   such that
for all $t\geq 0$,
\begin{align}\label{ExpDecay:Inte}
 \int^{+\infty}_t  \delta(s)ds   \leq & \; C  e^{-ct}.
\end{align}
\end{lemma}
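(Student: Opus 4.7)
The plan is to turn the two virial-type estimates, Lemma \ref{virialestimate-delta:positive} and Lemma \ref{control:parameter}, into a closed differential inequality for $f(t):=\int_t^{+\infty}\delta(s)\,ds$, and then to run Gronwall. Abbreviating $\Lambda(t):=1/\lambda(t)^2$, for $0\le\sigma\le\tau$ we have
\begin{align}
\int_\sigma^\tau\delta(s)\,ds &\le C\Big(\sup_{[\sigma,\tau]}\Lambda\Big)\bigl(\delta(\sigma)+\delta(\tau)\bigr), \tag{V}\\
\Lambda(\tau) &\le \Lambda(\sigma)+C_0\int_\sigma^\tau\delta(s)\,ds, \tag{M}
\end{align}
(the second one modulo the local-constancy issue for $|\tau-\sigma|\le 1/\lambda(\sigma)^2$, which is handled by Step 1 of Lemma \ref{control:parameter}'s proof and at worst changes constants). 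Substituting (M) into (V) gives, after absorbing $\int_\sigma^\tau\delta$ whenever $\delta(\sigma)+\delta(\tau)$ is sufficiently small,
\begin{equation*}
\int_\sigma^\tau\delta(s)\,ds\ \le\ \frac{C'}{\lambda(\sigma)^2}\bigl(\delta(\sigma)+\delta(\tau)\bigr).
\end{equation*}
This is the central estimate I would try to establish cleanly.

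Next I would apply it along the sequence $t_n\to+\infty$ with $\delta(t_n)\to 0$ provided by Corollary \ref{seqconv}: taking $\tau=t_n\to\infty$, and fixing any $\sigma=t$ large enough that $\delta(t)<\delta_0$, one obtains
\begin{equation*}
\int_t^{+\infty}\delta(s)\,ds\ \le\ \frac{C'}{\lambda(t)^2}\,\delta(t).
\end{equation*}
At this step I use that in the modulational regime $\delta<\delta_0$ we have $\lambda(t)=\mu(t)\ge\mu_->0$ by \eqref{para-uniform-SGD} and \eqref{sf:sup} (adapted to the subcritical case as in the proof of Lemma \ref{L:secderiv}, Step 2). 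Hence there exists $t_\ast\ge 0$ such that whenever $t\ge t_\ast$ and $\delta(t)<\delta_0$,
\begin{equation*}
f(t)\ \le\ C''\,\delta(t)\ =\ -C''\,f'(t).
\end{equation*}

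To upgrade this to a bound holding for all $t\ge t_\ast$ (and not only at times where $\delta(t)$ is small), I would chain through the good times: since $f(t_\ast)<\infty$ and $f$ is non-increasing, the sub-level set $\{t\ge t_\ast:\delta(t)<\delta_0\}$ has full measure (for large $t$ by Lemma \ref{meanconverg}) and, more importantly, hits any interval $[t,t+T_0]$ for some fixed $T_0$ (otherwise $\delta\ge\delta_0$ on a long interval, contradicting the boundedness of $\int_t^{+\infty}\delta$ combined with Step~2 of Lemma \ref{control:parameter}'s proof that locally forces $\delta\ge\delta_1$). Propagating the inequality $f\le -C''f'$ from a dense set of ``good'' times, combined with the monotonicity of $f$, yields $f(t)\le Ce^{-ct}$ for all $t\ge t_\ast$, which is the desired \eqref{ExpDecay:Inte}.

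The main obstacle will be Step~3, namely passing from the differential inequality at good times to a global exponential decay: one must carefully use the compactness of $\overline K$, the lower bound on $\mu(t)$ in the small-$\delta$ regime, and the fact that intervals where $\delta\ge\delta_0$ cannot be too long, in order to rule out that $\delta$ oscillates up and down in a way that prevents Gronwall from closing. Verifying integrability first (via $\tau=t_n\to\infty$) before attempting the exponential bound is essential, because only then does the sup in \eqref{V} combined with (M) produce a usable control on $\sup_{[t,\infty)}\Lambda$.
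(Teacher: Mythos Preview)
Your overall strategy---combine the virial estimate (V) with the modulation bound (M), absorb, obtain $\int_t^\infty\delta\le C\delta(t)$, then Gronwall---is exactly the paper's. But there is a genuine gap in how you bound $1/\lambda(t)^2$.

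You invoke \eqref{sf:sup} ``adapted to the subcritical case''. That adaptation does not exist: the proof of \eqref{sf:sup} (Step~2 of Lemma~\ref{L:secderiv}) uses mass conservation, i.e.\ $u_0\in L^2$, which is \emph{not} assumed in Proposition~\ref{expdecay:subcase}. In the subcritical threshold setting $u_0$ is only in $\dot H^1$, so no a priori lower bound on $\mu(t)$ is available this way.

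The paper instead extracts the bound on $\Lambda(t)=1/\lambda(t)^2$ directly from your own ``central estimate''. In fact, your substitution of (M) into (V) (with the local-constancy patch) yields
\[
\int_\sigma^\tau\delta\ \le\ C'\,\Lambda(\sigma)\bigl(\delta(\sigma)+\delta(\tau)\bigr)
\quad\text{whenever }\delta(\sigma)+\delta(\tau)\text{ is small.}
\]
Apply this with $\sigma=t_{n_0}$ fixed (so that $\delta(t_{n_0})\le 1/(4C_0)$, using Corollary~\ref{seqconv}) and $\tau=t_n\to\infty$: you get $\int_{t_{n_0}}^\infty\delta<\infty$. Then (M) with $\sigma=t_{n_0}$ immediately gives $\sup_{t\ge t_{n_0}}\Lambda(t)<\infty$. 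This is precisely the paper's first step, phrased slightly differently (the paper writes it as the combined bound \eqref{FSF-bound} and absorbs the $\sup\Lambda$ on the right-hand side).

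Once $\Lambda$ is uniformly bounded, (V) alone (no absorption, no smallness hypothesis on $\delta(t)$) gives $\int_t^{t_n}\delta\le C(\delta(t)+\delta(t_n))$ for \emph{every} $t$; letting $n\to\infty$ yields $f(t)\le C\delta(t)=-Cf'(t)$ for all large $t$, and Gronwall finishes. Your Step~3 discussion of ``chaining through good times'' and ruling out long intervals with $\delta\ge\delta_0$ is therefore unnecessary.
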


\begin{proof} We first show that $\frac{1}{\lambda(t)^2}$ is bounded. By
Lemma \ref{virialestimate-delta:positive} and Lemma
\ref{control:parameter}, there exists a constant $C_0>0$ such that
for all $0\leq \sigma \leq s < t\leq \tau$ with
$s+\frac{1}{\lambda(s)^2}< t$, we have
\begin{align}\label{FSF-bound}
\left| \frac{1}{\lambda(s)^2} -\frac{1}{\lambda(t)^2} \right| \leq
C_0 \sup_{\sigma\leq t\leq \tau}\left(\frac{1}{\lambda(t)^2}\right)
\Big(\delta(\sigma)+\delta(\tau)\Big).
\end{align}
By Corollary \ref{seqconv}, there exist $t_n\rightarrow +\infty$ and
$n_0\in \N$ such that for $n\geq n_0$,
\begin{align*}
\delta(t_n)\leq \frac{1}{4C_0}.
\end{align*}
Take $\sigma=s=t_{n_0}$, $\tau=t_n$, then
\begin{align*}
\forall \; t \in ( t_{n_0}+\frac{1}{\lambda(t_{n_0})^2}, t_n]
\Longrightarrow \left| \frac{1}{\lambda(t_{n_0})^2}
-\frac{1}{\lambda(t)^2} \right| \leq \frac{1}{2} \sup_{ t \geq
t_{n_0}} \frac{1}{\lambda(t)^2}.
\end{align*}
Note that $t_n \rightarrow +\infty$ as $n\rightarrow +\infty$, we
have
\begin{align*}
\sup_{ t \geq t_{n_0}+\frac{1}{\lambda(t_{n_0})^2}}
\frac{1}{\lambda(t)^2} \leq &  \frac{1}{2} \sup_{ t \geq t_{n_0}}
\frac{1}{\lambda(t)^2} +  \frac{1}{\lambda(t_{n_0})^2}.
\end{align*}
Thus
\begin{align*}
\sup_{ t \geq t_{n_0}+\frac{1}{\lambda(t_{n_0})^2}}
\frac{1}{\lambda(t)^2} \leq  \sup_{t_{n_0}\leq  t \leq
t_{n_0}+\frac{1}{\lambda(t_{n_0})^2}} \frac{1}{\lambda(t)^2} +
\frac{2}{\lambda(t_{n_0})^2}.
\end{align*}
This  shows the boundness of $\frac1{\lambda(t)^2}$.

By Lemma \ref{virialestimate-delta:positive} and the boundness of
$\frac{1}{\lambda(t)^2}$, we have for $t+\frac{1}{\lambda(t)^2}<t_n$
\begin{align*}
\int^{t_n}_t \delta(s) ds \leq C \Big(\delta(t)+\delta(t_n) \Big).
\end{align*}
Let $n\rightarrow +\infty$, we obtain
\begin{align*}
\int^{\infty}_t \delta(s) ds \leq C  \delta(t).
\end{align*}
This together with the Gronwall inequality yields
\eqref{ExpDecay:Inte}.\end{proof}

\subsection{Convergence of $\lambda(t)$}
Now by Lemma \ref{control:parameter} and \eqref{ExpDecay:Inte}, we
have for $\sigma+\frac{1}{\lambda(\sigma)^2} < \tau$
\begin{align*}
\left|\frac{1}{\lambda(\sigma)^2} -\frac{1}{\lambda(\tau)^2} \right|
\leq C e^{-c\sigma}.
\end{align*}
By means of  the Cauchy criteria of convergence at infinity, there exists
$\lambda_{\infty}\in (0, +\infty]$ such that
\begin{align*}
\left|\frac{1}{\lambda(t)^2} -\frac{1}{\lambda_{\infty}^2} \right|
\leq C e^{-ct}.
\end{align*}

Now we show that
\begin{align}\label{ScaFun:limit}
\lambda_{\infty} \in (0, + \infty).
\end{align}
Assume that $\lambda_{\infty}=+\infty$. Let $0\leq \sigma \leq s$.
By \eqref{ExpDecay:Inte}, there exists a  sequence $t_n$  such that
$$\delta(t_0) \leq \frac{1}{2C_0},\;\;\delta(t_n)\rightarrow 0 \; \text{ as }\; t_n \rightarrow +\infty.$$
 For larger $n$, we
have $s+\frac{1}{\lambda(s)^2}<t_n$. By Lemma
\ref{virialestimate-delta:positive}, we obtain
\begin{align*}
\left|\frac{1}{\lambda(s)^2} -\frac{1}{\lambda(t_n)^2} \right| \leq
C_0 \sup_{\sigma\leq t \leq t_n} \Big(\frac{1}{\lambda(t)^2} \Big)
\big(\delta(\sigma)+\delta(t_n) \big).
\end{align*}
Let $n\rightarrow +\infty$, we have
\begin{align*}
\sup_{t \geq \sigma} \frac{1}{\lambda(t)^2}    \leq C_0
\delta(\sigma) \sup_{t \geq \sigma}  \frac{1}{\lambda(t)^2}.
\end{align*}
If taking $\sigma=t_0$, we have
\begin{align*}
\forall\; t \geq t_0,\;\; \lambda(t) \equiv +\infty.
\end{align*}
This  contradicts the continuity of $\lambda(t)$ on $\R$.

\subsection{Convergence of the modulation parameters.}

By \eqref{ExpDecay:Inte}, there exists $t_n\rightarrow +\infty$ such
that
$$\delta(t_n) \rightarrow 0.$$
Fix such $\{t_n\}_{n\in \N}$. In the similar proof as leading to
\eqref{diff:Gradient}, one easily sees that
\begin{equation}\label{deltalimit}
\aligned \lim_{t\rightarrow +\infty} \delta(t)=0.
\endaligned
\end{equation}

Now for large $t$, $\alpha(t)$ is well defined by Lemma
\ref{estmodulpara}, \eqref{para-uniform-SGD}, \eqref{ScaFun:limit},
and \eqref{deltalimit}. Furthermore,  we also have
\begin{align}
\delta(t) \approx& \big| \alpha(t) \big| = \big| \alpha(t)
-\alpha(+\infty) \big| \notag\\
\leq & C \int^{+\infty}_t \big| \alpha'(s)\big| ds \leq C
\int^{+\infty}_t  \mu^2(s) \delta(s) ds \leq C e^{-ct}.
\label{expdecay:pointwise}
\end{align}

Finally, by Lemma \ref{estmodulpara}, \eqref{para-uniform-SGD},
\eqref{ScaFun:limit}, and the Cauchy criteria of convergence, there
exists $\theta_{\infty}$ such that for large $t$
\begin{align*}
\Big| \theta(t) - \theta_{\infty}\Big| \leq C \int^{\infty}_t \big|
\theta'(s) \big| ds \leq C \int^{+\infty}_t  \mu^2(s) \delta(s) ds
\leq C e^{-ct}.
\end{align*}
\subsection{Scattering for the negative times.}
\label{S:sub:scattering} By Corollary \ref{global}, we know that $u$
is globally well defined. Assume that $u$ does not scatter for the
negative time. Then by the analogue estimates as those in Subsection
\ref{subs:compact}-\ref{subs:expconv}, we have
\begin{enumerate}
\item[\rm (a)] there exists $\lambda(t)$, defined for $t\in \R$, such
that the set \begin{align*} K:=\Big\{\big(u(t)\big)_{\lambda(t)},\;
t\in \R \Big\}
\end{align*}
is pre-compact in $\dot H^1$.

\item[\rm (b)] there exists an decreasing sequence $t'_n \rightarrow
-\infty$ as $n\rightarrow +\infty$, such that
\begin{align*}
\lim_{n\rightarrow +\infty} \delta(t'_n)=0.
\end{align*}

\item[\rm (c)] there is a constant $C>0$ such that if $\; -\infty <
\sigma < \tau < \infty$,
\begin{equation*}
\aligned \int^{\tau}_{\sigma} \delta (t) dt \leq C \left(
\sup_{\sigma\leq t\leq \tau}\frac{1}{\lambda(t)^2} \right) \big(
\delta(\sigma) + \delta(\tau) \big).
\endaligned
\end{equation*}

\item[\rm (d)] there is a constant $C>0$
such that for any $\sigma, \tau $ with
$\sigma+\frac{1}{\lambda(\sigma)^2} \leq \tau$, we have
\begin{equation*}
\aligned \left|\frac{1}{\lambda(\tau)^2}
-\frac{1}{\lambda(\sigma)^2} \right|\leq C \int^{\tau}_{\sigma}
\delta(t) dt.
\endaligned
\end{equation*}
\end{enumerate}
From (b)-(d), we know that
\begin{align*}
\lim_{t\rightarrow \pm \infty} \delta(t)=0.
\end{align*}
Note that  $\frac{1}{\lambda(t)^2}$ is bounded for $t\in \R$, we
easily verify  that
\begin{align*}
\int^{\tau}_{\sigma} \delta(s) \;ds\leq C
\Big(\delta(\sigma)+\delta(\tau) \Big).
\end{align*}
Let $\sigma \rightarrow -\infty$ and $\tau\rightarrow +\infty$, we
have
$$\delta(t)=0,\;  \forall \; t\in \R.$$
Thus $u=W$ up to the symmetry of \eqref{har}, it  contradicts
$\big\|u_0\big\|_{\dot H^1} < \big\|W\big\|_{\dot H^1}$. \qed


\section{Uniqueness and the classification result}\label{S:uniqueness}

\subsection{Exponentially small solutions of the linearized equation.}
Recall the notation of Section \ref{S:existence}, in particular the
operator $\mathcal{L}$ and its eigenvalues and eigenfunctions.

Consider the radial functions $v(t)$ and $g(t)$
\begin{equation*}
\aligned v\in C^0([t_0, +\infty), \dot H^1),\;\;  g\in C^0([t_0,
+\infty), L^{\frac{2d}{d+2}})\;\;\text{and}\;\; \nabla g \in N(t_0,
+\infty)
\endaligned
\end{equation*}
satisfying
\begin{align}\label{linEquat}
 \partial_t v + \mathcal{L}v =g, & \;\; (x,t)\in \R^d  \times [t_0,
+\infty), \\
\label{baseestimate} \big\|v(t)\big\|_{\dot H^1} \leq C e^{-c_1t}, &
\;\;\;\; \big\|g(t)\big\|_{L^{\frac{2d}{d+2}}(\R^d)}+  \big\| \nabla
g(t)\big\|_{N(t, +\infty)} \leq  Ce^{-c_2 t},
\end{align}
where $0<c_1<c_2$. For any $c >0$, we denote by $c^-$ a positive
number arbitrary close to $c$ and such that $0<c^{-}<c$.

By the Strichartz estimate and Lemma \ref{summation}, we have
\begin{lemma}\label{vstrich}
Under the above assumptions, then for any $(q,r)$ with $\frac2q=d
(\frac12 -\frac1r ) $, $q\in [2, +\infty]$, we have
\begin{align*}
\big\|v\big\|_{l(t,+\infty)}+\big\|\nabla v\big\|_{L^q(t,+\infty;
L^r)} \leq C e^{-c_1t}.
\end{align*}
\end{lemma}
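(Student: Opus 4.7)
The plan is to mimic the argument for Lemma \ref{linearstrich}, treating $g$ as an inhomogeneous forcing term in addition to the potential $V$, and then summing over short intervals via Lemma \ref{summation}. Writing the equation \eqref{linEquat} in the Schr\"odinger form analogous to \eqref{linearequat}, namely
\begin{equation*}
 i\partial_t v + \Delta v = V v + i\,\widetilde g,
\end{equation*}
where $\widetilde g$ corresponds to $g$ up to reshuffling real and imaginary parts (so that $\|\widetilde g\|_{L^{\frac{2d}{d+2}}} \sim \|g\|_{L^{\frac{2d}{d+2}}}$ and $\|\nabla\widetilde g\|_{N(I)} \sim \|\nabla g\|_{N(I)}$), the Duhamel representation starting at $t$ and the Strichartz estimate of Lemma \ref{Striestimate} give, on any interval $I=[t,t+\tau_0]$ with $\tau_0\in(0,1)$,
\begin{equation*}
 \|v\|_{l(I)} + \|\nabla v\|_{L^q(I;L^r)} \le C\|v(t)\|_{\dot H^1} + C\|\nabla(V v)\|_{N(I)} + C\|\nabla g\|_{N(I)}.
\end{equation*}

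By the linear estimate \eqref{linearestimate} in Lemma \ref{linearoperator:prelimestimate}, the potential term is absorbable: $\|\nabla(Vv)\|_{N(I)} \le C\tau_0^{1/3}\|v\|_{l(I)}$. Choose $\tau_0$ once and for all so that $C\tau_0^{1/3}\le\tfrac12$. Using the decay hypothesis \eqref{baseestimate} on $v(t)$ and $\nabla g$, this yields, for all $t\ge t_0$,
\begin{equation*}
 \|v\|_{l(t,t+\tau_0)} + \|\nabla v\|_{L^q(t,t+\tau_0;L^r)} \le 2C\bigl(e^{-c_1 t} + e^{-c_2 t}\bigr) \le C' e^{-c_1 t},
\end{equation*}
where the last step uses $0<c_1<c_2$. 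Strictly speaking the absorption has to be justified on the short interval: this can be done by the standard continuity argument applied to $\eta(\tau)=\|v\|_{l(t,t+\tau)}+\|\nabla v\|_{L^q(t,t+\tau;L^r)}$ on $[0,\tau_0]$, which is finite because $v\in C^0([t_0,+\infty);\dot H^1)\cap (\text{local Strichartz space})$ follows from the data plus $g\in N_{\rm loc}$.

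With the short-interval bound in hand, the global bound is immediate from Lemma \ref{summation} applied with $f=v$ (respectively $f=\nabla v$), $p=$ the corresponding time exponent, $a_0=-c_1<0$ and $C_0=C'$: this gives
\begin{equation*}
 \|v\|_{l(t,+\infty)} + \|\nabla v\|_{L^q(t,+\infty;L^r)} \le \frac{C' e^{-c_1 t}}{1-e^{-c_1\tau_0}} \le C e^{-c_1 t},
\end{equation*}
uniformly in $t\ge t_0$, which is the desired conclusion. There is no essential obstacle in this argument; the only point requiring a little care is the choice of $\tau_0$ so that the potential contribution can be absorbed on every short interval simultaneously, and the use of $c_1<c_2$ to ensure the forcing decays faster than the a priori bound on $v$ and hence does not dictate the final rate.
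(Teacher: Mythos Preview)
Your proof is correct and takes essentially the same approach as the paper. The paper simply states the lemma as an immediate consequence of the Strichartz estimate and Lemma \ref{summation}, which is exactly the short-interval absorption plus summation argument you spell out (mirroring the proof of Lemma \ref{linearstrich}, with $g$ playing the role of $R(h)$).
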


By the coercivity of the linearized energy in $G_{\bot}$, we will
show that $v$ must decay almost as fast as $g$, except in the
direction $\YYY_+$ where the decay is $e^{-e_0t}$.

\begin{proposition}\label{improved decay}
The following estimates hold.
\begin{enumerate}
\item[\rm(a)] if $c_2 \leq e_0$ or $e_0 < c_1$, then
\begin{equation}
\aligned \big\|v(t)\big\|_{\dot H^1}  \leq C e^{-c_2^{-}t},
\endaligned
\end{equation}
\item[\rm(b)] If $c_2 > e_0$, then there exists $a\in \R$
such that
\begin{equation}
\aligned \big\|v(t)-ae^{-e_0t}\mathcal{Y}_+\big\|_{\dot H^1}  \leq C
e^{-c_2^{-}t}.
\endaligned
\end{equation}
\end{enumerate}
\end{proposition}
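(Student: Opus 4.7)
The plan is to decompose $v$ along the spectral components of $\mathcal{L}$ from Proposition \ref{spectral} and analyze each summand separately. Set
\begin{equation*}
 a_{\pm}(t):=\frac{B(v(t),\mathcal{Y}_{\mp})}{B(\mathcal{Y}_+,\mathcal{Y}_-)},
\end{equation*}
which is well defined because $B(\mathcal{Y}_{\pm},\mathcal{Y}_{\pm})=0$ (consequence of $\mathcal{L}\mathcal{Y}_{\pm}=\pm e_0\mathcal{Y}_{\pm}$ and the antisymmetry in \eqref{B-pro}) while $B(\mathcal{Y}_+,\mathcal{Y}_-)\neq 0$ by \eqref{B-orthEigen}. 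Consider then the unique decomposition
\begin{equation*}
 v(t)=a_+(t)\mathcal{Y}_++a_-(t)\mathcal{Y}_-+b_1(t)\,iW+b_2(t)\widetilde{W}+v_{\perp}(t),
\end{equation*}
where $b_1,b_2$ are chosen to make the remainder $v_{\perp}(t)\in G_{\perp}$, i.e., $\dot H^1$-orthogonal to $iW$ and $\widetilde{W}$.

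Applying $B(\cdot,\mathcal{Y}_{\mp})$ to \eqref{linEquat} and using \eqref{B-pro} produces the decoupled scalar ODEs $\dot a_{\pm}\pm e_0 a_{\pm}=g_{\pm}(t)$, where $g_{\pm}(t):=B(g(t),\mathcal{Y}_{\mp})/B(\mathcal{Y}_+,\mathcal{Y}_-)$ satisfies $|g_{\pm}(t)|\leq Ce^{-c_2 t}$ (pair $\|g(t)\|_{L^{2d/(d+2)}}\leq Ce^{-c_2 t}$ with the Schwartz-class $L_{\pm}\mathcal{Y}_{\mp}$ via self-adjointness of $L_{\pm}$). Since $|a_{\pm}(t)|\lesssim\|v(t)\|_{\dot H^1}\to 0$, the unstable ODE integrates backward from $+\infty$:
\begin{equation*}
 a_-(t)=-\int_t^{+\infty}e^{e_0(t-s)}g_-(s)\,ds,\qquad |a_-(t)|\leq Ce^{-c_2 t}.
\end{equation*}
For the stable ODE, the analysis depends on the size of $c_2$ relative to $e_0$: when $c_2\leq e_0$ (first subcase of (a)), integrate forward from $t_0$ to obtain $|a_+(t)|\leq Ce^{-e_0 t}+C(1+t)e^{-c_2 t}\leq Ce^{-c_2^{-}t}$ (the factor $(1+t)$ appears only in the borderline $c_2=e_0$); when $c_2>e_0$, the improper integral from $+\infty$ converges and one has $a_+(t)-ae^{-e_0 t}=-e^{-e_0 t}\int_t^{+\infty}e^{e_0 s}g_+(s)\,ds$ with $a:=\lim_{t\to+\infty}e^{e_0 t}a_+(t)\in\R$, so $|a_+(t)-ae^{-e_0 t}|\leq Ce^{-c_2 t}$. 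In the remaining subcase of (a), where $e_0<c_1<c_2$, the a priori bound $|a_+|\leq Ce^{-c_1 t}$ forces $a=0$; in case (b), $a$ is precisely the coefficient appearing in the statement.

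For the remainder $v_{\perp}$, combine coercivity with an energy identity. Proposition \ref{spectral}(b) gives $\Phi(v_{\perp})\geq c\|v_{\perp}\|_{\dot H^1}^2$. Using $\mathcal L(iW)=\mathcal L(\widetilde W)=0$, $B(iW,\cdot)=B(\widetilde W,\cdot)=0$ from \eqref{B-orthEnerHbot}, symmetry of $B$, and \eqref{B-pro}, all $b_i$, $\dot b_i$, and $\mathcal{L}v_\perp$ contributions cancel and the energy identity simplifies to
\begin{equation*}
 \frac{d}{dt}\Phi(v_{\perp})=2B(\widetilde g,v_{\perp}),\qquad \widetilde g:=g-g_+\mathcal{Y}_+-g_-\mathcal{Y}_-.
\end{equation*}
Estimate $|B(\widetilde g,v_{\perp})|$ in a time-averaged fashion, using $\|\nabla g\|_{N(t,\infty)}\leq Ce^{-c_2 t}$ for the gradient part of $B$ and the H\"{o}lder/HLS/Sobolev pairing $L^{2d/(d+2)}\times L^{2d/(d-2)}$ for the potential parts; combined with coercivity and Lemma \ref{summation}, integration from $t$ to $+\infty$ (using $\Phi(v_{\perp}(t))\to 0$) yields $\|v_{\perp}(t)\|_{\dot H^1}\leq Ce^{-c_2^{-}t}$. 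Finally, $b_1,b_2$ are handled by projecting \eqref{linEquat} onto $iW$ and $\widetilde W$ in $\dot H^1$: since $\mathcal L$ annihilates these null directions and every other component in the decomposition decays at rate $c_2^{-}$, the resulting ODEs for $b_i$ have source of that order and integrate from $+\infty$ (using $b_i(t)\to 0$) to give $|b_i(t)|\leq Ce^{-c_2^{-}t}$. Summing everything up, (a) and (b) follow.

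The main obstacle is the bound on $|B(\widetilde g,v_{\perp})|$, which is delicate because the gradient piece $\frac12\int\nabla\widetilde g\cdot\nabla v_{\perp}$ of $B$ cannot be controlled by the instantaneous $L^{2d/(d+2)}$-norm of $\widetilde g$ alone. The resolution is to work with the time-averaged Strichartz-dual norm $\|\nabla g\|_{N(t,\infty)}\leq Ce^{-c_2 t}$ paired with the admissible-pair Strichartz norm of $\nabla v_\perp$, and then sum on short intervals via Lemma \ref{summation} in the spirit of Lemma \ref{linearstrich}. A secondary technical point is upgrading the a priori bound $|b_i|\leq Ce^{-c_1 t}$ to $Ce^{-c_2^{-}t}$ for the null-space coefficients, which requires a separate projection argument once the other components are controlled.
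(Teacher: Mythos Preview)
Your decomposition, the ODEs for $a_\pm$, and the energy identity $\frac{d}{dt}\Phi(v_\perp)=2B(\widetilde g,v_\perp)$ are all correct and match the paper's approach (the paper writes the equivalent identity $\frac{d}{dt}\Phi(v)=2B(g,v)$ and then uses $\Phi(v)=\Phi(v_\perp)+2\alpha_+\alpha_-$). The gap is in the step where you assert that integrating this identity from $t$ to $+\infty$ yields $\|v_\perp(t)\|_{\dot H^1}\leq Ce^{-c_2^- t}$ directly. It does not: the bound $|B(\widetilde g(s),v_\perp(s))|$ always carries a factor of $\|v_\perp(s)\|_{\dot H^1}$ (or its Strichartz avatar), and a priori you only know this is $\leq Ce^{-c_1 s}$. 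Hence one pass through the argument gives at best
\[
\Phi(v_\perp(t))\;\leq\; C\!\int_t^{+\infty}\!e^{-c_2 s}\,e^{-c_1 s}\,ds\;\leq\; Ce^{-(c_1+c_2)t},
\]
i.e.\ $\|v_\perp(t)\|_{\dot H^1}\leq Ce^{-\frac{c_1+c_2}{2}t}$, not $e^{-c_2^- t}$. The same obstruction applies to your $b_i$ step: the source terms in the projected ODEs for $b_i$ contain $(\mathcal{L}v_\perp,\,iW)_{\dot H^1}$ (equivalently a pairing of $v_\perp$ against $\mathcal{L}^*(i\Delta W)=L_+(\Delta W)\in L^{2d/(d+2)}$), so one integration again only upgrades the rate to $\tfrac{c_1+c_2}{2}$.

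The missing ingredient is an \emph{iteration}: once you have $\|v\|_{\dot H^1}\leq Ce^{-c_1' t}$ with $c_1'=\tfrac{c_1+c_2}{2}$, you may replace $c_1$ by $c_1'$ in all of the above and repeat; the rates $c_1^{(n)}$ defined by $c_1^{(n+1)}=\tfrac12(c_1^{(n)}+c_2)$ increase to $c_2$, which is exactly how the $c_2^-$ in the statement arises. This is what the paper does in its Step~3, and it is essential---there is no shortcut that produces $e^{-c_2^- t}$ from a single integration of the energy identity. Once you insert this iteration, your proof coincides with the paper's.
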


\begin{proof} We decompose $v$ as
\begin{equation}
\label{h:dec} \aligned v(t)=\alpha_+(t)\YYY_+ +\alpha_-(t)\YYY_- +
\beta(t)iW +\gamma(t) \widetilde{W} + v_{\bot}(t), \;\; v_{\bot}(t)
\in G_{\bot}\cap \dot H^1_{rad}.
\endaligned
\end{equation}
By \eqref{B-orthEigen}, we can normalize the eigenfunction
$\YYY_{\pm}$ such that
$$B(\YYY_+,\;\YYY_-)=1.$$
By Remark \ref{r:bilinear form prop}, we have
\begin{align}
\label{alphapm} &\alpha_-(t)=  B(v,\; \YYY_+),\;\;
\alpha_+(t)= B(v,\; \YYY_-),\\
&\label{beta} \beta(t)= \frac{1}{\big\|W\big\|^2_{\dot H^1}}
\Big(v-\alpha_+(t)\YYY_+-\alpha_-(t)\YYY_-,\; iW \Big)_{\dot H^1}, \\
&\label{gamma} \gamma(t)= \frac{1}{\big\|W\big\|^2_{\dot H^1}}
\Big(v-\alpha_+(t)\YYY_+-\alpha_-(t)\YYY_-,\;
\widetilde{W}\Big)_{\dot H^1}.
\end{align}

\noindent{\bf Step 1: Differential equations on the coefficients.}
We first show that
\begin{align}
\label{alphapmder} & \frac{d}{dt}  \left(e^{-e_0t}\alpha_-\right) =
e^{-e_0t}B(g,\; \YYY_+),\;\; \frac{d}{dt} \left(e^{
e_0t}\alpha_+\right) =
e^{e_0t}B(g,\; \YYY_-),\\
\label{betagammader} & \frac{d}{dt} \beta(t)= \frac{\big(iW,\;
\widetilde{v} \big)_{\dot H^1}}{\big\|W\big\|^2_{\dot H^1}},\;\;\;\;
\frac{d}{dt} \gamma(t) = \frac{\big(\widetilde{W},\;  \widetilde{v} \big)_{\dot H^1}}{\big\|\widetilde{W}\big\|^2_{\dot H^1}},\\
\label{Phivder} &\frac{d}{dt}\Phi(v(t)) = 2 B(g,v),
\end{align}
where
\begin{equation*}
\aligned \widetilde{v}=g-B(\YYY_-,\; g)\YYY_+-B(\YYY_+,\; g)\YYY_-
-\mathcal{L}v_{\bot}.
\endaligned
\end{equation*}

Indeed, by \eqref{linEquat} and \eqref{alphapm} and Remark
\ref{r:bilinear form prop}, we have
\begin{align}\label{alphand}
 \alpha'_-(t)= B(\partial_t v, \; \YYY_+)=& B(-\mathcal{L}v, \; \YYY_+)
+ B(g,\; \YYY_+)\\
=& e_0B(v,\; \YYY_+) + B(g,\; \YYY_+) = e_0\alpha_-(t) + B(g,\;
\YYY_+), \notag
\end{align}
and
\begin{align}\label{alphapd}
\alpha'_+(t)= B(\partial_t v, \; \YYY_-)=& B(-\mathcal{L}v, \;
\YYY_-)
+ B(g,\; \YYY_-)\\
=& -e_0B(v,\; \YYY_-) + B(g,\; \YYY_-) = -e_0\alpha_+(t) + B(g,\;
\YYY_-). \notag
\end{align}
This jointed with \eqref{alphand} implies \eqref{alphapmder}.
\vskip0.2cm

By \eqref{linEquat}, \eqref{h:dec}, \eqref{beta}, \eqref{gamma},
\eqref{alphand} and \eqref{alphapd}, we have
\begin{align*}
\frac{d}{dt} \beta(t)=& \frac{1}{\big\|W\big\|^2_{\dot H^1}}
\Big(\partial_t v-\alpha'_+(t)\YYY_+-\alpha'_-(t)\YYY_-,\; iW
\Big)_{\dot H^1}\\
=& \frac{1}{\big\|W\big\|^2_{\dot H^1}} \Big(g-\mathcal{L}
v-\alpha'_+(t)\YYY_+-\alpha'_-(t)\YYY_-  ,\; iW
\Big)_{\dot H^1}\\
=& \frac{1}{\big\|W\big\|^2_{\dot H^1}} \Big( g-B(g,\;
\YYY_-)\YYY_+-B(g,\; \YYY_+)\YYY_-+\mathcal{L}v_{\bot} ,\; iW
\Big)_{\dot H^1}:=  \frac{\big(\widetilde{v},\; iW \big)_{\dot
H^1}}{\big\|W\big\|^2_{\dot H^1}},
\end{align*}
and
\begin{align*}
\frac{d}{dt} \gamma(t)=& \frac{1}{\big\|\widetilde{W}\big\|^2_{\dot
H^1}} \Big(\partial_t
v-\alpha'_+(t)\YYY_+-\alpha'_-(t)\YYY_-,\;\widetilde{ W}
\Big)_{\dot H^1}\\
=& \frac{1}{\big\|\widetilde{W}\big\|^2_{\dot H^1}} \Big( g-B(g,\;
\YYY_-)\YYY_+-B(g,\; \YYY_+)\YYY_-+\mathcal{L}v_{\bot} ,\;
\widetilde{W} \Big)_{\dot H^1}:=  \frac{\big(\widetilde{v},\;
\widetilde{ W} \big)_{\dot H^1}}{\big\|\widetilde{W}\big\|^2_{\dot
H^1}},
\end{align*}

By \eqref{linEquat}, we have
\begin{align*}
\frac{d}{dt}\Phi(v) = \frac{d}{dt} B(v,\; v) =& 2B(v, \partial_t v)
 =   2B(v, -\mathcal{L}v) + 2 B(v,\; g)=2B(v,\; g).
\end{align*}

\noindent{\bf Step 2: Decay estimate on $\alpha_{\pm}(t)$.} We now
claim that there exists a real number $a\in \R$, such that
\begin{align}\label{alphan:bound}
\big|\alpha_-(t)\big| \leq &\; C e^{-c_2t}, \\
\label{alphap:bound} \big|\alpha_+(t) \big| \leq& \;  Ce^{-c^-_2t} \quad \text{if}\; e_0\leq c_1  \;\text{or}\; c_2 \leq e_0, \\
\label{alphap:bound2}  \big| \alpha_+(t)-ae^{-e_0t} \big| \leq  & \;
Ce^{-c_2t} \quad \text{if}\; c_1 \leq e_0 < c_2.
\end{align}

By the definition of \eqref{bilinear form}, we have
\begin{align*}
2B(g,\; h)=\;\; & \int_{\R^d} (L_+ g_1) h_1 \; dx + \int_{\R^d} (L_- g_2) h_2 \; dx\\
=\;\; & \int_{\R^d} \nabla g_1 \nabla h_1 \; dx -\int_{\R^d} \Big(\frac{1}{|x|^{4}}*|W|^2\Big) g_1 h_1 \; dx - 2\int_{\R^d} \Big(\frac{1}{|x|^{ 4}}* (Wg_1)\Big) W h_1\; dx\\
+  & \int_{\R^d} \nabla g_2 \nabla h_2\; dx-\int_{\R^d}
\Big(\frac{1}{|x|^{ 4}}*|W|^2\Big)g_2 h_2 \; dx.
\end{align*}
Hence, for any time-interval $I$ with $|I|<\infty$, we have
\begin{align*}
\int_I \Big| B(g(t),\; \YYY_\pm) \Big| dt \lesssim & \;
|I|^{\frac13} \big\| \nabla g \big\|_{N(I)} \big\|\nabla \YYY_{\pm}
\big\|_{L^{\frac{6d}{3d-4}}_x(\R^d)}\\
&\;  +  |I| \cdot \big\|  g
\big\|_{L^{\infty}_IL^{\frac{2d}{d+2}}_x(\R^d)}
\big\|\YYY_{\pm}\big\|_{L^{\infty}_{I}L^{\frac{2d}{d-2}}_x(\R^d)}
\big\| W \big\|^2_{L^{\frac{2d}{d-4}}_x(\R^d)}.
\end{align*}
This together with \eqref{baseestimate} implies that
\begin{align*}
\int^{t+1}_t \Big| e^{-e_0s}B(g(s), \; \YYY_+)\Big| \; ds \leq C
e^{-e_0t} e^{-c_2t}.
\end{align*}
By Lemma \ref{summation}, we have
\begin{align}\label{alphan:conv}
\int^{+\infty}_t \Big| e^{-e_0s}B(g(s), \; \YYY_+)\Big| \; ds \leq C
e^{-(e_0+c_2)t}.
\end{align}
By \eqref{baseestimate}, we know that $e^{-e_0t}\alpha_-(t)$ tends
to 0 when $t$ goes to infinity. Integrating the equation on
$\alpha_-$ in \eqref{alphapmder} between $t$ and $+\infty$, we have
\begin{align*}
\big|\alpha_-(t)\big| \leq C e^{-c_2t}.
\end{align*}

Now we prove \eqref{alphap:bound}. First consider the case $e_0 <
c_1$. By \eqref{baseestimate}, we know that $e^{e_0t}\alpha_+(t)$
tends to 0 when $t$ goes to infinity. By \eqref{baseestimate} and
the similar estimate as \eqref{alphan:conv}, we also have
\begin{align*}
\int^{+\infty}_t \Big| e^{e_0s}B(g(s), \; \YYY_-)\Big| \; ds \leq C
e^{(e_0-c_2)t}.
\end{align*}
Integrating the equation on $\alpha_+$ in \eqref{alphapmder} between
$t$ and $+\infty$, we have
\begin{align*}
\big|\alpha_+(t)\big| \leq C e^{-c_2t}.
\end{align*}

If $c_1\leq e_0 < c_2$, by \eqref{baseestimate}, we have
\begin{align*}
\int^{t+1}_t \Big| e^{e_0s}B(g(s),\; \YYY_-)\;\Big| \; ds \leq C
e^{e_0t} e^{-c_2t},
\end{align*}
which together with Lemma \ref{summation} implies that
\begin{align*}
\int^{+\infty}_{t_0} \Big| e^{e_0s}B(g(s),\; \YYY_-)\;\Big| \; ds
\lesssim e^{e_0t_0} e^{-c_2t_0}< \infty.
\end{align*}
By \eqref{alphapmder}, we know that $e^{e_0t}\alpha_{+}(t)$
satisfies the Cauchy criterion as $t\rightarrow +\infty$, then
\begin{align*}
\lim_{t\rightarrow+\infty} e^{e_0t}\alpha_{+}(t) =a,
\end{align*}
and
\begin{align*}
\big|e^{e_0t}\alpha_+(t) - a\big| \leq C e^{e_0t} e^{-c_2t}.
\end{align*}
This  shows \eqref{alphap:bound2}.

If $c_1 < c_2\leq e_0$, integrating the equation on $\alpha_+$ in
\eqref{alphapmder} between $0$ and $t$, we have
\begin{align*}
\alpha_+(t)=e^{-e_0t}\alpha_+(0) + e^{-e_0t} \int^t_0
e^{e_0s}B(g(s),\; \YYY_-)\; ds,
\end{align*} by \eqref{baseestimate}, we know that \begin{align*} \left|\int^t_0
e^{e_0s}B(g(s),\; \YYY_-)\; ds \right|\leq \left\{
\begin{array}{rl}
C e^{(e_0-c_2)t}, & \;\text{if}\; c_2 < e_0,\\
C t  , & \; \text{if}\; c_2=e_0.
\end{array} \right.
\end{align*}
This  yields \eqref{alphap:bound} in this case.

\noindent{\bf Step 3: Proof of the case $c_2\leq e_0$, or $c_2 >
e_0, a=0$.} By Step 2, we know in this case that
\begin{align}\label{alphapm:optcontrol}
\big| \alpha_+(t)\big| + \big| \alpha_-(t)\big| \lesssim
e^{-c^-_2t}.
\end{align}

Since
\begin{align*}
\int^{t+1}_{t} B(g(s), v(s)) ds \leq C e^{-(c_1+c_2)t},
\end{align*} we have by Lemma \ref{summation}
\begin{align*}
\int^{+\infty}_{t} B(g, v) ds \leq C e^{-(c_1+c_2)t}.
\end{align*}
By \eqref{Phivder} and $\big|\Phi(v(t)) \big|\lesssim
\big\|v(t)\big\|^2_{\dot H^1} \rightarrow 0$ as $t\rightarrow
+\infty$, we have
\begin{align*} \Big|\Phi(v(t))\Big| \leq C e^{-(c_1+c_2)t}.
\end{align*}

Note that
\begin{align*}
\Phi(v) = B(v, v) = B(v_{\bot}, v_{\bot}) + 2 \alpha_+\alpha_-,
\end{align*}
we obtain from Lemma \ref{spectral} and \eqref{alphapm:optcontrol}
\begin{align}\label{vorth:optcontrol}
\big\|v_{\bot}(t)\big\|^2_{\dot H^1} \lesssim \Big| B(v_{\bot},
v_{\bot}) \Big| \leq C e^{-(c_1+c_2)t}.
\end{align}

Now we turn to estimate the decay of $\beta(t)$. First by
\eqref{beta} and Step 2, we know that
\begin{align*}
\big| \beta(t)\big| \rightarrow 0,\; \text{as} \; t\rightarrow
+\infty.
\end{align*}
In addition, by $\mathcal{L}^* (i \Delta W)= L_+ ( \Delta W ) \in
L^{\frac{2d}{d+2}}(\R^d)$, we also have
\begin{align*}
\int^{t+1}_{t} \left|\big( \widetilde{v}, iW \big)_{\dot H^1}
\right| ds \lesssim &\; e^{-c_2t} + \int^{t+1}_t  \left| \big(iW,
\mathcal{L} v_{\bot}(s)
\big)_{\dot H^1}  \right| ds \\
\lesssim & \; e^{-c_2t} + \int^{t+1}_t \int_{\R^d}
\Big|\mathcal{L}^* (i \Delta W) \overline{ v_{\bot}}(s) \Big| \;
 dx ds \\
 \lesssim & \;  e^{-c_2t} + \big\|v_{\bot}(t)\big\|_{L^{\infty}\dot
 H^1}  \lesssim e^{-\frac{c_1+c_2}{2}t}.
\end{align*}
This,  together with \eqref{betagammader} and Lemma \ref{summation},
implies that
\begin{align}\label{beta:optcontrol}
\big| \beta(t)\big|  \lesssim e^{-\frac{c_1+c_2}{2}t}.
\end{align}

 By the analogue
analysis, we can obtain the estimate of $\gamma(t)$
\begin{align}\label{gamma:optcontrol}
\big| \gamma(t)\big|  \lesssim e^{-\frac{c_1+c_2}{2}t}.
\end{align}
Thus $v$, $g$ satisfies \eqref{linEquat} and \eqref{baseestimate}
with $c_1$ replaced by $c'_1=\frac{c_1+c_2}{2}$. An iteration
argument gives
\begin{align*}
\big\|v(t)\big\|_{\dot H^1} \leq C e^{-c^-_2t}.
\end{align*}
Hence we  conclude the proof of the case $c_2\leq e_0$ or $c_2
> e_0, a=0$.

\noindent{\bf Step 4: Proof of the case $c_2>e_0$, and $a\not= 0$.}
By Step 2, if $c_1> e_0$, we can take $a=0$, so that we may assume
that $c_1\leq e_0$. Let
\begin{align*}
\overline{v} (t) := v(t) -ae^{-e_0t}\YYY_+,
\end{align*}
then
\begin{align*}
\partial_t \overline{v} (t) + \mathcal{L} \overline{v} (t) = g(t),
\quad \big\|\overline{v} (t) \big\|_{\dot H^1} \leq C e^{-c_1t}
\end{align*}
and by \eqref{alphap:bound2}, we have
\begin{align*}
\lim_{t\rightarrow+\infty}e^{e_0t}\overline{\alpha}_+(t) =0,
\end{align*}
where $\overline{\alpha}_+(t) = B(\overline{v}(t), \YYY_-)$ is the
coefficient of $\YYY_+$ in the decomposition of
$\overline{\alpha}_+(t)$. Thus $\overline{\alpha}_+(t)$ and $g$
satisfy all the assumptions of Step 3, hence, we have
\begin{align*}
\big\|v(t)-a e^{-e_0t}\YYY_+\big\|_{\dot H^1} \leq C e^{-c^-_2t}.
\end{align*}
This completes the proof.\end{proof}

\subsection{Uniqueness.}

\begin{proposition}\label{uniqueness}
Let $u$ be a radial solution of \eqref{har}, defined on $[t_0,
+\infty)$, such that $E(u)=E(W)$ and
\begin{equation}\label{uconvGSexponential}
\aligned \exists c, C>0,\; \big\|u-W \big\|_{\dot H^1} \leq C
e^{-ct}, \forall\; t\geq t_0 .
\endaligned
\end{equation}
Then there exists a unique $a \in \R$ such that $u=U^a$ is the
solution of \eqref{har} defined in Proposition
\ref{existence:thresholdsolution}.
\end{proposition}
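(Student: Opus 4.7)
The strategy is a two-stage bootstrap: first extract the leading unstable coefficient $a$ from $h := u - W$ via Proposition \ref{improved decay}, then show that $w := u - U^a$ decays faster than any exponential and must therefore vanish by a backward Strichartz–Gronwall argument.

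\emph{Step 1 (bootstrap to extract $a$).} Set $h := u - W$, which satisfies $\partial_t h + \mathcal{L}h = R(h)$ together with $\|h(t)\|_{\dot H^1} \leq C e^{-ct}$ by hypothesis. By Lemma \ref{linearoperator:prelimestimate} and Lemma \ref{linearstrich}, the nonlinearity $g := R(h)$ obeys $\|g(t)\|_{L^{2d/(d+2)}} + \|\nabla g\|_{N(t,+\infty)} \leq Ce^{-2ct}$. Apply Proposition \ref{improved decay} with $(c_1, c_2) = (c, 2c)$: if $2c \leq e_0$, case (a) upgrades the decay of $h$ to $e^{-(2c)^- t}$, and we iterate, roughly doubling the rate each time, until the first $k$ with $2^k c > e_0$. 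At that step case (b) produces a unique $a \in \R$ such that
\begin{equation*}
\|h(t) - a e^{-e_0 t}\YYY_+\|_{\dot H^1} \leq C e^{-c^* t}
\end{equation*}
for some $c^* > e_0$; note that if $c_1 > e_0$ then necessarily $a = 0$, since otherwise $a e^{-e_0 t}\YYY_+$ would decay slower than $h$.

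\emph{Step 2 (compare with $U^a$).} Let $U^a$ be the special solution from Proposition \ref{existence:thresholdsolution}, which by \eqref{difference:Energy} satisfies $\|U^a(t) - W - a e^{-e_0 t}\YYY_+\|_{\dot H^1} \leq C e^{-(3/2)e_0 t}$. Setting $w := u - U^a$ and combining with Step 1 yields $\|w(t)\|_{\dot H^1} \leq C e^{-c^{**} t}$ for some $c^{**} > e_0$.

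\emph{Step 3 (bootstrap for $w$).} With $h_U := U^a - W$, the difference satisfies $\partial_t w + \mathcal{L}w = R(h) - R(h_U)$. Because $R$ is quadratic at the origin and $\|h\|_{\dot H^1}, \|h_U\|_{\dot H^1} = O(e^{-e_0 t})$, the estimates of Lemma \ref{linearoperator:prelimestimate} give
\begin{equation*}
\|R(h)-R(h_U)\|_{L^{2d/(d+2)}} + \|\nabla(R(h)-R(h_U))\|_{N(t,t+\tau_0)} \leq C e^{-e_0 t}\|w\|_{l(t,t+\tau_0)}.
\end{equation*}
Hence if $\|w\|_{\dot H^1} \leq C e^{-c_k t}$ with $c_k > e_0$, the right-hand side decays at rate $c_k + e_0$. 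Since $c_1 = c_k > e_0$, the regime ``$e_0 < c_1$'' of Proposition \ref{improved decay}(a) applies and delivers $\|w(t)\|_{\dot H^1} \leq C e^{-(c_k+e_0)^- t}$. Iterating $N$ times produces $\|w(t)\|_{\dot H^1} \leq C_N e^{-Nt}$ for every $N \in \N$, so $w$ decays faster than any exponential.

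\emph{Step 4 (rigidity).} The equation for $w$,
\begin{equation*}
i\partial_t w + \Delta w = -\bigl[(|x|^{-4}*|u|^2)u - (|x|^{-4}*|U^a|^2)U^a\bigr],
\end{equation*}
is linear in $(w,\bar w)$ with coefficients built from $u$ and $U^a$, both uniformly bounded in $\dot H^1$. On a short interval $I = [\sigma, \sigma+\tau_0]$, the Strichartz estimate of Lemma \ref{Striestimate} together with Lemma \ref{linearoperator:prelimestimate} yields $\|w(\sigma)\|_{\dot H^1} \leq C_0 \|w(\sigma+\tau_0)\|_{\dot H^1}$, provided $\tau_0$ is small enough to absorb the factor $|I|^{1/3}$. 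Chaining such estimates on $[t_0, T]$ gives $\|w(t_0)\|_{\dot H^1} \leq e^{A(T-t_0)}\|w(T)\|_{\dot H^1}$ with $A = (\log C_0)/\tau_0$. Letting $T\to\infty$ and invoking Step 3 with $N > A$ forces $w(t_0) = 0$, and by the same argument $w \equiv 0$ on $[t_0,+\infty)$, so $u \equiv U^a$. Uniqueness of $a$ follows from its unique characterization in Step 1.

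\emph{Main obstacle.} The delicate point is the iterative application of Proposition \ref{improved decay}: at each bootstrap step one must preserve the strict inequality $c_1 < c_2$ (encoded in the ``$-$'' notation) and verify that the nonlinear errors $R(h)$ and $R(h) - R(h_U)$ genuinely satisfy \eqref{baseestimate} with the claimed rates, which ultimately relies on the quadratic-plus-cubic structure of $R$ and the short-interval gains $|I|^{1/6}, |I|^{1/3}$ of Lemma \ref{linearoperator:prelimestimate}. The borderline transitions where $c_k$ crosses $e_0$ require small perturbations of the exponents to stay within the hypotheses of case (b) (in Step 1) or case (a) in the regime ``$e_0 < c_1$'' (in Step 3).
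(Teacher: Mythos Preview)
Your bootstrap in Steps 1--3 is essentially the same as the paper's Steps 1--2: extract the coefficient $a$ via Proposition \ref{improved decay}, then iterate on $w = u - U^a$ to gain $e_0$ per step and reach super-exponential decay. The genuine difference is in the endgame. In Step 4 you close with a backward Strichartz--Gronwall chain: on each short interval you bound $\|w(\sigma)\|_{\dot H^1}$ by a fixed multiple of $\|w(\sigma+\tau_0)\|_{\dot H^1}$, concatenate to get at most exponential backward growth $e^{A(T-t_0)}$, and then beat $A$ with the arbitrarily large decay rate $N$ from Step 3. This is correct and self-contained.

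The paper instead reuses the uniqueness clause already built into Proposition \ref{existence:thresholdsolution}: once Step 3 produces decay at rate $(k_0+1)e_0$, one has $\|u - U^a_{k_0}\|_{l(t,+\infty)} \leq e^{-(k_0+1/2)e_0 t}$, and the fixed-point uniqueness there forces $u = U^a$ directly. This is shorter because the relevant contraction estimate was already carried out when constructing $U^a$, whereas your Step 4 effectively reproves a version of it. On the other hand, your argument is more transparent about \emph{why} super-exponential decay forces $w\equiv 0$ (backward propagation can grow at most exponentially for an equation with bounded coefficients), and it does not require going back to the approximate profiles $U^a_{k_0}$.
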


\begin{proof} Let $u=W+v$ be a radial solution of \eqref{har} for $t\geq
t_0$ satisfying \eqref{uconvGSexponential}. Then $v$ satisfies the
linearized equation \eqref{linearequat}, that is,
\begin{equation}
\aligned
\partial_t v + \mathcal{L} v=R(v),
\endaligned
\end{equation}

\noindent{\bf Step 1: } We will show that there exists $A\in \R$
such that
\begin{align}\label{uconvappsol:e0decay}
\forall \eta>0,\; \big\|v(t)-ae^{-e_0t}\YYY_+\big\|_{\dot H^1} +
\big\|v(t)-ae^{-e_0t}\YYY_+\big\|_{l(t, +\infty)} \leq C_{\eta}
e^{-2^-e_0t}.
\end{align}
Indeed, we only need show
\begin{align}\label{vdecay:e0}
  \big\|v(t)\big\|_{\dot H^1} \leq C e^{-e_0t}, &
\;\; \big\|R(v(t))\big\|_{L^{\frac{2d}{d+2}}(\R^d)}+  \big\| \nabla
R(v(t))\big\|_{N(t, +\infty)} \leq  Ce^{-2e_0 t},
\end{align}
which together with Proposition \ref{improved decay} gives
\eqref{uconvappsol:e0decay}. By Lemma
\ref{linearoperator:prelimestimate} and Lemma \ref{linearstrich} it
suffices to show the first estimate.

By \eqref{uconvGSexponential}, Lemma
\ref{linearoperator:prelimestimate} and Lemma \ref{linearstrich}, we
have
\begin{align*}
\big\|R(v(t))\big\|_{L^{\frac{2d}{d+2}}(\R^d)}+  \big\| \nabla
R(v(t))\big\|_{N(t, +\infty)} \leq  Ce^{-2c t}.
\end{align*}
Then Proposition \ref{improved decay} gives that
\begin{align*}
  \big\|v(t)\big\|_{\dot H^1} \leq C \Big(  e^{-e_0t} + e^{-\frac32ct}\Big).
\end{align*}
If $ \dfrac32\;c\geq e_0$, we obtain the first inequality in
\eqref{vdecay:e0}. If not, an iteration argument gives the first
inequality in \eqref{vdecay:e0}.

\noindent{\bf Step 2: } We will use the induction argument to show
that
\begin{align}\label{uconvappsol:fastdecay}
\forall\; m >0,\; \big\|u(t)-U^a(t)\big\|_{\dot H^1} +
\big\|u(t)-U^a(t)\big\|_{l(t, +\infty)} \leq   e^{-mt}.
\end{align}

By Proposition \ref{existence:thresholdsolution} and Step 1,
\eqref{uconvappsol:fastdecay} holds with $m=\frac32e_0$. Let us
assume that \eqref{uconvappsol:fastdecay} holds with $m=m_1 > e_0$,
we will show that it also holds for $m=m_1 + \frac12 e_0$, which
yields \eqref{uconvappsol:fastdecay} by iteration.

Let  $u^a(t)=U^{a}-W$, then we have
\begin{align*}
\partial_t \big( v-u^a \big) + \mathcal{L} \big( v-u^a \big) =
-R(v)+ R(u^a).
\end{align*}
By assumption, we have \begin{align*} \big\|v(t)-u^a(t)\big\|_{\dot
H^1} + \big\|v(t)-u^a(t)\big\|_{l(t, +\infty)} \leq C  e^{-m_1t}.
\end{align*}
By Lemma \ref{linearoperator:prelimestimate} and Lemma
\ref{summation}, we have
\begin{align*}
\big\|R(v(t))-R(u^a(t))\big\|_{L^{\frac{2d}{d+2}}(\R^d)}+  \big\|
\nabla \big( R(v(t))-R(u^a(t))\big)\big\|_{N(t, +\infty)} \leq
Ce^{-(m_1 +e_0) t}.
\end{align*}
By Proposition \ref{improved decay} and Lemma \ref{vstrich}, we have
\begin{align*}
\forall m >0,\; \big\|v(t)-u^a(t)\big\|_{\dot H^1} +
\big\|v(t)-u^a(t)\big\|_{l(t, +\infty)} \leq C
e^{-(m_1+\frac{3}{4}e_0)t}\leq   e^{-(m_1+\frac{1}{2}e_0)t}.
\end{align*}

\noindent{\bf Step 3: End of the proof. } Using
\eqref{uconvappsol:fastdecay} with $m=(k_0 +1)e_0$, (where $k_0$ is
given by Proposition \ref{existence:thresholdsolution}), we get that
for large $t>0$,
\begin{align*}
 \big\|u(t)-U^a_{k_0}(t)\big\|_{l(t, +\infty)} \leq   e^{-(k_0+\frac12)t}.
\end{align*}
By the uniqueness in Proposition \ref{existence:thresholdsolution},
we get that $u=U^a$, so we complete the proof of Proposition \label{uniqueness}. \end{proof}

\begin{corollary}\label{cor:uniqueness}
For any $a\not =0$, there exists $T_a \in \R$  such that
\begin{equation}\label{ua:classification}
\left\{ \begin{array}{rl} U^a(t)=W^{+}(t-T_a), & \text{if}\;\; a>0,\\
U^a(t)=W^{-}(t-T_a), & \text{if}\; \; a<0.
\end{array} \right.
\end{equation}
\end{corollary}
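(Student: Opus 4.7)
The plan is to exploit the time-translation invariance of \eqref{har} together with the fact that a time shift acts on the leading-order coefficient in \eqref{difference:Energy} by simple multiplication, and then to invoke Proposition \ref{uniqueness} for rigidity.

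First I would fix $a>0$ and set $T := -\tfrac{1}{e_0}\log a$, so that $e^{-e_0 T}=a$. Define the shifted solution $\widetilde{U}(t) := U^{1}(t+T)$, which is again a radial solution of \eqref{har}. By Proposition \ref{existence:thresholdsolution} applied to $U^{1}$, for $t$ large enough to guarantee $t+T\geq t_0$,
\begin{equation*}
\bigl\|\widetilde{U}(t) - W - e^{-e_0(t+T)}\mathcal{Y}_+\bigr\|_{\dot H^1}
\leq e^{-\frac{3}{2}e_0(t+T)},
\end{equation*}
and since $e^{-e_0(t+T)} = a\,e^{-e_0 t}$ this rewrites as
\begin{equation*}
\bigl\|\widetilde{U}(t) - W - a\,e^{-e_0 t}\mathcal{Y}_+\bigr\|_{\dot H^1} \leq C e^{-\frac{3}{2}e_0 t}.
\end{equation*}
In particular $\widetilde{U}$ is defined on a half-line $[t_1,+\infty)$, has $E(\widetilde{U})=E(W)$ (by time-translation of the conserved energy), and satisfies the exponential convergence hypothesis \eqref{uconvGSexponential} of Proposition \ref{uniqueness}. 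The same proposition applies to $U^a$. By the uniqueness clause of Proposition \ref{uniqueness} (the real number $a$ is uniquely determined by the solution, and both $\widetilde{U}$ and $U^a$ have the same leading coefficient $a$ in the $\mathcal{Y}_+$ direction), we conclude $\widetilde{U}=U^a$, i.e.
\begin{equation*}
U^a(t) = U^{1}(t+T).
\end{equation*}
Recalling the definition \eqref{Definition:threshold}, namely $W^+(s)=U^{1}(s+t_0)$, and hence $U^{1}(s)=W^+(s-t_0)$, this gives $U^a(t) = W^+(t-T_a)$ with $T_a := t_0 - T = t_0 + \tfrac{1}{e_0}\log a$.

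For $a<0$ the argument is identical with $U^{-1}$ in place of $U^{1}$ and $T:=-\tfrac{1}{e_0}\log|a|$: then $-e^{-e_0 T} = a$, so $\widetilde{U}(t):=U^{-1}(t+T)$ satisfies
\begin{equation*}
\bigl\|\widetilde{U}(t) - W - a\,e^{-e_0 t}\mathcal{Y}_+\bigr\|_{\dot H^1} \leq C e^{-\frac{3}{2}e_0 t},
\end{equation*}
and Proposition \ref{uniqueness} again forces $\widetilde{U}=U^a$, yielding $U^a(t)=W^-(t-T_a)$ with $T_a := t_0 + \tfrac{1}{e_0}\log|a|$. There is no real obstacle here; the only thing to check carefully is that a time translate of $U^{\pm 1}$ genuinely satisfies the hypotheses of Proposition \ref{uniqueness} (i.e. the exponential convergence $\|\widetilde{U}(t)-W\|_{\dot H^1}\leq Ce^{-e_0 t}$), which is immediate from \eqref{difference:Energy} and $\mathcal{Y}_+\in\mathcal{S}$.
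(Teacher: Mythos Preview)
Your proof is correct and follows essentially the same approach as the paper: both exploit that a time translation multiplies the leading coefficient in \eqref{difference:Energy} by an exponential factor, and then invoke Proposition~\ref{uniqueness} to identify the two solutions. The only cosmetic difference is the direction of the shift---the paper translates $U^a$ by $T_a$ with $|a|e^{-e_0 T_a}=1$ and identifies $U^a(\cdot+T_a)$ with $U^{\pm1}$, whereas you translate $U^{\pm1}$ and identify the result with $U^a$; the content is the same.
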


\begin{proof}Let $a\not =0$ and choose $T_a$ such that
$|a|e^{-e_0T_a}=1$. By \eqref{difference:Energy}, we have
\begin{align}
&\big\|U^a(t+T_a)-W - \text{sign} (a) e^{-e_0t}\YYY_+\big\|_{\dot
H^1} \nonumber \\
= & \big\|U^a(t+T_a)-W-a
e^{-e_0(t+T_a)}\YYY_+\big\|_{\dot H^1} \leq   e^{-\frac32 e_0
(t+T_a)} \leq C e^{-\frac32 e_0 t}. \label{ua:translation}
\end{align}
In addition, $U^a(t+T_a)$ satisfies the assumption of Proposition
\ref{uniqueness}, this implies that there exists $\widetilde{a}$
such that \begin{align*} U^a(t+T_a) =U^{\widetilde{a}}(t).
\end{align*}
By \eqref{ua:translation} and \eqref{difference:Energy}, we know
that  $\widetilde{a}=1$ if $a>0$ and $\widetilde{a}=-1$ if $a=-1$,
this completes the proof.\end{proof}

\subsection{Proof of Theorem \ref{classification}.} Let us first show
(a). Let $u$ be a radial solution to \eqref{har} on $[-T_-, T_+]$
such that, replacing if necessary $u(t)$ by $\overline{u}(-t)$
$$E(u_0)=E(W), \big\|\nabla u_0 \big\|_{L^2} < \big\|\nabla W
\big\|_{L^2},\;\text{and}\; \big\|u\big\|_{Z(0, T_+)}=+\infty.$$
Then by Proposition \ref{expdecay:subcase}, we know that
$T_-=T_+=\infty$ and there exist $\theta_0\in \R, \mu_0>0$ and $c,
C>0$ such that
\begin{align*} \big\|u(t)-W_{\theta_0, \mu_0}\big\|_{\dot H^1} \leq
C e^{-ct}.
\end{align*}
This shows that $u_{-\theta_0, \frac{1}{\mu_0}}$ fulfills the
assumptions of Proposition \ref{uniqueness}. By $\big\|u\big\|_{\dot
H^1} < \big\| W\big\|_{\dot H^1}$ and Corollary
\ref{cor:uniqueness}, we know that there exist  $a<0$ and $T_a$ such
that
\begin{align*}
u_{-\theta_0, \frac{1}{\mu_0}}(t)=U^a(t) = W^{-}(t-T_a),
\end{align*}
which  shows (a).

\vskip0.2cm
(b) is a direct consequence of the variational characterization of $W$.

\vskip0.2cm

The proof of (c) is similar to that of (a). Let $u$ be a radial
solution of \eqref{har} defined on $[0, +\infty)$  (Replacing if
necessary $u(t)$ by $\overline{u}(-t)$) such that
\begin{align*}
E(u)=E(W), \; \big\| \nabla u_0\big\|_{L^2}> \big\|\nabla
W\big\|_{L^2},\; \text{and}\; u_0 \in L^2.
\end{align*}
Due to Proposition \ref{expdecay:supercase}, there exist
$\theta_0\in \R, \mu_0>0$ and $c, C>0$ such that \begin{align*}
\big\|u(t)-W_{\theta_0, \mu_0}\big\|_{\dot H^1} \leq C e^{-ct}.
\end{align*}
Similar to the proof of (a), we know that there exist  $a>0$ and
$T_a$ such that
\begin{align*}
u_{-\theta_0, \frac{1}{\mu_0}}(t)=U^a(t) = W^{+}(t-T_a).
\end{align*}
This shows (c). The proof of Theorem \ref{classification} is thus
complete. \qed

%
%
%
%

\appendix

\section{Uniqueness of the ground state in $L^{\frac{2d}{d-2}}$}\label{S:movplan}
In this Appendix, we will use the moving plane method to show the
uniqueness of the positive solution of the following nonlocal
elliptic equation in $L^{\frac{2d}{d-2}}$

\begin{equation}\label{ellipticwithnonlocal}
- \Delta \omega  =  \left(\frac{1}{|x|^4}*|\omega|^2\right)\omega
,\quad (t,x)\in \R \times \R^d, d \geq 5.
\end{equation}

\begin{proposition}\label{T:uniqueness:diff form}
The positive solution $\omega(x)$ of \eqref{ellipticwithnonlocal} in
$ L^{\frac{2d}{d-2}}(\R^d)$ is radially symmetry and decreasing
about some point $x_0$ and unique. Therefore the positive solution
$u(x)$ of \eqref{ellipticwithnonlocal} in $
L^{\frac{2d}{d-2}}(\R^d)$  has the form
\begin{equation}\label{standform}
c_0\left( \frac{t}{t^2 + |x-x_0|^2} \right)^{\frac{d-2}{2}}
\end{equation}
with some positive constant $c_0$ and $t$.
\end{proposition}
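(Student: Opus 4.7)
The plan is to prove Proposition \ref{T:uniqueness:diff form} in three stages: regularity and decay of the solution, radial symmetry via the moving plane method applied to an integral reformulation, and uniqueness of the radial profile.

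\textbf{Stage 1 (Regularity and decay).} Writing the equation as $-\Delta\omega = V\omega$ with $V(x) = |x|^{-4}*|\omega|^2$, the Hardy--Littlewood--Sobolev inequality (Lemma \ref{L:hardy}) gives $V \in L^{d/2}(\R^d)$, so a standard Brezis--Kato bootstrap on the linear equation propagates integrability of $\omega$ from $L^{2d/(d-2)}$ up to $L^p$ for every $p \geq \tfrac{2d}{d-2}$; elliptic regularity then yields $\omega \in C^\infty$ and boundedness. Converting to the integral form
\begin{equation*}
\omega(x) = c_d \int_{\R^d} \frac{V(y)\,\omega(y)}{|x-y|^{d-2}}\,dy,
\end{equation*}
and applying Lemma \ref{L:hardy} once more with the improved integrability, I would extract the decay $\omega(x) = O(|x|^{-(d-2)})$ as $|x|\to\infty$, which is the correct rate for the Kelvin transform to produce a bounded function.

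\textbf{Stage 2 (Moving plane in its global/integral form).} Following the Chen--Li--Ou scheme, I would work directly on the integral equation instead of the PDE. For $\lambda\in\R$ set $T_\lambda = \{x_1=\lambda\}$, $\Sigma_\lambda = \{x_1<\lambda\}$, $x^\lambda$ the reflection of $x$ across $T_\lambda$, and $\omega_\lambda(x)=\omega(x^\lambda)$, $V_\lambda(x)=V(x^\lambda)$. On $\Sigma^-_\lambda := \{x\in\Sigma_\lambda : \omega(x)>\omega_\lambda(x)\}$ the kernel $|x-y|^{2-d}-|x^\lambda-y|^{2-d}$ has a favorable sign for $y\in\Sigma_\lambda$, which, combined with the analogous representation for $V - V_\lambda$ coming from the quadratic term $|\omega|^2$, yields an estimate of the schematic form
\begin{equation*}
\|\omega-\omega_\lambda\|_{L^{2d/(d-2)}(\Sigma^-_\lambda)} \leq C\,\Phi(\omega,\Sigma^-_\lambda)\,\|\omega-\omega_\lambda\|_{L^{2d/(d-2)}(\Sigma^-_\lambda)},
\end{equation*}
where $\Phi(\omega,\Sigma^-_\lambda)\to 0$ as $|\Sigma^-_\lambda|\to 0$. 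By the decay from Stage 1, $\Phi$ is strictly less than $1/C$ for $\lambda$ sufficiently negative, forcing $\Sigma^-_\lambda=\emptyset$ and thus $\omega_\lambda\geq \omega$ on $\Sigma_\lambda$; this starts the moving plane. Pushing $\lambda$ up to the critical value $\lambda^*$ and using a contradiction argument (if $\omega_{\lambda^*}\not\equiv\omega$ then the plane can be moved further, by reapplying the same integral inequality to an $\eps$-enlargement) yields symmetry about $T_{\lambda^*}$. Since the $x_1$-direction was arbitrary, $\omega$ is radially symmetric and monotone decreasing about some $x_0\in\R^d$.

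\textbf{Stage 3 (Uniqueness of the radial profile).} After translating so that $x_0=0$, and using the fact that \eqref{ellipticwithnonlocal} is invariant under $\omega\mapsto \lambda^{(d-2)/2}\omega(\lambda\,\cdot)$ and under the Kelvin transform $\omega(x)\mapsto |x|^{-(d-2)}\omega(x/|x|^2)$, any positive radial decreasing solution can be rescaled so that $\omega(0)=W(0)$ with $W$ the explicit solution in \eqref{standform}. A standard ODE/Pohozaev uniqueness argument for the resulting radial nonlocal equation then shows the two solutions agree, giving the stated form \eqref{standform}.

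The main obstacle will be Stage 2. Unlike the local case where the pointwise maximum principle suffices, the nonlocal quadratic term $V$ requires a simultaneous moving-plane comparison of $V$ and $\omega$, and one must carefully set up the integral inequality so that the smallness of $|\Sigma^-_\lambda|$, together with the decay of $\omega$, dominates the convolution-type constants arising from HLS; balancing the quadratic appearance of $\omega$ in $V$ against its linear appearance in the representation formula is the delicate point.
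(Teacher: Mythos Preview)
Your proposal is correct and follows essentially the same route as the paper: rewrite \eqref{ellipticwithnonlocal} as the integral system for $(\omega,V)$, run the Chen--Li--Ou moving-plane argument simultaneously on both components, bootstrap regularity, and conclude uniqueness from scaling plus an ODE argument. The only noteworthy difference is that the paper starts the moving plane using \emph{integrability alone} (the ``global form'': $\|\omega\|_{L^{2d/(d-2)}(\Sigma_\lambda^c)}+\|V\|_{L^{d/2}(\Sigma_\lambda^c)}\to 0$ as $\lambda\to -\infty$ by dominated convergence), so your Stage~1 decay is not needed for Stage~2; the paper instead recovers the sharp asymptotic $|x|^{d-2}\omega(x)\to\omega_\infty>0$ \emph{after} the moving plane, by applying the regularity step to the Kelvin transform.
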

\begin{remark}\begin{enumerate}
\item The method of the moving planes was invented by Alexanderov in
\cite{Ale:58:MovPlan}. Later, it was further developed by Serrin
\cite{Ser:71:MovPlan}, Gidas, Ni and Nirenberg
\cite{GidNN:81:MovPlan}, Caffarelli, Gidas and Spruck
\cite{CafGS:89:MovPlan} in the study of the classification of
semilinear elliptic equation $ - \Delta \omega =
\omega^{\frac{d+2}{d-2}}, x\in \R^d $. Subsequently, Chen and Li
\cite{CheL:91:MovPlan} and Li \cite{Li:96:MovPlan} simplified its
proof. Recently, Wei and Xu \cite{WeiX:99:MovPlan} and Chen, Li and
Ou \cite{CheLO:06:MovPlan} generalize the classification result to
the solutions of higher order conformally invariant equations $
\big(- \Delta \big)^{\alpha/2}\omega =
\omega^{\frac{d+\alpha}{d-\alpha}},\; x\in \R^d,\; 0<\alpha <d. $ Li
\cite{Li:04:MovSph} use the method of moving spheres to obtain the
same classification result as that in \cite{CheLO:06:MovPlan}. For
other applications, please refer to \cite{ChaY:97:MovPlan,
CheL:97:MovPlan, CheL:05:MovPlan, CheLO:05:MovPlan, FraL:09:uniq1,
FraL:09:uniq2, MaZ:10:MovPlan, Len:08:uniq}.

\item The uniqueness still holds for the positive solution in
$L^{\frac{2d}{d-2}}_{loc}(\R^d)$ by the analogue analysis as that in
\cite{CheLO:06:MovPlan}.

\item The results still hold for the fractional Laplacian equation
($0<\alpha<d$)
\begin{equation*}
\big(- \Delta\big)^{\alpha/2} \omega =
\left(\frac{1}{|x|^{2\alpha}}*|\omega|^2\right)\omega ,\quad
(t,x)\in \R \times \R^d.
\end{equation*}That is, the positive solution in $L^{2d/(d-\alpha)}_{loc}$ is radially symmetry and decreasing about some
point $x_0$ and unique.
\end{enumerate}
\end{remark}

To do so, we first show the covariance of
\eqref{ellipticwithnonlocal} under the Kelvin transform. Denote
$K\omega$ the Kelvin transform of $\omega$, that is
\begin{equation*}
K\omega=\frac{1}{|x|^{n-2}}\omega\left(\frac{x}{|x|^2}\right).
\end{equation*}
\begin{lemma} Let $\omega(x)$ be a solution of
\eqref{ellipticwithnonlocal}, then $w=K\omega$ is still a solution
of \eqref{ellipticwithnonlocal}.
\end{lemma}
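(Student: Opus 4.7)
The plan is to verify directly that $w = K\omega$ satisfies \eqref{ellipticwithnonlocal} by computing each side and matching them. For the left-hand side, I would invoke the classical conformal covariance of the Laplacian under inversion, namely
\[
-\Delta w(x) = |x|^{-(d+2)}(-\Delta\omega)(y), \qquad y := x/|x|^2,
\]
and then feed in the equation satisfied by $\omega$ at the point $y$ to obtain
\[
-\Delta w(x) = |x|^{-(d+2)}\left(\int_{\R^d}\frac{|\omega(z)|^2}{|y-z|^4}\,dz\right)\omega(y).
\]

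For the right-hand side I would make the change of variables $z' = z/|z|^2$ in the convolution $(|\cdot|^{-4}*|w|^2)(x)$. Two standard inversion formulas enter: the Jacobian $dz' = |z|^{-2d}\,dz$, and the distance identity
\[
|x-z'|^2 = \frac{|y-z|^2}{|y|^2|z|^2}.
\]
Combined with $|w(z')|^2 = |z|^{2(d-2)}|\omega(z)|^2$, all powers of $|z|$ cancel (indeed $2(d-2)+4-2d = 0$), giving
\[
\int_{\R^d}\frac{|w(z')|^2}{|x-z'|^4}\,dz' = |y|^4\int_{\R^d}\frac{|\omega(z)|^2}{|y-z|^4}\,dz = |x|^{-4}\int_{\R^d}\frac{|\omega(z)|^2}{|y-z|^4}\,dz.
\]
Multiplying by $w(x) = |x|^{-(d-2)}\omega(y)$ and comparing with the expression for $-\Delta w(x)$ above closes the argument.

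The only delicate point is the bookkeeping of exponents: the factor $|x|^{-(d-2)}$ from the definition of $K$, the kernel exponent $4$, the dimensional scaling $|w(z')|^2 \sim |z|^{2(d-2)}$, and the Jacobian $|z|^{-2d}$ must combine to produce precisely $|x|^{-(d+2)}$ on both sides. This is nothing more than the scale invariance of the Hartree nonlinearity $(|\cdot|^{-4}*|\omega|^2)\omega$ at the $\dot H^1$ critical level, so the cancellation is automatic and the main task reduces to recording the algebra carefully.
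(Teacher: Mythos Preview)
Your proposal is correct and follows essentially the same route as the paper: both arguments rely on the conformal covariance formula $\Delta(K\omega)(x)=|x|^{-(d+2)}(\Delta\omega)(x/|x|^2)$, the change of variables $z\mapsto z/|z|^2$ in the convolution, and the inversion distance identity $|y/|y|^2 - z/|z|^2| = |y-z|/(|y||z|)$ to match exponents. The paper organizes the computation starting from $\omega$ and transforming to $w$, whereas you start from $w$ and transform back, but the algebra and ingredients are identical.
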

\begin{proof} Note that
\begin{align*}
\omega(x)=\frac{1}{|x|^{d-2}}w\left(\frac{x}{|x|^2}\right),
\end{align*}
It is well known that
\begin{equation*}\label{linear-term}
\Delta \omega(x) = |x|^{-2-d}( \Delta w
)\left(\frac{x}{|x|^2}\right).
\end{equation*}

Now we check the nonlinearity,
\begin{align*}
\left(\frac{1}{|\cdot|^4}*|\omega|^2\right)(x) = &
\int_{\R^d}\frac{|\omega(y)|^2}{|x-y|^4} \; dy =
\int_{\R^d}\frac{\big|w(\frac{y}{|y|^2})\big|^2}{|x-y|^4
|y|^{2(d-2)}} \; dy
\\
= & \int_{\R^d}\frac{\big|w(y)\big|^2}{|x-\frac{y}{|y|^2}|^4} \;
|y|^{2(d-2)}\; |y|^{-2d} \; dy\\
= &
|x|^{-4}\int_{\R^d}\frac{\big|w(y)\big|^2}{|\frac{x}{|x|^2}-y|^4} \;
 \; dy = |x|^{-4}
 \left(\frac{1}{|\cdot|^4}*|w|^2\right)\left(\frac{x}{|x|^2}\right)
\end{align*}
where we use the following identity in the fourth step
\begin{align*}
 |y|^4  \left| x- \frac{y}{|y|^2}  \right|^4 =  |x|^4
 \left| \frac{x}{|x|^2} - y \right|^4 .
\end{align*}
Therefore, by \eqref{ellipticwithnonlocal}, we have
\begin{align*}
|x|^{-2-d}( -\Delta w )\left(\frac{x}{|x|^2}\right) = |x|^{-4}
 \left(\frac{1}{|\cdot|^4}*|w|^2\right)\left(\frac{x}{|x|^2}\right)
 \times \frac{1}{|x|^{d-2}}w\left(\frac{x}{|x|^2}\right).
\end{align*}
This shows that $w$ is also the solution of
\eqref{ellipticwithnonlocal}, that is, the equation
\eqref{ellipticwithnonlocal} is covariant under the Kelvin
transform.
\end{proof}

Now, we transfer \eqref{ellipticwithnonlocal} into the equivalent
integral system \eqref{integralsystem}, then make use of the moving
plane method in its global form to show that the positive solution
of \eqref{ellipticwithnonlocal} in $L^{\frac{2d}{d-2}}$ is radially
symmetric and monotone decreasing about some point $x_0\in \R^d$. For this purpose, we
first introduct some notations. For $x=(x_1, x_2, \ldots, x_d)\in \R^d$,
$\lambda \in \R$, we define $x^{\lambda}=(2\lambda- x_1, x_2,
\ldots, x_d),$ and
\begin{align*}
\omega_{\lambda}(x)=\omega(x^{\lambda}), \quad
v_{\lambda}(x)=v(x^{\lambda}).
\end{align*}
Let $\Sigma_{\lambda}=  \{x=(x_1, x_2, \ldots, x_d)\in \R^d, x_1
\geq \lambda\}$, we denote
\begin{align*}
\Sigma^{\omega}_{\lambda}= &  \{x \in \Sigma_{\lambda}, \omega(x)<
\omega_{\lambda}(x)\};\;\; \overline{\Sigma^{\omega}_{\lambda}}=
\{x \in
\Sigma_{\lambda}, \omega(x) \leq \omega_{\lambda}(x)\}, \\
\Sigma^v_{\lambda}= &  \{x \in \Sigma_{\lambda}, v(x)<
v_{\lambda}(x)\}.
\end{align*}
We denote the complement of $ \Sigma_{\lambda}$ in $\R^d$ by
$\Sigma_{\lambda}^c $, and the reflection of
$\Sigma^{\omega}_{\lambda}$ about the plane $x_1=\lambda$ by $\big(
\Sigma^{\omega}_{\lambda}\big)^*$.

Let $v(x)=|x|^{-4}*|\omega|^2$, then \eqref{ellipticwithnonlocal} is
equivalent to
\begin{align}\label{integralsystem}
\omega(x)=\displaystyle \int_{\R^d}
\frac{\omega(y)v(y)}{|x-y|^{d-2}} \; dy, \quad v(x)=\displaystyle
\int_{\R^d} \frac{\omega(y)^2 }{|x-y|^{4}} \; dy.
\end{align}
By $\omega \in L^{\frac{2d}{d-2}}$ and Lemma \ref{L:hardy}, we know
that $v\in L^{d/2}$. Therefore, it suffices to show the radial
symmetry and monotone decreasing of the positive solution
$(\omega,v)$ of \eqref{integralsystem} in $L^{2d/(d-2)}\times
L^{d/2}$ in order to show the radial symmetry and monotone
decreasing of the positive solution $\omega$ of
\eqref{ellipticwithnonlocal} in $L^{2d/(d-2)}$.

To do so, we decompose $\omega_{\lambda}(x)$, $\omega(x)$ in
$\Sigma_{\lambda}$ and $v_{\lambda}(x)$, $v(x)$ in
$\Sigma_{\lambda}$ as follows:

\begin{lemma}\label{L:decomp}
For any solution $(\omega,v)$ of \eqref{integralsystem}, we have
\begin{align}
\omega_{\lambda}(x)-\omega(x) = & \int_{\Sigma_{\lambda}}  \left(
\frac{1}{|x-y|^{d-2}} -\frac{1}{|x^{\lambda}-y|^{d-2}} \right)\Big(
\omega_{\lambda}(y) v_{\lambda}(y) - \omega(y) v(y) \Big)\; dy, \label{decom-u} \\
v_{\lambda}(x)-v(x) = & \int_{\Sigma_{\lambda}}  \left(
\frac{1}{|x-y|^{4}} -\frac{1}{|x^{\lambda}-y|^{4}} \right)\Big(
\omega_{\lambda}(y)^2  - \omega(y)^2 \Big)\; dy. \label{decom-v}
\end{align}
\end{lemma}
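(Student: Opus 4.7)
The plan is to establish both identities by a direct computation: decompose $\R^d = \Sigma_\lambda \cup \Sigma_\lambda^c$ inside the integral representations \eqref{integralsystem} for $\omega$ and $\omega_\lambda$ (resp.\ $v$ and $v_\lambda$), and then apply the change of variables $y \mapsto y^\lambda$ on the half-space $\Sigma_\lambda^c$. Two elementary facts drive everything: reflection about $\{x_1=\lambda\}$ is an isometry with unit Jacobian, so it maps $\Sigma_\lambda^c$ bijectively onto $\Sigma_\lambda$ and preserves Lebesgue measure; and it satisfies the identities
\begin{equation*}
|x-y^{\lambda}| = |x^{\lambda}-y|, \qquad |x^{\lambda}-y^{\lambda}| = |x-y|,
\end{equation*}
which convert reflected arguments in the kernel into reflected base-points.

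Concretely, I would first split
\begin{equation*}
\omega(x)=\int_{\Sigma_\lambda}\frac{\omega(y)v(y)}{|x-y|^{d-2}}\,dy+\int_{\Sigma_\lambda^c}\frac{\omega(y)v(y)}{|x-y|^{d-2}}\,dy,
\end{equation*}
and rewrite the second integral via $y\mapsto y^\lambda$, which sends $\omega(y)v(y)$ to $\omega_\lambda(y)v_\lambda(y)$ and $|x-y|$ to $|x-y^\lambda|=|x^\lambda-y|$. This yields a representation of $\omega(x)$ as a sum of two integrals over $\Sigma_\lambda$ with kernels $|x-y|^{-(d-2)}$ and $|x^\lambda-y|^{-(d-2)}$, respectively. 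Performing the same decomposition on $\omega_\lambda(x)=\int_{\R^d}\omega(y)v(y)|x^\lambda-y|^{-(d-2)}\,dy$ (now using $|x^\lambda-y^\lambda|=|x-y|$ in the transformed piece) produces the symmetric representation with the two kernels swapping the coefficients $\omega v$ and $\omega_\lambda v_\lambda$. Subtracting the two expressions collapses the $\omega v$ and $\omega_\lambda v_\lambda$ contributions into a single integral against the kernel difference $|x-y|^{-(d-2)}-|x^\lambda-y|^{-(d-2)}$, giving \eqref{decom-u}. The proof of \eqref{decom-v} is verbatim the same, with the kernel $|x-y|^{-4}$ replacing $|x-y|^{-(d-2)}$ and the integrand $\omega(y)^2$ replacing $\omega(y)v(y)$.

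There is no serious obstacle here, since the argument is purely algebraic once the change of variables is in place; the only point deserving a sentence of justification is the absolute convergence of the integrals so that the splitting and substitution are legal. This is immediate from $\omega\in L^{2d/(d-2)}(\R^d)$, Lemma \ref{L:hardy} (which places $v\in L^{d/2}(\R^d)$), and H\"older's inequality, giving $\omega v\in L^{2d/(d+2)}$ and $\omega^2\in L^{d/(d-2)}$, so both integrals in \eqref{integralsystem} converge absolutely for every $x$ and Fubini/translation are permitted. With that in hand the two identities follow directly from the manipulations above.
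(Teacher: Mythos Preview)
Your proposal is correct and follows essentially the same approach as the paper: split the integral over $\R^d$ into $\Sigma_\lambda$ and $\Sigma_\lambda^c$, change variables $y\mapsto y^\lambda$ on the complement using $|x-y^\lambda|=|x^\lambda-y|$, obtain symmetric representations of $\omega(x)$ and $\omega_\lambda(x)$ over $\Sigma_\lambda$, and subtract. Your added remark on absolute convergence (via $\omega v\in L^{2d/(d+2)}$, $\omega^2\in L^{d/(d-2)}$ and Lemma~\ref{L:hardy}) is a welcome justification that the paper leaves implicit; one minor caveat is that Hardy--Littlewood--Sobolev only guarantees convergence for a.e.\ $x$ rather than every $x$, but this is harmless for the identity.
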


\begin{proof} By \eqref{integralsystem} and the fact that $\left|x-y^{\lambda}\right|=\left|x^{\lambda}-y\right|$, we have
\begin{align}
\omega(x)= &  \int_{\R^d} \frac{\omega(y)v(y)}{|x-y|^{d-2}} \; dy
 =   \int_{\sum_{\lambda}} \frac{\omega(y)v(y)}{|x-y|^{d-2}} \; dy +   \int_{\sum^c_{\lambda}} \frac{\omega(y)v(y)}{|x-y|^{d-2}} \;
 dy \nonumber\\
 = &  \int_{\sum_{\lambda}} \frac{\omega(y)v(y)}{|x-y|^{d-2}} \; dy +   \int_{\sum_{\lambda}} \frac{\omega(y^{\lambda})v(y^{\lambda})}{|x-y^{\lambda}|^{d-2}} \;
 dy \nonumber\\
  = &  \int_{\sum_{\lambda}} \frac{\omega(y)v(y)}{|x-y|^{d-2}} \; dy +   \int_{\sum_{\lambda}} \frac{\omega(y^{\lambda})v(y^{\lambda})}{|x^{\lambda}-y|^{d-2}} \;
 dy \nonumber\\
 = &  \int_{\sum_{\lambda}} \left( \frac{\omega(y)v(y)}{|x-y|^{d-2}} +  \frac{\omega_{\lambda}(y)v_{\lambda}(y)}{|x^{\lambda}-y|^{d-2}} \right)\;
 dy. \label{dec:u:sigma}
\end{align}
This implies that
\begin{align}\label{dec:u-lamb:sigma}
\omega_{\lambda}(x)= \omega(x^{\lambda})= &  \int_{\sum_{\lambda}}
\left( \frac{\omega(y)v(y)}{|x^{\lambda}-y|^{d-2}} +
\frac{\omega_{\lambda}(y)v_{\lambda}(y)}{|x-y|^{d-2}} \right) \;
 dy.
\end{align}

By \eqref{dec:u:sigma} and \eqref{dec:u-lamb:sigma}, we get
\eqref{decom-u}. By the same way, we can show \eqref{decom-v}.
\end{proof}

Based on the above preliminaries, we can prove that

\begin{proposition}\label{P:rad-dec}
Let $(\omega,v)$ be the positive solution of \eqref{integralsystem}
in $ L^{2d/(d-2)} \times L^{d/2}$. Then $\omega$ and $v$ are
radially symmetry and decreasing about some point $x_0\R^d$.
\end{proposition}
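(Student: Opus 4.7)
The plan is to implement the moving plane method in its integral form on the equivalent system \eqref{integralsystem}. Fix the direction $e_1$ and for each $\lambda \in \R$ consider the plane $T_\lambda = \{x_1 = \lambda\}$, the half space $\Sigma_\lambda = \{x_1 \geq \lambda\}$, the reflection $x^\lambda$, and the contradiction sets $\Sigma_\lambda^\omega, \Sigma_\lambda^v$. Starting from Lemma \ref{L:decomp}, I would use that the kernels $|x-y|^{-(d-2)} - |x^\lambda-y|^{-(d-2)}$ and $|x-y|^{-4} - |x^\lambda-y|^{-4}$ are nonnegative whenever $x,y\in\Sigma_\lambda$, together with the splittings $\omega_\lambda v_\lambda - \omega v = (\omega_\lambda-\omega) v_\lambda + \omega(v_\lambda - v)$ and $\omega_\lambda^2 - \omega^2 = (\omega_\lambda+\omega)(\omega_\lambda-\omega)$, dropping the non-positive contributions to obtain, for $x\in\Sigma_\lambda^\omega$ and $x\in\Sigma_\lambda^v$ respectively,
\begin{equation*}
 0<\omega_\lambda-\omega \leq \int_{\Sigma_\lambda^\omega}\frac{v_\lambda(\omega_\lambda-\omega)}{|x-y|^{d-2}}dy + \int_{\Sigma_\lambda^v}\frac{\omega(v_\lambda-v)}{|x-y|^{d-2}}dy, \quad 0<v_\lambda-v\leq \int_{\Sigma_\lambda^\omega}\frac{(\omega_\lambda+\omega)(\omega_\lambda-\omega)}{|x-y|^4}dy.
\end{equation*}

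Applying Lemma \ref{L:hardy} (with exponent $d-2$, mapping $L^{2d/(d+2)}\to L^{2d/(d-2)}$, and with exponent $4$, mapping $L^{d/(d-2)}\to L^{d/2}$) followed by H\"older's inequality yields the coupled system of estimates
\begin{align*}
\|\omega_\lambda-\omega\|_{L^{\frac{2d}{d-2}}(\Sigma_\lambda^\omega)} &\leq C\|v\|_{L^{d/2}(\Sigma_\lambda^c)}\|\omega_\lambda-\omega\|_{L^{\frac{2d}{d-2}}(\Sigma_\lambda^\omega)} + C\|\omega\|_{L^{\frac{2d}{d-2}}(\Sigma_\lambda^v)}\|v_\lambda-v\|_{L^{d/2}(\Sigma_\lambda^v)},\\
\|v_\lambda-v\|_{L^{d/2}(\Sigma_\lambda^v)} &\leq C\|\omega\|_{L^{\frac{2d}{d-2}}(\R^d)}\|\omega_\lambda-\omega\|_{L^{\frac{2d}{d-2}}(\Sigma_\lambda^\omega)},
\end{align*}
after using $\|\omega_\lambda\|_{L^p((\Sigma_\lambda^\omega))}=\|\omega\|_{L^p((\Sigma_\lambda^\omega)^*)}$ and $(\Sigma_\lambda^\omega)^*\subset \Sigma_\lambda^c$. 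Since $\omega \in L^{2d/(d-2)}$ and $v\in L^{d/2}$, absolute continuity of the integral makes $\|v\|_{L^{d/2}(\Sigma_\lambda^c)}$ and $\|\omega\|_{L^{2d/(d-2)}(\Sigma_\lambda^v)}$ arbitrarily small for $\lambda$ sufficiently negative; plugging the second inequality into the first then yields a strict self-contracting estimate, forcing $\Sigma_\lambda^\omega$ and $\Sigma_\lambda^v$ to have measure zero, which starts the moving plane: $\omega\leq \omega_\lambda$, $v\leq v_\lambda$ on $\Sigma_\lambda$ for all $\lambda\ll 0$.

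Define $\lambda_0:=\sup\{\lambda: \omega_\mu\geq \omega,\,v_\mu\geq v \text{ on } \Sigma_\mu\text{ for all }\mu\leq\lambda\}$. I would rule out $\lambda_0=+\infty$ by the following argument: if it were $+\infty$ then $\omega$ would be non-increasing in $x_1$, so that for each fixed $(x_2,\dots,x_d)$ the function $x_1\mapsto\omega(x_1,\dots,x_d)$ has a nonnegative limit at $+\infty$; combined with strict positivity this contradicts $\omega\in L^{2d/(d-2)}(\R^d)$. At $\lambda=\lambda_0$ I claim $\omega\equiv\omega_{\lambda_0}$ and $v\equiv v_{\lambda_0}$ on $\Sigma_{\lambda_0}$: otherwise the integral representation and the strict positivity of the kernel differences upgrade $\omega_{\lambda_0}\geq\omega$ to a strict inequality on $\Sigma_{\lambda_0}$; a continuity argument then shows that the norms $\|\omega\|_{L^{2d/(d-2)}(\Sigma_\lambda^v)}$, $\|v\|_{L^{d/2}(\Sigma_\lambda^c)}$ used in Step 1 vary continuously in $\lambda$ and can be made small on $[\lambda_0,\lambda_0+\eps]$, allowing the bootstrap of the first step to run again for $\lambda$ slightly above $\lambda_0$, which contradicts the definition of $\lambda_0$. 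Thus $\omega$ is symmetric about the hyperplane $x_1=\lambda_0$ and monotone from the left up to it.

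Performing this argument in every direction $e\in \mathbb{S}^{d-1}$ yields that $\omega$ is symmetric about some hyperplane perpendicular to every $e$; by a standard lemma the common intersection of these hyperplanes is a single point $x_0$, and $\omega$ must be radially symmetric and radially decreasing about $x_0$, with the same conclusion for $v$. The main obstacle I anticipate is organizing the coupled bootstrap in Step 1 so that the two estimates for $\omega_\lambda-\omega$ and $v_\lambda-v$ close together, and carrying out the propagation of strict inequality at the critical value $\lambda_0$ in a way that allows pushing the plane further; the exclusion of $\lambda_0=+\infty$ also needs care because $L^p$ integrability does not immediately give pointwise decay, so the monotonicity conclusion forced by $\lambda_0=+\infty$ must be combined with the integral identity \eqref{integralsystem} to reach the contradiction.
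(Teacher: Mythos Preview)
Your overall strategy is the same as the paper's (the integral moving--plane method of Chen--Li--Ou on the coupled system), but the bookkeeping of which norms provide the smallness does not close as you have written it, and this is exactly the point you flagged as the ``main obstacle.''

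In Step~1 you assert that $\|\omega\|_{L^{2d/(d-2)}(\Sigma_\lambda^v)}$ is small for $\lambda\ll 0$. This is not true a~priori: $\Sigma_\lambda^v\subset\Sigma_\lambda$, and $\Sigma_\lambda$ fills almost all of $\R^d$ as $\lambda\to-\infty$, so that norm can stay of order $\|\omega\|_{L^{2d/(d-2)}(\R^d)}$. With your version of the $v$-estimate (the factor $\|\omega\|_{L^{2d/(d-2)}(\R^d)}$), the substituted coefficient in front of $\|\omega_\lambda-\omega\|$ is $C\|v\|_{L^{d/2}(\Sigma_\lambda^c)}+C^2\|\omega\|_{L^{2d/(d-2)}(\Sigma_\lambda^v)}\|\omega\|_{L^{2d/(d-2)}(\R^d)}$, and the second summand is not small. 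The fix is to keep the reflected factors: on $\Sigma_\lambda^\omega$ one has $\omega_\lambda+\omega\le 2\omega_\lambda$, hence $\|\omega_\lambda+\omega\|_{L^{2d/(d-2)}(\Sigma_\lambda^\omega)}\le 2\|\omega\|_{L^{2d/(d-2)}((\Sigma_\lambda^\omega)^*)}$; likewise $\|v_\lambda\|_{L^{d/2}(\Sigma_\lambda^\omega)}=\|v\|_{L^{d/2}((\Sigma_\lambda^\omega)^*)}$. Since $(\Sigma_\lambda^\omega)^*\subset\Sigma_\lambda^c$, these quantities go to $0$ as $\lambda\to-\infty$, and the contraction closes.

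The same mis-identification recurs in your continuation step. Near a finite $\lambda_0$ the norm $\|v\|_{L^{d/2}(\Sigma_\lambda^c)}$ is \emph{not} small (this was only small because $\lambda\ll 0$). What becomes small as $\lambda\downarrow\lambda_0$ is the measure of $\Sigma_\lambda^\omega$ (because the strict inequality $\omega>\omega_{\lambda_0}$ in the interior forces $|\overline{\Sigma_{\lambda_0}^\omega}|=0$ and then $\limsup_{\lambda\to\lambda_0}\Sigma_\lambda^\omega\subset\overline{\Sigma_{\lambda_0}^\omega}$). So the bootstrap at $\lambda_0$ must again be run with the factors $\|\omega\|_{L^{2d/(d-2)}((\Sigma_\lambda^\omega)^*)}$ and $\|v\|_{L^{d/2}((\Sigma_\lambda^\omega)^*)}$, not with $\|v\|_{L^{d/2}(\Sigma_\lambda^c)}$.

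Two smaller remarks. First, there is a sign slip: showing $\Sigma_\lambda^\omega=\{\omega<\omega_\lambda\}$ is empty gives $\omega\ge\omega_\lambda$ on $\Sigma_\lambda$, not $\omega\le\omega_\lambda$; your definition of $\lambda_0$ should be adjusted accordingly. Second, to rule out $\lambda_0=+\infty$ the paper does not use your monotonicity/integrability argument; it simply repeats the Step~1 contraction from the right (for $\lambda\gg 0$ one has $\omega\le\omega_\lambda$ on $\Sigma_\lambda$) and lets the two planes meet. Either route works, but the symmetric argument avoids the delicate point you noted about combining monotonicity with the integral identity.
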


\begin{proof} The proof consist of three steps.
\begin{enumerate}
\item[Step 1:] We show that there exists an $N>0$ such that for
any $\lambda<-N$, we have
\begin{equation}\label{basicfact}
\omega(x)\geq \omega_{\lambda}(x); \; \text{and}\; v(x) \geq
v_{\lambda}(x), \forall \; x\in \Sigma_{\lambda}.
\end{equation}
\item[Step 2:] We move the plane continuously from $\lambda<-N$ to
the right as long as \eqref{basicfact} holds.  We show that if the
plane stops at $x_1=\lambda_0$ for some $\lambda_0$, then
$\omega(x)$ and $v(x)$ must be symmetric and monotone about the
plane $x_1=\lambda_0$; otherwise, we can move the plane all the way
to the right.
\item[Step 3:] By Step 1, we know that the plane cannot move
all through to the right in Step 2. That is,  the plane will
eventually stop at some finite point. In fact, by the similar
analysis as that in Step 1 and Step 2, there exists a large $M$,
such that for $\lambda
> M$, we have
\begin{equation*}
\omega(x)\leq \omega_{\lambda}(x); \; \text{and}\; v(x) \leq
v_{\lambda}(x), \forall \; x\in \Sigma_{\lambda}.
\end{equation*}
Now  we can move the plane continuously  from  $\lambda> M$ to the
left as long the above fact holds. The planes moved from the left
and the right will eventually meet at some point. Finally, since the
direction of $x_1$ can be chosen arbitrarily, we deduce that
$\omega(x)$ and $v(x)$ must be radially symmetric and decreasing
about some point.
\end{enumerate}

According to the above analysis, we only need to show the facts in
Step 1 and Step 2.

\noindent{\bf Step 1: } For the sufficiently negative
value of $\lambda$, we can show that  $\Sigma^{\omega}_{\lambda}$ and
$\Sigma^v_{\lambda}$ must be both empty. In fact, for any $x\in
\Sigma_{\lambda}$, we have
\begin{align*}
\omega_{\lambda}(x) - \omega(x) = & \int_{\sum_{\lambda}}  \left(
\frac{1}{|x-y|^{d-2}} -\frac{1}{|x^{\lambda}-y|^{d-2}} \right)\Big(
\omega_{\lambda}(y) v_{\lambda}(y) - \omega(y) v(y) \Big)\; dy \\
\leq & \int_{  \{y\in\; \Sigma_{\lambda}:\; \omega v \leq
\omega_{\lambda}v_{\lambda} \}}
 \frac{1}{|x-y|^{d-2}} \Big( \omega_{\lambda}(y) v_{\lambda}(y) - \omega(y)
v(y) \Big)\; dy \\
= & \int_{  \{y\in\; \Sigma_{\lambda}:\; \omega v \leq
\omega_{\lambda}v_{\lambda} \}}
 \frac{   \omega(y)\Big(  v_{\lambda}(y) -
v(y) \Big) +  v_{\lambda}(y)\Big(  \omega_{\lambda}(y) -
\omega(y) \Big)  }{|x-y|^{d-2}}\; dy \\
= &  \int_{  \{y\in\; \Sigma_{\lambda}:\; \omega v \leq
\omega_{\lambda}v_{\lambda}, \atop \omega <\omega_{\lambda}, v<
v_{\lambda} \}}
 \frac{   \omega(y)\Big(  v_{\lambda}(y) -
v(y) \Big) +  v_{\lambda}(y)\Big(  \omega_{\lambda}(y) -
\omega(y) \Big)  }{|x-y|^{d-2}}\; dy \\
 &+ \int_{  \{y\in\; \Sigma_{\lambda}:\; \omega v \leq
\omega_{\lambda}v_{\lambda}, \atop \omega < \omega_{\lambda}, v \geq
v_{\lambda} \}}
 \frac{  \omega(y)\Big(  v_{\lambda}(y) -
v(y) \Big) +  v_{\lambda}(y)\Big(  \omega_{\lambda}(y) -
\omega(y) \Big)  }{|x-y|^{d-2}}\; dy \\
& + \int_{  \{y\in\; \Sigma_{\lambda}:\; \omega v \leq
\omega_{\lambda}v_{\lambda}, \atop \omega \geq \omega_{\lambda}, v<
v_{\lambda} \}}
 \frac{   \omega(y)\Big(  v_{\lambda}(y) -
v(y) \Big) +  v_{\lambda}(y)\Big( \omega_{\lambda}(y) -
\omega(y) \Big)  }{|x-y|^{d-2}}\; dy \\
\leq & \int_{ \sum^v_{\lambda} }
 \frac{\omega(y)}{|x-y|^{d-2}}  \Big(  v_{\lambda}(y) -
v(y) \Big)\; dy + \int_{ \sum^{\omega}_{\lambda} }
 \frac{v_{\lambda}(y)}{|x-y|^{d-2}}  \Big( \omega_{\lambda}(y) -
\omega(y) \Big)\; dy.
\end{align*}
Hence, by Lemma \ref{L:hardy} and H\"{o}lder's inequality, we obtain
\begin{align}
\big\| \omega_{\lambda} - \omega
\big\|_{L^{\frac{2d}{d-2}}(\Sigma^{\omega}_{\lambda})}  \leq  &\; C
\big\| \omega
\big(v_{\lambda}-v\big)\big\|_{L^{\frac{2d}{d+2}}(\Sigma^v_{\lambda})}
+ C \big\|v_{\lambda}\big( \omega_{\lambda} - \omega \big)
\big\|_{L^{\frac{2d}{d+2}}(\Sigma^{\omega}_{\lambda})} \nonumber \\
\leq & \; C
\big\|\omega\big\|_{L^{\frac{2d}{d-2}}(\Sigma^v_{\lambda})}
\big\|v_{\lambda}-v \big\|_{L^{\frac{d}{2}}(\Sigma^v_{\lambda})} + C
\big\|v_{\lambda}\big\|_{L^{\frac{d}{2}}(\Sigma^{\omega}_{\lambda})}
\big\|\omega_{\lambda} - \omega
\big\|_{L^{\frac{2d}{d-2}}(\Sigma^{\omega}_{\lambda})}.\label{est:u}
\end{align}

In the same argument as above, for $x\in \Sigma_{\lambda}$, we also have
\begin{align*}
v_{\lambda}(x)-v(x) \leq & 2 \int_{ \Sigma^{\omega}_{\lambda} }
 \frac{\omega_{\lambda}(y)}{|x-y|^{4}}  \Big(  \omega_{\lambda}(y) -
\omega(y) \Big)\; dy,
\end{align*}
and
\begin{align}
\big\| v_{\lambda} - v\big\|_{L^{\frac{d}{2}}(\Sigma^v_{\lambda})}
\leq & C \big\| \omega_{\lambda}
\big(\omega_{\lambda}-\omega\big)\big\|_{L^{\frac{d}{d-2}}(\Sigma^{\omega}_{\lambda})}
 \leq  C
\big\|\omega_{\lambda}\big\|_{L^{\frac{2d}{d-2}}(\Sigma^{\omega}_{\lambda})}
\big\|\omega_{\lambda}- \omega
\big\|_{L^{\frac{2d}{d-2}}(\Sigma^{\omega}_{\lambda})}.
\label{est:v}
\end{align}
Hence, taking \eqref{est:v} into \eqref{est:u}, we obtain
\begin{align}
&\big\|  \omega_{\lambda} -  \omega
\big\|_{L^{\frac{2d}{d-2}}(\Sigma^{\omega}_{\lambda})} \nonumber \\
\leq &\;
C\big\|\omega\big\|_{L^{\frac{2d}{d-2}}(\Sigma^v_{\lambda})}
\big\|\omega_{\lambda}\big\|_{L^{\frac{2d}{d-2}}(\Sigma^{\omega}_{\lambda})}
\big\|\omega_{\lambda}-\omega\big\|_{L^{\frac{2d}{d-2}}(\Sigma^{\omega}_{\lambda})}
+ C
\big\|v_{\lambda}\big\|_{L^{\frac{d}{2}}(\Sigma^{\omega}_{\lambda})}
\big\|\omega_{\lambda} -\omega
\big\|_{L^{\frac{2d}{d-2}}(\Sigma^{\omega}_{\lambda})}  \nonumber\\
\leq & \; C
\big\|\omega\big\|_{L^{\frac{2d}{d-2}}(\Sigma_{\lambda})}
\big\|\omega_{\lambda}\big\|_{L^{\frac{2d}{d-2}}(\Sigma_{\lambda})}
\big\|\omega_{\lambda}-\omega\big\|_{L^{\frac{2d}{d-2}}(\Sigma^{\omega}_{\lambda})}
+ C \big\|v_{\lambda}\big\|_{L^{\frac{d}{2}}(\Sigma_{\lambda})}
\big\|\omega_{\lambda} - \omega
\big\|_{L^{\frac{2d}{d-2}}(\Sigma^{\omega}_{\lambda})} \nonumber\\
\leq &   \left( C
\big\|\omega\big\|_{L^{\frac{2d}{d-2}}(\Sigma_{\lambda})}
\big\|\omega \big\|_{L^{\frac{2d}{d-2}}(\Sigma^c_{\lambda})}  +  C
\big\|v \big\|_{L^{\frac{d}{2}}(\Sigma^c_{\lambda})} \right)
\big\|\omega_{\lambda} - \omega
\big\|_{L^{\frac{2d}{d-2}}(\Sigma^u_{\lambda})}. \label{est:u:final}
\end{align}

By $\omega\in L^{2d/(d-2)}$ and $v\in L^{d/2}$ and the dominated
convergence theorem, we can choose $N$ sufficiently large such that
$\lambda< -N,$   and
\begin{equation*}
C \big\|\omega\big\|_{L^{\frac{2d}{d-2}}(\Sigma_{\lambda})}
\big\|\omega \big\|_{L^{\frac{2d}{d-2}}(\Sigma^c_{\lambda})} +  C
\big\|v \big\|_{L^{\frac{d}{2}}(\Sigma^c_{\lambda})} \leq \frac12.
\end{equation*}
Thus \eqref{est:u:final} implies that
\begin{equation*}
\big\| \omega_{\lambda} - \omega
\big\|_{L^{\frac{2d}{d-2}}(\Sigma^{\omega}_{\lambda})} =0.
\end{equation*}
This implies that $\Sigma^{\omega}_{\lambda}$ must be the set with
zero measure, hence must be empty, up to a set with zero measure. By
\eqref{est:v}, $\Sigma^v_{\lambda}$ also must be empty.

\noindent{\bf Step 2: } We move the plane $x_1=\lambda$ to the right
as long as \eqref{basicfact} holds. Suppose that at some
$\lambda_0$, we have
\begin{align*}
\omega(x) & \geq \omega_{\lambda_0}(x), v(x)\geq v_{\lambda_0}(x),
\; \text{on}\;
\Sigma_{\lambda_0}, \\
\text{but}\; \omega(x) & \not \equiv \omega_{\lambda_0}(x)
\;\text{or}\; v(x) \not \equiv v_{\lambda_0}(x) \; \text{on}\;
\Sigma_{\lambda_0}.
\end{align*}
We show that the plane can be moved further to the right. More
precisely,  there exists $\epsilon=\epsilon(n,\omega,v)$ such that
$\omega(x)\geq \omega_{\lambda}(x)$ and $v(x)\geq v_{\lambda}(x)$ on
$\Sigma_{\lambda}$ for all $\lambda \in [\lambda_0, \lambda_0 +
\epsilon)$.

In the case
\begin{equation*}
v(x) \not \equiv v_{\lambda_0}(x) \; \text{on}\; \Sigma_{\lambda_0}.
\end{equation*}
By \eqref{decom-u}, we have that $\omega(x)> \omega_{\lambda_0}(x)$
in the interior of $\Sigma_{\lambda_0}$. Note that
\begin{equation*}\big|\; \overline{\Sigma^{\omega}_{\lambda_0}}
\;\big|=0,\; \text{and}\; \lim_{\lambda\rightarrow \lambda_0}
\Sigma^{\omega}_{\lambda} \subseteq
\overline{\Sigma^{\omega}_{\lambda_0}}.
\end{equation*}

From \eqref{est:u} and \eqref{est:v}, we have
\begin{align*}
\big\| \omega_{\lambda} - \omega
\big\|_{L^{\frac{2d}{d-2}}(\Sigma^{\omega}_{\lambda})} \leq &\;
\left( C \big\|\omega\big\|_{L^{\frac{2d}{d-2}}(\Sigma_{\lambda})}
\big\|\omega
\big\|_{L^{\frac{2d}{d-2}}\big((\Sigma^{\omega}_{\lambda})^*\big)} +
C \big\|v
\big\|_{L^{\frac{d}{2}}\big((\Sigma^{\omega}_{\lambda})^*\big)}
\right) \big\|\omega_{\lambda} - \omega
\big\|_{L^{\frac{2d}{d-2}}(\Sigma^{\omega}_{\lambda})}.
\end{align*}
By $\omega\in L^{2d/(d-2)}$ and $v\in L^{d/2}$ and $\big|\;
\overline{\Sigma^{\omega}_{\lambda_0}} \; \big|=0$ and the dominated
convergence theorem, we can choose $\epsilon$ sufficiently small,
such that for all $\lambda \in [\lambda_0, \lambda_0 + \epsilon)$,
we have
\begin{equation*}
 C
\big\|\omega\big\|_{L^{\frac{2d}{d-2}}(\Sigma_{\lambda})}
\big\|\omega
\big\|_{L^{\frac{2d}{d-2}}\big((\Sigma^{\omega}_{\lambda})^*\big)} +
C \big\|v
\big\|_{L^{\frac{d}{2}}\big((\Sigma^{\omega}_{\lambda})^*\big)} \leq
\frac12.
\end{equation*}
This implies that $\big\| \omega_{\lambda} - \omega
\big\|_{L^{\frac{2d}{d-2}}(\Sigma^{\omega}_{\lambda})}=0$. Hence,
$\Sigma^{\omega}_{\lambda}$ must be empty for all $\lambda \in
[\lambda_0, \lambda_0 + \epsilon)$. It also implies that
$\Sigma^v_{\lambda}$ also is empty for all $\lambda \in [\lambda_0,
\lambda_0 + \epsilon)$.

In the case
\begin{equation*}
\omega(x) \not \equiv \omega_{\lambda_0}(x) \; \text{on}\;
\Sigma_{\lambda_0}.
\end{equation*}
By \eqref{decom-v}, we have that $v(x)> v_{\lambda_0}(x)$ in the
interior of $\Sigma_{\lambda_0}$. By the above analysis, we know
that $\Sigma^{\omega}_{\lambda}$ and $\Sigma^v_{\lambda}$ must also
be empty for all $\lambda \in [\lambda_0, \lambda_0 + \epsilon)$.
This completes the proof.
\end{proof}

Now we use the elliptic regularity theory to show that

\begin{proposition}\label{P:regularity}
Assume that $\omega$ is a positive solution of
\eqref{ellipticwithnonlocal} in $L^{2d/(d-2)}$. Then $\omega$ is
uniformly bounded in $\R^d$. Furthermore, $\omega$ is $C^{\infty}$
and
\begin{equation}\label{asympbehav}
\lim_{|x| \rightarrow +\infty} |x|^{d-2} \omega(x) = \omega_{\infty}
\end{equation}
for some positive number $\omega_{\infty}$.
\end{proposition}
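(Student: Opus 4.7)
The plan is to establish the three assertions in order: uniform boundedness, smoothness, and the asymptotic expansion. By Proposition \ref{P:rad-dec}, after a translation I may assume $\omega$ is radially symmetric and decreasing about the origin. Since $\omega \in L^{\frac{2d}{d-2}}$ is radially decreasing, the elementary monotonic rearrangement bound
\begin{equation*}
\omega(|x|)^{\frac{2d}{d-2}} \,|B(0,|x|)| \leq \int_{B(0,|x|)} \omega^{\frac{2d}{d-2}} \leq \|\omega\|_{L^{2d/(d-2)}}^{\frac{2d}{d-2}}
\end{equation*}
yields $\omega(x) \leq C|x|^{-(d-2)/2}$ for $x \neq 0$, so $\omega$ is bounded away from the origin and in $L^p$ on every exterior region $\{|x|\geq \rho\}$ for all $p \in [\tfrac{2d}{d-2},\infty]$.

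For global boundedness, I would bootstrap on the integral representations \eqref{integralsystem}. Splitting each convolution into the local piece $\{|y|\leq 1\}$ and the tail $\{|y|>1\}$ decouples the scale-invariant system: the tail is controlled by pointwise decay and $L^\infty$-boundedness of $\omega$, while the local piece is controlled by the $L^{2d/(d-2)}$ and $L^{d/2}$ hypotheses on $\omega$ and $v$. This decoupling breaks the scale invariance that would otherwise block the Hardy--Littlewood--Sobolev bootstrap, and finitely many rounds of HLS applied alternately to $v = |\cdot|^{-4}*\omega^2$ and $\omega = c_d\, |\cdot|^{-(d-2)}*(\omega v)$ upgrade $\omega$ to $L^p_{loc}$ for every $p<\infty$, and then via H\"older against the locally integrable kernel $|x|^{-(d-2)}$ to $\omega \in L^\infty(\R^d)$. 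With $\omega$ bounded, $\omega^2 \in L^\infty \cap L^{d/(d-2)}$ so $v \in C^\alpha_{loc}$, and Schauder estimates applied to $-\Delta\omega = \omega v$ combined with iteration give $\omega \in C^\infty$.

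For the asymptotic behavior, I invoke the Kelvin transform $\tilde\omega(x) = |x|^{-(d-2)}\omega(x/|x|^2)$, which by the lemma preceding this proposition is again a positive solution of \eqref{ellipticwithnonlocal} on $\R^d\setminus\{0\}$ and satisfies $\|\tilde\omega\|_{L^{2d/(d-2)}} = \|\omega\|_{L^{2d/(d-2)}}$ by change of variables under inversion. Applying the boundedness and smoothness arguments to $\tilde\omega$, together with a standard removable singularity argument at the origin (since $\tilde\omega$ is a bounded weak solution on the punctured space with a locally bounded right-hand side), shows that $\tilde\omega$ extends to an element of $C^\infty(\R^d)$. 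By continuity at $0$,
\begin{equation*}
\omega_\infty \;:=\; \tilde\omega(0) \;=\; \lim_{x\to 0} |x|^{-(d-2)}\omega(x/|x|^2) \;=\; \lim_{|y|\to\infty} |y|^{d-2}\omega(y)
\end{equation*}
exists, and the positivity $\omega_\infty > 0$ follows by inserting a fixed positive lower bound for $\omega v$ on some ball into the Green's function representation $\omega(x) = c_d \int \omega(y)v(y)/|x-y|^{d-2}\,dy$, which forces $\omega(x) \gtrsim |x|^{-(d-2)}$ for large $|x|$. The main obstacle is the boundedness step, where the $L^p$-bootstrap must be set up with care to exploit the pointwise decay at infinity in order to escape the scale-invariant critical space.
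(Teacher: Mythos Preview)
Your overall architecture matches the paper's: bootstrap to $L^\infty$, elliptic regularity for smoothness, then Kelvin transform for the asymptotic limit. The asymptotic step via the Kelvin transform $\tilde\omega$ and the positivity of $\tilde\omega(0)$ is essentially what the paper does as well, and your smoothness argument is fine.

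The gap is in the boundedness step. Your spatial splitting of the convolution into $\{|y|\le 1\}$ and $\{|y|>1\}$ handles the tail correctly (radial monotonicity gives $\omega\in L^\infty$ outside any ball), but it does \emph{not} ``break the scale invariance'' in the local piece. On $B_1$ you still only have $\omega\in L^{2d/(d-2)}$ and $v\in L^{d/2}$, so $\omega v\in L^{2d/(d+2)}$ and Hardy--Littlewood--Sobolev returns exactly $L^{2d/(d-2)}$: there is no gain, and finitely many rounds of HLS cannot climb out of the critical space without an additional smallness mechanism. The paper supplies this mechanism by truncating at a high level: setting $\omega_A=\omega\,\chi_{\{\omega>A\}}$ and choosing $A$ large makes $\|\omega_A\|_{L^{2d/(d-2)}}$ small by dominated convergence, so that the term $C\|\omega_A\|_{L^{2d/(d-2)}}^2\|\omega_A\|_{L^r}$ coming from the cubic nonlinearity can be absorbed into the left-hand side, yielding $\omega_A\in L^r$ for every $r\ge 2d/(d-2)$. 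In your spatial framework the analogous fix would be to replace $B_1$ by a sufficiently small ball $B_\rho$ so that $\|\omega\|_{L^{2d/(d-2)}(B_\rho)}$ is below the absorption threshold; without stating one of these smallness devices, the bootstrap as you have written it does not close.
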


\begin{proof}
{\bf Step 1: } We first show that $\omega$ is uniformly bounded and
continuous. For $A>0$, we define
\begin{equation*}
\Omega = \{x\in \R^n, \omega(x)>A\}  \quad \text{and}\quad \omega_{A}(x)=\begin{cases} \omega(x), & \Omega, \\
0, & \R^d\backslash \Omega.
\end{cases}
\end{equation*}
Hence \begin{equation}\label{est:small part} \omega - \omega_A \in
L^{2d/(d-2)}\cap L^{\infty},\;\text{for any}\; A>0.
\end{equation}

Since $\omega$ is a solution of \eqref{ellipticwithnonlocal}, we
have
\begin{equation*}
\omega(x) = \int_{\R^d}
\frac{\big(|\cdot|^{-4}*|\omega|^2\big)\omega(y)}{|x-y|^{d-2}} \;dy,
\; \forall \; x \in \R^d.
\end{equation*}
This  implies that for any $ x \in \Omega$
\begin{align*}
\omega_A(x) = & \int_{\R^d}
\frac{\big(|\cdot|^{-4}*|\omega|^2\big)\omega(y)}{|x-y|^{d-2}} \;dy, \\
= & \int_{\R^d}
\frac{\big(|\cdot|^{-4}*|\omega_A|^2\big)\omega_A(y)}{|x-y|^{d-2}}
\;dy + \int_{\R^d}
\frac{\big(|\cdot|^{-4}*|\omega-\omega_A|^2\big)\omega_A(y)}{|x-y|^{d-2}} \;dy \\
& + \int_{\R^d}
\frac{\big(|\cdot|^{-4}*|\omega_A|^2\big)(\omega-\omega_A)(y)}{|x-y|^{d-2}}
\;dy + \int_{\R^d}
\frac{\big(|\cdot|^{-4}*|\omega-\omega_A|^2\big)(\omega-\omega_A)(y)}{|x-y|^{d-2}}
\;dy.
\end{align*}

For any $r \geq \frac{2d}{d-2}$,  we have by Lemma \ref{L:hardy}
\begin{align}
\big\|\omega_A\big\|_{L^r} \leq &\; C
\big\|\omega_A\big\|^2_{L^{2d/(d-2)}} \big\|\omega_A\big\|_{L^r} + C
\big\|\omega_A\big\|_{L^{2d/(d-2)}}\big\|\omega-\omega_A\big\|_{L^{2d/(d-2)}}
\big\|\omega-\omega_A\big\|_{L^r} \nonumber\\
& +  C \big\|\omega_A\big\|^2_{L^{2d/(d-2)}}
\big\|\omega-\omega_A\big\|_{L^r} +  C
\big\|\omega-\omega_A\big\|^2_{L^{2d/(d-2)}}
\big\|\omega-\omega_A\big\|_{L^r}. \label{est:big part}
\end{align}

On one hand, from  $\omega\in L^{2d/(d-2)}$,  we can choose $A$
sufficiently large, such that
\begin{equation}\label{nonlinearterm}
C\big\|\omega_A\big\|^2_{L^{2d/(d-2)}}\leq \frac12.
\end{equation}
On the other hand, by  $\omega\in L^{2d/(d-2)}$ and \eqref{est:small
part}, we  easily verify that
\begin{equation}\label{resterm}
\big\|\omega_A\big\|_{L^{2d/(d-2)}} +
\big\|\omega-\omega_A\big\|_{L^{2d/(d-2)}} +
\big\|\omega-\omega_A\big\|_{L^r} \leq C(A).
\end{equation}

Taking \eqref{nonlinearterm} and \eqref{resterm} into \eqref{est:big
part}, we have for any $r\geq \frac{2d}{d-2}$
\begin{equation*}
\big\|\omega_A\big\|_{L^r} \leq  \frac{1}{2}
\big\|\omega_A\big\|_{L^r} + C(A),
\end{equation*}
which implies that $\omega_A\in L^r$ for any $r\geq \frac{2d}{d-2}$.
Therefore, we have $\omega\in L^r$ for any $r\geq \frac{2d}{d-2}$.
By Lemma \ref{L:hardy}, we have
\begin{equation*}
-\Delta \omega = (|\cdot|^{-4}*|\omega|^2)\omega \in L^{p}, \qquad
\text{for any}\quad p\geq \frac{2d}{d+2}.
\end{equation*}
From the $L^p$ theory and the Sobolev embedding theorem in
\cite{Ste:70:book}, we know that $\omega$ is uniformly bounded  and
$C^{0, \gamma}$  for all $0<\gamma<1$. In fact, we also have $\omega
\in C^{\infty}$ from Theorem 4.4.8 in \cite{Caz:Semiell}.

\vskip0.2cm

\noindent{\bf Step 2: } We show that $\omega$ has the asymptotic
behavior at infinity. We prove it by contradiction. Consider the
Kelvin transform solution:
\begin{equation*}
U(x)=\frac{1}{|x|^{d-2}}\omega\left(\frac{x}{|x|^2}\right)\quad
\Longrightarrow \quad |x|^{d-2}\;\omega(x)=U\left(\frac{x}{|x|^2}\right).
\end{equation*}
Applying Proposition \ref{P:rad-dec} to  $U(x)$,  we conclude that
$U(x)$ must be radially symmetric about some point and continuity.
Hence
\begin{equation*}  \lim_{|x| \rightarrow +\infty}
|x|^{d-2} \omega(x) = U(0)>0.
\end{equation*}This completes the proof of
Proposition \ref{P:regularity}.
\end{proof}

Finally, the uniqueness in Proposition \ref{T:uniqueness:diff form}
comes from the scaling covariance of \eqref{ellipticwithnonlocal}
and the uniqueness of ODE theory. This finishes the proof of
Proposition \ref{T:uniqueness:diff form}.\qed

\section{Coercivity of $\Phi$ on $H^{\bot} \cap \dot H^1_{rad}$}\label{appen CoerH}
In this Appendix, we prove Proposition \ref{coerH}. We divide the
proof into two steps.

\noindent{\bf Step 1: Nonnegative.} We claim that for any  function
$h \in \dot H^1$, $\big(h, W\big)_{\dot H^1}=0$, there exists
\begin{equation*}
\aligned    \Phi(h) \geq  0.
\endaligned
\end{equation*}

Indeed, let
\begin{equation}\label{HLS-fomula}
\aligned    I(u)= \frac{\big\|\nabla u \big\|^4_2}{\big\|\nabla
W\big\|^4_2} - \frac{\displaystyle \iint_{\R^d\times\R^d}
\frac{\big|u(x)\big|^2 \big|u(y)\big|^2}{|x-y|^{4}} \ dxdy }{
\displaystyle \iint_{\R^d\times\R^d} \frac{\big|W(x)\big|^2
\big|W(y)\big|^2}{|x-y|^{4}} \ dxdy }.
\endaligned
\end{equation}
By  Lemma \ref{SharpConstant}, we have
\begin{equation*}
\aligned \forall \; u \in \dot H^1, \quad I(u) \geq 0.
\endaligned
\end{equation*}

Choosing  $\displaystyle h\in \dot H^1 $ such that $(W, h)_{\dot H^1}=0, \;
\alpha \in \R$, we consider the expansion of $I(W+\alpha h)$ in
$\alpha$ of order $2$. Note that
\begin{equation*}
\aligned \big\|\nabla (W+ \alpha h) \big\|^4_2 = \big\|\nabla W
\big\|^4_2  \left( 1 + 2 \alpha^2 \frac{\big\|h\big\|^2_{\dot
H^1}}{\big\|W\big\|^2_{\dot H^1} } + O(\alpha^4)\right),
\endaligned
\end{equation*}
and
\begin{align*}
 & \iint_{\R^d\times\R^d} \frac{\big|W(x)+ \alpha h(x) \big|^2 \big|W(y)+
\alpha h(y)\big|^2}{|x-y|^{4}} \ dxdy \\
 = &\iint_{\R^d\times\R^d} \frac{\big|W(x)
\big|^2 \big|W(y) \big|^2}{|x-y|^{4}} \ dxdy \\
&   + 2\alpha    \iint_{\R^d\times\R^d} \frac{\big|W(x) \big|^2
 W(y) h_1(y)+ \big|W(y) \big|^2
 W(x) h_1(x)}{|x-y|^{4}} \ dxdy
\\  &  + \; \alpha^2  \iint_{\R^d\times\R^d} \frac{\big|W(x) \big|^2
\big|h(y)\big|^2+ \big|W(y) \big|^2 \big|h(x)\big|^2+4 W(x)h_1(x)
W(y)h_1(y) }{|x-y|^{4}} \ dxdy   + O(\alpha^3) \\
 = &\iint_{\R^d\times\R^d} \frac{\big|W(x)
\big|^2 \big|W(y) \big|^2}{|x-y|^{4}} \ dxdy \\
& \times \left( 1 + 2 \alpha^2 \frac{\displaystyle
\iint_{\R^d\times\R^d} \frac{\big|W(x) \big|^2 \big|h(y)\big|^2+ 2
W(x)h_1(x) W(y)h_1(y) }{|x-y|^{4}} \ dxdy }{\big\|W\big\|^2_{\dot
H^1}} + O(\alpha^3)\right),
\end{align*}
one easily shows that
\begin{align*}
  I(W+\alpha h)  = &  \frac{ 2 \alpha^2}{\big\|W\big\|^2_{\dot
H^1}} \left( \displaystyle \big\|h\big\|^2_{\dot H^1}-
\iint_{\R^d\times\R^d} \frac{\big|W(x) \big|^2 \big|h(y)\big|^2+2
W(x)h_1(x) W(y)h_1(y)
}{|x-y|^{4}} \ dxdy \right) \\
& + O(\alpha^3)
\end{align*}
where we have used the facts that
\begin{align*}
\iint_{\R^d\times\R^d} \frac{\big|W(x) \big|^2
 W(y) h_1(y)}{|x-y|^{4}} \ dxdy =& \int_{\R^d} - \Delta W(y)\cdot h_1(y)dy
 =0 ,\\
 \iint_{\R^d\times\R^d} \frac{\big|W(y) \big|^2
 W(x) h_1(x)}{|x-y|^{4}} \ dxdy =& \int_{\R^d} - \Delta W(x)\cdot h_1(x)dy
 =0, \\
\iint_{\R^d\times\R^d} \frac{\big|W(x) \big|^2
\big|h(y)\big|^2}{|x-y|^4} \; dxdy = & \iint_{\R^d\times\R^d}
\frac{\big|W(y) \big|^2 \big|h(x)\big|^2}{|x-y|^4} \; dxdy,
 \\
\iint_{\R^d\times\R^d} \frac{\big|W(x) \big|^2 \big|W(y)
\big|^2}{|x-y|^{4}} \ dxdy = &  \int_{\R^d} \big|\nabla W \big|^2
dx= C^{-4}_*.
\end{align*}

Since $I(W+\alpha h) \geq 0$ for all $\alpha \in \R$,  we have
\begin{equation*}
\aligned \Phi(h)= \int_{\R^d} \big|\nabla h \big|^2 dx -
\iint_{\R^d\times\R^d} \frac{\big|W(x) \big|^2 \big|h(y)\big|^2 + 2
W(x)h_1(x) W(y)h_1(y)}{|x-y|^{4}} \ dxdy \geq 0.
\endaligned
\end{equation*}

\noindent{\bf Step 2: Coercivity.} We show that there exists a
constant $c_*>0$ such that for any radial function $h\in H^{\bot}$
\begin{equation*}
\aligned  \Phi (h) \geq c_* \big\|h\big\|^2_{\dot H^1}.
\endaligned
\end{equation*}

We rewrite  $\Phi(h) = \Phi_1 (h_1) + \Phi_2(h_2)$,  where
\begin{align*}
 \Phi_1 (h_1)
: = & \frac12 \int_{\R^d} (L_+h_1) h_1  \; dx\\
=& \frac12 \int_{\R^d} \big| \nabla h_1 \big| ^2 \;dx- \frac12
\iint_{\R^d\times\R^d} \frac{|W(x)|^2|h_1(y)|^2}{|x-y|^4}\;dxdy  \\
& \qquad \qquad \qquad - \iint_{\R^d\times\R^d}
\frac{W(x)h_1(x)W(y)h_1(y)}{|x-y|^4}\; dxdy,
\end{align*}
and
\begin{align*}
 \Phi_2(h_2) :=  & \frac12 \int_{\R^d} (L_-h_2) h_2 \; dx \\
 =& \frac12 \int_{\R^d}
\big| \nabla h_2 \big| ^2 \; dx- \frac12 \iint_{\R^d\times\R^d}
\frac{|W(x)|^2|h_2(y)|^2}{|x-y|^4}\; dxdy.
\end{align*}
By Step 1, $L_+$ is nonnegative on $\{W\}^{\bot}$ (in the sense of
$\dot H^1$) and $L_-$ is nonnegative. We will deduce the coercivity
by the compactness argument \cite[Proposition
2.9]{Wein:85:Modulationary stability}.

We first show that there exists a constant $c$ such that
\begin{equation*}
\aligned   \Phi_1 (h_1) \geq c \big\| h_1 \big\|^2_{\dot H^1},
\endaligned
\end{equation*}
 for any
radial real-valued $\dot H^1$-function $h_1 \in \{W,
\widetilde{W}\}^{\bot}$ (in the sense of $\dot H^1$).
Assume that the above inequality does not hold, then there exists a
sequence of real-valued radial $\dot H^1$-functions $\{f_n\}_n$ such
that
\begin{equation} \label{H-noncoercivity}
\aligned f_n \in H^{\bot}, \quad \lim_{n \rightarrow +\infty}
\Phi_1(f_n) =0, \quad \big\|f_n\big\|_{\dot H^1} =1.
\endaligned
\end{equation}
Extracting a subsequence from $(f_n)$, we may assume that
\begin{equation*}
\aligned f_n \rightharpoonup f_* \; \text{in} \; \dot H^1.
\endaligned
\end{equation*}
The weak convergence of $f_n \in H^{\bot}$ to $f_*$ implies that
$f_*\in H^{\bot}$. In addition, by compactness, we have
\begin{equation*}
\aligned \iint_{\R^d\times\R^d} \frac{|W(x)|^2|f_n(y)|^2}{|x-y|^4}
\; dxdy & \longrightarrow \iint_{\R^d\times\R^d}
\frac{|W(x)|^2|f_*(y)|^2}{|x-y|^4} \; dxdy, \\
 \iint_{\R^d\times\R^d}
\frac{W(x)f_n(x)W(y)f_n(y)}{|x-y|^4} \;dxdy & \longrightarrow
\iint_{\R^d\times\R^d} \frac{W(x)f_*(x)W(y)f_*(y)}{|x-y|^4}\;dxdy.
\endaligned
\end{equation*}
Thus by the Fatou lemma, \eqref{H-noncoercivity} and Step 1, we have
\begin{equation*}
\aligned 0 \leq \Phi_1(f_*)\leq  \liminf_{n\rightarrow +\infty}
\Phi_1(f_n)=0.
\endaligned
\end{equation*}
So we   conclude that $f_*$ is the solution to the following minimizing problem
\begin{align*}
 0= \min_{f\in \Omega\backslash \{0\}   }
 \frac{\displaystyle\int_{\R^d}  L_+ f \cdot f \; dx}{\big\|f\big\|^2_{\dot H^1}},\quad \Omega=\left\{f\in \dot
H^1_{rad}, (f, W)_{\dot H^1}= (f, \widetilde{W})_{\dot
H^1}=0\right\}.
\end{align*}
Thus for some Langrange multipliers $\lambda_0, \lambda_1$, we have
\begin{align*}
L_+f_*=\lambda_0 \Delta W + \lambda_1 \Delta \widetilde{W}.
\end{align*}
Note that $(W, \widetilde{W})_{\dot H^1}=0$ and
$L_+(\widetilde{W})=0$, we have
\begin{align*}
0=\int_{\R^d} f_* L_+(\widetilde{W})=\int_{\R^d} (L_+
f_*)\widetilde{W} =\lambda_1 \big\|\widetilde{W}\big\|^2_{\dot H^1}
\Longrightarrow \lambda_1=0.
\end{align*}
This  tells us that
\begin{align*}
L_+f_*=\lambda_0 \Delta W =-\lambda_0 \big(  |x|^{-4}*|W|^2 \big) W
= \frac{\lambda_0}{2} L_+W.
\end{align*}
By $\text{Null}(L_+) = \text{span}\{\widetilde{W}\}$ in Lemma
\ref{keyassumption},  there exists $\mu_1$ such that
\begin{align*}
f_*= \frac{\lambda_0}{2} W + \mu_1 \widetilde{W}.
\end{align*}
Using $f_* \in H^{\bot}$, we get $\mu_1=0$ and
\begin{align*}
f_*= \frac{\lambda_0}{2} W.
\end{align*}
This  implies that
\begin{align*}
0=\Phi_1(f_*)=  \frac{\lambda^2_0}{4} \Phi_1(W) =
\frac{\lambda^2_0}{4} \Phi (W) \leq 0  \Longrightarrow \lambda_0 =0.
\end{align*}
Therefore, we have
\begin{equation*}
\aligned   f_*=0,\;\; \text{and}\;\; f_n \rightharpoonup 0 \;
\text{in} \; \dot H^1 .
\endaligned
\end{equation*}
Now, by  compactness, we have
\begin{equation*}
\aligned \iint_{\R^d\times\R^d} \frac{|W(x)|^2|f_n(y)|^2}{|x-y|^4}
\longrightarrow 0, \quad
 \iint_{\R^d\times\R^d}
\frac{W(x)f_n(x)W(y)f_n(y)}{|x-y|^4} \longrightarrow 0.
\endaligned
\end{equation*}
By $\Phi_1(f_n)\rightarrow 0$ in \eqref{H-noncoercivity}, we get
that
\begin{equation*}
\aligned \big\|\nabla f_n \big\|_2 \rightarrow 0,
\endaligned
\end{equation*}
which contradicts $\big\|f_n\big\|_{\dot H^1} =1$ in
\eqref{H-noncoercivity}.

Using the same argument, we can show that there exists a constant
$c$, such that for any real-valued radial $\dot H^1$-function $h_2
\in \{W\}^{\bot}$, we have $$\Phi_2 (h_2) \geq c \big\| h_2
\big\|^2_{\dot H^1}. $$ This completes the proof of Proposition \ref{coerH}. \qed

\section{Spectral properties of the linearized operator}
\label{appen-spectralprop} In this Appendix, we will give the proof
of Proposition \ref{spectral}.

\subsection{Existence, symmetry of the eigenfunctions} Note that
$$\overline{\mathcal{L}v}=-\mathcal{L}\overline{v},$$ so that if
$e_0>0$ is an eigenvalue of $\mathcal{L}$ with the eigenfunction
$\mathcal{Y}_+$, $-e_0$ is also an eigenvalue with eigenfunction
$\mathcal{Y}_-=\overline{\mathcal{Y}}_+$.

Now we show the existence of $\mathcal{Y}_+$. Let $\mathcal{Y}_1=\Re
\mathcal{Y}_+, \mathcal{Y}_2 = \Im \mathcal{Y}_+$, it suffices to
show that
\begin{equation}\label{eigenf-system}
  -L_- \mathcal{Y}_2 =\; e_0 \mathcal{Y}_1, \quad
L_+ \mathcal{Y}_1 = \; e_0 \mathcal{Y}_2.
\end{equation}

From the proof of the coercivity property of $\Phi$ on $H^{\bot}\cap
\dot H^1_{rad}$ in Proposition \ref{coerH}, we know that $L_-$ on
$L^2$ with domain $H^2$ is self-adjoint and nonnegative, By Theorem
3.35 in \cite[Page 281]{kato:book}, it has a unique square root $
(L_-)^{1/2} $ with domain $H^1$.

Assume that there exists a function $f\in H^4_{rad}$ such that
\begin{equation}\label{single-eigenf}
\aligned \mathcal{P} f=-e^2_0 f_1, \quad \mathcal{P} := \big(
L_-\big)^{1/2} \big(L_+\big) \big( L_-\big)^{1/2}.
\endaligned
\end{equation}
Then taking
\begin{equation*}
\aligned \mathcal{Y}_1 : = \big( L_-\big)^{1/2} f, \quad
\mathcal{Y}_2 := \frac{1}{e_0} \big(L_+\big) \big( L_-\big)^{1/2} f
\endaligned
\end{equation*}
would yield a solution of \eqref{eigenf-system}, which implies the
existence of the radial $\mathcal{Y}_{\pm}$ by the rotation
invariance of the operator $\mathcal{L}$.

It suffices to show that the operator $\mathcal{P}$ on $L^2$ with
domain $H^4_{rad}$ has a strictly negative eigenvalue. Since
$\mathcal{P}$ is a relatively  compact, self-adjoint, perturbation
of $\big(-\Delta\big)^2$, then by the Weyl theorem
\cite{HisSig:96:book, kato:book}, we know that
\begin{equation*}
\aligned \sigma_{ess}(\mathcal{P})= [0, +\infty).
\endaligned
\end{equation*}
We only need to show that $\mathcal{P}$ has at least one
negative eigenvalue $-e^2_0$.
\begin{lemma}
\begin{equation*}
\aligned
 \sigma_{-}(\mathcal{P}):= \inf\{\big(\mathcal{P}f , f
\big)_{L^2}, f\in H^4_{rad}, \big\|f\big\|_{L^2}=1\}<0.
\endaligned
\end{equation*}
\end{lemma}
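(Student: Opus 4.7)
The strategy is the standard one: exhibit a trial function $g$ in the orthogonal complement of $\ker L_-$ such that $(L_+ g, g)_{L^2} < 0$, then pull it back through $L_-^{1/2}$ to produce an admissible $f \in H^4_{rad}$ with $(\mathcal{P}f, f) < 0$.

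\textbf{Step 1: Reduction.} Writing $g = L_-^{1/2} f$ gives $(\mathcal{P}f,f)_{L^2} = (L_+ g, g)_{L^2}$. As $f$ ranges over the form domain of $\mathcal{P}$, $g$ ranges over a dense subset of $\overline{\mathrm{Ran}(L_-^{1/2})} = (\ker L_-)^\perp$, which by Lemma \ref{keyassumption} equals $\{W\}^\perp$ in $L^2$. So it suffices to exhibit a radial $g \perp_{L^2} W$ with $(L_+ g,g)<0$.

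\textbf{Step 2: Choice of trial element.} I will take $g := W + \widetilde{W}$, which lies in $L^2_{rad}(\mathbb{R}^d)$ for $d\geq 5$ since both $W$ and $\widetilde W$ decay like $|x|^{2-d}$ at infinity. The $L^2$-orthogonality to $W$ follows from the one-line integration by parts
\[
(\widetilde{W}, W)_{L^2} = \tfrac{d-2}{2}\|W\|_2^2 + \tfrac{1}{2}\int_{\mathbb{R}^d} x\cdot\nabla(W^2)\,dx
= \tfrac{d-2}{2}\|W\|_2^2 - \tfrac{d}{2}\|W\|_2^2 = -\|W\|_2^2,
\]
so $(g,W)_{L^2}=0$. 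For negativity, Lemma \ref{keyassumption} gives $L_+\widetilde{W}=0$, hence $(L_+g,g)_{L^2}=(L_+W,W)_{L^2}$. Using equation \eqref{gs} in the definition \eqref{Lpos} yields $L_+W=-\Delta W-3(|x|^{-4}*W^2)W=2\Delta W$, and therefore $(L_+W,W)_{L^2}=-2\|\nabla W\|_2^2<0$.

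\textbf{Step 3: Recovering an admissible $f$.} Let $\{E_\lambda\}$ be the spectral resolution of $L_-\geq 0$, and set $P_n := \chi_{(1/n,\infty)}(L_-)$. Since $g\perp \ker L_-$, one has $P_n g \to g$ in $L^2$. The function $f_n := \int_{(1/n,\infty)}\lambda^{-1/2}\,dE_\lambda g$ is a bounded $L^2$-function satisfying $L_-^{1/2}f_n=P_n g$, so $f_n$ belongs to the form domain of $\mathcal{P}$. Because $|x|^{-4}*W^2$ is a bounded continuous function decaying like $|x|^{-2}$ (compare $-\Delta W/W$), the quadratic form of $L_+$ is continuous on $\dot H^1$; and since $P_n$ commutes with $L_-$, one checks that $\|\nabla P_n g\|_2^2 = (L_-P_n g,P_n g)+\int(|x|^{-4}*W^2)(P_n g)^2 \to \|\nabla g\|_2^2$, so $P_n g\to g$ in $\dot H^1$. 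Hence $(\mathcal{P}f_n,f_n)_{L^2}=(L_+P_n g,P_n g)_{L^2}\to(L_+g,g)_{L^2}<0$ as $n\to\infty$. A standard mollification and cutoff approximation inside $H^4_{rad}$ (dense in the form domain) produces an actual $f\in H^4_{rad}$ with $\|f\|_{L^2}=1$ and $(\mathcal{P}f,f)_{L^2}<0$, giving $\sigma_-(\mathcal{P})<0$.

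\textbf{Main obstacle.} The algebraic heart—finding a test direction with $(L_+\cdot,\cdot)$ negative that is $L^2$-orthogonal to $\ker L_-$—is immediate once one notices that $\widetilde W$ lies in $\ker L_+$ and, crucially, has nonzero $L^2$-pairing with $W$ (something that requires $d\geq 5$ for $\|W\|_2$ to be finite). The only technical point is the passage from the form domain back to the operator domain $H^4_{rad}$ appearing in the definition of $\sigma_-(\mathcal{P})$; this is routine but requires care with the unboundedness of $L_-^{-1/2}$, handled cleanly by the spectral truncation $P_n$ together with the decay properties of $|x|^{-4}*W^2$.
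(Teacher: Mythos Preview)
Your proof is correct and follows the same overall strategy as the paper: exhibit a radial $g\perp_{L^2}W$ with $(L_+g,g)<0$, then pull it back through $L_-^{1/2}$. The difference is in the choice of trial function and the approximation device. The paper first invokes the existence of a negative eigenfunction $Z$ of $L_+$ (argued from $(L_+W,W)<0$ and Weyl's theorem), then corrects it by adding $\alpha_1\widetilde{W}$ to enforce $L^2$-orthogonality to $W$, using only that $(\widetilde{W},W)_{L^2}\neq 0$; for the pull-back it uses an $(L_-+1)^{-1}$ regularization. You bypass $Z$ entirely by taking $g=W+\widetilde{W}$ and computing $(\widetilde{W},W)_{L^2}=-\|W\|_{L^2}^2$ explicitly, which is more direct and avoids the detour through the point spectrum of $L_+$; for the pull-back you use spectral truncation, which is equally standard. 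One small remark on Step~3: norm convergence $\|\nabla P_ng\|_2\to\|\nabla g\|_2$ together with $L^2$ convergence does not by itself yield strong $\dot H^1$ convergence; it is cleaner to compute $\|\nabla(P_ng-g)\|_2^2=(L_-(I-P_n)g,(I-P_n)g)+\int(|x|^{-4}*W^2)|(I-P_n)g|^2$ directly and observe that both terms tend to zero (the first by monotone convergence since $g\in H^1$, the second since $|x|^{-4}*W^2\in L^\infty$ and $(I-P_n)g\to 0$ in $L^2$). In fact, since $g=W+\widetilde W\in H^\infty$ for $d\geq 5$, your $f_n$ already lies in $H^4_{rad}$, so the final mollification step is unnecessary.
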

\begin{proof} Note that
\begin{equation*}
\aligned \big(\mathcal{P}f, f \big)_{L^2} = \big( L_+F ,
F\big)_{L^2}, \quad F: = \big(L_-\big)^{1/2}f,
\endaligned
\end{equation*}
it suffices to find $F$ such that there exists $g \in
H^4_{rad}$, $ F=(\Delta + V) g$ and
\begin{equation}\label{L+negative}
\aligned \big(L_+F, F \big)_{L^2} < 0.
\endaligned
\end{equation}

Since   $W \in L^2(\R^d)$, we have
\begin{equation*}
\aligned  \text{Ran}(L_-) ^{\bot} = \text{Null}(L_-)
=\text{span}\{W\},
\endaligned
\end{equation*}
Thus
\begin{equation}\label{RLn}
\aligned \text{Ran}(L_-) = \{f\in L^2, (f, W)_{L^2} =0\}.
\endaligned
\end{equation}

Note that $L_+$ is a self-adjoint compact perturbation of $-\Delta$
and
\begin{equation*}
\aligned \big(L_+W, W \big)_{L^2} = -2 \int |\nabla W|^2 \; dx < 0,
\endaligned
\end{equation*}
we easily see that  $L_+$ has a negative eigenvalue. Let $Z$ be the
eigenfunction for this eigenvalue (it is radial by the minimax
principle). Note that $L_+ \widetilde{W} =0$, then for any real
number $\alpha$, we have
\begin{equation}\label{ZW}
\aligned E_0: = \int_{\R^d} L_+(Z+\alpha \widetilde{W} ) \cdot (Z+
\alpha \widetilde{W} ) = \int_{\R^d} L_+  Z \cdot Z < 0.
\endaligned
\end{equation}
Since
$$\big(\widetilde{W} , W\big)_{L^2} \not = 0,$$
we can choose the real number $\alpha_1$ such that
\begin{equation*}
\aligned \big( Z + \alpha_1 \widetilde{W} , W \big)_{L^2} =0,
\endaligned
\end{equation*}
which means that
\begin{equation*}
\aligned \left( (L_- +1) (Z+\alpha_1 \widetilde{W} ) , W
\right)_{L^2} =& \left(
 Z+\alpha_1 \widetilde{W }  ,(L_-+1)  W \right)_{L^2}\\
 =& \big( Z+\alpha_1 \widetilde{W }
,  W \big)_{L^2}=0.
\endaligned
\end{equation*}

By \eqref{RLn}, for any $\epsilon>0$, there exists a function
$G_{\epsilon} \in H^2_{rad}$ such that
\begin{equation*}
\aligned \big\|  L_-  G_{\epsilon} -\big(L_-+1\big) (Z+\alpha_1
\widetilde{W })\big\|_{L^2} < \epsilon.
\endaligned
\end{equation*}
Taking
$$F_{\epsilon}:= \big(L_- +1 \big)^{-1} L_-   G_{\epsilon},$$
we obtain that
\begin{equation*}
\aligned \big\| \big(L_- + 1\big) \big(F_{\epsilon} - (Z + \alpha_1
\widetilde{W })\big)\big\|_{L^2} \leq \epsilon.
\endaligned
\end{equation*}
This implies that
\begin{equation*}
\aligned \big\| F_{\epsilon} - (Z+\alpha_1\widetilde{W}
)\big\|_{H^2} \leq \epsilon \big\| \big(L_- +1 \big)^{-1}\big\|_{L^2
\rightarrow L^2}.
\endaligned
\end{equation*}
Hence for some constant $C_0$, we have
\begin{equation*}
\aligned \left| \int_{\R^d}  L_+   F_{\epsilon} \cdot F_{\epsilon}
-\int_{\R^d}  L_+ (Z+\alpha_1\widetilde{W} ) \cdot
(Z+\alpha_1\widetilde{W}) \right| \leq C_0\epsilon.
\endaligned
\end{equation*}
By \eqref{ZW}, we have \eqref{L+negative} for $F=F_{\epsilon},
\epsilon=\frac{E_0}{2C_0}$. \end{proof}

\subsection{Decay of the eigenfunctions at infinity } By the bootstrap argument,
we know that $\YYY_{\pm} \in C^{\infty}\cap H^{\infty}$. In fact, we
have $\YYY_{\pm} \in \mathcal{S}$. By \eqref{eigenf-system}, it
suffices to show that $\YYY_1\in \mathcal{S}$. Note that $\YYY_1$
satisfies that
\begin{align*}
\left(e^2_0 + \Delta^2 \right)\YYY_1 = &  -2\Delta \left(
\frac{1}{|x|^4}*(W\YYY_1)  \cdot  W \right)- 2 \left(
\frac{1}{|x|^4}*(W\YYY_1) \right)  \left(
\frac{1}{|x|^4}*|W|^2\right)\cdot W\\
& - \Delta \left( \frac{1}{|x|^4}*|W|^2 \cdot \YYY_1\right)  -
\left( \frac{1}{|x|^4}*|W|^2 \cdot \Delta \YYY_1\right) - \left(
\frac{1}{|x|^4}*|W|^2 \right)^2 \cdot  \YYY_1.
\end{align*}
Thus,
\begin{align*} \left(e_0 - \Delta \right)^2 \YYY_1 =& - \Delta \left( \frac{1}{|x|^4}*|W|^2 \cdot \YYY_1\right)  -
\left( \frac{1}{|x|^4}*|W|^2 \cdot \Delta \YYY_1\right) - \left(
\frac{1}{|x|^4}*|W|^2 \right)^2 \cdot  \YYY_1-2e_0 \Delta \YYY_1\\
 &
-2\Delta \left( \frac{1}{|x|^4}*(W\YYY_1) \cdot  W \right)- 2 \left(
\frac{1}{|x|^4}*(W\YYY_1) \right)  \left(
\frac{1}{|x|^4}*|W|^2\right)\cdot W.
\end{align*}
Because of the existence of the nonlocal interaction on the right
hand side, the decay estimate in
\cite{DuyMerle:NLS:ThresholdSolution} does not work. From the Bessel
potential theory in \cite{Ste:70:book}, we know that the integral
kernel $G$ of the operator $\left(e_0 - \Delta \right)^{-2}$ is
\begin{align*}
G(x)=\frac{1}{(4\pi)^2}\int^{\infty}_0e^{-\frac{e_0}{4\pi}\delta}
e^{-\frac{\pi}{\delta}|x|^2} \delta^{\frac{-d+4}{2}}
\frac{d\delta}{\delta}.
\end{align*}
Hence we have
\begin{enumerate}
\item $G(x)=\frac{|x|^{-d+4}}{\gamma(4)}+o(|x|^{-d+4}),\; |x|\rightarrow
0$;
\item there exists $c>0$ such that
$$G(x)=o(e^{-c|x|}), \; |x|\rightarrow +\infty.$$
\end{enumerate}
Then the conclusion follows by the analogue estimates in
\cite{FroJL:07:effective dynamics, KriMR:mass-subcritical har}.\qed

\subsection{Coercivity of $\Phi$ on $G_{\bot}\cap \dot H^1_{rad}$.}
Let $f\in G_{\bot}\cap\dot H^1_{rad}$,  we now decompose $f,
\mathcal{Y}_+, \mathcal{Y}_-$ in the orthogonal sum $\dot H^1 = H
\oplus H^{\bot}$:
\begin{equation*}
\aligned f= \alpha W + \widetilde{h},  \;\;
\mathcal{Y}_+=&\; \eta iW + \xi \widetilde{W} +\zeta W + h_+ ,\\
\mathcal{Y}_- = & -\eta iW + \xi \widetilde{W}  + \zeta W + h_-,
\endaligned
\end{equation*}
where $\widetilde{h},\; h_+,\; h_- \in H^{\bot}\cap \dot H^1_{rad} $
and $h_- =\overline{h}_+$.

\noindent{\bf Step 1.} We first show that for any $f\in G_{\bot}$,
\begin{equation}\label{G formula}
\aligned \Phi(f) = - \frac{B(h_+, \widetilde{h}) B(h_-,
\widetilde{h})}{\sqrt{\Phi(h_+)} \sqrt{\Phi(h_-)}} +
\Phi(\widetilde{h})
\endaligned
\end{equation}
By $\Phi(\mathcal{Y}_{\pm})=0 $ and Remark \ref{r:bilinear form
prop}, we have
\begin{equation*}
\aligned \zeta^2 \Phi(W) + \Phi(h_+)=0,\;\;
\zeta^2 \Phi(W) + \Phi(h_-)=0,\\
\endaligned
\end{equation*}
Since $f\in G_{\bot}$, we have $ B(f, \mathcal{Y}_{\pm})= 0$, which
implies that
\begin{equation*}
\aligned \alpha \zeta \Phi(W) + B(\widetilde{h}, h_+) =0, \;\;
\alpha \zeta \Phi(W) + B(\widetilde{h}, h_-) =0.
\endaligned
\end{equation*}
Thus we have
\begin{equation*}
\aligned \Phi(f) = \alpha^2 \Phi(W)+ \Phi(\widetilde{h}) = -
\frac{B(h_+, \widetilde{h}) B(h_-, \widetilde{h})}{\sqrt{\Phi(h_+)}
\sqrt{\Phi(h_-)}} + \Phi(\widetilde{h}).
\endaligned
\end{equation*}

\noindent{\bf Step 2.} Next we show that
\begin{equation}\label{h-pm indep}
\aligned h_1:=\Re \; h_+ \not =0, \quad h_2:=\Im\; h_+ \not =0.
\endaligned
\end{equation}
In other words,  $h_+$ and $h_-$ are independent in the real Hilbert
space $\dot H^1$.

By conclusion (a) in Proposition \ref{spectral}, we have
\begin{equation}\label{eigen}
\aligned L_- \mathcal{Y}_2 =-e_0 \mathcal{Y}_1,\quad
L_+\mathcal{Y}_1 =e_0 \mathcal{Y}_2.
\endaligned
\end{equation}

We show \eqref{h-pm indep} by contradiction. First assume that
$h_2=0$, then from the decomposition of $\mathcal{Y}_+$,
$$\mathcal{Y}_2 \in \text{span}(W),$$
which is the null space of $L_-$.  Thus,
$\mathcal{L}_-\mathcal{Y}_2=0$,   together with \eqref{eigen},
implies that $\mathcal{Y}_1=0, \mathcal{Y}_2=0$. But it contradicts
the definition of $\mathcal{Y}_+$.

Similarly, assume that $h_1=0$, and by \eqref{eigen},
$L_+\widetilde{W} =0$ and the decomposition of $\mathcal{Y}_+$, we
get
\begin{align*}
  \mathcal{Y}_2 =& \; \frac{1}{e_0}L_+ \mathcal{Y}_1
=   \frac{1}{e_0}L_+ \big( \xi \widetilde{W}  + \zeta W \big)  =
\frac{\zeta}{e_0}L_+  W = -2  \frac{\zeta}{e_0}
\big(|\cdot|^{-4}* \big|W\big|^2 \big) W, \\
\mathcal{Y}_1 = &\;  -\frac{1}{e_0} L_-\mathcal{Y}_2  =
\frac{1}{e_0}\left( \Delta + |\cdot|^{-4}*\big|W \big|^2
\right)\mathcal{Y}_2   \\ = &\; \frac{\zeta}{e^2_0}\left( \Delta +
|\cdot|^{-4}*\big|W \big|^2 \right) \big(|\cdot|^{-4}* |W|^2 \big)W.
\end{align*}
A direct computation shows that $\mathcal{Y}_1 \not \in
\text{span}(W, \widetilde{W})$, which contradicts the decomposition
of $\mathcal{Y}_+$.

\noindent{\bf Step 3: Conclusion of the proof.}  Note that  $\Phi$
is positive definite on $H^{\bot}\cap \dot H^1_{rad}$, we
claim that there exists a constant $b \in (0,1)$ such that
\begin{equation}\label{h control}
\aligned \forall X \in H^{\bot}, \quad \left| \frac{B(h_+, X) B(h_-,
X)}{\sqrt{\Phi(h_+)} \sqrt{\Phi(h_-)}}   \right| \leq b \Phi(X).
\endaligned
\end{equation}
Indeed it is equivalent to show that, by the orthogonal
decomposition on $H^{\bot}$ related to $B$
\begin{equation*}
\aligned  \displaystyle b:=\max_{X\in span \{h_{\pm}\}\atop X
\not=0} \left(\frac{B(h_+, X) )}{\sqrt{\Phi(h_+)} \sqrt{\Phi(X)}}
 \right) \left(\frac{ B(h_-, X)}{\sqrt{\Phi(h_-)}\sqrt{\Phi(X) } }
 \right)< 1.
\endaligned
\end{equation*}
Applying twice Cauchy-Schwarz inequality with $B$, we get $b\leq 1$.
Furthermore, if $b=1$, there exists $X\not=0$ such that the two
Cauchy-Schwarz inequalities become  equalities and thus $X\in
\text{span}\{h_+\} \cap \text{span}\{h_-\} =0,$ which is a
contradiction. Thus $b<1$.

Now by the coercivity of $\Phi$ on $H^{\bot}\cap \dot H^1_{rad}$,
\eqref{G formula} and \eqref{h control}, we have
\begin{equation}
\aligned \Phi(f)\geq (1-b)\Phi(\widetilde{h}) \geq c_1 (1-b)
\big\|\widetilde{h} \big\|^2_{\dot H^1}.
\endaligned
\end{equation}
From the  decomposition of $f $, one easily see that  $\alpha^2 \Phi(W) +
\Phi(\widetilde{h}) = \Phi(f) \geq (1-b)\Phi(\widetilde{h})  $.
Since $\Phi(W)<0$, we have
\begin{equation*}
\aligned b\Phi(\widetilde{h})\geq \alpha^2 \big| \Phi(W) \big| =
\alpha^2  \big\|W\big\|^2_{\dot H^1}. \Longrightarrow C  \Phi(f)
\geq  \alpha^2 \big\|W\big\|^2_{\dot H^1}+ \big\|\widetilde{h}
\big\|^2_{\dot H^1} = \big\|f\big\|^2_{\dot H^1}.
\endaligned
\end{equation*}
The proof is  complete. \qed

\subsection{Characterization of the real spectrum of $\mathcal{L}$}
 Note that $\mathcal{L}$ is a compact perturbation of
$\begin{pmatrix} 0 & \Delta \\-\Delta & 0 \end{pmatrix}$, thus its
essential spectrum is $i\R$. Consequently, $0\in
\sigma(\mathcal{L})$ and $\sigma(\mathcal{L}) \cap \big(
\R\backslash\{0\}\big) $ contains only eigenvalues. Furthermore, we
have show that $\{\pm e_0\} \subset \sigma(\mathcal{L})$. It remains
to show that $\pm e_0$ are the only eigenvalues of $\mathcal{L}$ in
$\R\backslash\{0\} $.

We argue by contradiction. Assume that for some $f\in H^2$, we have
\begin{equation*}
\aligned \mathcal{L}f=e_1 f,\quad e_1 \in \R\backslash \{0, \pm
e_0\}.
\endaligned
\end{equation*}
By \eqref{B-pro}, we have  $$(e_1+e_0) B (f, \mathcal{Y}_+) =
(e_1-e_0) B (f, \mathcal{Y}_-)=0, \quad e_1 B(f,f) = -e_1 B(f, f).$$
Thus
\begin{equation*}
\aligned B (f, \mathcal{Y}_+) =   B (f, \mathcal{Y}_-)=0, \quad
B(f,f)=0.
\endaligned
\end{equation*}
Now we write
\begin{equation*}
\aligned f= \beta iW + \gamma \widetilde{W} + g, \quad g\in
G_{\bot}, \; \beta =  \frac{(f, iW)_{\dot
H^1}}{\big\|W\big\|^2_{\dot H^1}},\; \gamma =  \frac{(f,
\widetilde{W})_{\dot H^1}}{\big\|\widetilde{W}\big\|^2_{\dot H^1}}.
\endaligned
\end{equation*}
By the coercivity of $\Phi$ on $G_{\bot}$ in Proposition
\ref{spectral} and Remark \ref{r:bilinear form prop}, we have
\begin{equation*}
\aligned 0 = B(f,f) = B(g,g) \gtrsim \big\|g\big\|^2_{\dot H^1},
\Longrightarrow g=0.
\endaligned
\end{equation*}
thus we have
\begin{equation*}
\aligned e_1 f = \mathcal{L}f = \eta \mathcal{L}(iW) + \gamma
\mathcal{L}(W)=0.
\endaligned
\end{equation*}
Since $e_1 \not =0$, we have
$$f=0.$$
This  contradicts the definition of the eigenfunctions,  and so we concludes
the proof. \qed

%
%
%
%

\end{document}